\newtheorem{theorem}{Theorem}[section]
\newtheorem{lemma}[theorem]{Lemma}
\def\proofbox{\begin{picture}(6.5,6.5)
\put(0,0){\framebox(6.5,6.5){}}\end{picture}}
\newenvironment{proof}{\noindent{\it Proof.\quad}}{\hfill\proofbox}
\begin{document}

\title{Exhausting Curve Complexes by Finite Superrigid Sets on Nonorientable Surfaces\\}
\author{Elmas Irmak}

\maketitle

\renewcommand{\sectionmark}[1]{\markright{\thesection. #1}}

\thispagestyle{empty}
\maketitle
\begin{abstract} Let $N$ be a compact, connected, nonorientable surface of genus $g$ with $n$ boundary components. Let $\mathcal{C}(N)$ be the curve complex of $N$. 
We prove that if $(g, n) \neq (1,2)$ and $g + n \neq 4$, then there is an exhaustion of $\mathcal{C}(N)$ by a sequence of finite superrigid sets.\end{abstract}
  
{\small Key words: Mapping class group, curve complex, nonorientable surface, rigidity

MSC: 57N05, 20F38}
 
\section{Introduction} Let $N$ be a compact, connected, nonorientable surface of genus $g$ with $n$ boundary components. Mapping class group,
$Mod_N$, of $N$ is defined to be the group of isotopy classes of all self-homeomorphisms of $N$. The \textit{complex of curves},
$\mathcal{C}(N)$, on $N$ is defined as an abstract simplicial complex as follows: A simple closed curve on $N$ is called nontrivial
if it does not bound a disk, a M\"{o}bius band, and it is not isotopic to a boundary component of $N$. The vertex set of $\mathcal{C}(N)$
is the set of isotopy classes of nontrivial simple closed curves on $N$. A set of vertices forms a simplex in $\mathcal{C}(N)$ if
they can be represented by pairwise disjoint simple closed curves. 
Let $\mathcal{A}$ be a subcomplex in $\mathcal{C}(N)$. A simplicial map
$\lambda : \mathcal{A} \rightarrow \mathcal{C}(N)$ is called superinjective if the following condition holds:
if $\alpha, \beta$ are two vertices in $\mathcal{A}$ such that $i(\alpha,\beta) \neq 0$, then
$i(\lambda(\alpha),\lambda(\beta)) \neq 0$. We will say $\mathcal{A}$ is a  superrigid set if for every superinjective simplicial map $\lambda : \mathcal{A} \rightarrow \mathcal{C}(N)$, there exists a homeomorphism $h : N \rightarrow N$ such that $H(\alpha) = \lambda(\alpha)$
for every vertex $\alpha$ in $\mathcal{A}$ where $H=[h]$. The main result is the following:
 
\begin{theorem} \label{main} Let $N$ be a compact, connected, nonorientable surface of genus $g$ with $n$ boundary components. If $(g, n) \neq (1,2)$ and $g + n \neq 4$, then 
there exists a sequence $\mathcal{Z}_1 \subset \mathcal{Z}_2 \subset \cdots \subset \mathcal{Z}_n \subset \cdots$ such that $\mathcal{Z}_i$ is a finite superrigid set in $\mathcal{C}(N)$ and $\bigcup_{i \in \mathbb{N}} \mathcal{Z}_i = \mathcal{C}(N)$. If $g+n \geq 5$, then $\mathcal{Z}_i$ can be chosen to have trivial pointwise stabilizer in $Mod_N$ for each $i$.\end{theorem}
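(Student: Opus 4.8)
The plan for Theorem~\ref{main} rests on two ingredients. \textbf{(A) A base case:} there is a single finite superrigid set $\mathcal{X}\subset\mathcal{C}(N)$ whose curves \emph{fill} $N$ (the components of its complement are topologically trivial). \textbf{(B) An expansion step:} if $\mathcal{Z}$ is any finite superrigid set whose curves fill $N$ and $v$ is an arbitrary vertex of $\mathcal{C}(N)$, then $\mathcal{Z}\cup\{v\}$ is contained in a finite superrigid set $\mathcal{Z}'$ whose curves again fill $N$. Granting (A) and (B), the theorem follows by a standard iteration: since $N$ is compact, $\mathcal{C}(N)$ has only countably many vertices $v_1,v_2,\dots$; set $\mathcal{Z}_0=\mathcal{X}$, and given a finite filling superrigid set $\mathcal{Z}_{i-1}$, let $\mathcal{Z}_i$ be the finite filling superrigid set containing $\mathcal{Z}_{i-1}\cup\{v_i\}$ produced by (B). The sets $\mathcal{Z}_i$ are then nested, finite, and superrigid, and $\bigcup_{i}\mathcal{Z}_i=\mathcal{C}(N)$ since $v_i\in\mathcal{Z}_i$ for every $i$.

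For (A), when $N$ avoids the excluded list I would take $\mathcal{X}$ to be the nonorientable analogue of the Aramayona--Leininger filling configuration: the curves appearing in a suitable ideal-triangulation-type filling system on $N$, together with enough one-sided curves to detect the crosscaps. Superrigidity of $\mathcal{X}$ is then verified by combining the explicit combinatorics of this configuration with the structural properties of superinjective simplicial self-maps of $\mathcal{C}(N)$ from the author's earlier work: such maps preserve disjointness and, by superinjectivity, nondisjointness; they preserve the topological type of a curve and of the subsurface(s) cut off by two curves; and on the full complex they are induced by homeomorphisms. The hypotheses $(g,n)\neq(1,2)$ and $g+n\neq 4$ enter exactly here: the surfaces $N_{1,2},N_{1,3},N_{2,2},N_{3,1},N_{4,0}$ are the ones for which $\mathcal{C}(N)$ is disconnected or of too small dimension, or admits superinjective self-maps not realized by homeomorphisms, so that no finite set can be superrigid; the remaining low-complexity cases $N_{3,0}$ and $N_{2,1}$ are handled by direct inspection.

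For (B), realize all curves in minimal position, let $R$ be a compact subsurface of $N$ carrying $\mathcal{Z}\cup\{v\}$, and adjoin to $\mathcal{Z}\cup\{v\}$ finitely many further curves, inside and around $R$, arranged so that (i) $v$ lies in a finite \emph{rigid} subconfiguration of $\mathcal{C}(R)\subset\mathcal{C}(N)$, and (ii) the resulting finite set $\mathcal{Z}'$ still fills $N$. Given a superinjective simplicial map $\lambda\colon\mathcal{Z}'\to\mathcal{C}(N)$, restrict it to $\mathcal{Z}$: superrigidity of $\mathcal{Z}$ produces a homeomorphism $h$ with $[h]|_{\mathcal{Z}}=\lambda|_{\mathcal{Z}}$, and after replacing $\lambda$ by $[h]^{-1}\circ\lambda$ we may assume $\lambda$ fixes $\mathcal{Z}$ pointwise. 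One then propagates the identity: because $\lambda$ preserves disjointness and nondisjointness and the topological types of curves and of the pieces they cut off, and because a subsurface is pinned down by its boundary together with the curves it contains, an induction on the complexity of the relevant subsurfaces shows that $\lambda$ fixes every newly adjoined curve; in particular $\lambda(v)=v$. Thus $[h]$ agrees with $\lambda$ on all of $\mathcal{Z}'$, so $\mathcal{Z}'$ is superrigid.

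The main obstacle is the expansion step (B), and within it the fact that a vertex of $\mathcal{C}(N)$ is \emph{not} determined by its intersection pattern with any finite collection of fixed curves; the real work is to choose the auxiliary curves so that $v$ sits in a finite rigid gadget whose combinatorics, once anchored to the already-fixed set $\mathcal{Z}$, force $\lambda(v)=v$, while keeping the whole collection finite and filling. Carrying this out requires handling the phenomena peculiar to nonorientable surfaces --- one-sided versus two-sided curves, crosscaps, and crosscap slides --- and is the reason the exceptional surfaces above must be set aside. For the last assertion: when $g+n\ge 5$ the surface is complex enough to carry a finite family of curves whose common stabilizer in $Mod_N$ is trivial, and including such a family in $\mathcal{X}$, hence in every $\mathcal{Z}_i$, gives the strengthened conclusion; in the remaining admissible cases $N_{2,1}$ and $N_{3,0}$ this cannot be achieved, since $Mod_N$ then contains a nontrivial element fixing every isotopy class of simple closed curves.
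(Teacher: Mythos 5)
Your overall skeleton (a finite superrigid base set, then an inductive enlargement whose union is all of $\mathcal{C}(N)$) matches the paper's, but your key step (B) is exactly where the proposal has a genuine gap: you assert that for an \emph{arbitrary} vertex $v$ one can adjoin finitely many auxiliary curves so that $v$ sits in a "finite rigid gadget" anchored to $\mathcal{Z}$ forcing $\lambda(v)=v$, and you yourself note that a vertex is not determined by its intersection pattern with any finite fixed collection. No construction of such a gadget is given, and none is known in this generality; this is not a routine verification but the entire content of the theorem, so as written (B) is an unproven claim rather than a proof step. The paper never proves anything like (B) directly. Instead it enlarges along the group action: it first builds, in several explicit stages $\mathcal{C}_1\subset\cdots\subset\bigcup_{i=1}^{6}\mathcal{C}_i=\mathcal{C}$, a finite superrigid set containing a curve of every topological type and having trivial pointwise stabilizer, takes a finite generating set $G$ of $Mod_N$ (Korkmaz's generators), and sets $\mathcal{Z}_k=\mathcal{Z}_{k-1}\cup\bigcup_{f\in G}\bigl(f(\mathcal{Z}_{k-1})\cup f^{-1}(\mathcal{Z}_{k-1})\bigr)$. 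Superrigidity of each translate $f(\mathcal{Z}_{k-1})$ is automatic (it is a homeomorphic image of a superrigid set), and the homeomorphisms induced on $\mathcal{Z}_{k-1}$ and on $f(\mathcal{Z}_{k-1})$ are shown to coincide because their overlap contains an explicitly chosen set $f(L_f)\subset\mathcal{C}$ whose pointwise stabilizer is trivial (Lemmas on $L_f$ for $g$ even and odd); exhaustion then follows from the change-of-coordinates fact that every vertex is in the $Mod_N$-orbit of a curve in $\mathcal{C}$, together with the word-by-word application of the generators. This mechanism completely sidesteps the "pin down an arbitrary $v$ by finitely many intersections" problem that your plan runs into.

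Two smaller inaccuracies: for the excluded surfaces with $g+n=4$ the paper does not claim that no finite set can be superrigid; the obstruction is that the Ilbira--Korkmaz rigid set (the needed base case) is unavailable there, so these cases are simply excluded. Also, in the admissible low-complexity cases the paper does not feed them into your iteration at all: $(g,n)=(2,1)$ is handled by an explicit description of $\mathcal{C}(N)$ following Scharlemann, and $(g,n)=(3,0)$ by the same generator scheme as above but with the pointwise stabilizer of the anchor sets $L_f$ equal to the center of $Mod_N$, whose generator acts trivially on $\mathcal{C}(N)$, so the induced homeomorphisms still agree on vertices. Your closing remark about why trivial pointwise stabilizers are impossible for $(3,0)$ is consistent with this, but the uniqueness issue it creates in the induction must be (and in the paper is) addressed explicitly.
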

   
Many results about simplicial maps of curve complexes on orientable surfaces were proved after Ivanov's theorem: Ivanov proved that every automorphism of the complex of curves is induced by a homeomorphism of the surface if the genus is at least two in \cite{Iv1} for compact, connected, oreintable surfaces. He used this result to give classification of isomorphisms between any two finite index subgroups of the extended mapping class group of the surface in \cite{Iv1}. Ivanov's results for lower genus cases were proved by Korkmaz in \cite{K1} and indepedently by Luo in \cite{L}. The author proved that the superinjective simplicial maps of the complexes of curves on a compact, connected, orientable surface are induced by homeomorphisms if the genus is at least two, and using this result she gave a classification of injective homomorphisms from finite index subgroups to the extended 
mapping class group in \cite{Ir1}, \cite{Ir2}, \cite{Ir3}. These results 
were extended to lower genus cases by Behrstock-Margalit in \cite{BM} and Bell-Margalit in \cite{BeM}. Shackleton proved that locally injective simplicial maps of the curve complex are induced by homeomorphisms in \cite{Sh} for orientable surfaces. In \cite{H2} Hern\'andez proved that edge-preserving maps on orientable surfaces are induced by homeomorphisms. The author gave a new proof of this result (including also low genus cases) for edge preserving maps of the curve graphs, where she also proved the result for the nonseparating curve graphs, see \cite{Ir6} and \cite{Ir7}. A subcomplex $\mathcal{A}$ of the curve complex $\mathcal{C}(R)$ is called rigid if for every locally injective (injective on the star of every vertex) simplical map $\tau : \mathcal{A} \rightarrow \mathcal{C}(R)$, there exists a homeomorphism $h : R \rightarrow R$ such that $H(\alpha) = \tau(\alpha)$
for every vertex $\alpha$ in $\mathcal{A}$ where $H=[h]$. On orientable surfaces Aramayona-Leininger proved that there exists a finite rigid subcomplex in the curve complex if $R$ is of finite type in \cite{AL1}, and there is an exhaustion of the curve complex by a sequence of finite rigid sets in \cite{AL2}. 

On nonorientable surfaces there were similar results: Atalan-Korkmaz proved that the automorphism group of the curve complex is isomorphic to the mapping class group for most cases in \cite{AK}. The author proved that each superinjective simplicial map of the complex of curves is induced by a homeomorphism in most cases in \cite{Ir4}. She also proved that an injective simplicial map of the curve complex is induced by a homeomorphism in \cite{Ir5}. This result implies that superinjective simplicial maps and automorphisms are induced by homeomorphisms since they are injective.  
Irmak-Paris proved that superinjective simplicial maps of the two-sided curve complex are induced by homeomorphisms on compact, connected, nonorientable surfaces when the genus is at least 5 in \cite{IrP1}. They also gave a classification of injective homomorphisms from finite index subgroups to the whole mapping class group on these surfaces in \cite{IrP2}. Recently, in \cite{IlK} Ilbira-Korkmaz proved the existence of finite rigid subcomplexes in the curve complex for nonorientable surfaces when $g+n \neq 4$. In this paper we start with Ilbira-Korkmaz's rigid set and enlarge it in steps to get superrigid sets and prove that there exists an exhaustion of the curve complex by a sequence of finite superrigid sets on nonorientable surfaces when $g+n \neq 4$. We use some techniques given by Aramayona-Leininger in \cite{AL2} and Irmak-Paris in \cite{IrP1}.

\section{Some small genus cases}

\begin{figure}
	\begin{center}
		\hspace{0.006cm} \epsfxsize=1.05in \epsfbox{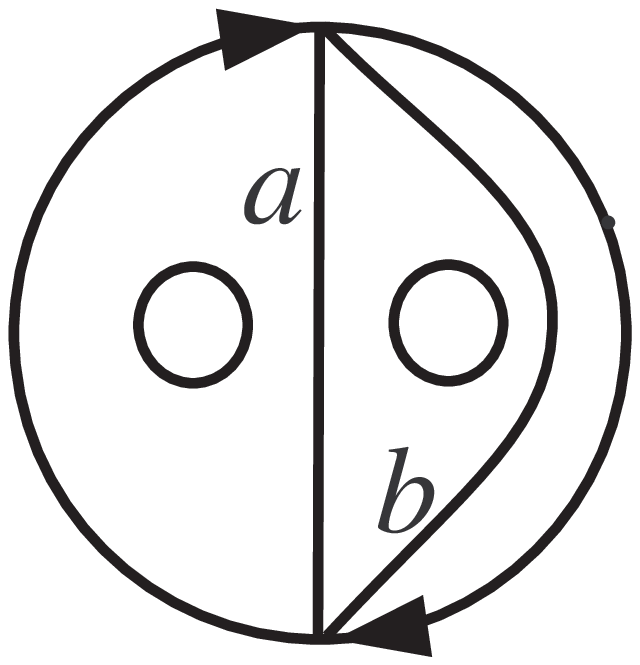} \hspace{1.25cm} \epsfxsize=2in \epsfbox{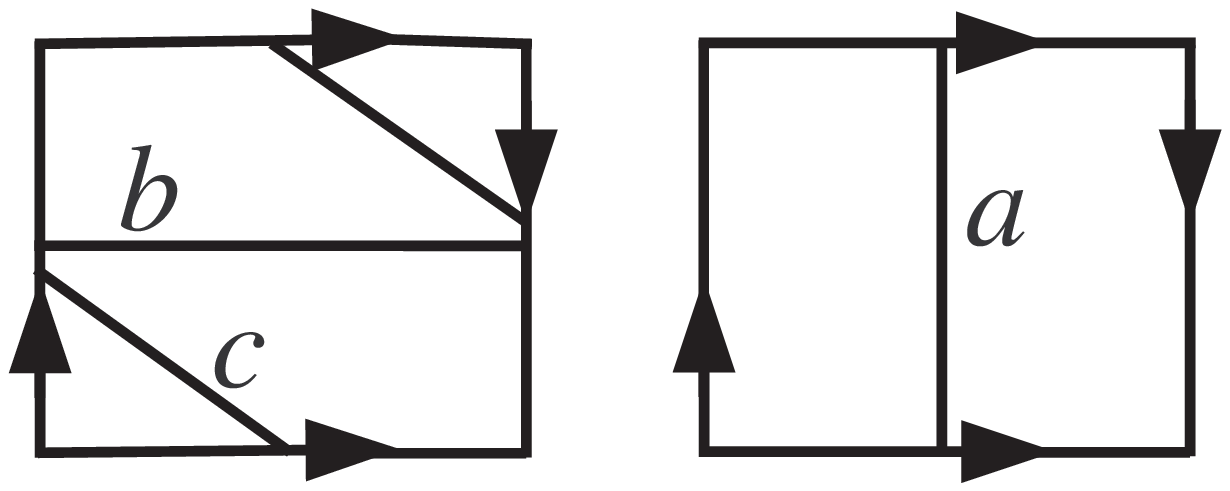}
		
\hspace{0.1cm} (i) \hspace{4.7cm} (ii) \hspace{1.1cm}
		
		\caption{Vertices of $\mathcal{C}(N)$ for (i) $(g, n)=(1,2)$, and (ii) $(g, n)=(2,0)$}
		\label{f1}
	\end{center}
\end{figure}   

\begin{figure}
	\begin{center}
 \hspace{-0.9cm} \epsfxsize=1.2in \epsfbox{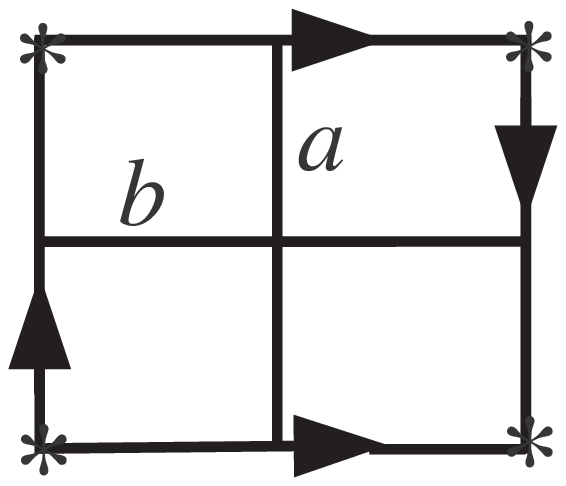} \hspace{-0.4cm}
		\epsfxsize=3.7in \epsfbox{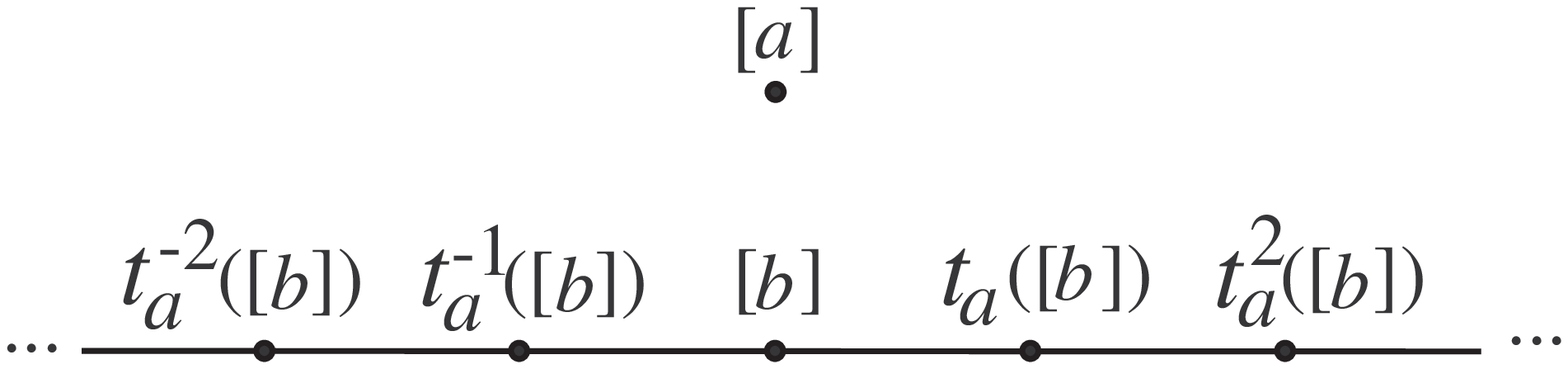} \hspace{-0.4cm}	\epsfxsize=1.2in \epsfbox{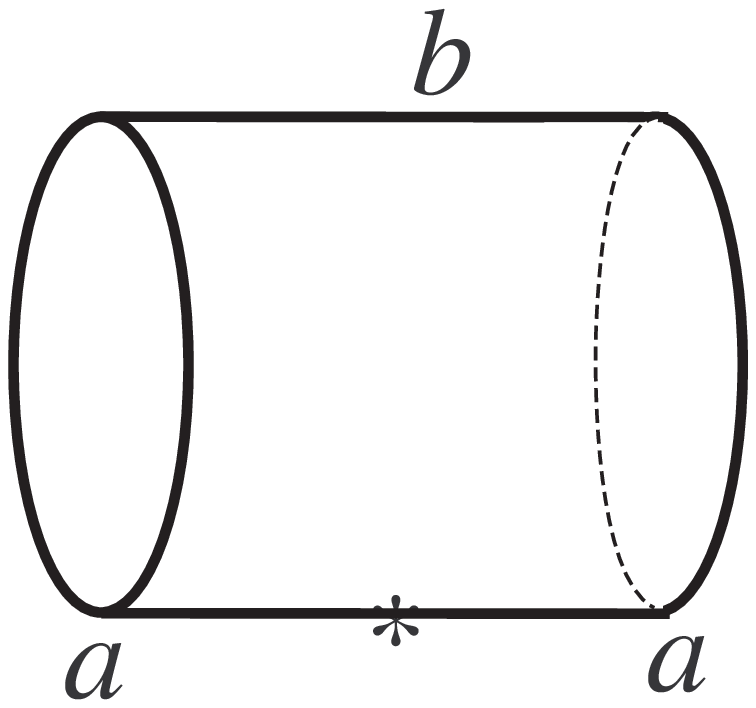}
		
\hspace{-0.55cm}	 (i) \hspace{5.6cm} (ii)   \hspace{5.6cm} (iii)  
		\caption{Curve complex for $(g, n)=(2,1)$}
		\label{f2}
	\end{center}
\end{figure}
 
A simple closed curve on $N$ is called 1-sided if a regular neighborhood of it is homeomorphic to a M\"obius band and it is called 2-sided if a regular neighborhood of it is homeomorphic to an annulus. If $x$ is a simple closed curve on $N$, then $i([x], [x])=0$ if and only if $x$ is a 2-sided curve, and 
$i([x], [x])=1$ if and only if $x$ is a 1-sided curve.
Since superinjective simplicial maps preserve geometric intersection zero and nonzero properties, they send 2-sided curves to 2-sided curves and 1-sided curves to 1-sided curves. 
If $(g,n) = (1,0)$, $N$ is the projective plane. There is only one element (isotopy class of a 1-sided curve) in the curve complex. If $(g,n) = (1,1)$, 
$N$ is Mobius band. There is only one element (isotopy class of a 1-sided curve) in the curve complex. So, if $(g,n) = (1,0)$ or $(g,n) = (1,1)$, we see easily that $\mathcal{C}(N)$ is a superrigid set. If $(g,n) = (1,2)$, there are only two vertices in the curve complex (see \cite{Sch}). They are the isotopy classes of two 1-sided curves $a$ and $b$ as shown in Figure \ref{f1} (i). We have  $i([a],[b]) = 1$. A simplicial map sending both of these vertices to $[a]$ is superinjective and it is not induced by a homeomorphism. So, $\mathcal{C}(N)$ is not a superrigid set. If $(g,n) = (1,2)$, it is easy to see that every injective simplicial map of the curve complex is induced by a homeomorphism.

If $(g,n) = (2,0)$, there are only three vertices and one edge in the curve complex (see \cite{Sch}). The vertices are the isotopy classes of $a$, $b$ and $c$ as shown in Figure \ref{f1} (ii). The curve $a$ is 2-sided, and $b$ and $c$ are both 1-sided. We have $i([a], [b]) = 1$, $i([a], [c]) = 1$ and $i([b], [c]) = 0$. Let $\lambda : \mathcal{C}(N) \rightarrow \mathcal{C}(N)$ be a superinjective simplicial map. Since superinjective simplicial maps preserve geometric intersection zero and nonzero properties, $\lambda$ fixes $[a]$. If it also fixes each of $[b]$ and $[c]$ then it is
induced by the identity homeomorphism, if it switches $[b]$ and $[c]$ then it is induced by a homeomorphism that switches the one sided curves $b$
and $c$, while fixing $a$ up to isotopy. So, $\mathcal{C}(N)$ is a finite superrigid set in this case.

If $(g, n) = (2, 1)$, then the curve complex is given by Scharlemann in \cite{Sch} as follows: Let $a$ and $b$ be as in
Figure \ref{f2} (i). We see that $i([a], [b]) = 1$. The vertex set of the curve complex is $\{[a], [b], t_a^m ([b]) : m \in \mathbb{Z} \}$. The complex is shown in Figure \ref{f2} (ii). The curve $a$ is a 2-sided curve, and $b$ is a 1-sided curve. Let $\mathcal{Z}_i= \{[a], [b], t_a^m ([b]) : -i \leq m \leq i \}$. Since superinjective simplicial maps send 2-sided curves to 2-sided curves and 1-sided curves to 1-sided curves, they fix $[a]$. By cutting $N$ along $a$ we get a cylinder with one puncture as shown in Figure \ref{f2} (iii). There is a reflection of the cylinder
interchanging the front face with the back face which fixes $b$ pointwise. This gives a homeomorphism $r$ of $N$ such that $r(b) = b$ and $r(a) = a^{-1}$. So, $(r)_\#([b]) = [b]$ and $(r)_\#(t_a([b])) = t_{a^{-1}}([b]) = {t_a}^{-1}([b])$. The map $r_\#$ reflects the graph in Figure \ref{f2} (ii) at $[b]$ and fixes $[a]$. By using that superinjective simplicial maps preserve geometric intersection zero and nonzero properties, it is easy to see that any superinjective simplicial map on $\mathcal{Z}_i$ is induced by $t_a^k$ or $t_a^k \circ r$ for some $k \in \mathbb{Z}$ for each $i$. So, each $\mathcal{Z}_i$ is a finite superrigid set and $\bigcup_{i \in \mathbb{N}} \mathcal{Z}_i = \mathcal{C}(N)$.
  
\begin{figure}[t]
	\begin{center}
		\epsfxsize=2.7in \epsfbox{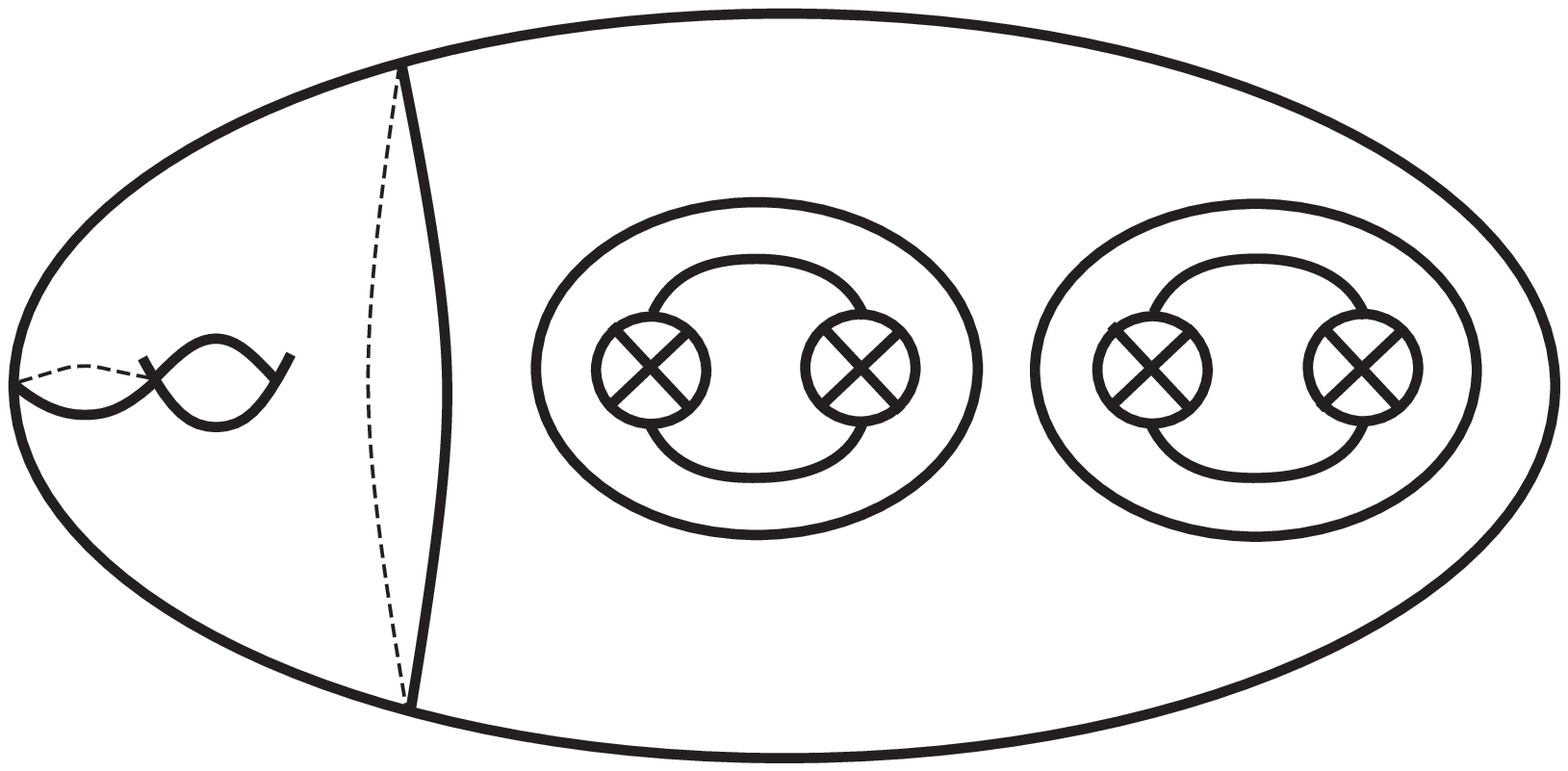} \hspace{0.1in}
		\epsfxsize=2.7in \epsfbox{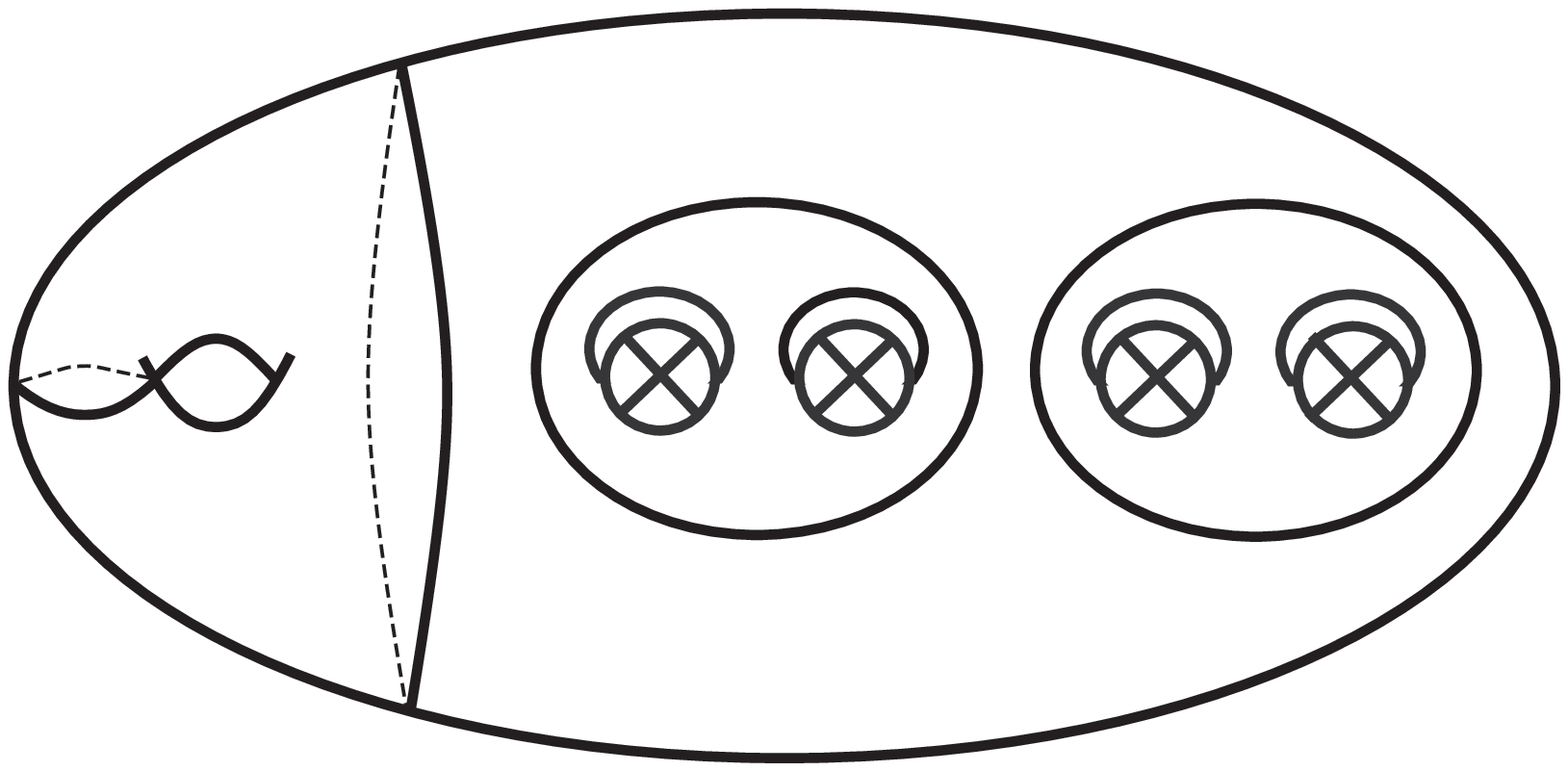}
		\caption{Different dimensional maximal simplices in $\mathcal{C}(N)$}
		\label{Fig0}
	\end{center}
\end{figure}
  
\section{Exhaustion of $\mathcal{C}(N)$ by finite superrigid sets when $g+n \geq 5$}

In this section we will always assume that $g+n \geq 5$. We first give some definitions. A set $P$ of pairwise disjoint, nonisotopic, nontrivial simple closed curves on $N$ is called a pants decomposition, if each
component of the surface $N_P$, obtained by cutting $N$ along $P$, is a pair of pants. Let $P$ be a pants decomposition of $N$. Let $[P]$ be the set of isotopy classes of elements of $P$. Note that $[P]$ is a maximal simplex of $\mathcal{C}(N)$. Every maximal simplex $\sigma$ of $\mathcal{C}(N)$ is equal to
$[P]$ for some pants decomposition $P$ of $N$. There are different dimensional maximal simplices in $\mathcal{C}(N)$ (see Figure \ref{Fig0}). In the figure we see cross signs. This means that the interiors of the disks with cross signs inside are removed and the antipodal points on the resulting boundary components are identified. 

\begin{figure}
	\begin{center}  \epsfxsize=2.5in \epsfbox{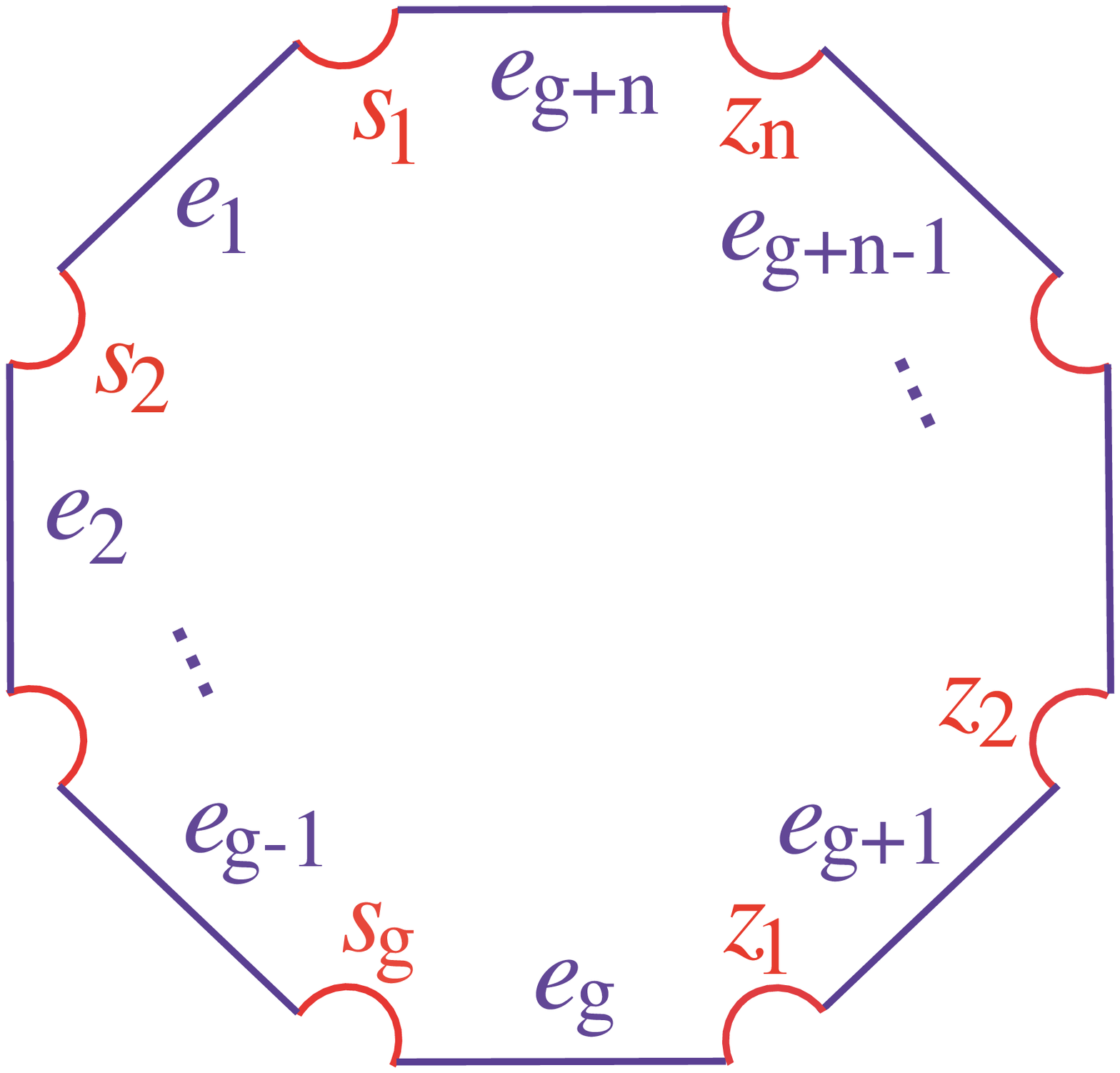}  \hspace{0.5cm}
		 \epsfxsize=2.2in \epsfbox{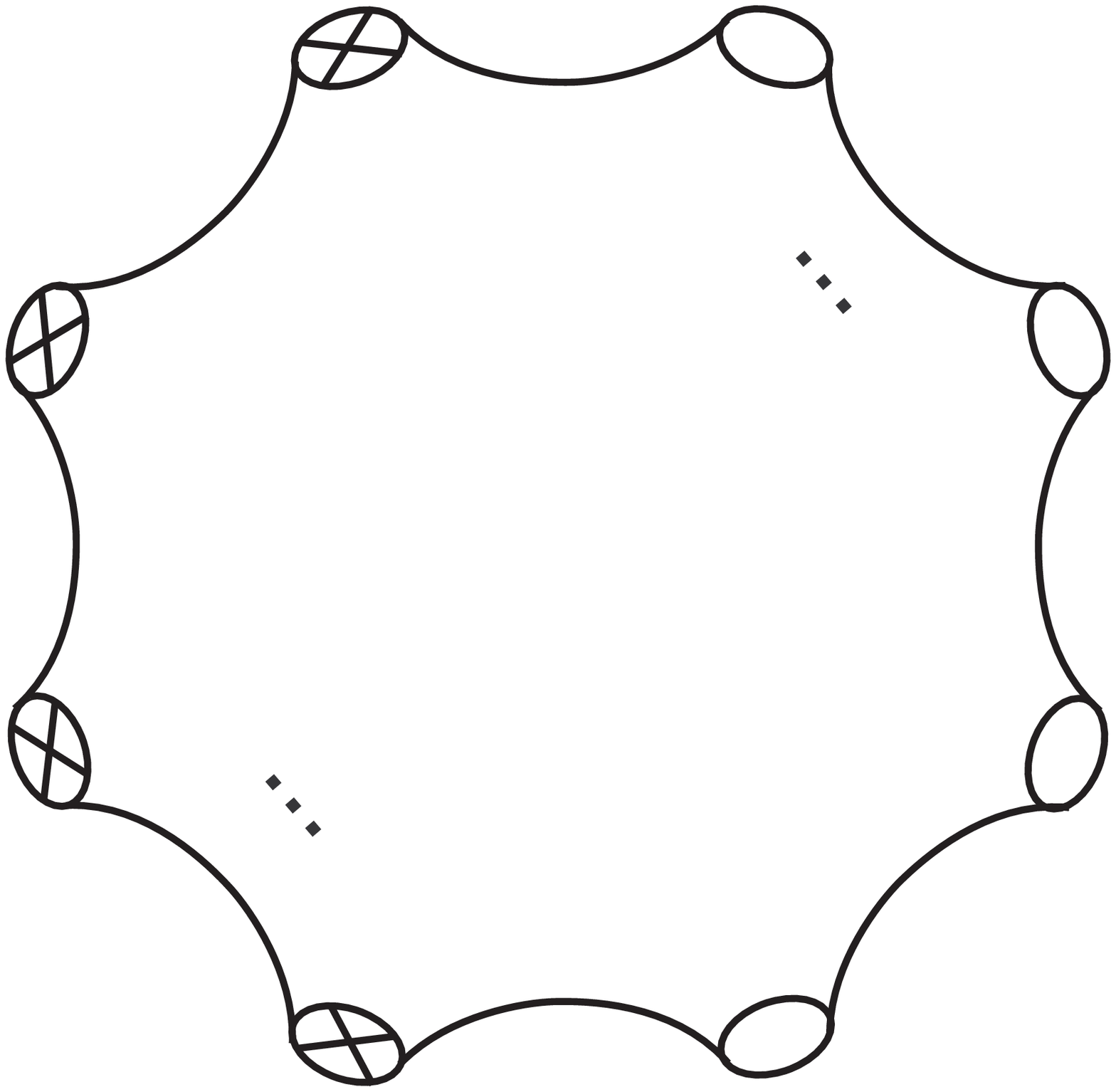} 
 
 		(i)   \hspace{6.2cm} (ii)  
		\caption {Curves in $\mathcal{X}$} \label{fig-2}
	\end{center}
\end{figure}

A subcomplex $\mathcal{A}$ of $\mathcal{C}(N)$ is called rigid if for every locally injective (injective on the star of every vertex) simplical map 
$\tau : \mathcal{A} \rightarrow \mathcal{C}(N)$, there exists a homeomorphism $h : N \rightarrow N$ such that $H(\alpha) = \tau(\alpha)$
for every vertex $\alpha$ in $\mathcal{A}$ where $H=[h]$. 
The finite rigid subcomplex in $\mathcal{C}(N)$ given by Ilbira-Korkmaz in \cite{IlK} is as follows:
Let $D$ be a disk in the plane whose boundary is a $(2g + 2n)$-gon such that the edges are labeled as $s_1, e_1, s_2, e_2, \cdots, s_g, e_g, z_1, e_{g+1}, z_2,  e_{g+2}, \cdots, e_{g+n-1}, z_n, e_{g+n}$ as shown in Figure \ref{fig-2} (i). 
By glueing two copies of $D$ along $e_i$ for each $i$ we get a sphere $S$ with 
$g+n$ holes. By identifying the antipodal points on each boundary component of $S$ that are formed by $s_i$ for each $i$, we get a nonorientable surface $N$ of genus $g$ with $n$ boundary components. Some arcs on $D$ correspond to simple closed curves on $N$ after gluing. By considering this correspondence some curves are defined as follows:

$\bullet$ For each $i$, let $a_i$ be the 1-sided curve on $N$ that corresponds to the arc $s_i$ on $D$.  

$\bullet$ For $1 \leq i \leq g$, $1 \leq j \leq g + n$  with $j \neq i$ , $j \neq i-1 \ (mod \ (g + n))$, let $a_{i,j}$ be the one-sided curve on $N$ that corresponds to a line segment joining the midpoint of $s_i$ and a point of $e_j$ on $D$. 

$\bullet$ For $1 \leq i, j \leq g+n$ and $|i-j| \geq 2$, let $b_{i,j}$ be the two-sided curve on $N$ that corresponds to a line segment joining a point of $e_i$ to a point of $e_j$ on $D$. Note that $b_{i,j} = b_{j,i}$.

We will denote by $t_{g+j}$ the boundary component of $N$ which corresponds to the arc $z_j$  for each $j$. 

Let $\mathcal{X}_1 = \{a_i, a_{i,j} :  1 \leq i \leq g, 1 \leq j \leq g + n, j \neq i, j \neq i-1 \ (mod \ (g + n))\}$, 

$\mathcal{X}_2 = \{b_{i,j} :  1 \leq i, j \leq g+n, 2 \leq |i-j| \leq g+n-2\}$, and $\mathcal{X} =  \mathcal{X}_1 \cup \mathcal{X}_2$. 

\begin{theorem} \label{rigid} (Ilbira-Korkmaz) Let $N$ be a compact, connected, nonorientable surface of genus $g$ with $n$ boundary components. If $g + n \neq 4$, then $\mathcal{X}$ is a finite rigid set in $\mathcal{C}(N)$.\end{theorem}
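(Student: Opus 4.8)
The result is due to Ilbira--Korkmaz; the plan below is how I would reconstruct it, following the template of Aramayona--Leininger \cite{AL1, AL2} for orientable surfaces and the combinatorial machinery of Atalan--Korkmaz \cite{AK} for nonorientable ones. Fix a locally injective simplicial map $\tau : \mathcal{X} \to \mathcal{C}(N)$; the goal is a homeomorphism $h : N \to N$ with $H(\alpha) = \tau(\alpha)$ for every vertex $\alpha$ of $\mathcal{X}$, where $H = [h]$. The first step is to show that $\tau$ preserves the topological type of curves: one-sided curves go to one-sided curves and two-sided curves go to two-sided curves. Since $\tau$ is injective on the star of each vertex, this follows from the fact that the link of a one-sided vertex is combinatorially distinguishable from that of a two-sided vertex (cutting along a one-sided curve drops the genus by one, while cutting along a two-sided curve either disconnects $N$ or leaves a surface of the same genus with more boundary), together with the recognition lemmas of \cite{AK}. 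Here the hypothesis $g + n \neq 4$ enters for the first time: for the small surfaces with $g+n = 4$, $\mathcal{X}$ is simply too small to carry these distinctions, and indeed $\mathcal{X}$ fails to be rigid there.

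Next I would fix an anchor: a sub-configuration $\mathcal{A} \subset \mathcal{X}$ --- for instance the one-sided core curves $a_1, \dots, a_g$ together with a chain of the curves $b_{i,j}$ that fills the underlying sphere $S$ with $g + n$ holes --- chosen so that $\mathcal{A}$ is determined up to isotopy by the intersection pattern and complementary topology that $\tau$ can detect. By the change-of-coordinates principle on $N$, the restriction $\tau|_{\mathcal{A}}$ then agrees with $\phi_\#$ for some $\phi \in Mod_N$; replacing $\tau$ by $\phi^{-1}_\# \circ \tau$, I may assume that $\tau$ fixes every vertex of $\mathcal{A}$. An alternative normalization uses the orientation double cover $\widetilde{N} \to N$: the curves of $\mathcal{X}$ lift to a configuration in $\mathcal{C}(\widetilde N)$ containing a copy of the Aramayona--Leininger finite rigid set, $\tau$ lifts to a map commuting with the deck involution, and rigidity upstairs yields an equivariant homeomorphism that descends to $N$.

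The crux --- and the step I expect to be the main obstacle --- is propagation: showing that, after this normalization, $\tau$ fixes every remaining vertex of $\mathcal{X}$. For each curve $c \in \mathcal{X} \setminus \mathcal{A}$ one needs a uniqueness statement: $c$ is the only vertex of $\mathcal{C}(N)$ of its topological type whose geometric intersection numbers with the curves of $\mathcal{A}$ equal those of $c$; granting this, $\tau(c)$ has the same intersection pattern with the fixed $\mathcal{A}$ as $c$, so $\tau(c) = c$. Proving such a lemma means cutting $N$ along $\mathcal{A}$, analysing the resulting (possibly nonorientable, possibly disconnected) pieces, and showing that the arc system representing $c$ is pinned down in each piece. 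The one-sided curves $a_{i,j}$ are the delicate case, since a one-sided arc in a M\"obius-band component is less constrained than a two-sided arc, so $\mathcal{A}$ must be rich enough --- and this is why the full set $\mathcal{X}$, not just its two-sided part, is needed. Once all vertices of $\mathcal{X}$ are shown to be fixed by the normalized $\tau$, the original map is induced by $\phi$; finiteness of $\mathcal{X}$ is immediate from its definition, and one finishes by handling a few borderline small surfaces directly, where the generic propagation argument is tightest.
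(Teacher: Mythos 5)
This theorem is not proved in the paper at all: it is quoted as an external result of Ilbira--Korkmaz (\cite{IlK}), so there is no internal proof to compare your outline against. Judged on its own terms, what you have written is a strategy sketch rather than a proof, and the places you flag as "the main obstacle" are exactly the places where the real content lies. The most serious gap is at the start: a rigid set only assumes $\tau$ is \emph{locally injective} (injective on stars), not superinjective, so nothing gives you for free that $\tau$ preserves intersection or non-disjointness, and your type-preservation argument via "the link of a one-sided vertex is combinatorially distinguishable" does not apply --- $\tau$ maps the finite complex $\mathcal{X}$ into $\mathcal{C}(N)$, so you control only the image of the link of a vertex \emph{inside $\mathcal{X}$}, not the link of the image vertex in $\mathcal{C}(N)$. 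Recovering intersection data and topological type from a merely locally injective map on a finite set is precisely what forces the detailed analysis of maximal simplices and small configurations (pentagons, chains, Klein-bottle and M\"obius pieces) in the actual Ilbira--Korkmaz argument; it cannot be cited away through the recognition lemmas of \cite{AK}, which concern the full complex.

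The later steps inherit this problem. The normalization "by change of coordinates $\tau|_{\mathcal{A}}$ agrees with $\phi_\#$" presupposes that $\tau(\mathcal{A})$ is a configuration of the same topological type as $\mathcal{A}$, which is again the thing to be proved; the propagation step is stated as a conditional ("granting this, $\tau(c)=c$") with the uniqueness lemmas unproved, and these uniqueness statements require knowing the intersection numbers of $\tau(c)$ with the anchor curves, which a locally injective map does not a priori preserve. The orientation double cover alternative is also not justified: a simplicial map of $\mathcal{C}(N)$ does not obviously lift to an equivariant simplicial map of $\mathcal{C}(\widetilde N)$, since the lifting behavior of curves (one or two components, connectedness of preimages) is itself type-dependent. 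So the proposal identifies a reasonable skeleton but leaves the essential steps --- type and intersection recognition for locally injective maps on the finite set, and the per-curve uniqueness lemmas after cutting --- unestablished.
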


\begin{figure} \begin{center}  \epsfxsize=2in \epsfbox{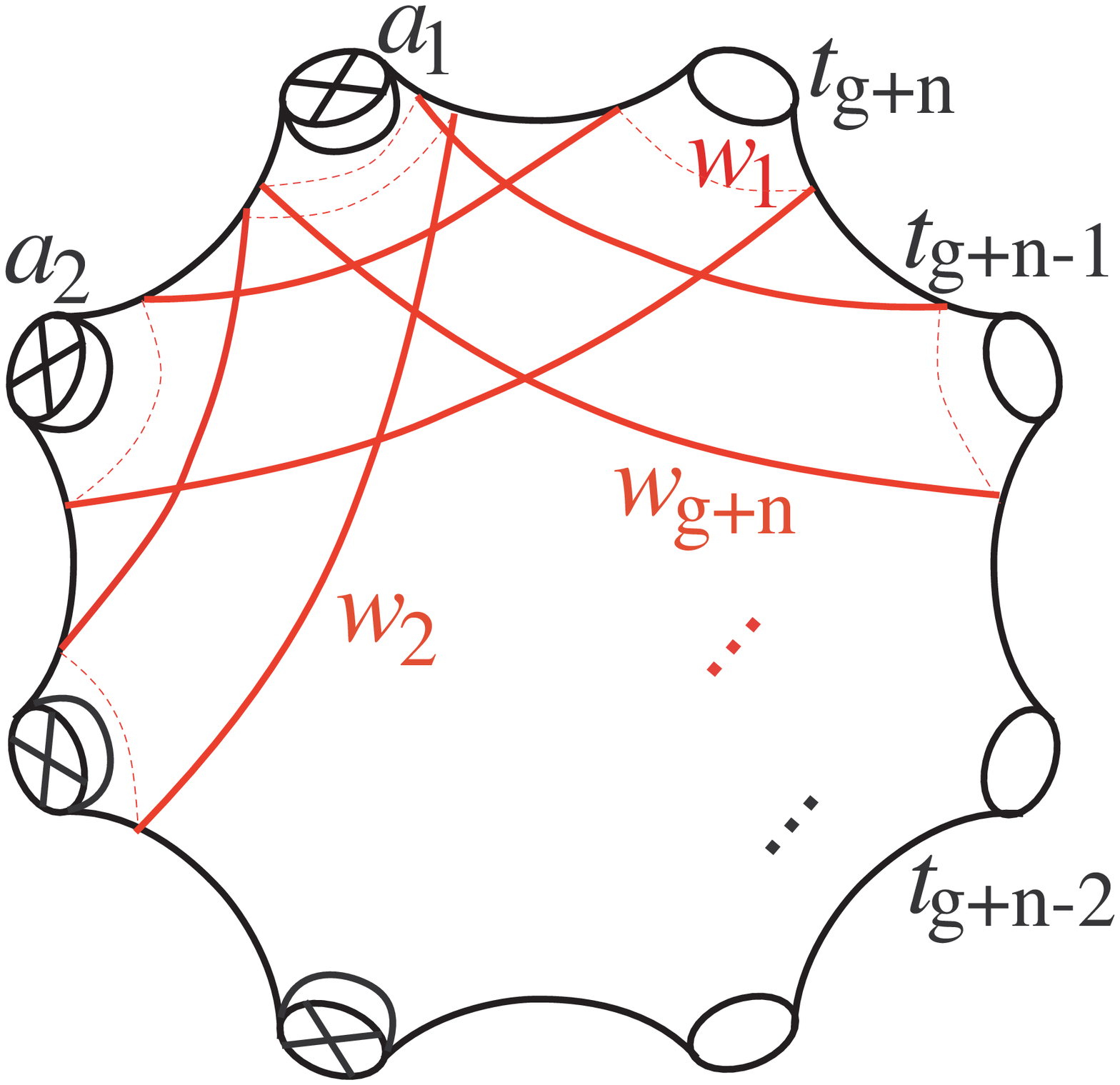} \hspace{0.2cm} \epsfxsize=2in \epsfbox{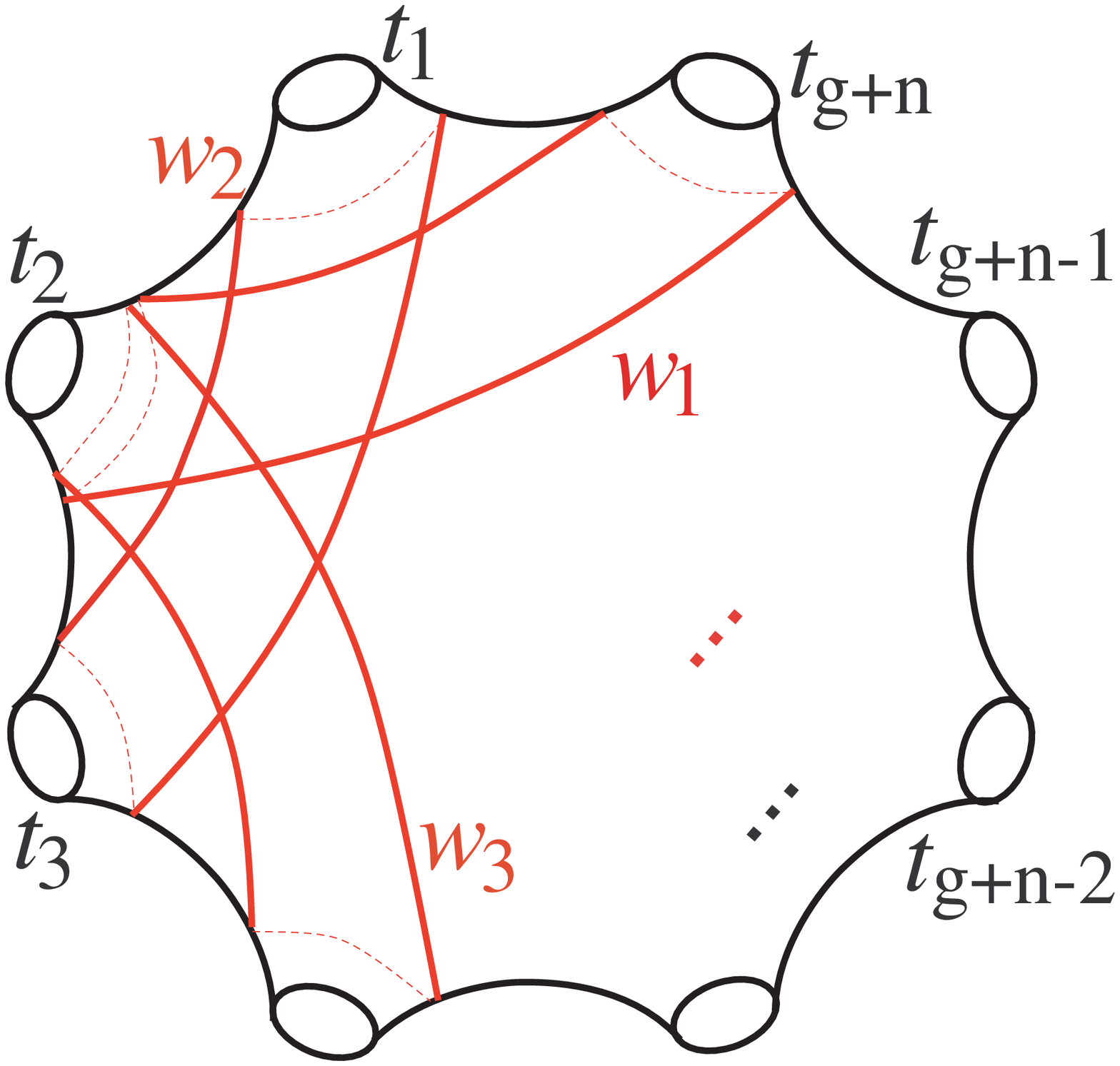}
		
		(i)   \hspace{4.7cm} (ii)  
		
		\epsfxsize=2in \epsfbox{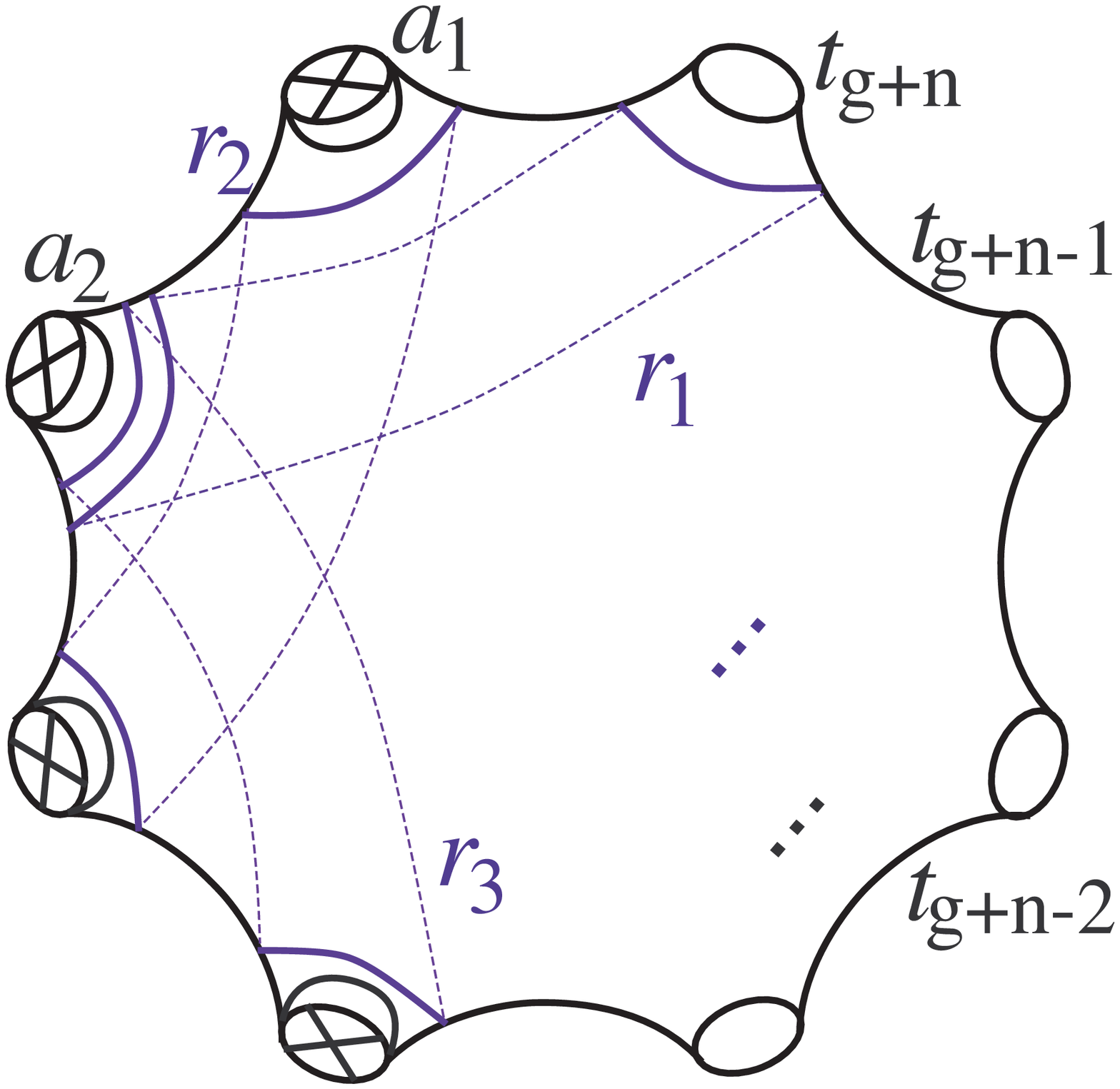} \hspace{0.2cm} 
		\epsfxsize=2in \epsfbox{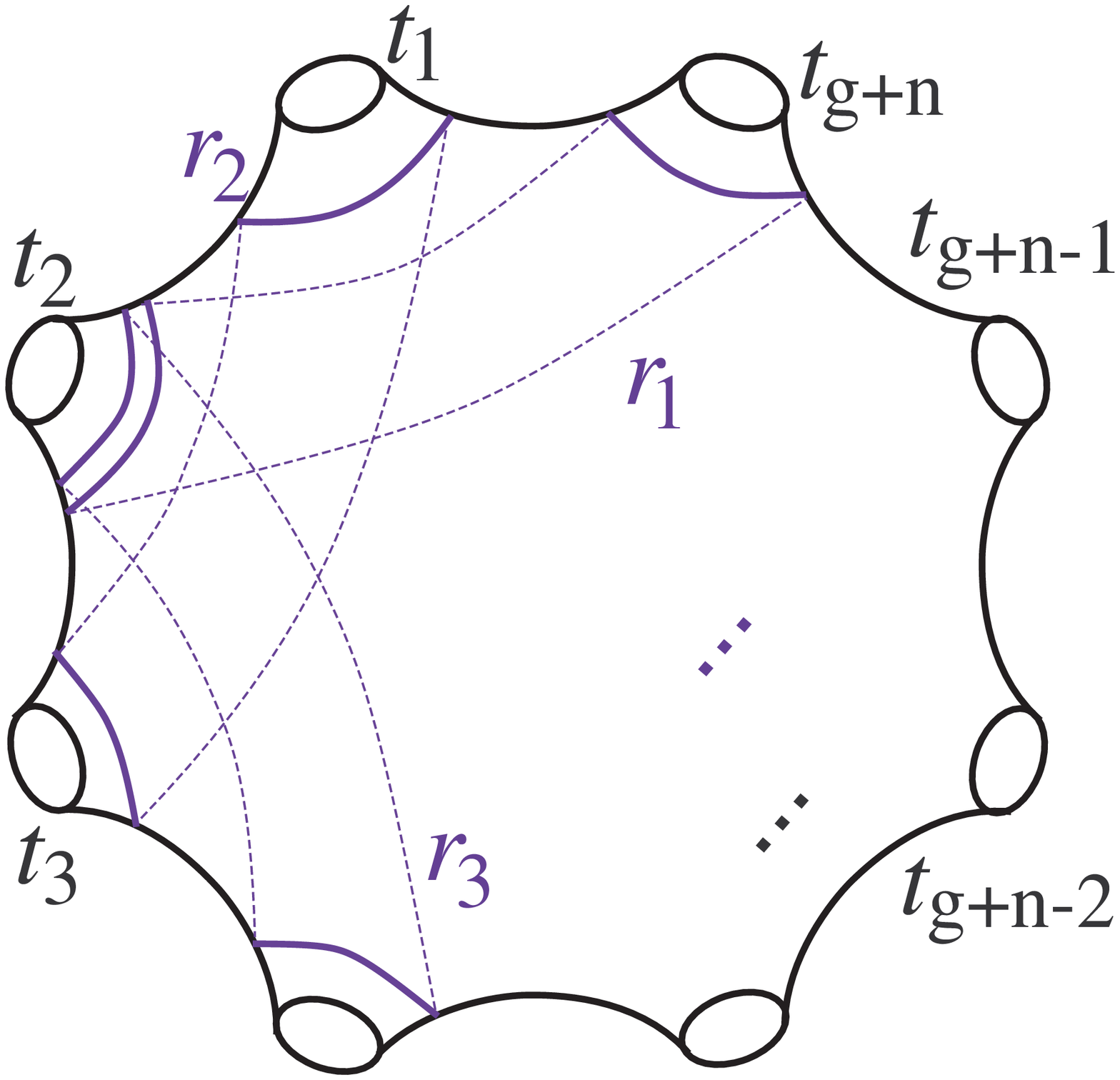}
		
			(iii)   \hspace{4.7cm} (iv)  
			
		 	\epsfxsize=2in \epsfbox{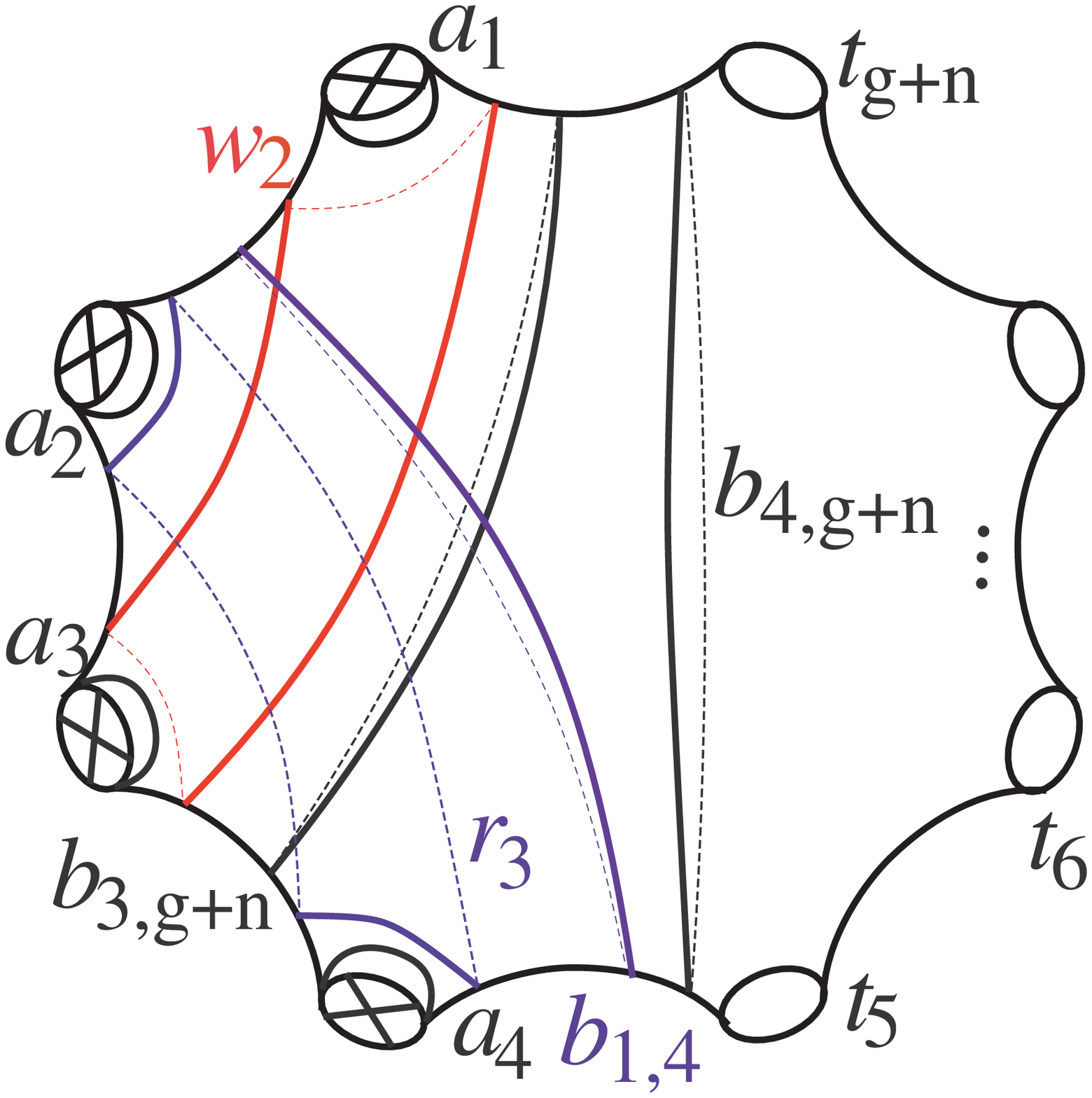} \hspace{0.2cm} 
		 \epsfxsize=2in \epsfbox{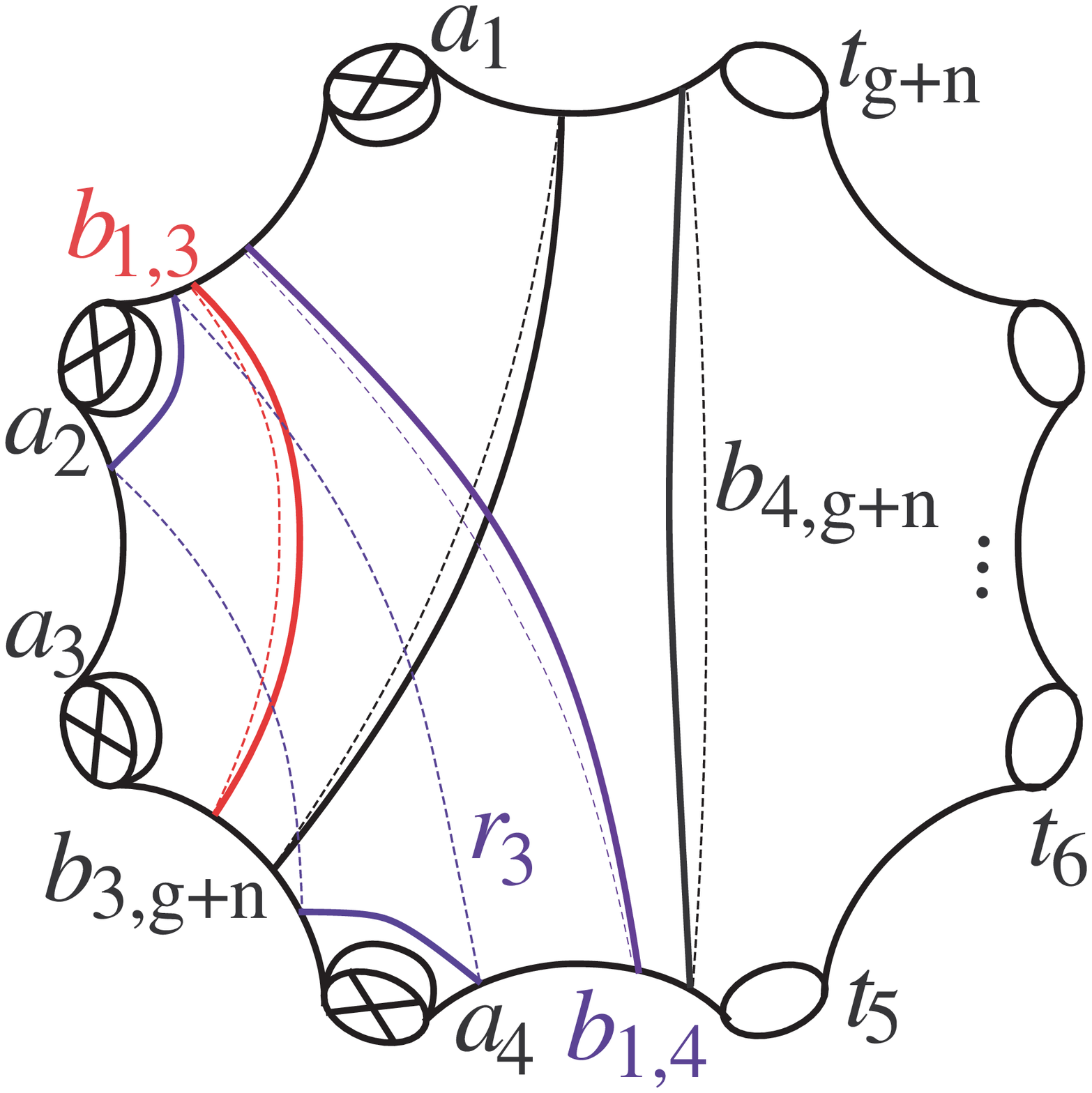}
		 
		 (v)   \hspace{4.7cm} (vi)  
		 
		 \epsfxsize=2in \epsfbox{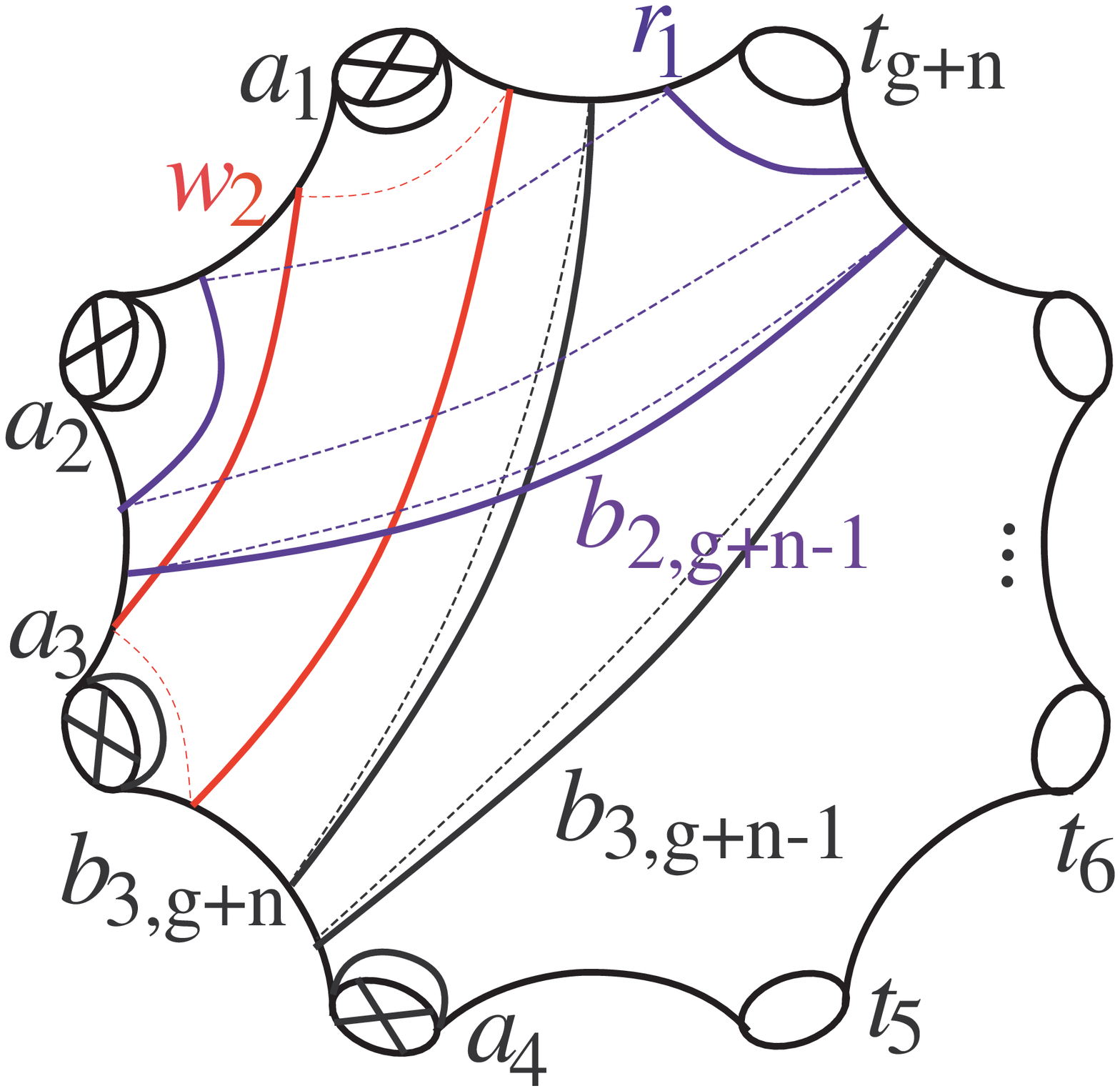} \hspace{0.2cm} 
		 \epsfxsize=2in \epsfbox{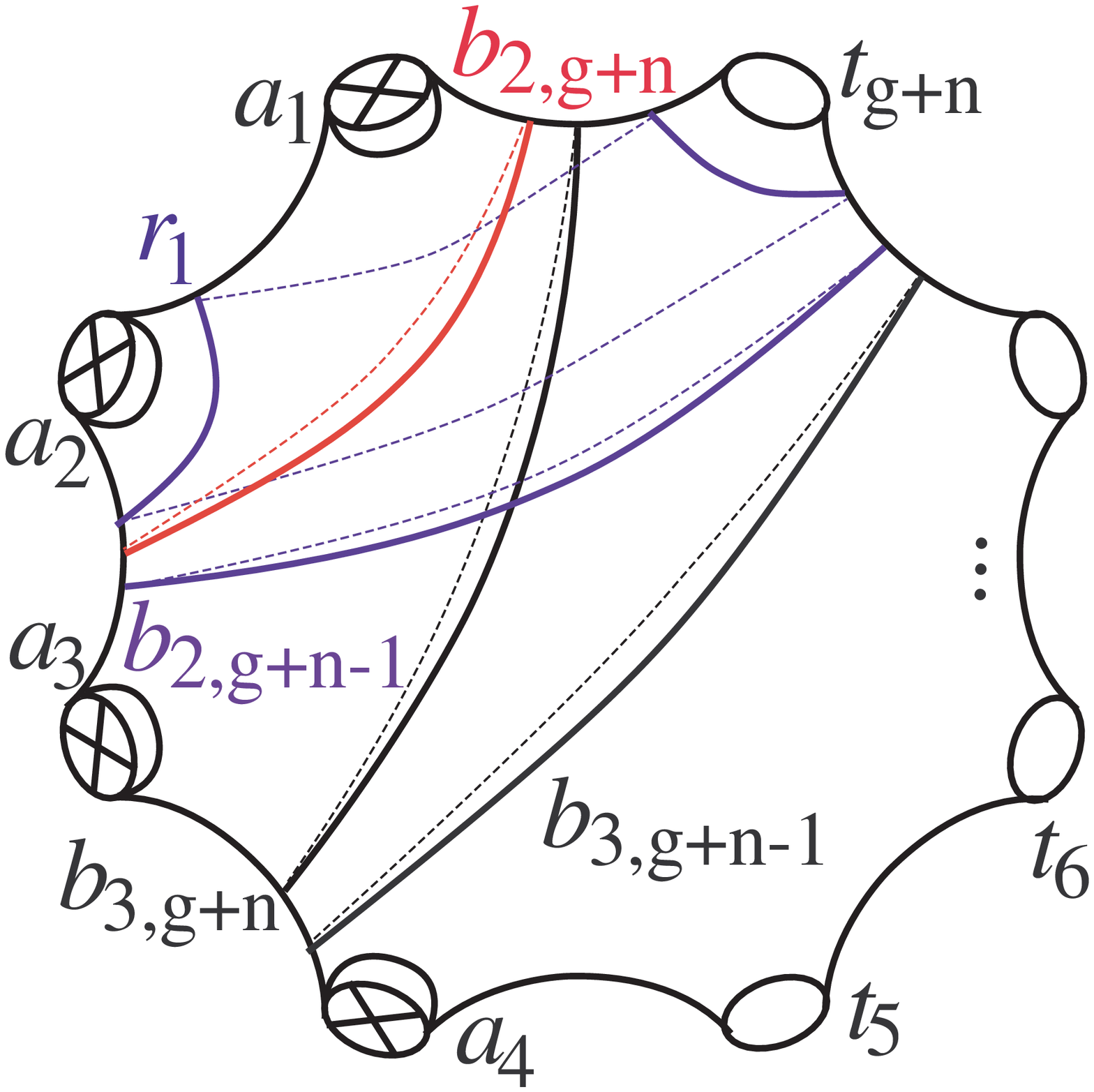}
		 
		 (vii)   \hspace{4.7cm} (viii)  
		\caption {Curves in $\mathcal{C}_2$} \label{fig-s11}
	\end{center}
\end{figure}

If $f: N \rightarrow N$ is a homeomorphism, then we will use the same notation for $f$ and $[f]$. We will also use the same notation for a subset $\mathcal{B}$  and the subcomplex of $\mathcal{C}(N)$ that is spanned by $\mathcal{B}$. Let $\mathcal{C}_1 = \mathcal{X}$ where $\mathcal{X}$ is defined as above. 

\begin{lemma} \label{inj} Suppose that $g+n \geq 5$. A superinjective simplicial  map $\lambda: \mathcal{C}_1 \rightarrow \mathcal{C}(N)$ is injective.\end{lemma}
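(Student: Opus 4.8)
The plan is to argue by contradiction, using only two ingredients: the fact (recalled in Section~2) that a superinjective simplicial map sends $1$-sided curves to $1$-sided curves and $2$-sided curves to $2$-sided curves, and the combinatorics of the explicit curves $a_i,a_{i,j},b_{i,j}$ making up $\mathcal{X}$. So suppose $\lambda$ is not injective and choose distinct vertices $\alpha,\beta$ of $\mathcal{C}_1$ with $\lambda(\alpha)=\lambda(\beta)=:\gamma$. First, $\alpha$, $\beta$ and $\gamma$ are all $1$-sided or all $2$-sided, since $\lambda$ preserves sidedness. Moreover, if $i(\alpha,\beta)\neq 0$ then superinjectivity forces $i(\gamma,\gamma)=i(\lambda(\alpha),\lambda(\beta))\neq 0$, so $\gamma$, and hence $\alpha$ and $\beta$, are $1$-sided; equivalently, two distinct $2$-sided vertices that are collapsed by $\lambda$ must be disjoint. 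This is the only point where the superinjectivity hypothesis is used directly; everything else is geometry of $\mathcal{X}$.

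\textbf{The separation principle.} Suppose we can find a vertex $u$ of $\mathcal{C}_1$ with $\lambda(u)\neq\gamma$ that is disjoint from one of $\alpha,\beta$ and crosses the other, say $i(u,\alpha)=0$ and $i(u,\beta)\neq 0$. Since $\lambda(u)\neq\gamma=\lambda(\alpha)$ we have $u\neq\alpha$, so $\{u,\alpha\}$ spans an edge of $\mathcal{C}_1$, hence $\{\lambda(u),\gamma\}=\{\lambda(u),\lambda(\alpha)\}$ spans a simplex of $\mathcal{C}(N)$; as $\lambda(u)\neq\gamma$ this gives $i(\lambda(u),\gamma)=0$. But $i(u,\beta)\neq 0$ gives $i(\lambda(u),\gamma)=i(\lambda(u),\lambda(\beta))\neq 0$, a contradiction. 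So it suffices to produce such a $u$, and I claim one may always take $u$ to be $2$-sided: then $\lambda(u)$ is $2$-sided, and if $\lambda(u)$ equalled $\gamma$ the inequality $i(\lambda(u),\lambda(\beta))\neq 0$ would force $i(\gamma,\gamma)\neq 0$, i.e. $\gamma$ $1$-sided, contradicting that $\gamma=\lambda(u)$ is $2$-sided. Thus a $2$-sided separating vertex automatically has $\lambda(u)\neq\gamma$.

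\textbf{Reduction to a combinatorial statement.} Everything now comes down to the following assertion about the finite set $\mathcal{X}$: for any two distinct vertices $\alpha,\beta$ of $\mathcal{X}$ there is a curve $b_{i,j}\in\mathcal{X}_2$ that is disjoint from one of $\alpha,\beta$ and meets the other. I would verify this from the polygon model of Section~3 (Figure~\ref{fig-2}): each of $a_i$, $a_{i,j}$, $b_{i,j}$ is represented by a chord of the $(2g+2n)$-gon $D$, and whether two of them intersect on $N$ is read off from the cyclic order of their endpoints on $\partial D$ together with the antipodal identifications on the $s_i$-arcs. Running through the possible type-pairs $(a_i,a_k)$, $(a_i,a_{k,l})$, $(a_{i,j},a_{k,l})$, $(a_i,b_{k,l})$, $(a_{i,j},b_{k,l})$, $(b_{i,j},b_{k,l})$, one checks that distinct vertices always have distinct sets of $b$'s disjoint from them; the hypothesis $g+n\geq 5$ is precisely what makes $\mathcal{X}_2$ rich enough for this in every case (in particular to separate two $1$-sided curves $a_{i,j},a_{i,j'}$ sharing the same end $s_i$, or $a_i$ from an $a_{i,j}$).

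\textbf{Main obstacle.} The real content is this last, finite but fiddly, verification — that the $2$-sided curves already present in $\mathcal{X}$ ``detect'' every vertex of $\mathcal{X}$. If in some sporadic configuration a $2$-sided separator failed to exist, I would instead apply the separation principle with a $1$-sided $u$ and then rule out a collapse-set $\lambda^{-1}(\gamma)\cap\mathcal{X}$ of cardinality $\geq 2$ by iterating the same argument, which is again a finite check on the model; but I expect the clean statement to hold outright for all $g+n\geq 5$, in which case the proof is complete as above.
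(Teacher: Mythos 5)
Your overall strategy is the same as the paper's: the proof of Lemma \ref{inj} given in the paper is exactly the separation argument, choosing for each pair of distinct vertices $x,y\in\mathcal{C}_1$ a third curve $z\in\mathcal{C}_1$ with $i([x],[z])=0$ and $i([y],[z])\neq 0$ and invoking simpliciality together with superinjectivity. Your ``separation principle'' is stated correctly, and your observation that a $2$-sided separator automatically satisfies $\lambda(u)\neq\gamma$ is a careful (indeed more careful than the paper's) treatment of the degenerate case $\lambda(u)=\lambda(\alpha)$.

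The gap is in the combinatorial claim carrying the main branch of your argument: it is \emph{not} true that any two distinct vertices of $\mathcal{X}$ are separated by some $b_{i,j}\in\mathcal{X}_2$, nor that distinct vertices have distinct sets of $b$'s disjoint from them. Each $b_{i,j}$ is represented by a chord of $D$ with endpoints on $e_i$ and $e_j$, so it avoids every arc $s_m$ and hence is disjoint from every core $a_m$. Consequently, for a pair of cores $a_i,a_k$ with $i\neq k$ (which exists whenever $g\geq 2$), no curve of $\mathcal{X}_2$ meets either of them, and your $2$-sided separation principle cannot distinguish them; this is a systematic failure, not a sporadic one, so the fallback you only gesture at is genuinely required. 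It does work, but it needs to be written out as a two-step argument: take the $1$-sided separator $u=a_{k,l}$, which meets $a_k$ and is disjoint from $a_i$; if $\lambda(u)=\gamma$ (now possible, since $\gamma=\lambda(a_i)$ is $1$-sided), pick a $2$-sided $b_{p,q}$ meeting $a_{k,l}$ (such a curve exists in $\mathcal{X}_2$ for $g+n\geq 5$) and automatically disjoint from $a_i$; since $\lambda(b_{p,q})$ is $2$-sided it differs from $\gamma$, and the edge $\{b_{p,q},a_i\}$ forces $i(\lambda(b_{p,q}),\gamma)=0$ while $i(b_{p,q},a_{k,l})\neq 0$ forces $i(\lambda(b_{p,q}),\gamma)\neq 0$, a contradiction; hence $\lambda(u)\neq\gamma$ and your original principle applies. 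With this branch carried out (and the remaining finite check that all other pairs are separated by some $b_{p,q}$, which does hold since every $a_{i,j}$ and every $b_{k,l}$ is crossed by some curve of $\mathcal{X}_2$ when $g+n\geq 5$), your proof closes; as proposed, the expectation that ``the clean statement holds outright'' is false and leaves the argument incomplete.
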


\begin{proof} Let $x$ and $y$ be distinct elements in $\mathcal{C}_1$. We can find a curve $z \in \mathcal{C}_1$ such that $i([x], [z])=0$ and $i([y], [z]) \neq 0$. Since $\lambda$ is superinjective, we have $i(\lambda([x]), \lambda([z]))=0$ and $i(\lambda([y]), \lambda([z])) \neq 0$. So, $\lambda([x]) \neq \lambda([y])$. Hence, $\lambda$ is injective.\end{proof}

\begin{lemma} \label{C_1-super} If $g+n \geq 5$, then $\mathcal{C}_1$ is a finite superrigid set.\end{lemma}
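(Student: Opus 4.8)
The goal is to show that every superinjective simplicial map $\lambda : \mathcal{C}_1 \to \mathcal{C}(N)$ is induced by a homeomorphism. By Lemma~\ref{inj}, such a $\lambda$ is injective; in particular it is injective on the star of every vertex, hence \emph{locally injective}. Therefore $\lambda$ satisfies the hypothesis of the Ilbira--Korkmaz rigidity theorem (Theorem~\ref{rigid}) for the set $\mathcal{X} = \mathcal{C}_1$, provided $g+n \neq 4$, which holds since we are assuming $g+n \geq 5$. Applying Theorem~\ref{rigid} directly yields a homeomorphism $h : N \to N$ with $H(\alpha) = \lambda(\alpha)$ for every vertex $\alpha \in \mathcal{C}_1$, where $H = [h]$. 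This is exactly the statement that $\mathcal{C}_1$ is a superrigid set, and $\mathcal{C}_1 = \mathcal{X}$ is finite by construction.

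So the proof is essentially a two-line reduction: superinjective $\Rightarrow$ injective (Lemma~\ref{inj}) $\Rightarrow$ locally injective $\Rightarrow$ apply Theorem~\ref{rigid}. The only point that needs a word of care is the implication ``injective simplicial map $\Rightarrow$ locally injective,'' i.e.\ injective on each star: if $\lambda$ is injective on all of $\mathcal{C}_1$ then a fortiori it is injective on the subcomplex $\mathrm{St}(\alpha) \cap \mathcal{C}_1$ for each vertex $\alpha$, so this is immediate. One should also confirm that the notion of ``locally injective simplicial map $\tau : \mathcal{A} \to \mathcal{C}(N)$'' in Theorem~\ref{rigid} is being applied with $\mathcal{A} = \mathcal{C}_1$, which is legitimate since $\mathcal{C}_1 = \mathcal{X}$ verbatim.

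**Main obstacle.** Honestly, there is no substantial obstacle here: the lemma is a bookkeeping consequence of the already-quoted Theorem~\ref{rigid} together with Lemma~\ref{inj}. The one thing to be slightly attentive to is that the target of a locally injective map in the Ilbira--Korkmaz setup is the full curve complex $\mathcal{C}(N)$ (not some restricted range), which matches the target of our $\lambda$, so the hypotheses align without modification. If one wanted to be thorough, one could remark that superrigidity is a priori a stronger requirement than rigidity for a given set (it quantifies over the smaller class of superinjective maps rather than all locally injective maps), so rigidity of $\mathcal{X}$ immediately upgrades to superrigidity of $\mathcal{C}_1$ once we know superinjective maps on $\mathcal{C}_1$ are locally injective — which is precisely what Lemma~\ref{inj} buys us.

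I would therefore write the proof as: ``Let $\lambda : \mathcal{C}_1 \to \mathcal{C}(N)$ be superinjective. By Lemma~\ref{inj}, $\lambda$ is injective, hence injective on the star of every vertex, i.e.\ locally injective. Since $g+n \geq 5$ implies $g+n \neq 4$, Theorem~\ref{rigid} applies to $\mathcal{X} = \mathcal{C}_1$ and gives a homeomorphism $h : N \to N$ with $H(\alpha) = \lambda(\alpha)$ for all vertices $\alpha \in \mathcal{C}_1$, $H = [h]$. As $\mathcal{C}_1 = \mathcal{X}$ is finite, $\mathcal{C}_1$ is a finite superrigid set.''
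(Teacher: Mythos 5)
Your proposal is correct and follows exactly the paper's own argument, which proves the lemma by combining Theorem \ref{rigid} with Lemma \ref{inj}; your explicit observation that injectivity implies local injectivity (injectivity on stars) is precisely the implicit step in the paper's one-line proof.
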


\begin{proof} The proof follows from Theorem \ref{rigid} and Lemma \ref{inj}.\end{proof}\\

We will enlarge $\mathcal{C}_1$ to other superrigid sets in $\mathcal{C}(N)$. To make the notation simpler we will introduce new curves as follows: Let $w_i, r_i$ for $i= 1, 2, \cdots, g+n$ be as shown in Figure \ref{fig-s11} (i) and (iii). If we cut $N$ along all $a_i$ we get a sphere $S$ with $g+n$ boundary components. Let $t_i$ be the boundary component of $S$ that comes from cutting $N$ along $a_i$ for each $i$. Then $S$ has boundary components $t_1, t_2, \cdots, t_{g+n}$ and our curves $w_i, r_i$ are as shown in Figure \ref{fig-s11} (ii) and (iv) on $S$, (and hence they will have the same configuration in the complement of $\bigcup a_i$).
Let $\mathcal{C}_2 = \{w_1, w_2,  \cdots, w_{g+n}, r_1, r_2, \cdots, r_{g+n}\}$ where the curves are as shown in Figure \ref{fig-s11} (i), (iii), see also Figure \ref{Fig-2-a} (i), (ii). We will use pentagons in the proof of the following lemma. The pentagons are defined as follows: 
Let $\alpha_1, \alpha_2, \alpha_3, \alpha_4, \alpha_5$ be five distinct vertices in $\mathcal{C}(N)$. We will say that $(\alpha_1, \alpha_2, \alpha_3, \alpha_4, \alpha_5)$ is a pentagon in $\mathcal{C}(N)$ if $i(\alpha_i, \alpha_{i+1})=0$ for every $i=1, 2, 3, 4, 5$ and $i(\alpha_i, \alpha_{i+1}) \neq 0$ otherwise, where $\alpha_6=\alpha_1$.  

\begin{lemma} \label{1c} Suppose that $g+n \geq 5$. If $\lambda: \mathcal{C}_1 \cup \mathcal{C}_2 \rightarrow \mathcal{C}(N)$ is a superinjective simplicial map, then
	
	$i(\lambda([w_1]), \lambda([b_{1,g+n-1}])) =2$, $i(\lambda([w_1]), \lambda([b_{2,g+n}])) =2$, 
	
	$i(\lambda([w_2]), \lambda([b_{1,3}])) =2$, $i(\lambda([w_2]), \lambda([b_{2,g+n}])) =2$, 
	
	$i(\lambda([w_3]), \lambda([b_{1,3}])) =2$, $i(\lambda([w_3]), \lambda([b_{2,4}])) =2, \cdots,$ 
	
	$i(\lambda([w_{g+n}]), \lambda([b_{1,g+n-1}])) =2$, $i(\lambda([w_{g+n}]), \lambda([b_{g+n-2,g+n}])) =2$.\end{lemma}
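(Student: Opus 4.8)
The plan is to normalize $\lambda$ on $\mathcal{C}_1$, confine each $\lambda([w_k])$ to a sphere with few holes whose curve complex is completely understood, and then force the intersection numbers using that structure together with an explicit pentagon. The restriction $\lambda|_{\mathcal{C}_1}$ is a superinjective simplicial map into $\mathcal{C}(N)$ and $\mathcal{C}_1$ is superrigid by Lemma \ref{C_1-super}, so there is a homeomorphism $h$ of $N$ with $h(\alpha)=\lambda(\alpha)$ for all $\alpha\in\mathcal{C}_1$; replacing $\lambda$ by $h^{-1}\circ\lambda$ we may assume $\lambda$ is the identity on $\mathcal{C}_1$. Then $\lambda([b_{i,j}])=[b_{i,j}]$ for every $b_{i,j}\in\mathcal{X}_2$, so it is enough to prove $i(\lambda([w_k]),[b_{i,j}])=2$ for each listed pair.

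Next I would locate $\lambda([w_k])$. Each $w_k$ surrounds two of the crosscaps, hence is disjoint from all of $a_1,\dots,a_g$; since these lie in $\mathcal{C}_1$ and $\lambda$ is simplicial, $\lambda([w_k])$ is also disjoint from all of them, so it is carried by the sphere with $g+n$ holes $S$ obtained by cutting $N$ along $a_1\cup\dots\cup a_g$ (with boundary $t_1,\dots,t_{g+n}$, on which $w_k$ surrounds $t_k$ and $t_{k+2}$, indices mod $g+n$). Among the curves of $\mathcal{X}_2$ lying in $S$, those disjoint from $w_k$ cut $S$ into pieces, exactly one of which — call it $R_k$, a four- or five-holed sphere — carries $w_k$ together with the two relevant curves $b_{i,j}$. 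Since $\lambda$ fixes all of these cutting curves, $\lambda([w_k])$ is disjoint from each, so up to isotopy it is either one of them (ruled out exactly as in Lemma \ref{inj}: for each cutting curve $\gamma$ choose $y\in\mathcal{C}_1$ with $i(\gamma,y)=0\ne i(w_k,y)$, or the reverse) or it is carried by a single complementary piece; and because $\lambda([w_k])$ meets the fixed curve $b_{i,j}\subset R_k$, that piece must be $R_k$. Hence $\lambda([w_k])$ is a curve carried by $R_k$.

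Finally I would pin $\lambda([w_k])$ down inside $R_k$. Superinjectivity already gives $i(\lambda([w_k]),[b_{i,j}])\ne 0$ for the two relevant $b_{i,j}$. Reading off Figure \ref{fig-s11}, there is an explicit pentagon through $w_k$ whose image under $\lambda$ is again a pentagon, because $\lambda$ is superinjective and, by the argument of Lemma \ref{inj}, injective on its five vertices (use $i(w_k,\cdot)\ne 0$ on the two non-neighbouring vertices, and a distinguishing curve for the two neighbours). Combined with disjointness from the cutting curves, these conditions restrict $\lambda([w_k])$ to finitely many curves of $R_k$ — in fact to $w_k$ itself or to its image under the evident symmetry of $R_k$ fixing the cutting curves — and for each of these a direct computation in $R_k$ gives $i(\lambda([w_k]),[b_{i,j}])=i([w_k],[b_{i,j}])=2$. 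Running this for $k=1,\dots,g+n$ with the corresponding pairs yields all the stated equalities.

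The step I expect to be the crux is the confinement in the second paragraph: one must verify, uniformly in $k$ and over all surfaces with $g+n\ge 5$, that the curves of $\mathcal{X}_2$ disjoint from $w_k$ really do cut $S$ into one four- or five-holed sphere $R_k$ carrying $w_k$ and both relevant $b_{i,j}$'s, with every other piece unable to carry $\lambda([w_k])$. This needs care at the indices near $k=g$ and $k=g+n$, where the labelling of the one-sided curves $a_i$ (defined only for $i\le g$) changes, and it is where the hypothesis $g+n\ge 5$ enters. Once $R_k$ and the pentagon are identified, the remainder is routine bookkeeping with the curve complexes of spheres with at most five holes and with the pentagon defined just above the statement.
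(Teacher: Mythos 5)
Your overall strategy---normalize $\lambda$ on $\mathcal{C}_1$ via superrigidity, confine $\lambda([w_k])$ to a small sphere subsurface whose boundary curves have known images, and exploit a pentagon---is the same skeleton as the paper's proof, but your final step has a genuine gap. After the confinement, the only conditions you can actually verify against \emph{known} curves are: $\lambda([w_k])$ lies in $R_k$ (disjointness from the fixed cutting curves) and has \emph{nonzero} intersection with the two fixed curves $b_{i,j}$. These do not cut the possibilities down to finitely many isotopy classes: every curve in the Dehn-twist orbits $t_{b_{i,j}}^m([w_k])$ satisfies them, so the assertion that $\lambda([w_k])$ must be $w_k$ or its image under the symmetry of $R_k$ is unjustified. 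The pentagon condition does not repair this, because two of the five vertices of your pentagon are $\mathcal{C}_2$-curves (the $r$-type curves) whose images are equally unknown; ``the image is again a pentagon'' is a relation among unknown curves, not a constraint against fixed ones. Indeed the paper only pins the image down to the two candidates $r_2,w_2$ later, in Lemma \ref{C_2}, and it can do so only \emph{after} knowing the intersection numbers are exactly $2$---which is precisely what the present lemma must supply; assuming the two-element list here makes the argument circular.

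The missing ingredient is the rigidity of pentagons in the curve complex of the five-holed sphere: Korkmaz's Theorem 3.2 in \cite{K1} says any pentagon there is topologically standard, so nonadjacent vertices intersect exactly twice. The paper places the image pentagon (e.g.\ the image of $(b_{3,g+n},b_{1,3},b_{1,4},r_3,w_2)$ in the case $g=4$, $n\geq 2$) inside the five-holed sphere bounded by $\lambda([a_1]),\lambda([a_2]),\lambda([a_3]),\lambda([a_4]),\lambda([b_{4,g+n}])$, which is the $h$-image of the original five-holed sphere, and then reads off $i(\lambda([w_2]),\lambda([b_{1,3}]))=2$ directly, with no need to identify $\lambda([w_k])$ itself. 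If you add this ingredient---and arrange the confinement so that the relevant subsurface is a genuine five-holed sphere, since a pentagon cannot exist in the curve complex of a four-holed sphere, where distinct essential curves always intersect---your argument becomes the paper's proof.
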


\begin{proof} We will give the proof when $g=4$ and $n \geq 2$. The proofs for the other cases are  similar. It is easy to see that $\lambda$ is injective (see Lemma \ref{inj}). Since $\lambda$ is injective, by Theorem \ref{rigid} there exists a homeomorphism $h: N \rightarrow N$ such that $h([x]) = \lambda([x])$ for every $x$ in $\mathcal{C}_1$. 
  
We will first show that $i(\lambda([w_2]), \lambda([b_{1,3}])) =2$. 
Consider the curves given in Figure \ref{fig-s11} (v) and (vi). Let $M$ be the five holed sphere bounded by $a_1, a_2, a_3, a_4, b_{4,g+n}$. The curves 
$(b_{3, g+n}, b_{1,3}, b_{1,4}, r_3, w_2)$ form a pentagon in $C(M)$. For each pair of curves $x, y \in \{b_{3, g+n}, b_{1,3}, b_{1,4}, r_3, w_2\}$ we know that $i([x], [y]) = 0$ if and only if $i(\lambda([x]), \lambda([y])) = 0$ since $\lambda$ is superinjective.  If $b'_{3, g+n}, b'_{1,3}, b'_{1,4}, r'_3, w'_2, a'_1, a'_2, a'_3, a'_4, b'_{4,g+n}$ are minimally intersecting representatives of $\lambda([b_{3, g+n}]), \lambda([b_{1,3}]), \lambda([b_{1,4}]), \lambda([r_3]), \lambda([w_2]), \lambda([a_1]),  \lambda([a_2]),$ $\lambda([a_3]),\lambda([a_4]), \lambda([b_{4,g+n}])$ respectively, then $(b'_{3, g+n}, b'_{1,3}, b'_{1,4}, r'_3, w'_2)$ form a pentagon in the five holed sphere bounded by $a'_1, a'_2, a'_3, a'_4, b'_{4,g+n}$. Since $h([x]) = \lambda([x])$ for every $x$ in $\mathcal{C}_1$ and $(b'_{3, g+n}, b'_{1,3}, b'_{1,4}, r'_3, w'_2)$ form a pentagon in the five holed sphere bounded by $a'_1, a'_2, a'_3, a'_4, b'_{4,g+n}$, by using 
Korkmaz's Theorem 3.2 in \cite{K1}, we get $i(\lambda[w_2]), \lambda([b_{1,3}]) =2$. By using similar arguments we get all the intersections in the statement (to see $i(\lambda[w_2]), \lambda([b_{2,g+n}]) =2$ for example, use the pentagon formed by the curves $w_2, r_1, $ $b_{2,g+n-1}, b_{2,g+n}, b_{3,g+n}$ in the five holed sphere bounded by $a_1, a_2, a_3, t_{g+n},  b_{3,g+n-1}$ given in Figure \ref{fig-s11} (vii) and (viii)).\end{proof}
 
\begin{lemma} \label{1d} Suppose that $g+n \geq 5$. If $\lambda: \mathcal{C}_1 \cup \mathcal{C}_2 \rightarrow \mathcal{C}(N)$ is a superinjective simplicial map, then 
	
	 $i(\lambda([r_1]), \lambda([b_{1,g+n-1}])) =2$, $i(\lambda([r_1]), \lambda([b_{2,g+n}])) =2$, 
	 
	 $i(\lambda([r_2]), \lambda([b_{1,3}])) =2$, $i(\lambda([r_2]), \lambda([b_{2,g+n}])) =2$, 
	 
	 $i(\lambda([r_3]), \lambda([b_{1,3}])) =2$, $i(\lambda([r_3]), \lambda([b_{2,4}])) =2, \cdots,$ 
	 
	 $i(\lambda([r_{g+n}]), \lambda([b_{1,g+n-1}])) =2$, $i(\lambda([r_{g+n}]), \lambda([b_{g+n-2,g+n}])) =2$.   \end{lemma}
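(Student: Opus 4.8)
The plan is to mirror the proof of Lemma \ref{1c} verbatim, replacing the role of each $w_i$ by the corresponding $r_i$. As in the previous lemma, $\lambda$ is injective by Lemma \ref{inj}, so Theorem \ref{rigid} supplies a homeomorphism $h: N \rightarrow N$ with $h([x]) = \lambda([x])$ for every $x \in \mathcal{C}_1$. The only new ingredient needed is, for each desired intersection number, an explicit pentagon in some subsurface bounded by curves of $\mathcal{C}_1$ in which the relevant $r_i$ and $b_{j,k}$ both appear as vertices and which constrains the intersection number to $2$.

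First I would exhibit these pentagons. For $i(\lambda([r_3]), \lambda([b_{1,3}])) = 2$ I would check, using Figure \ref{fig-s11} (v) and (vi) again, that the tuple $(b_{3,g+n}, b_{1,3}, b_{1,4}, w_2, r_3)$ — or whichever cyclic reordering realizes the alternation of zero/nonzero geometric intersection — is a pentagon in $\mathcal{C}(M)$, where $M$ is still the five-holed sphere bounded by $a_1, a_2, a_3, a_4, b_{4,g+n}$; note $r_3$ already appears as a vertex of the pentagon used in Lemma \ref{1c}, so essentially the same picture works. For $i(\lambda([r_2]), \lambda([b_{2,g+n}])) = 2$ I would use the pentagon $(r_2, w_1, b_{2,g+n-1}, b_{2,g+n}, b_{3,g+n})$ (matching the configuration in Figure \ref{fig-s11} (vii), (viii)) in the five-holed sphere bounded by $a_1, a_2, a_3, t_{g+n}, b_{3,g+n-1}$, and similarly produce one pentagon for each assertion by shifting indices cyclically, exactly as in Lemma \ref{1c}. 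In each case the bounding curves $a_1, a_2, a_3, a_4, \dots$ and $b_{j,k}$ lie in $\mathcal{C}_1$, so $h$ carries them to minimally intersecting representatives of their $\lambda$-images, and since $\lambda$ is superinjective the five listed curves go to a genuine pentagon inside the corresponding five-holed sphere cut out by the $\lambda$-images of the bounding curves.

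Then I would invoke Korkmaz's Theorem 3.2 in \cite{K1}: in a five-holed sphere the only pentagons, up to the action of the mapping class group, are the "standard" ones, and in a standard pentagon each of the two curves appearing two apart in the cyclic order meets its non-neighbor in exactly $2$ points. Applying this to the pentagon containing $r_i$ and $b_{j,k}$ gives $i(\lambda([r_i]), \lambda([b_{j,k}])) = 2$ for the pair in question. Running this over all the pentagons produces every equality in the statement.

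I do not anticipate a real obstacle: the argument is structurally identical to Lemma \ref{1c}, and the curves $r_i$ were in fact already placed in several of the pentagons used there. The only point requiring care is bookkeeping — verifying for each of the $2(g+n)$ claimed equalities that there is a five-holed subsurface with all five boundary curves in $\mathcal{C}_1$ and a pentagon inside it whose vertex set contains the right $r_i$ and $b_{j,k}$ in positions two apart. As in the previous proof, it suffices to spell this out for $g=4$, $n \geq 2$ and remark that the remaining cases are analogous.
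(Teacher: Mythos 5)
Your proposal matches the paper's treatment: the paper disposes of this lemma by noting the proof is the same as that of Lemma \ref{1c}, i.e.\ the same pentagon-in-a-five-holed-sphere argument (via injectivity from Lemma \ref{inj}, Theorem \ref{rigid}, and Korkmaz's Theorem 3.2) with the roles of $w_i$ and $r_i$ exchanged, which is exactly what you do — including the correct observation that some $r_i$ already sit in the pentagons of Lemma \ref{1c} and that the reflection-symmetric pentagons such as $(r_2, w_1, b_{2,g+n-1}, b_{2,g+n}, b_{3,g+n})$ handle the rest.
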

 
\begin{proof} The proof is similar to the proof given in Lemma \ref{1c}.\end{proof}
  
\begin{lemma} \label{C_2} If $g+n \geq 5$, then $\mathcal{C}_1 \cup \mathcal{C}_2$ is a finite superrigid set with trivial pointwise stabilizer.\end{lemma}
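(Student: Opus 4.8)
The plan is to bootstrap from the rigidity of $\mathcal{C}_1$ established in Lemma \ref{C_1-super}. First, $\mathcal{C}_1\cup\mathcal{C}_2$ is finite ($\mathcal{C}_1=\mathcal{X}$ is finite by Theorem \ref{rigid} and $\mathcal{C}_2$ has $2(g+n)$ elements), so only superrigidity and triviality of the pointwise stabilizer need proof. Given a superinjective simplicial map $\lambda:\mathcal{C}_1\cup\mathcal{C}_2\to\mathcal{C}(N)$, I would restrict it to $\mathcal{C}_1$: the restriction is superinjective, hence injective by Lemma \ref{inj}, hence by Theorem \ref{rigid} there is a homeomorphism $h:N\to N$ with $h(x)=\lambda(x)$ for every $x\in\mathcal{C}_1$. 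Since $h$ preserves all geometric intersection numbers, $h^{-1}\circ\lambda$ is again a superinjective simplicial map on $\mathcal{C}_1\cup\mathcal{C}_2$, so after replacing $\lambda$ by $h^{-1}\circ\lambda$ I may assume $\lambda$ fixes every vertex of $\mathcal{C}_1$. It then suffices to show that $\lambda$ fixes every $[w_i]$ and every $[r_i]$, for then $\lambda=\mathrm{id}$ on $\mathcal{C}_1\cup\mathcal{C}_2$ and, undoing the reduction, the original $\lambda$ agrees with $h$ there.

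Fix $i$. Since $w_i$ is disjoint from $a_j$ for every $j$ and $\lambda$ is simplicial and fixes each $[a_j]$, the curve $\lambda([w_i])$ is disjoint from every $[a_j]$ and hence can be isotoped into the $(g+n)$-holed sphere $S=N\setminus\bigcup_j a_j$; inspecting Figure \ref{fig-s11}, $w_i$ moreover lies in a four-holed or five-holed sphere subsurface $M_i\subset S$ all of whose boundary curves lie in $\mathcal{C}_1$ (for $w_2$ one may take the five-holed sphere bounded by $a_1,a_2,a_3,a_4,b_{4,g+n}$ from the proof of Lemma \ref{1c}), and since $\lambda$ fixes $\partial M_i$ the curve $\lambda([w_i])$ lies in $M_i$; the same holds for $r_i$. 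Because $\lambda$ is superinjective and fixes $\mathcal{C}_1$, the curve $\lambda([w_i])$ has exactly the same pattern of zero and nonzero intersections with the vertices of $\mathcal{C}_1$ as $[w_i]$ does, and by Lemma \ref{1c} it has intersection number $2$ with the two distinguished curves $b_{j,k}$ associated to $w_i$, the same as $[w_i]$. I would then argue that $[w_i]$ is the unique vertex of $\mathcal{C}(M_i)$ with this intersection profile, via the pentagon description of Lemmas \ref{1c} and \ref{1d} combined with Korkmaz's Theorem 3.2 in \cite{K1} applied in the four-holed sphere obtained from $M_i$ by capping off an appropriate boundary component, where the curve complex is the Farey graph. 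Hence $\lambda([w_i])=[w_i]$, and symmetrically $\lambda([r_i])=[r_i]$ using Lemma \ref{1d}. Running this for all $i$, and for the various $(g,n)$ with $g+n\ge 5$ exactly as in the proof of Lemma \ref{1c}, completes the proof of superrigidity. I expect this uniqueness step to be the main obstacle: one must check case by case that in each $M_i$ the intersection-number-$2$ data genuinely singles out $w_i$ (respectively $r_i$) from the other candidate curve with the same disjointness pattern, and that the pentagons can be processed in an order in which enough vertices are already pinned down.

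Finally, for the pointwise stabilizer, let $f\in Mod_N$ fix the isotopy class of every curve in $\mathcal{C}_1\cup\mathcal{C}_2$. Since $\mathcal{C}_1$ contains a pants decomposition $P$ of $N$, $f$ fixes every curve of $P$, so $f$ is isotopic to a composition of Dehn (and half-)twists along curves of $P$ and of crosscap slides supported near the one-sided curves $a_i$; boundary Dehn twists can be ignored, since $Mod_N$ does not fix $\partial N$ pointwise and such twists act trivially on every curve. But $f$ fixes each one-sided curve $a_i$ and each one-sided curve $a_{i,j}$, which forbids the crosscap slides, and it fixes the curves $w_i,r_i$ of $\mathcal{C}_2$, each of which meets curves of $P$ essentially, forcing all twisting exponents to vanish. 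Therefore $f=\mathrm{id}$, so $\mathcal{C}_1\cup\mathcal{C}_2$ has trivial pointwise stabilizer.
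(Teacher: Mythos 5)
Your overall strategy (restrict to $\mathcal{C}_1$, get $h$ from Theorem \ref{rigid}, then pin down the curves of $\mathcal{C}_2$ by their intersection pattern with $\mathcal{C}_1$) runs into a genuine obstruction at exactly the step you flag as ``the main obstacle,'' and that obstruction cannot be resolved the way you propose. After normalizing so that $\lambda$ fixes $\mathcal{C}_1$ pointwise, it is \emph{not} true that $\lambda$ must fix each $[w_i]$ and $[r_i]$: there is an order-two homeomorphism $\phi$ of $N$ (the reflection through the plane of the figure) which fixes the isotopy class of every curve in $\mathcal{C}_1$ but interchanges $w_i$ and $r_i$ for every $i$. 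The map induced by $\phi$ is a superinjective simplicial map of $\mathcal{C}_1\cup\mathcal{C}_2$ fixing $\mathcal{C}_1$ pointwise and fixing no $[w_i]$, so your intended conclusion ``$\lambda=\mathrm{id}$ on $\mathcal{C}_1\cup\mathcal{C}_2$, hence the original $\lambda$ agrees with $h$'' is false as stated; the correct conclusion is that $\lambda$ agrees with $h$ or with $h\circ\phi$. Moreover, since $\phi$ preserves geometric intersection numbers, $[w_i]$ and $[r_i]$ have \emph{identical} intersection profiles (zero/nonzero and even the exact numbers from Lemmas \ref{1c} and \ref{1d}) with every vertex of $\mathcal{C}_1$, so no amount of case-checking of intersection data against $\mathcal{C}_1$ alone — pentagons, Farey-graph uniqueness in capped-off holed spheres, etc. — can single out $w_i$ from $r_i$. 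The paper's proof handles exactly this: it observes that $w_2,r_2$ are the only two candidates with the stated disjointness and intersection-number-$2$ properties, composes with $\phi_{*}$ if necessary to arrange $\lambda([w_2])=h([w_2])$, uses injectivity to force $\lambda([r_2])=h([r_2])$, and only then propagates to all other $w_i,r_i$ using disjointness relations \emph{within} $\mathcal{C}_2$ (which your argument never invokes, and which are needed to rule out $\lambda$ swapping $w_i\leftrightarrow r_i$ for some indices and not others).

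Your stabilizer argument also has a flaw that is related to the same oversight: the pointwise stabilizer of a pants decomposition (even of all of $\mathcal{C}_1$) is not contained in the group generated by twists along the pants curves and crosscap slides — the reflection $\phi$ fixes every curve of $\mathcal{C}_1$ and is none of these, and indeed the pointwise stabilizer of $\mathcal{C}_1$ has order two. The final statement is still true because $\phi$ does not fix the $w_i$, but the correct argument is the paper's: the stabilizer of $\mathcal{C}_1$ is generated by $\phi$, and $\phi$ moves $w_i$ to $r_i$, so the stabilizer of $\mathcal{C}_1\cup\mathcal{C}_2$ is trivial.
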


\begin{proof} Let $\lambda: \mathcal{C}_1 \cup \mathcal{C}_2 \rightarrow \mathcal{C}(N)$ be a superinjective simplicial map. It is easy to see that $\lambda$ is injective (see the proof of Lemma \ref{inj}). Since $\lambda$ is injective, by Theorem \ref{rigid}, there exists a homeomorphism $h: N \rightarrow N$ such that $h([x]) = \lambda([x])$ for every $x$ in $\mathcal{C}_1$. We will show that $h([x]) = \lambda([x])$ for every $x$ in $\mathcal{C}_2$ as well.
	
There exists a homeomorphism $\phi : N \rightarrow N$ of order two (reflection through the plane of the paper in Figure \ref{fig-s11} (i)) such that the map $\phi_{*}$ induced by $\phi$ on $\mathcal{C}_1$ sends the isotopy class of each curve in $\mathcal{C}_1$ to itself and switches $r_i$ and $w_i$. By Lemma \ref{1c} and Lemma \ref{1d} we know that $i(\lambda([w_2]), \lambda([b_{1,3}])) =2$, $i(\lambda([w_2]), \lambda([b_{2,g+n}])) =2$, and $i(\lambda([r_2]), \lambda([b_{1,3}])) =2$, $i(\lambda([r_2]), \lambda([b_{2,g+n}])) =2$. The curves $r_2$ and $w_2$ are the only nontrivial curves up to isotopy disjoint from $a_1, a_2, a_3, b_{3,g+n}$ and intersects each of $b_{1,3}, b_{2,g+n}$ nontrivially twice in the four holed sphere cut by $a_1, a_2, a_3, b_{3,g+n}$. 
Since we know that $h([x]) = \lambda([x])$ for all the curves $a_1, a_2, a_3, b_{3,g+n}$ and $\lambda$ preserves these properties, by replacing $\lambda$ with $\lambda \circ \phi_{*}$ if necessary, we can assume that we have $h([w_2]) = \lambda([w_2])$. This implies that $h([r_2]) = \lambda([r_2])$ since $\lambda$ is injective. Then by using Lemma \ref{1c} and Lemma \ref{1d} and $\lambda$ preserves disjointness, we see that $h([w_i]) = \lambda([w_i])$ and $h([r_i]) = \lambda([r_i])$ for each $i= 1, 2, \cdots, g+n$. So, $\mathcal{C}_1 \cup \mathcal{C}_2$ is a finite superrigid set. It is easy to see that the pointwise stabilizer of $\mathcal{C}_1$ has order two, generated by the reflection through the plane of the paper, and the pointwise stabilizer of $\mathcal{C}_1 \cup \mathcal{C}_2$ is trivial.\end{proof}\\

\begin{figure}[t]
	\begin{center} 
		\epsfxsize=2.3in \epsfbox{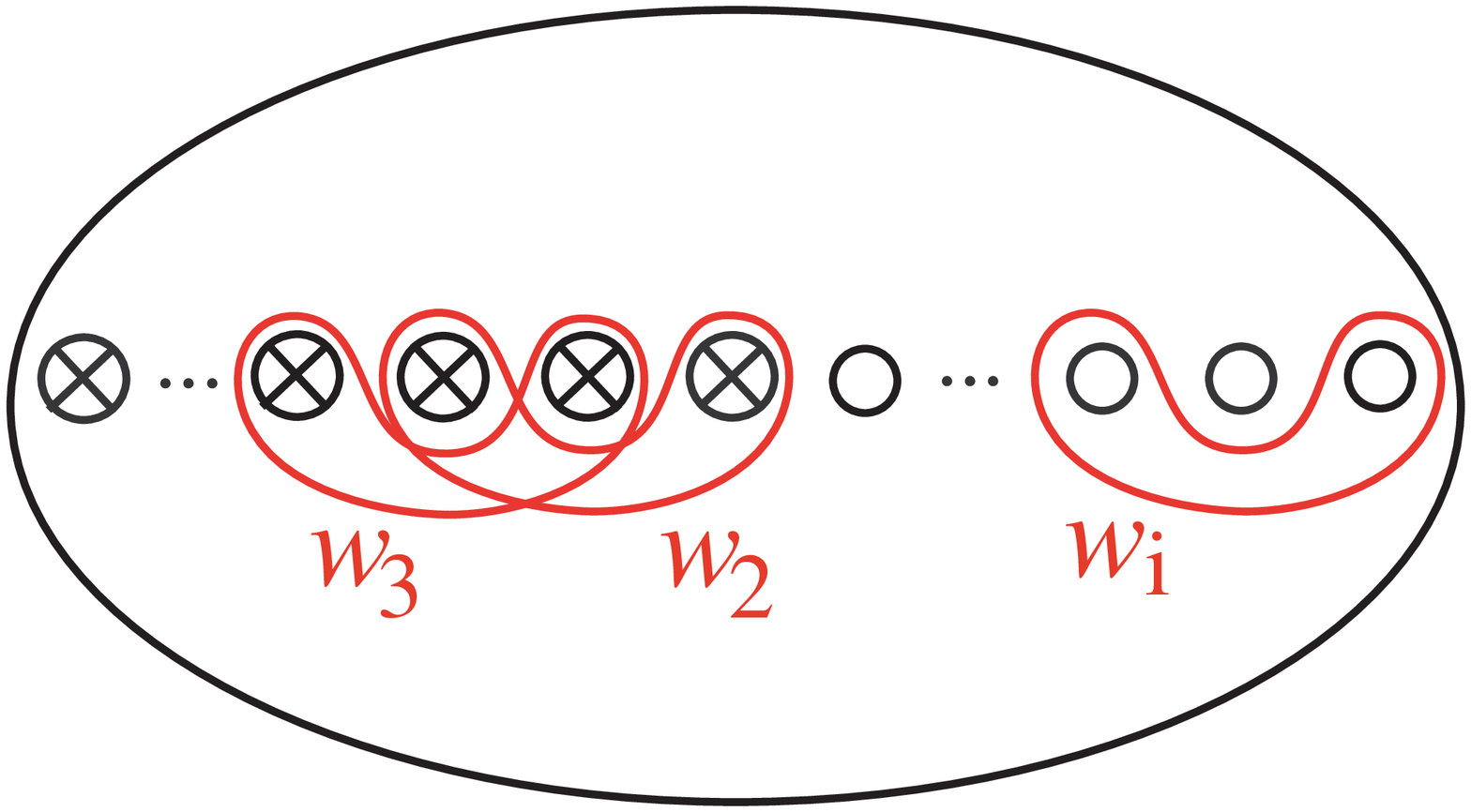} \hspace{0.1cm} 	\epsfxsize=2.3in \epsfbox{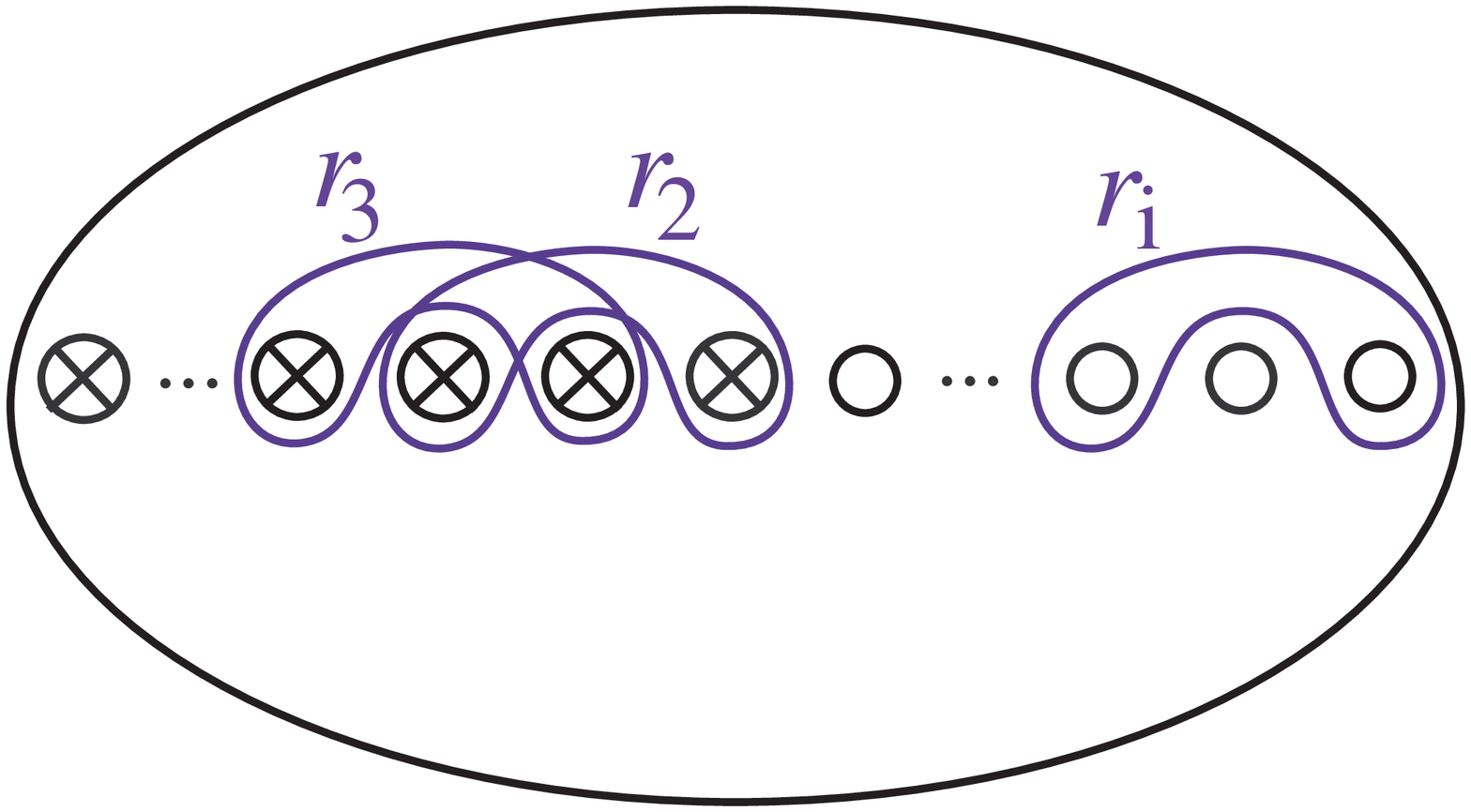} 
		
		(i)  \hspace{5cm}   (ii) 
		
		\epsfxsize=2.3in \epsfbox{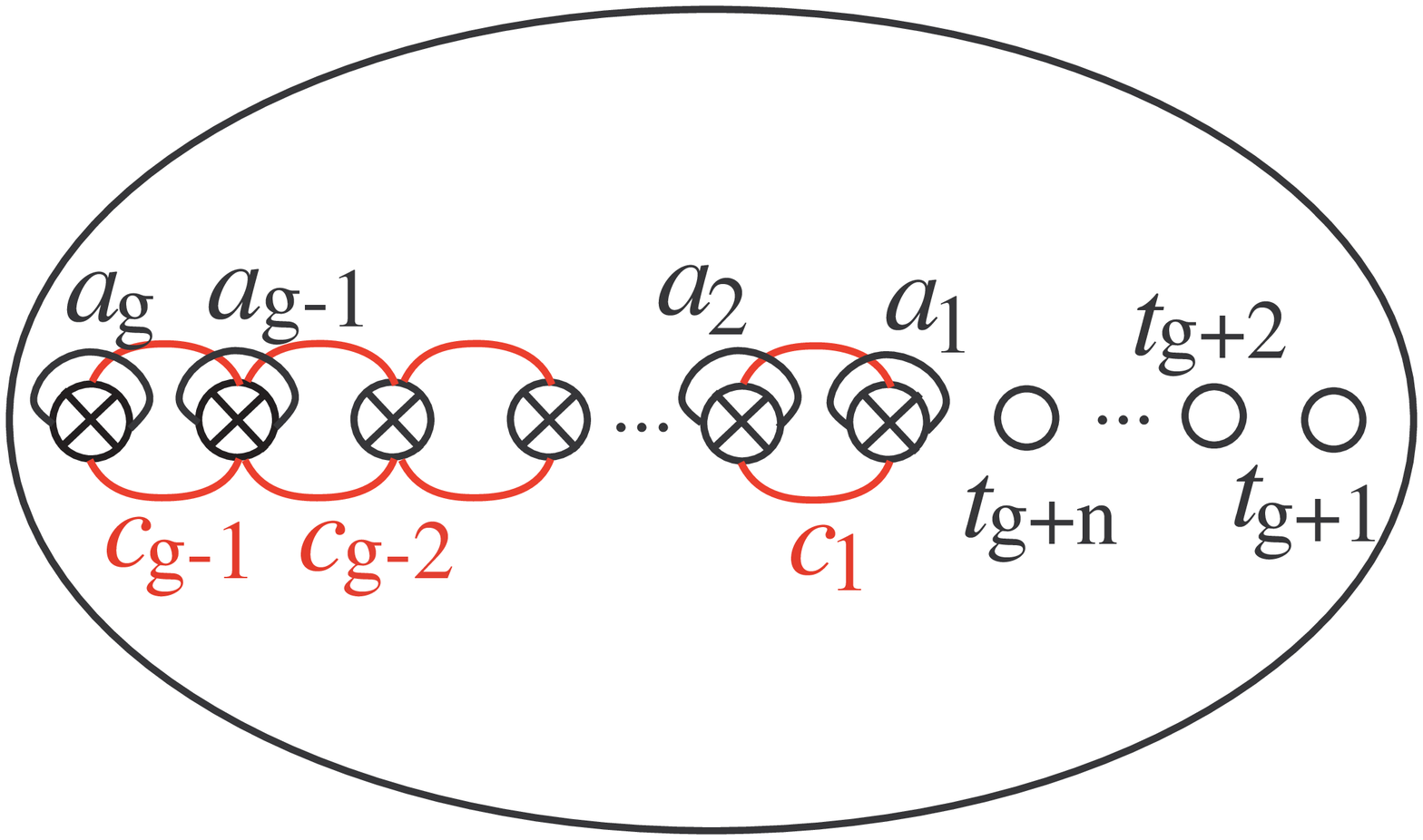} \hspace{0.1cm} 	\epsfxsize=2.3in \epsfbox{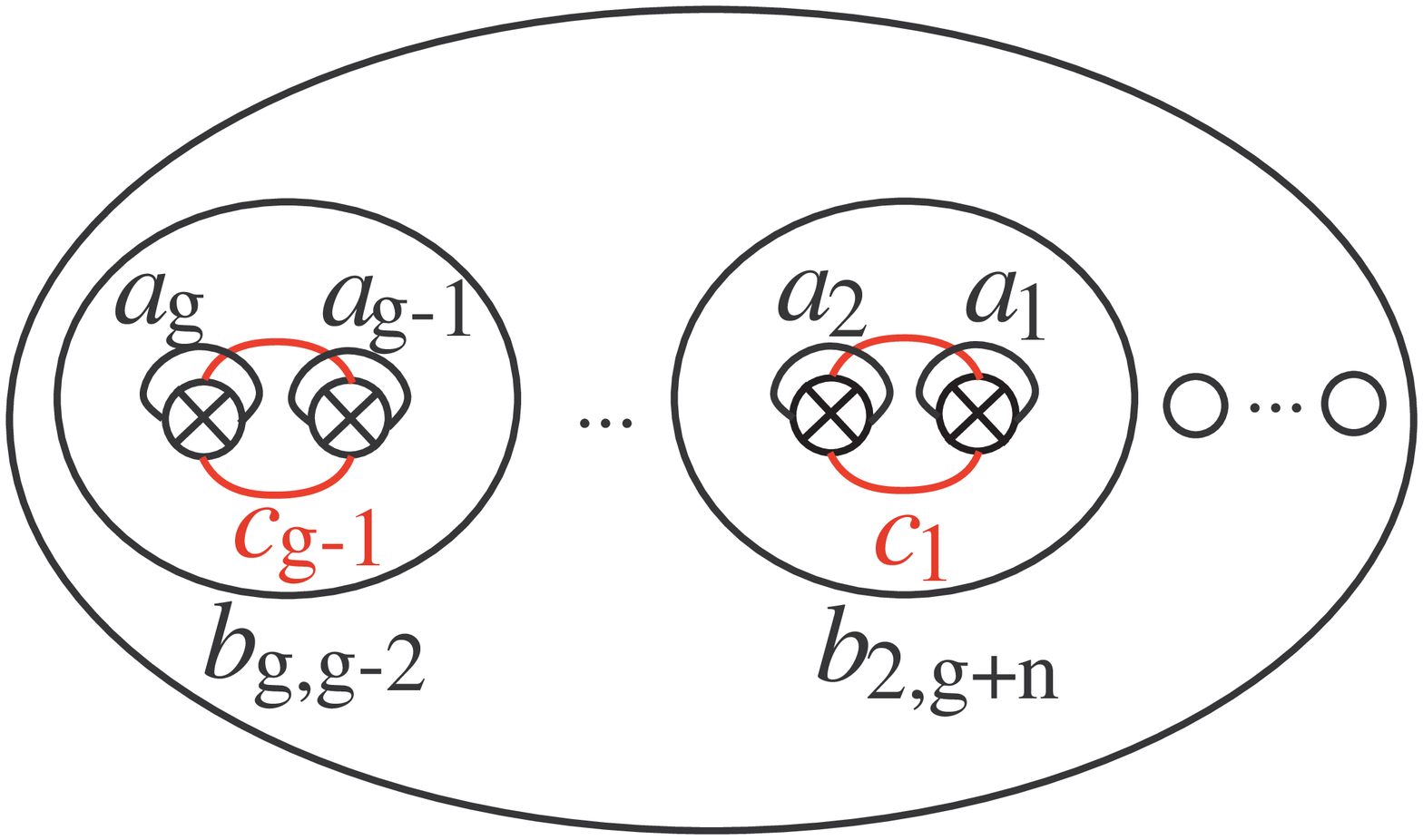} 
		
		(iii)  \hspace{5cm}   (iv) 
		
		\caption{Curves in $\mathcal{C}_2$ }
		\label{Fig-2-a}
	\end{center}
\end{figure}

In figure \ref{Fig-2-a} we show the curves $r_i, w_i$.  Let $t_i$ for $i=g+1, g+2, \cdots, g+n$ be the boundary components of $N$ as shown in Figure \ref{Fig-2-a} (iii). We will always assume that we have the ordering of the  crosscaps, $a_i$'s, and the boundary components $t_j$'s as shown in Figure \ref{Fig-2-a} without writing $a_i$ and $t_j$. 

Let $c_i$ for $i=1, 2, \cdots,g-1$ be the curves shown in Figure \ref{Fig-2-a} (iii) and (iv). The curve $c_1$ is, up to isotopy, the unique 2-sided curve in the Klein bottle with one hole bounded by $b_{2,g+n}$, and the curve $c_i$ when $i \geq 2$ is, up to isotopy, the unique 2-sided curve in the Klein bottle with one hole bounded by $b_{i+1,i-1}$.

\begin{figure}
	\begin{center} 
		\epsfxsize=1.58in \epsfbox{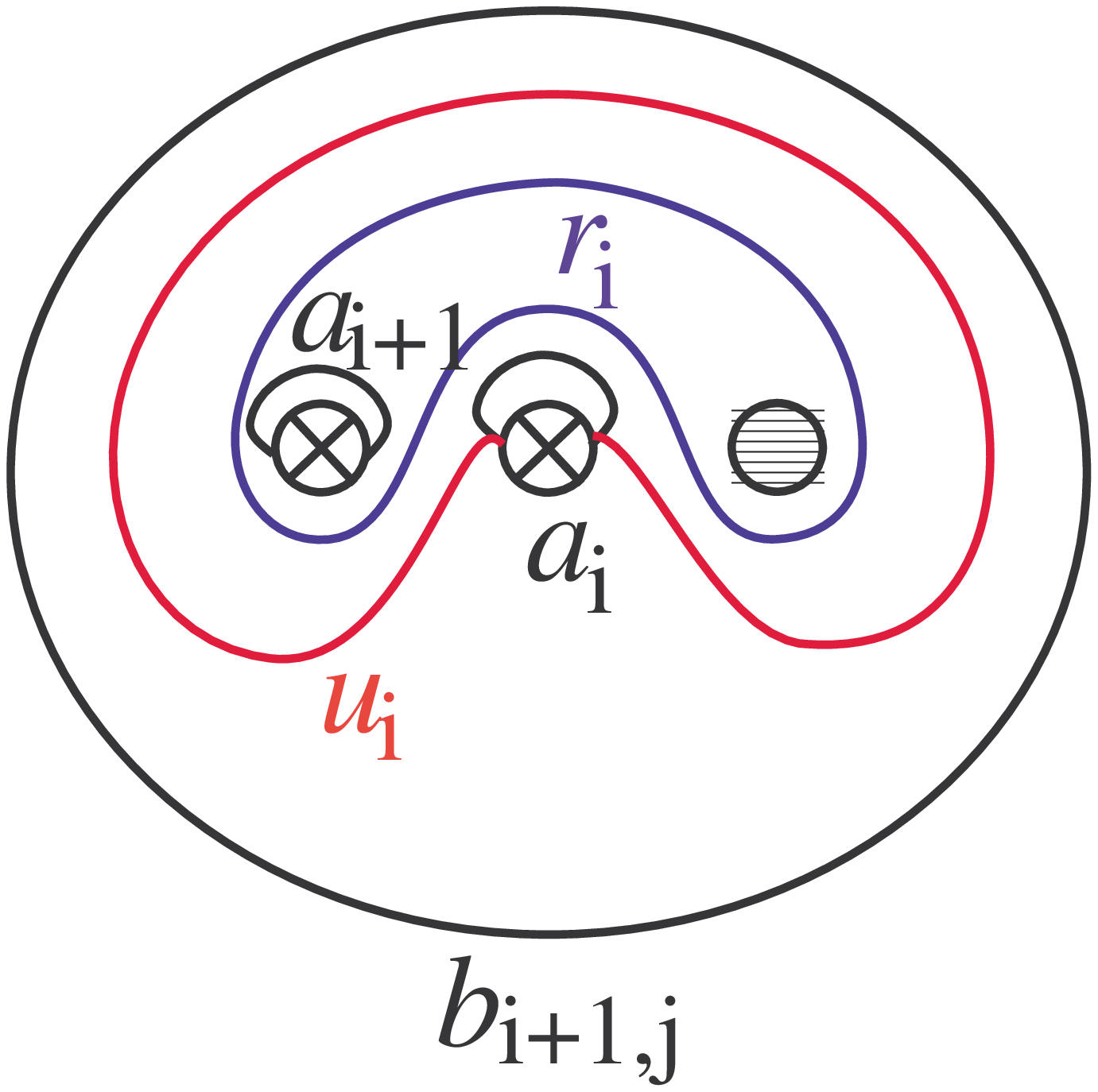} \hspace{0.1cm} 	\epsfxsize=1.58in \epsfbox{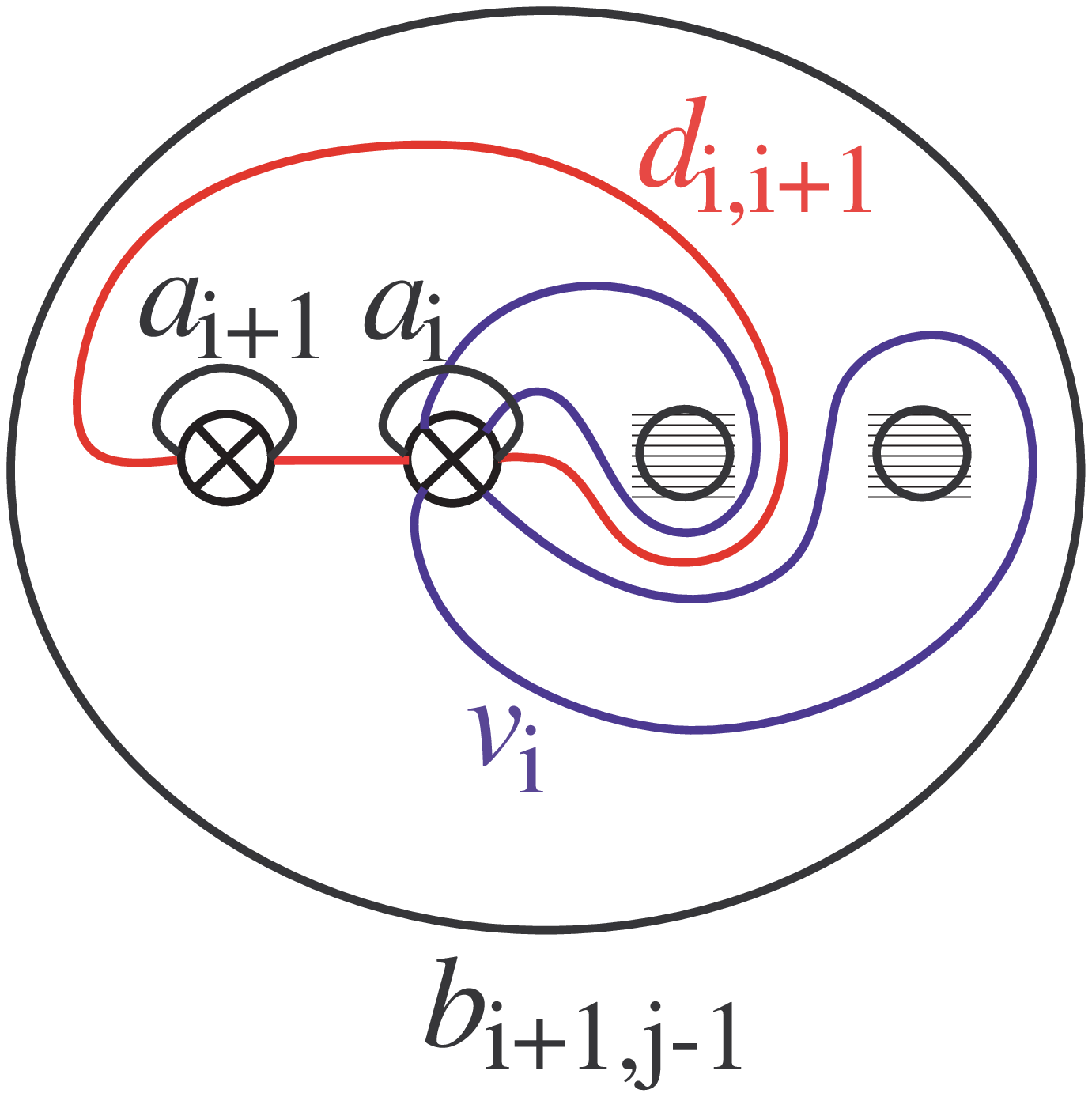}  \hspace{0.1cm} 	\epsfxsize=1.58in \epsfbox{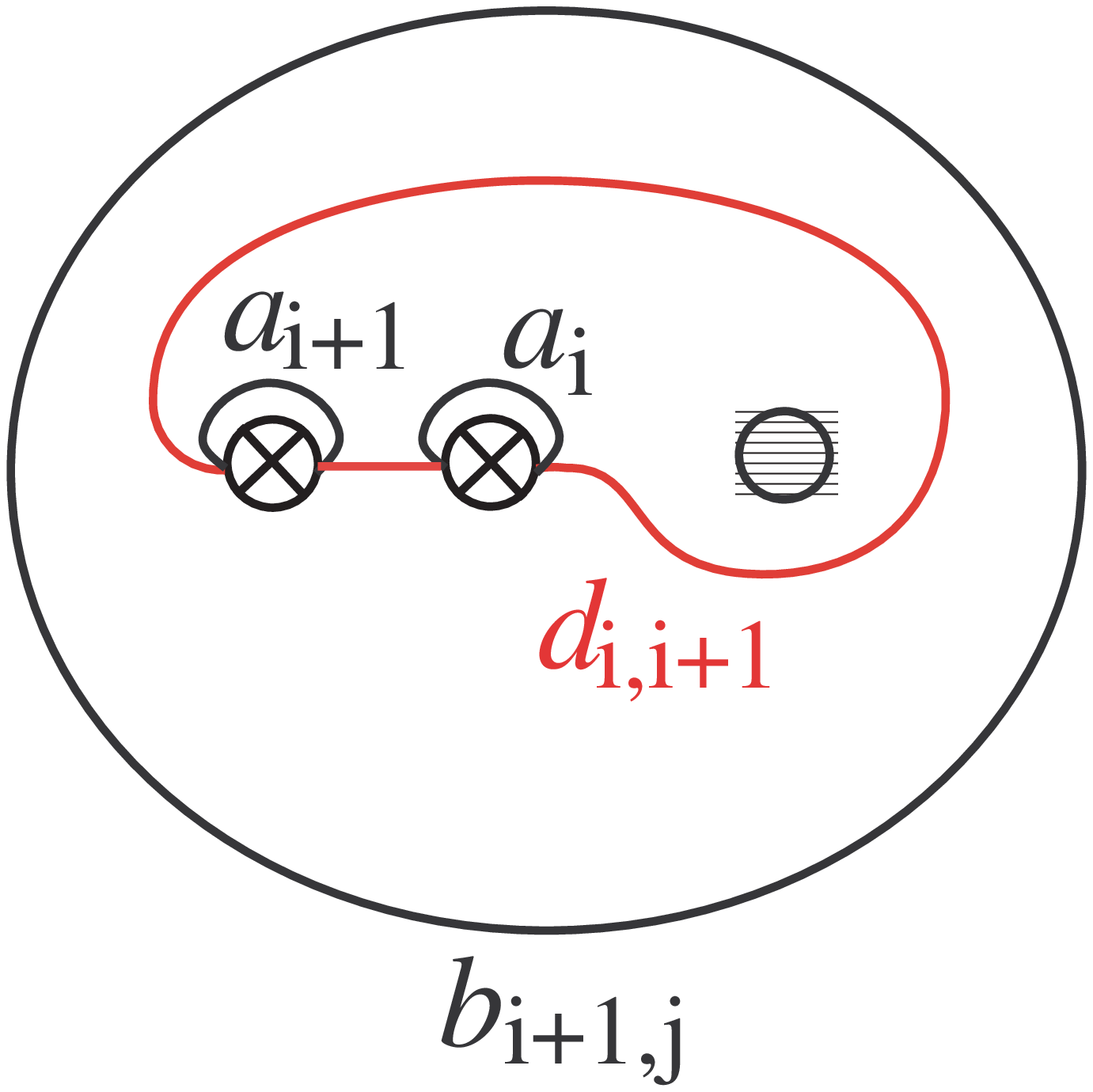}  \hspace{0.1cm}  
		
		(i)  \hspace{3.8cm}   (ii)  \hspace{3.8cm}   (iii) 
		
		\vspace{0.1cm}
		
		\epsfxsize=1.58in \epsfbox{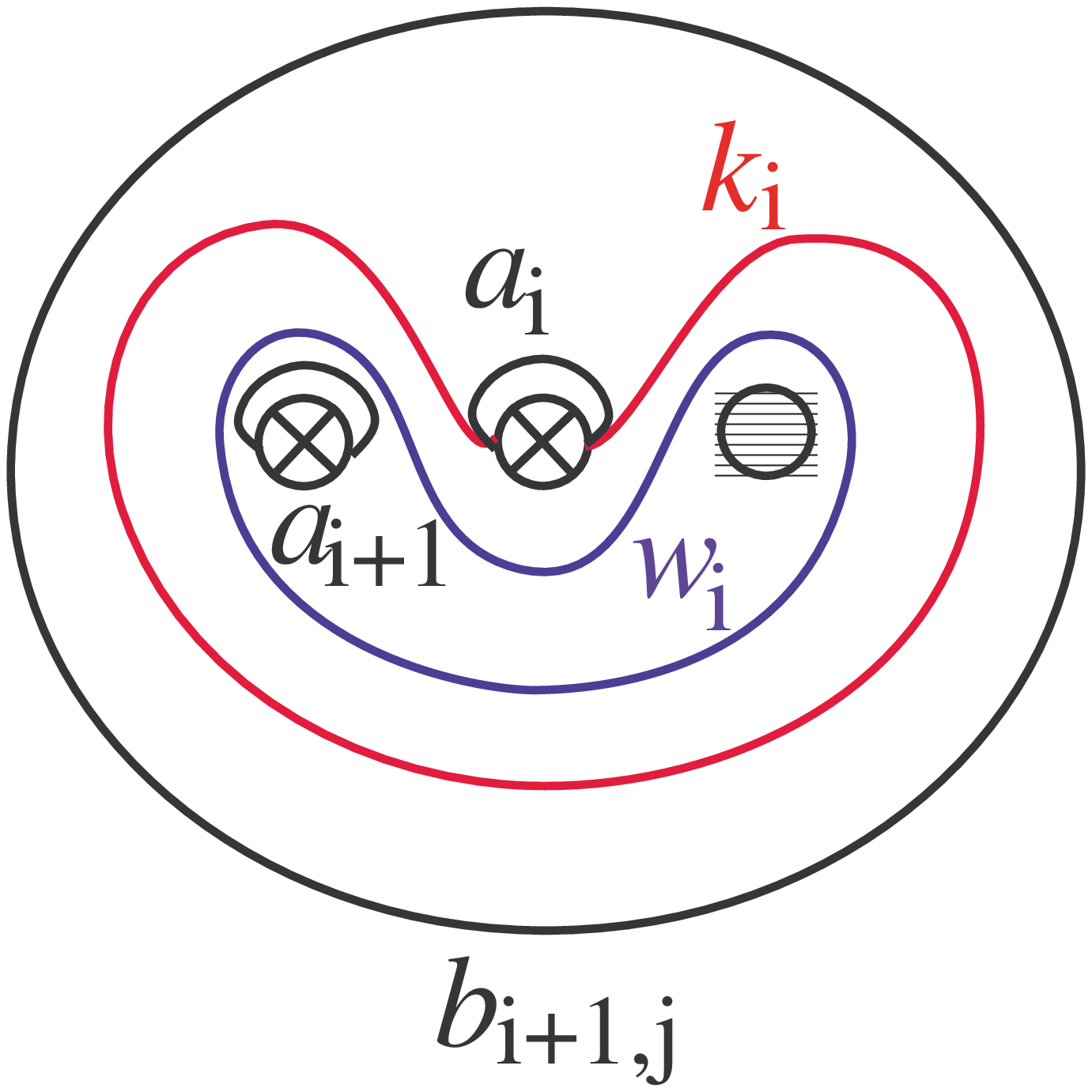} \hspace{0.1cm} 	
		\epsfxsize=1.58in \epsfbox{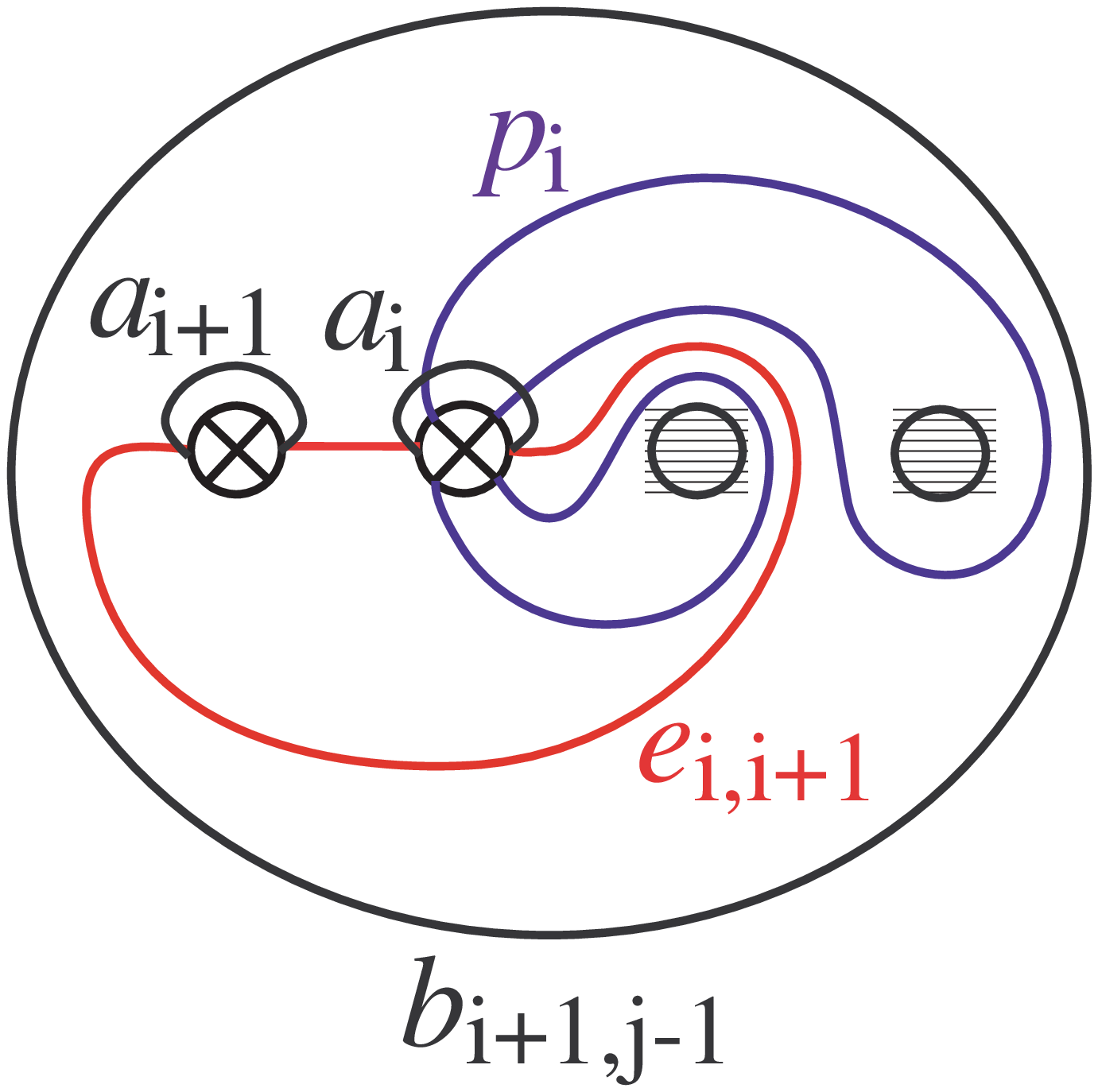}  \hspace{0.1cm} 
		\epsfxsize=1.58in \epsfbox{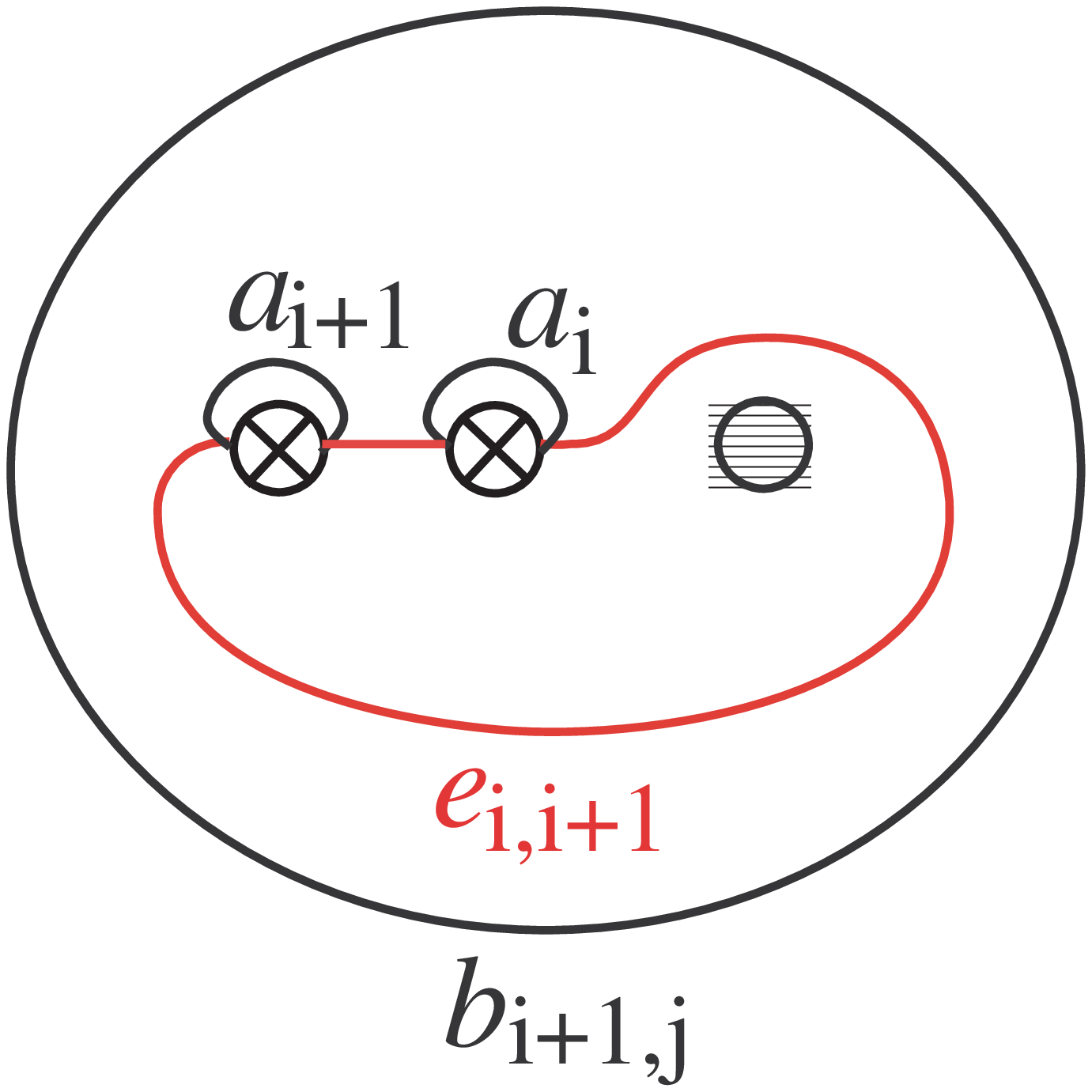} 
		
		(iv)  \hspace{3.8cm}   (v)  \hspace{3.8cm}   (vi) 
		
		\vspace{0.1cm}
		
		\epsfxsize=1.58in \epsfbox{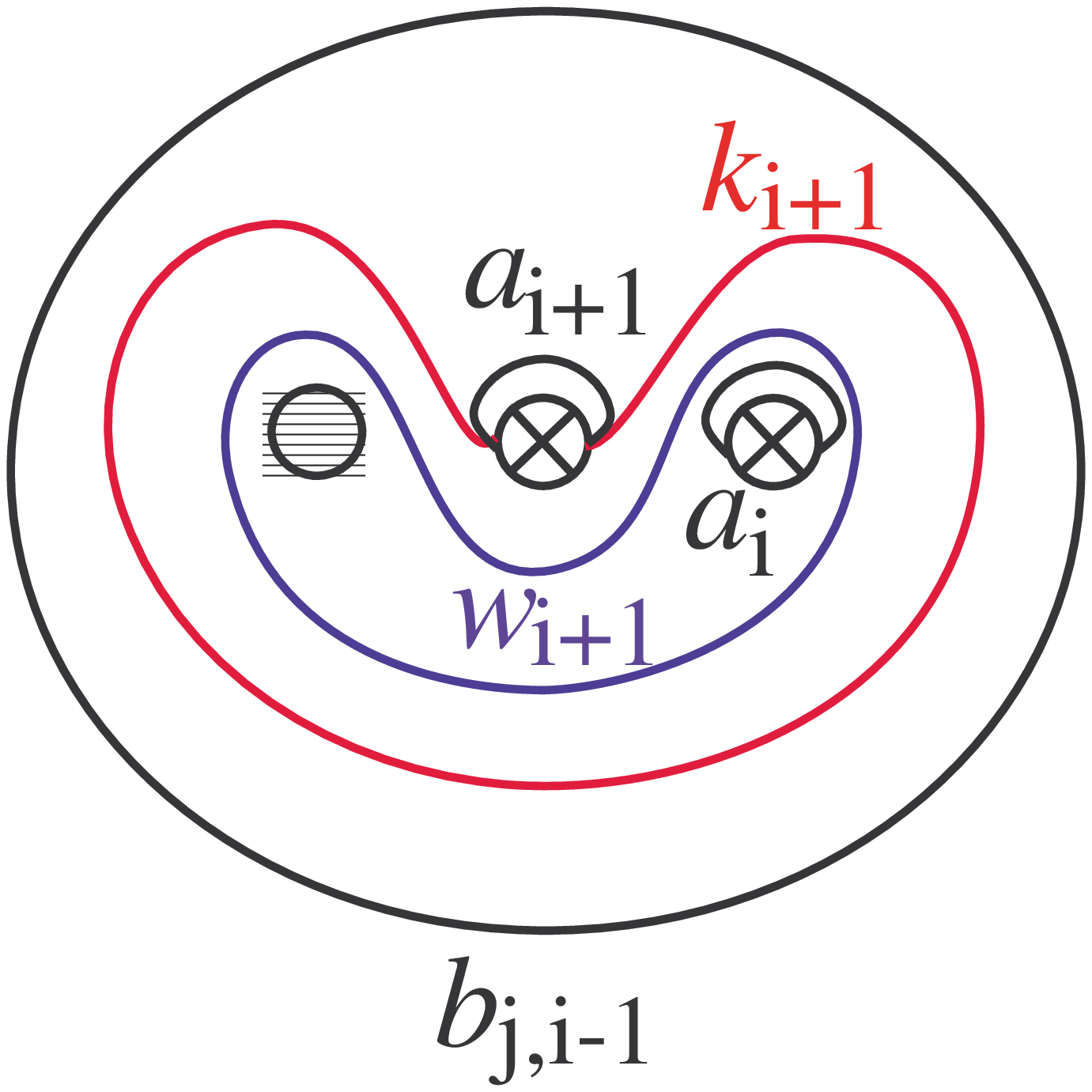} \hspace{0.1cm} 	
		\epsfxsize=1.58in \epsfbox{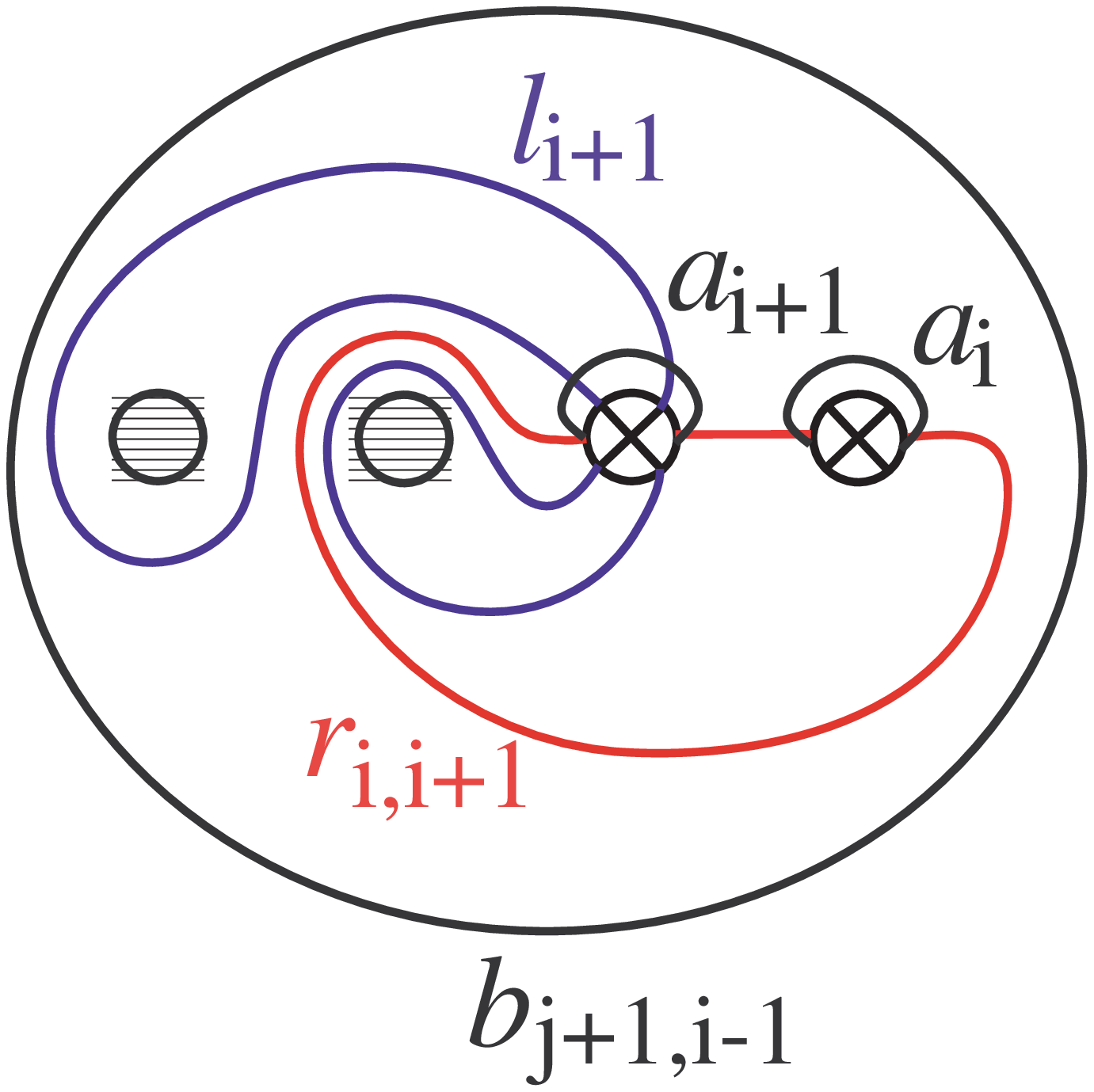}  \hspace{0.1cm} 
		\epsfxsize=1.58in \epsfbox{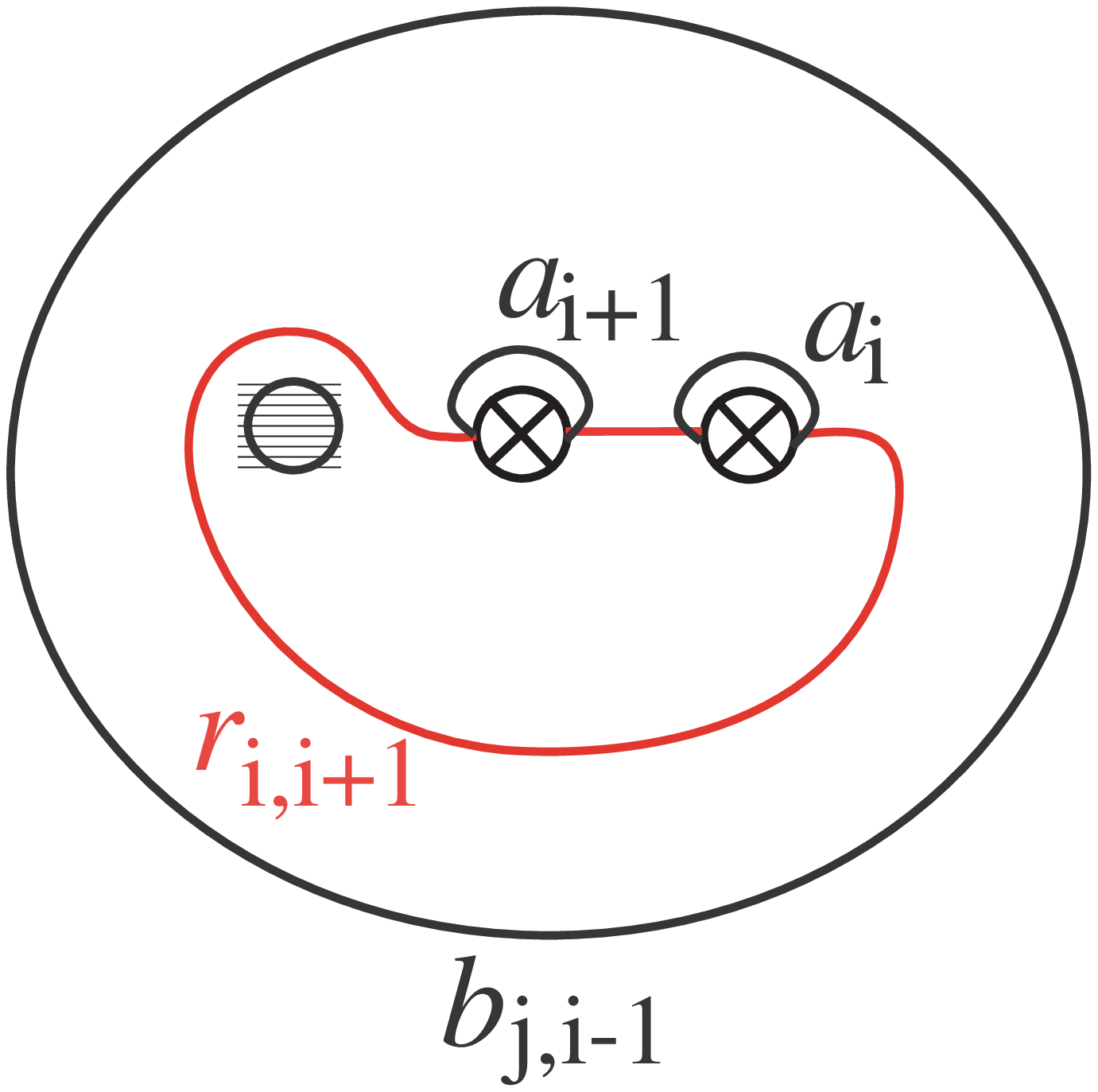} 
		
		(vii)  \hspace{3.8cm}   (viii)  \hspace{3.8cm}   (ix) 	
		
		\vspace{0.1cm}
		
		\epsfxsize=1.58in \epsfbox{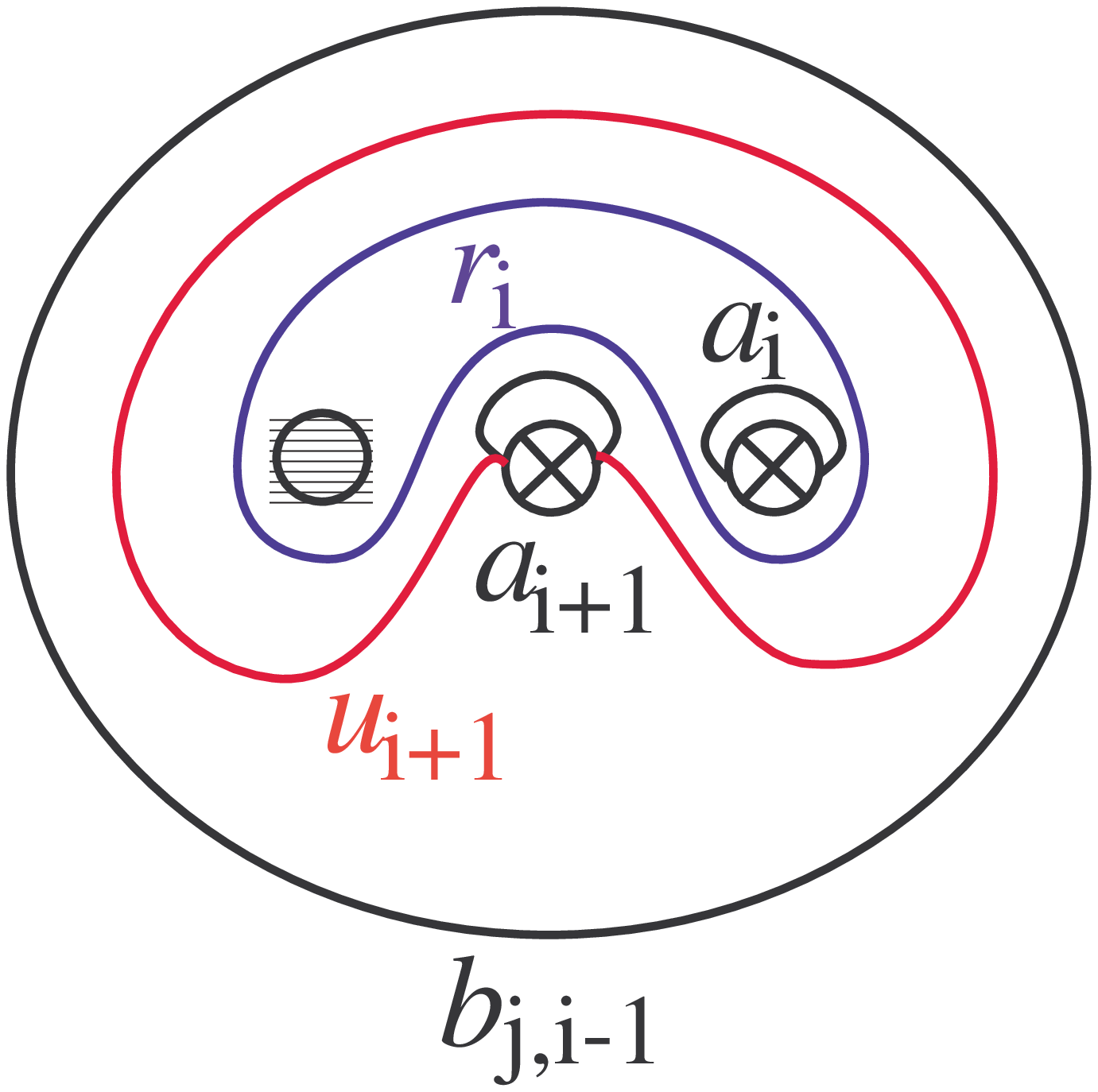} \hspace{0.1cm} 	
		\epsfxsize=1.58in \epsfbox{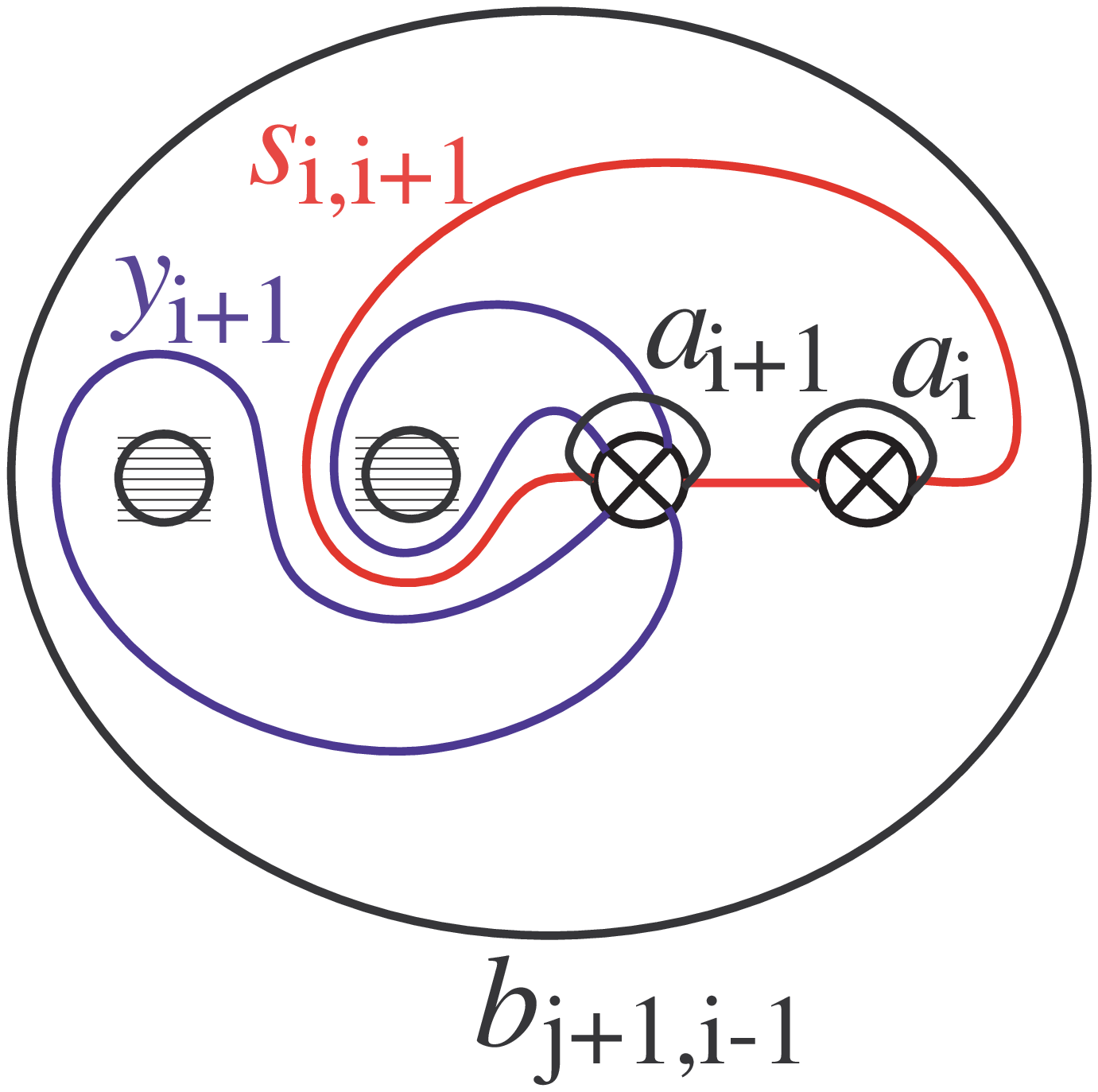}  \hspace{0.1cm} 
		\epsfxsize=1.58in \epsfbox{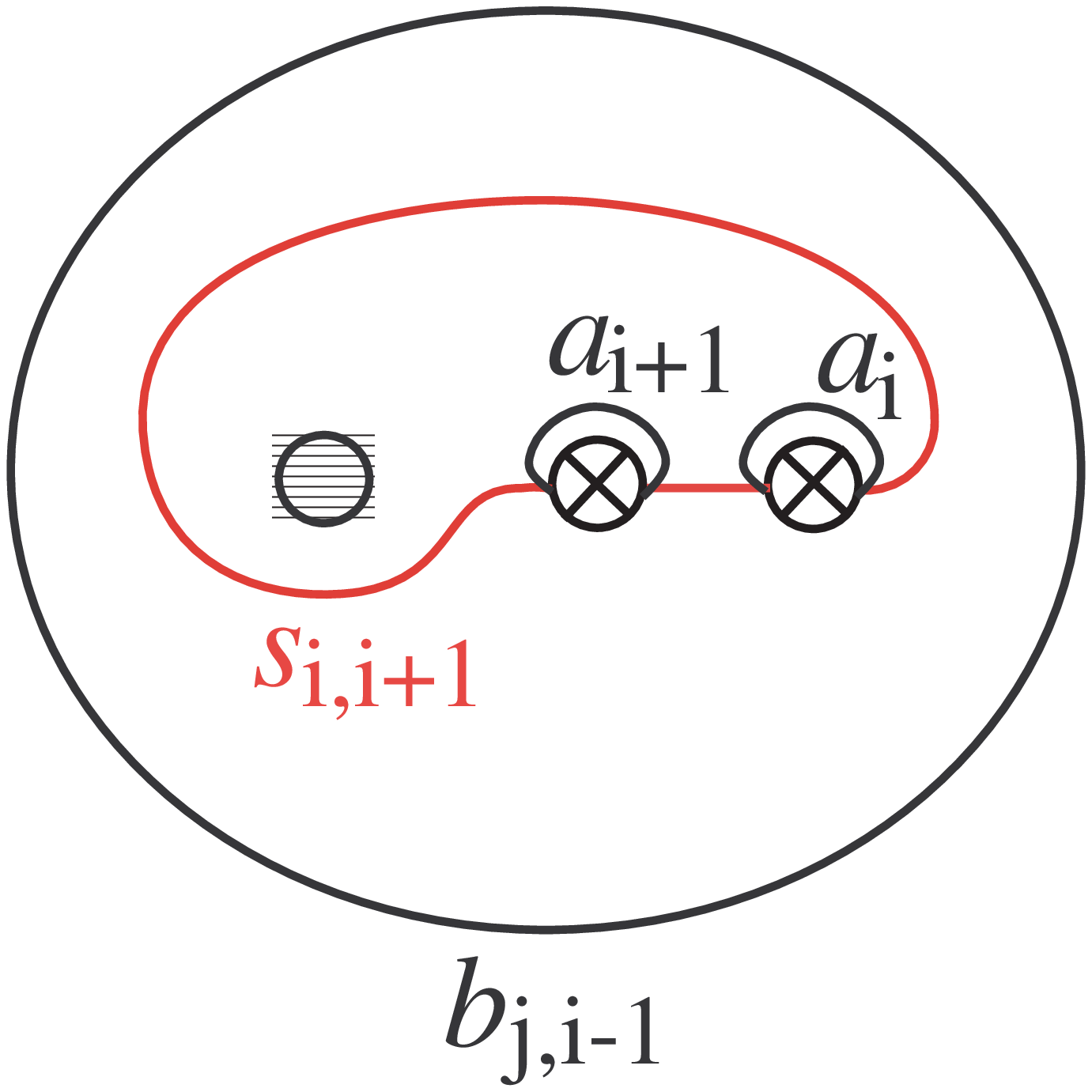} 
		
		(x)  \hspace{3.8cm}   (xi)  \hspace{3.8cm}   (xii) 	
		
		\caption{Curves in $\mathcal{C}_3$ and $\mathcal{C}_4$}\label{Fig-3-aa}
	\end{center}
\end{figure}

Let $d_{i,i+1}$ be the curve shown in Figure \ref{Fig-3-aa} (iii). In the Figure \ref{Fig-3-aa} (i) and (iii) we see a disk $D$ filled with stripes. Let $R$ be the boundary of $D$. By filling the disk $D$ with stripes we will mean that (i) $R$ bounds a  M\"{o}bius band which has $a_{i-1}$ as the core if $2 \leq i \leq g-1$, (ii) 
$R$ bounds a  M\"{o}bius band which has $a_{g}$ as the core if $i=1$ and $N$ is closed, and (iii) $R$ is isotopic to $t_{g+n}$ if $i=1$ and there is a boundary component of $N$. In Figure \ref{Fig-3-aa} we assume that the indices $i$ and $j$ are related as $|j-i|=2 \ (mod \ ( g+n))$.  
Let $u_i, v_i$ be the curves shown in Figure \ref{Fig-3-aa} (i) and (ii). Again in our notation filling disks with stripes will mean that the boundary of each disk either bounds a  M\"{o}bius band or is isotopic to a boundary component of $N$ depending on the position of $a_i$, when we consider the cyclic ordering of the $a_i$'s and $t_j$'s as shown in Figure \ref{fig-s11} (i). For $g+n \geq 5$ 
let $\mathcal{C}_3 = \{c_1, c_2,  \cdots, c_{g-1}, d_{1,2}, d_{2,3}, \cdots, d_{g-1,g}, u_1, u_2, \cdots, u_{g-1}, v_1, v_2, \cdots, v_{g-1}\}$. 

\begin{lemma} \label{C_3} If $g+n \geq 5$, then $\mathcal{C}_1 \cup \mathcal{C}_2 \cup \mathcal{C}_3$ is a finite superrigid set.\end{lemma}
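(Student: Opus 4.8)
The plan is to run, one more time, the bootstrapping argument already used for Lemma \ref{C_2}. Let $\lambda : \mathcal{C}_1 \cup \mathcal{C}_2 \cup \mathcal{C}_3 \to \mathcal{C}(N)$ be a superinjective simplicial map. Exactly as in Lemma \ref{inj}, given two distinct vertices in the domain one finds a curve in $\mathcal{C}_1$ disjoint from one of them and crossing the other, so $\lambda$ is injective. Restricting $\lambda$ to $\mathcal{C}_1 \cup \mathcal{C}_2$ and applying Lemma \ref{C_2}, there is a homeomorphism $h : N \to N$ with $h([x]) = \lambda([x])$ for every $x \in \mathcal{C}_1 \cup \mathcal{C}_2$; since by Lemma \ref{C_2} the pointwise stabilizer of $\mathcal{C}_1 \cup \mathcal{C}_2$ is trivial, such an $h$ is unique up to isotopy, so there is no remaining freedom and it suffices to show that this particular $h$ already satisfies $h([x]) = \lambda([x])$ for every $x \in \mathcal{C}_3$. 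Finiteness of $\mathcal{C}_1 \cup \mathcal{C}_2 \cup \mathcal{C}_3$ is clear.

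For each $x \in \mathcal{C}_3$ I would exhibit a finite subset $S_x \subseteq \mathcal{C}_1 \cup \mathcal{C}_2$ such that $x$ is the \emph{unique} vertex of $\mathcal{C}(N)$ of the same sidedness as $x$ realizing the prescribed pattern of geometric intersection $0$ and nonzero with the curves of $S_x$; in a few cases the pattern must also record specific intersection numbers equal to $2$, which I would obtain exactly as in Lemma \ref{1c} from the pentagon structure of the curve complex of a five-holed sphere cut off by curves of $\mathcal{C}_1$. Since $\lambda$ preserves sidedness and geometric intersection $0$/nonzero, and $h$ agrees with $\lambda$ on $S_x \subseteq \mathcal{C}_1 \cup \mathcal{C}_2$, the curve $h^{-1}(\lambda([x]))$ realizes the same pattern with $S_x$, hence equals $[x]$, so $\lambda([x]) = h([x])$. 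For the $c_i$ this is cleanest: $c_i$ is the unique nontrivial $2$-sided curve in the once-holed Klein bottle bounded by $b_{i+1,i-1}$ (resp.\ $b_{2,g+n}$ when $i=1$), and that subsurface is isolated inside $N$ by disjointness from the relevant $a_\ell$'s and $b_{\ell,m}$'s together with curves of $\mathcal{C}_1 \cup \mathcal{C}_2$ filling the complementary piece, so $h^{-1}(\lambda([c_i]))$ is a $2$-sided curve forced into that once-holed Klein bottle and hence equal to $[c_i]$. For $d_{i,i+1}$, $u_i$, $v_i$ I would cut $N$ along curves of $\mathcal{C}_1 \cup \mathcal{C}_2$ (the choice depending on whether the relevant ``filled'' disk is a M\"{o}bius band or a boundary component) until the curve lies in a subsurface of small complexity in which it is the unique curve disjoint from, and crossing, a short explicit sublist of $\mathcal{C}_1 \cup \mathcal{C}_2$, using again the rigidity of the curve complexes of four- and five-holed spheres.

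The main obstacle is the subsurface bookkeeping rather than any new idea. For each of the new families $c_i$, $d_{i,i+1}$, $u_i$, $v_i$ — including the boundary case $i=1$ and the closed-surface case, in which some ``filled'' disks become M\"{o}bius bands — one has to check that the chosen list $S_x$ really cuts $N$ into pieces small enough to force $x$ and make it unique, and that $S_x$ uses only curves whose $\lambda$-image is already controlled, i.e.\ curves of $\mathcal{C}_1 \cup \mathcal{C}_2$. The curves $u_i$ and $v_i$ look the most delicate, since their characterizing subsurfaces are once-holed Klein bottles or tori sitting in the complement of several $a_\ell$'s and $b_{\ell,m}$'s and one must rule out that $\lambda$ carries the curve into a different complementary component; I would handle this as in Lemma \ref{C_2}, by exhibiting curves of $\mathcal{C}_1 \cup \mathcal{C}_2$ that fill every unwanted complementary piece, so that disjointness leaves only the intended subsurface available.
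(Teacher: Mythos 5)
Your overall framework is the paper's: restrict $\lambda$ to $\mathcal{C}_1 \cup \mathcal{C}_2$, invoke Lemma \ref{C_2} to get $h$, and then pin down each curve of $\mathcal{C}_3$ as the unique curve realizing a prescribed intersection pattern with already-controlled curves. However, the part you defer as ``subsurface bookkeeping'' is precisely the mathematical content of the lemma, and as formulated your plan has a genuine gap: you commit to characterizing every $x \in \mathcal{C}_3$ by its pattern with a set $S_x \subseteq \mathcal{C}_1 \cup \mathcal{C}_2$ \emph{only}. The paper does not do this, and apparently cannot do it cheaply: it orders the curves of $\mathcal{C}_3$ and lets later characterizations use earlier, already-determined members of $\mathcal{C}_3$ itself. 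Concretely, $c_i$ and $u_i$ are handled with curves of $\mathcal{C}_1 \cup \mathcal{C}_2$ alone (via a top-dimensional pants decomposition containing $a_1, a_2, b_{2,g+n}$, resp.\ $r_1, a_1, a_2, b_{2,g+n-1}$, plus sidedness, injectivity, and uniqueness of curves in a one-holed Klein bottle resp.\ a two-holed projective plane), but $v_1$ is characterized as the unique curve disjoint from and nonisotopic to $u_1$, $a_{1,g+n-1}$, $b_{1,g+n-2}$ and the curves of $\mathcal{C}_1 \cup \mathcal{C}_2$ outside the cut surface, and $d_{1,2}$ is characterized using $c_1$ and $v_1$. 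Without those auxiliary $\mathcal{C}_3$-curves, the $0$/nonzero pattern with $\mathcal{C}_1 \cup \mathcal{C}_2$ need not isolate $v_i$ or $d_{i,i+1}$ inside the relevant once-holed Klein bottle or genus-one pieces, and you give no argument that it does; the pentagon/intersection-number-$2$ device you import from Lemma \ref{1c} is not shown to resolve these ambiguities either (the paper does not need it here at all).

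The fix is small but essential: drop the restriction $S_x \subseteq \mathcal{C}_1 \cup \mathcal{C}_2$ and establish the identities in the order $c_i$, then $u_i$, then $v_i$ (using $u_i$), then $d_{i,i+1}$ (using $c_i$ and $v_i$), verifying each uniqueness claim explicitly, including the boundary versus closed cases where the ``striped disk'' is a M\"{o}bius band or a boundary component. Also note two minor points: the uniqueness of $h$ up to isotopy from the trivial stabilizer is true but not needed for the argument, and injectivity of $\lambda$ (as in Lemma \ref{inj}) must indeed be retained, since the characterizations of $u_i$ and $v_i$ exclude the 1-sided curves $a_i$, $a_{i,j}$ only through ``nonisotopic to'' conditions, i.e.\ through $\lambda([u_1]) \neq \lambda([a_1])$ and its analogues.
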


\begin{proof} Let $\lambda: \mathcal{C}_1 \cup \mathcal{C}_2 \cup \mathcal{C}_3  \rightarrow \mathcal{C}(N)$ be a superinjective simplicial map. By Lemma \ref{C_2}, there exists a homeomorphism $h: N \rightarrow N$ such that $h([x]) = \lambda([x])$ for every $x$ in $\mathcal{C}_1 \cup \mathcal{C}_2$. We will show that $h([x]) = \lambda([x])$ for every $x$ in $\mathcal{C}_3$ as well. We will give the proof when $n \geq 2$. The proofs for the other cases when $n \leq 1$ are similar.

Claim 1: $h([c_i]) = \lambda([c_i])$ for all $i =1, 2, \cdots, g-1$. 
	
Proof of Claim 1: The curve $b_{2,g+n}$ is in $\mathcal{C}_1$ and it bounds a Klein bottle with one hole containing $c_1$. We complete $a_1, a_2, b_{2,g+n}$ to a top dimensional pants decomposition $P$ using the curves in $\mathcal{C}_1 \cup \mathcal{C}_2$ on $N$. Since $\lambda$ is superinjective and $c_1$ intersects only $a_1, a_2$ nontrivially in $P$, $\lambda([c_1])$ has to have geometric intersection nonzero with only $\lambda([a_1]), \lambda([a_2])$ in $\lambda([P])$. Since $c_1$ is a 2-sided curve and $\lambda$ is superinjective we see that $\lambda([c_1])$ is the isotopy class of a 2-sided curve on $N$. In a Klein bottle with one hole, up to isotopy there exists a unique 2-sided curve. Since $h([x]) = \lambda([x])$ for every $x$ in $\mathcal{C}_1 \cup \mathcal{C}_2$, by using the above information we see that $h([c_1]) = \lambda([c_1])$. Similarly, $h([c_i]) = \lambda([c_i])$ for all $i =2, 3, \cdots, g-1$. 
 
Claim 2: $h([u_i]) = \lambda([u_i])$ for all $i =1, 2, \cdots, g-1$.  

Proof of Claim 2: We will show that $h([u_1]) = \lambda([u_1])$. We complete $r_1, a_1, a_2, b_{2,g+n-1}$ to a top dimensional pants decomposition $P$ using the curves in $\mathcal{C}_1 \cup \mathcal{C}_2$ on $N$ such that $u_1$ is disjoint from all the curves in $P$ except for $a_1$. Since $\lambda$ is superinjective and $u_1$ intersects only $a_1$ nontrivially in $P$, $\lambda([u_1])$ has geometric intersection nonzero only with $\lambda([a_{1}])$ in $\lambda([P])$. Since $h([x]) = \lambda([x])$ for every $x$ in $\mathcal{C}_1 \cup \mathcal{C}_2$, and $\lambda([u_{1}]) \neq \lambda([a_{1}])$, we see that $h([u_1]) = \lambda([u_1])$ (since up to isotopy there are exactly two curves in the projective plane bounded by $a'_1$ and $b'_{2,g+n-1}$ which are disjoint  representatives of $\lambda([a_{1}]), \lambda([b_{2,g+n-1}])$ respectively). Similarly we get $h([u_{i}]) = \lambda([u_{i}])$ for all $i =2, 3, \cdots, g-2$.

Claim 3: $h([v_i]) = \lambda([v_i])$ for all $i =1, 2, \cdots, g-1$.

Proof of Claim 3: We will show that $h([v_1]) = \lambda([v_1])$. When we cut $N$ along the curve $b_{1,g+n-2}$ we get a nonorientable surface, $M_1$, of genus one with three boundary components, containing $a_{1,g+n-1}$. The curve $v_1$ is the unique curve up to isotopy disjoint from and nonisotopic to $u_1, a_{1,g+n-1}$ and $b_{1,g+n-2}$ and all the curves of $\mathcal{C}_1 \cup \mathcal{C}_2$ that are in the complement of $M_1$. Since $\lambda$ is superinjective and $h([x]) = \lambda([x])$ for all these curves, we see that $h([v_1]) = \lambda([v_1])$. When we cut $N$ along the curve $b_{2,g+n-1}$ we get a nonorientable surface, $M_2$, of genus two with two boundary components, containing $a_{2,g+n}$. The curve $v_2$ is the unique curve up to isotopy disjoint from and nonisotopic to $u_1, a_{2,g+n}, a_1$ and $b_{2,g+n-1}$ and all the curves of $\mathcal{C}_1 \cup \mathcal{C}_2$ that are in the complement of $M_2$. Since $\lambda$ is superinjective and $h([x]) = \lambda([x])$ for all these curves, we see that $h([v_2]) = \lambda([v_2])$.
When we cut $N$ along the curve $b_{3,g+n}$ we get a nonorientable surface, $M_3$, of genus three with one boundary component, containing $a_{3,1}$. 
The curve $v_3$ is the unique curve up to isotopy disjoint from and nonisotopic to $u_{2}, a_{3,1}, a_1, a_2$ and $b_{3,g+n}$ and all the curves of $\mathcal{C}_1 \cup \mathcal{C}_2$ that are in the complement of $M_3$. Since $\lambda$ is superinjective and $h([x]) = \lambda([x])$ for all these curves, we see that $h([v_3]) = \lambda([v_3])$. Similarly, we have $h([v_i]) = \lambda([v_i])$ for all $i =4, 5, \cdots, g-1$.

Claim 4: $h([d_{i,i+1}]) = \lambda([d_{i,i+1}])$ for all $i =1, 2, \cdots, g-1$.

Proof of Claim 4: When we cut $N$ along the curve $b_{2,g+n-1}$ we get a nonorientable surface, $M_4$, of genus two with two boundary components, which contains $c_1$. The curve $d_{1,2}$ is the unique curve up to isotopy disjoint from and nonisotopic to $c_1, b_{2,g+n-1}$ and $v_1$ and all the curves of $\mathcal{C}_1 \cup \mathcal{C}_2$ that are in the complement of $M_4$. Since $\lambda$ is superinjective and $h([x]) = \lambda([x])$ for all these curves, we see that $h([d_{1,2}]) = \lambda(d_{1,2})$. When we cut $N$ along the curve $b_{3,g+n}$ we get a nonorientable surface, $M_5$, of genus three with one boundary component, which contains $c_2$. The curve $d_{2,3}$ is the unique curve up to isotopy disjoint from and nonisotopic to $a_1, c_2, b_{3,g+n}$ and $v_2$ and all the curves of $\mathcal{C}_1 \cup \mathcal{C}_2$ that are in the complement of $M_5$. Since $\lambda$ is superinjective and $h([x]) = \lambda([x])$ for all these curves, we see that $h([d_{2,3}]) = \lambda(d_{2,3})$. Similarly, we have $h([d_{i,i+1}]) = \lambda([d_{i,i+1}])$ for all $i =3, 4, \cdots, g-1$.

We proved that $h([x]) = \lambda([x])$ for every $x$ in $\mathcal{C}_1 \cup \mathcal{C}_2 \cup \mathcal{C}_3$. Hence, $\mathcal{C}_1 \cup \mathcal{C}_2 \cup \mathcal{C}_3$ is a finite superrigid set.\end{proof}\\
 
Let $k_i, p_i, e_{i,i+1}$ be the curves shown in Figure \ref{Fig-3-aa} (iv), (v) and (vi). Let $l_{i+1}, r_{i,i+1}$ be the curves shown in Figure \ref{Fig-3-aa} (viii), (ix). Let $y_{i+1}, s_{i,i+1}$ be the curves shown in Figure \ref{Fig-3-aa} (xi), (xii). For $g+n \geq 5$,   
let $\mathcal{C}_4 = \{k_1, k_2, \cdots, k_{g-1}, p_1, p_2, \cdots, p_{g-1}, e_{1,2}, e_{2,3}, \cdots, e_{g-1,g},$ $l_2, l_3, \cdots, l_g, $ $r_{1,2}, r_{2,3}, \cdots,$ $r_{g-1,g}, y_2, y_3, \cdots, y_g, s_{1,2}, s_{2,3}, \cdots, s_{g-1,g} \}$.

\begin{lemma} \label{C_4}  If $g+n \geq 5$, then $\bigcup_{i=1} ^4 \mathcal{C}_i$ is a finite superrigid set.\end{lemma}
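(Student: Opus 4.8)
The plan is to run the same scheme as in the proof of Lemma~\ref{C_3}. Let $\lambda : \bigcup_{i=1}^4 \mathcal{C}_i \to \mathcal{C}(N)$ be a superinjective simplicial map. As in Lemma~\ref{inj}, $\lambda$ is injective, so by Lemma~\ref{C_3} there is a homeomorphism $h : N \to N$ with $h([x]) = \lambda([x])$ for every $x \in \bigcup_{i=1}^3 \mathcal{C}_i$. It then suffices to prove $h([x]) = \lambda([x])$ for every $x \in \mathcal{C}_4$, since that makes $h$ realize $\lambda$ on all of $\bigcup_{i=1}^4 \mathcal{C}_i$, and the pointwise stabilizer question is already settled by Lemma~\ref{C_2}. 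As in Lemma~\ref{C_3} I would present the argument for $n \ge 2$ and note that $n \le 1$ is analogous.

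The key observation is that, in the cyclic ordering of the crosscaps $a_i$ and boundary components $t_j$, the curves $k_i, p_i, e_{i,i+1}$ (resp.\ $l_{i+1}, r_{i,i+1}$ and $y_{i+1}, s_{i,i+1}$) of $\mathcal{C}_4$ are the analogues of the curves $c_i, u_i, d_{i,i+1}$ and $v_i$ of $\mathcal{C}_3$, but centred at the shifted positions prescribed by the ``filling disks with stripes'' convention (Figure~\ref{Fig-3-aa}). So I would treat these families one at a time, in an order in which each new curve is pinned down using curves already known to be realized by $h$, exactly as in Claims~1--4 of Lemma~\ref{C_3}. For the $k_i$: each $k_i$ is a $2$-sided curve lying in a Klein bottle with one hole whose boundary is (after cutting along suitable already-fixed curves of $\mathcal{C}_1 \cup \mathcal{C}_2$) a curve of $\mathcal{C}_1$, and it meets only two crosscap curves among a top-dimensional pants decomposition $P$ built from $\mathcal{C}_1 \cup \mathcal{C}_2$; since $\lambda$ preserves geometric intersection zero/nonzero and self-intersection, sends $2$-sided curves to $2$-sided curves, and agrees with $h$ on $P$ and on the boundary of that Klein bottle, the uniqueness of the $2$-sided curve in a Klein bottle with one hole forces $h([k_i]) = \lambda([k_i])$. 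For the $p_i$: as in Claim~2 of Lemma~\ref{C_3}, complete suitable curves of $\mathcal{C}_1 \cup \mathcal{C}_2$ to a pants decomposition in which $p_i$ meets only one crosscap curve, note there are exactly two such curves up to isotopy in the relevant once-punctured projective plane, and use an extra disjointness condition (from a further curve of $\mathcal{C}_1 \cup \mathcal{C}_2$, or from a $k_i$ just handled) together with injectivity of $\lambda$ to select the right one. For $l_{i+1}, y_{i+1}$ and then $e_{i,i+1}, r_{i,i+1}, s_{i,i+1}$: mirror Claims~3 and~4 — cut $N$ along an appropriate $b_{*,*} \in \mathcal{C}_1$ to obtain the nonorientable subsurface of the genus/boundary type recorded in Figure~\ref{Fig-3-aa}, identify the curve as the unique one up to isotopy disjoint from and nonisotopic to a prescribed finite list (certain $p$'s or $k$'s already handled, certain $a_*$'s, the cutting curve, and all curves of $\mathcal{C}_1 \cup \mathcal{C}_2$ outside that subsurface), and conclude $h([\gamma]) = \lambda([\gamma])$ because $\lambda$ preserves disjointness and nonisotopy and $h = \lambda$ on the whole list. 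Combining the claims gives $h = \lambda$ on $\bigcup_{i=1}^4 \mathcal{C}_i$.

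The main obstacle is purely the surface-topology bookkeeping behind the uniqueness assertions: one must check, in the shifted positions, that every curve of $\mathcal{C}_4$ really is characterized up to isotopy by its disjointness/intersection pattern with the already-fixed curves, handling subsurfaces of several distinct types (Klein bottle with one hole, once-punctured projective plane, nonorientable surfaces of genus $1,2,3$ with one, two, or three boundary components), the boundary indices $i=1$ and $i=g-1$, the ``filling disk'' conventions in the closed versus bounded cases, and the low-complexity cases $n \le 1$. None of this is conceptually new beyond Lemma~\ref{C_3}, but it is where all the actual verification lies.

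\hfill\proofbox
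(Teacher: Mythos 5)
Your proposal matches the paper's approach: the paper's entire proof of this lemma is the remark that it is similar to the proof of Lemma \ref{C_3}, and your argument is exactly that scheme — invoke Lemma \ref{C_3} to get $h$ agreeing with $\lambda$ on $\bigcup_{i=1}^3 \mathcal{C}_i$, then pin down each curve of $\mathcal{C}_4$ by the same uniqueness-of-disjointness-pattern arguments used in Claims 1--4 there. The proposal is correct in outline, with the remaining work being the same topological bookkeeping the paper also leaves implicit.
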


\begin{proof}The proof is similar to the proof of Lemma \ref{C_3}.\end{proof}\\

\begin{figure}
	\begin{center}
		\epsfxsize=2.5in \epsfbox{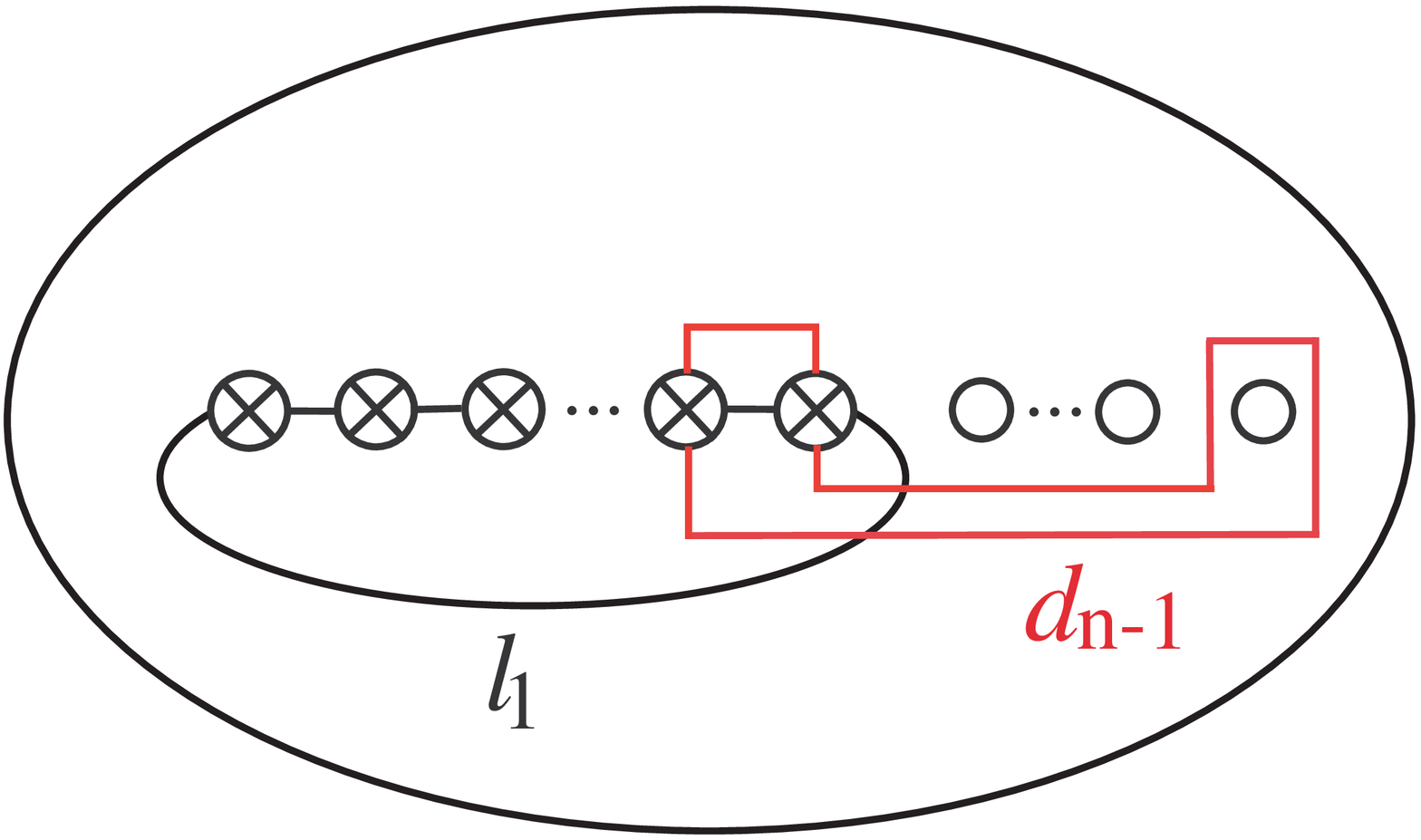} \hspace{0.1cm} 	\epsfxsize=2.5in \epsfbox{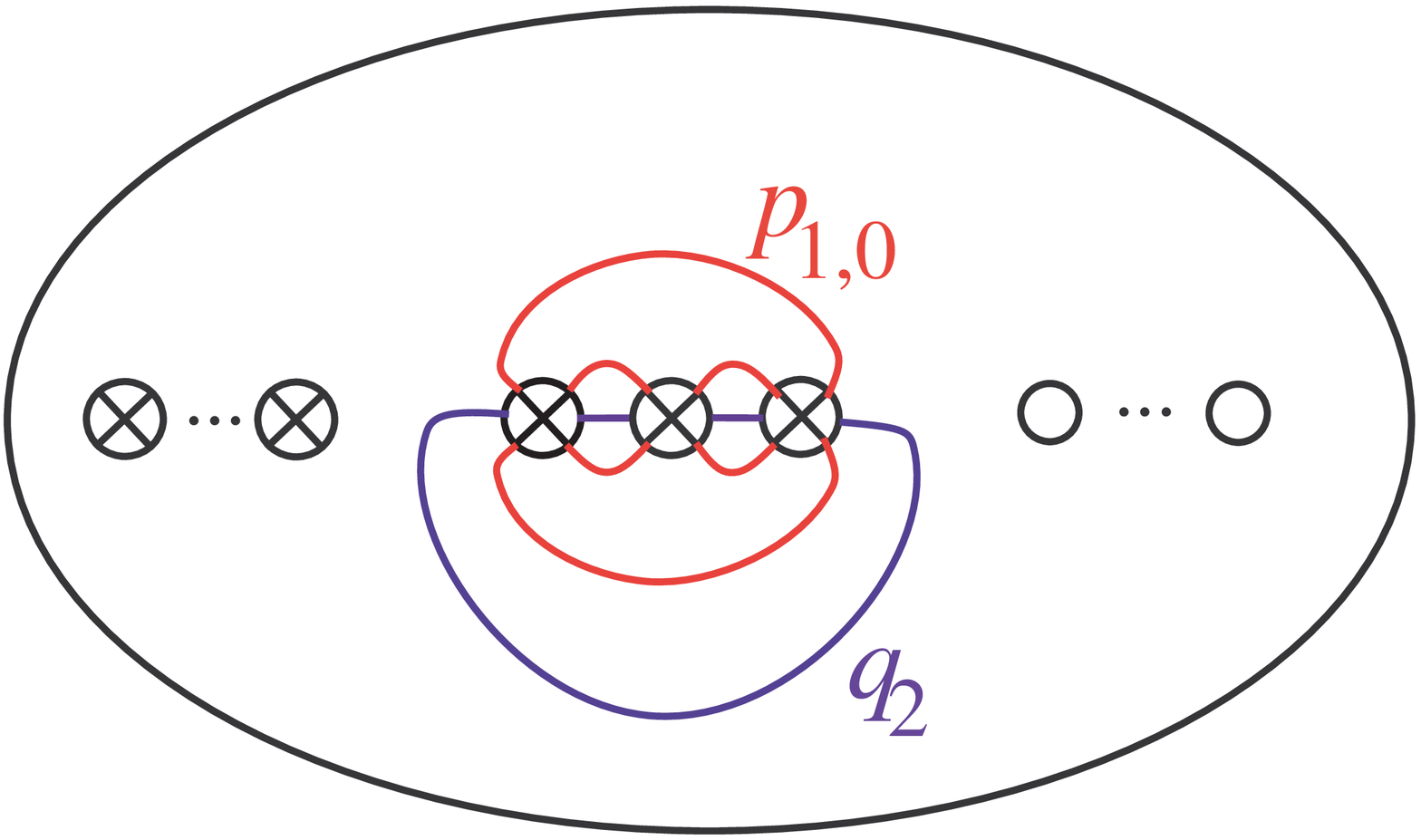} 
		
		(i)  \hspace{6cm}   (ii) 
		
		\epsfxsize=2.5in \epsfbox{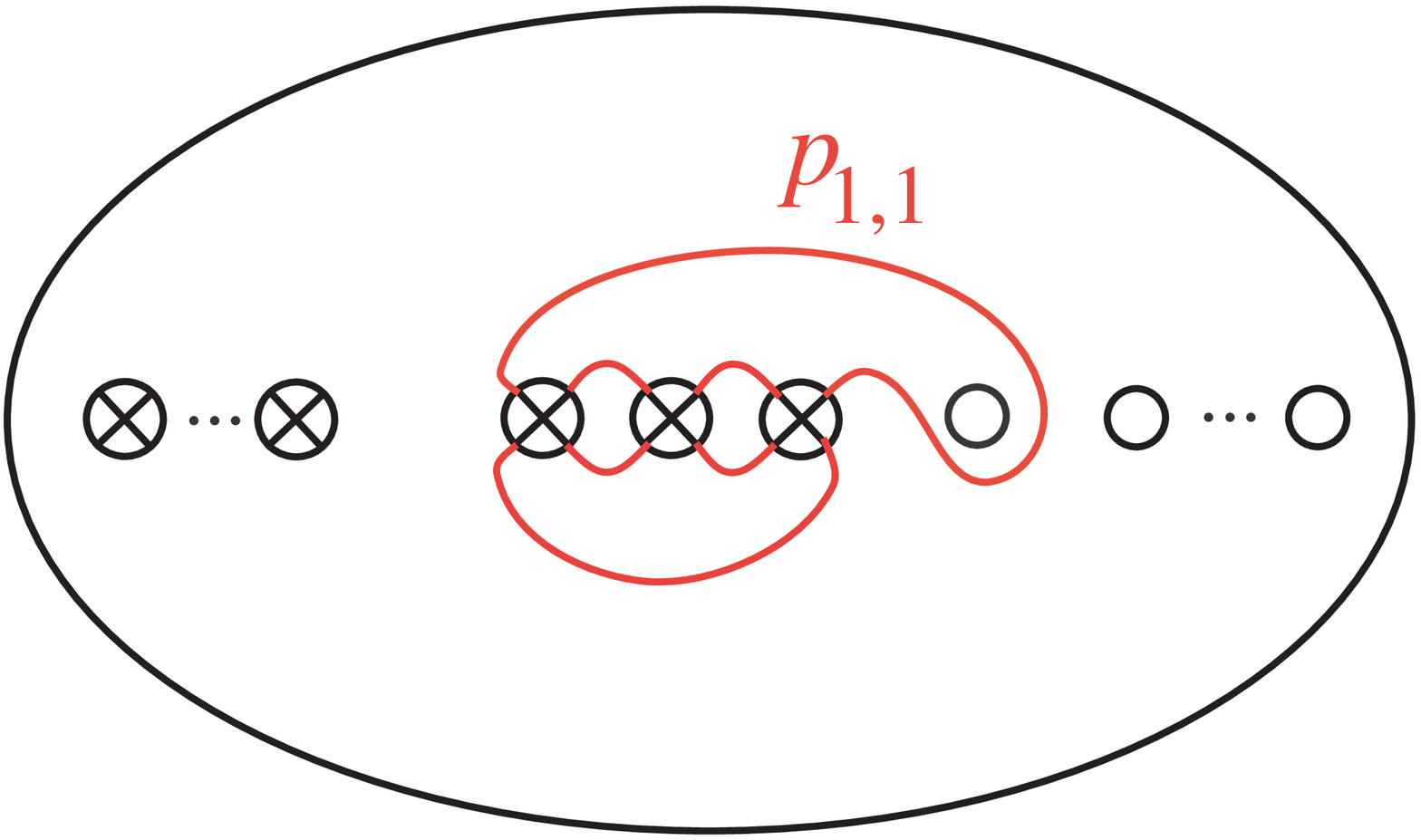} \hspace{0.1cm} 	\epsfxsize=2.5in \epsfbox{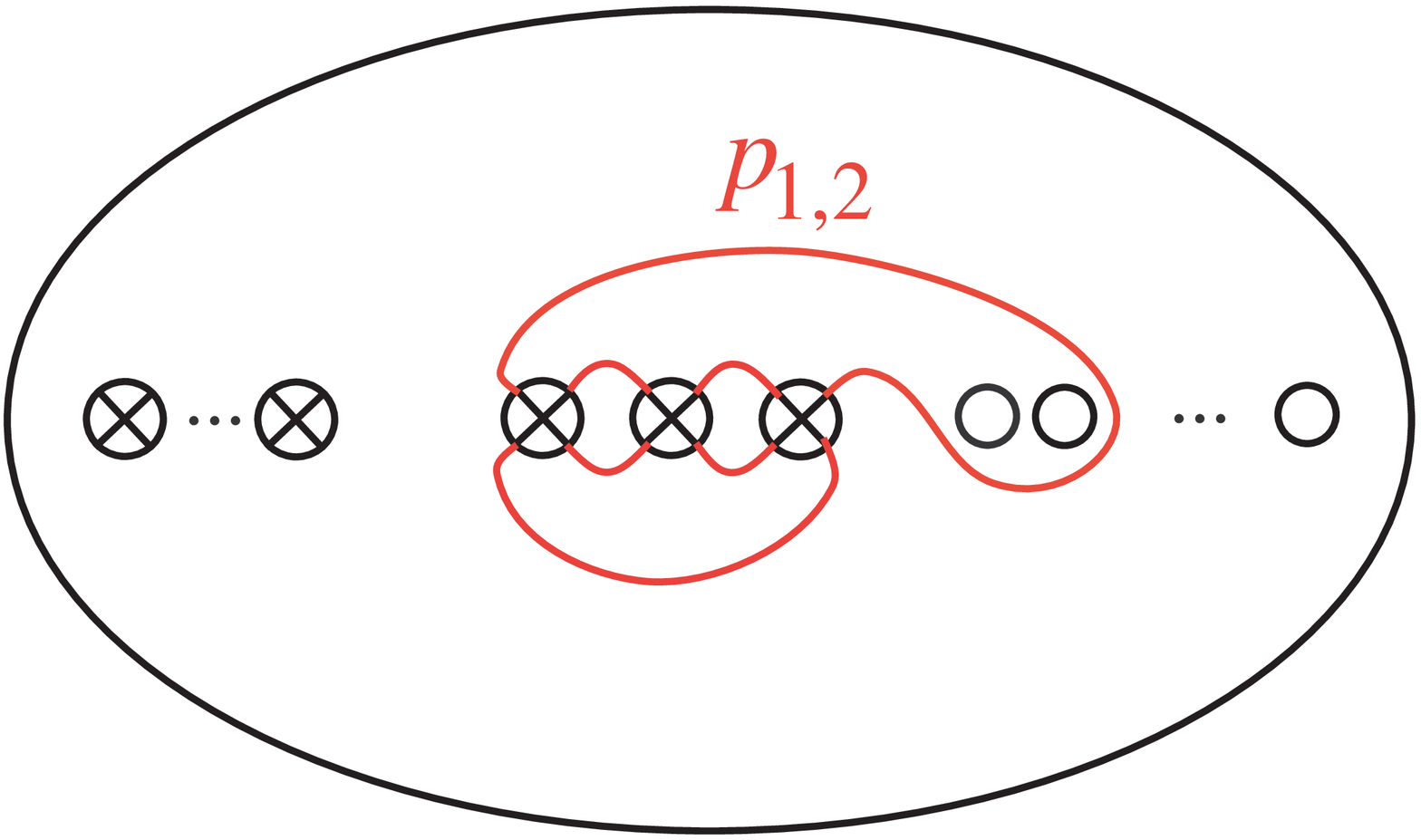}

		(iii)  \hspace{6cm}   (iv) 
		
		\epsfxsize=2.5in \epsfbox{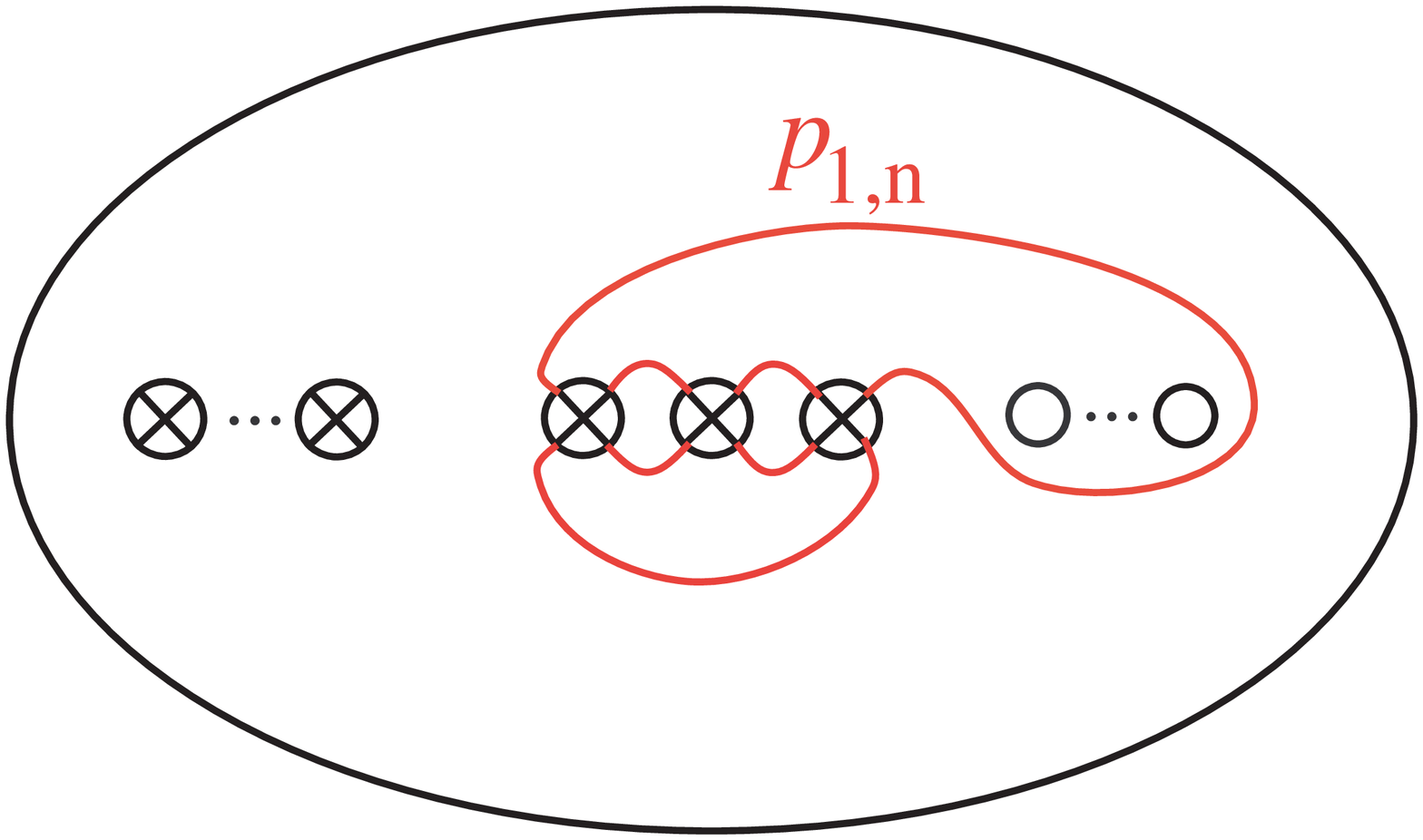} \hspace{0.1cm} 	\epsfxsize=2.5in \epsfbox{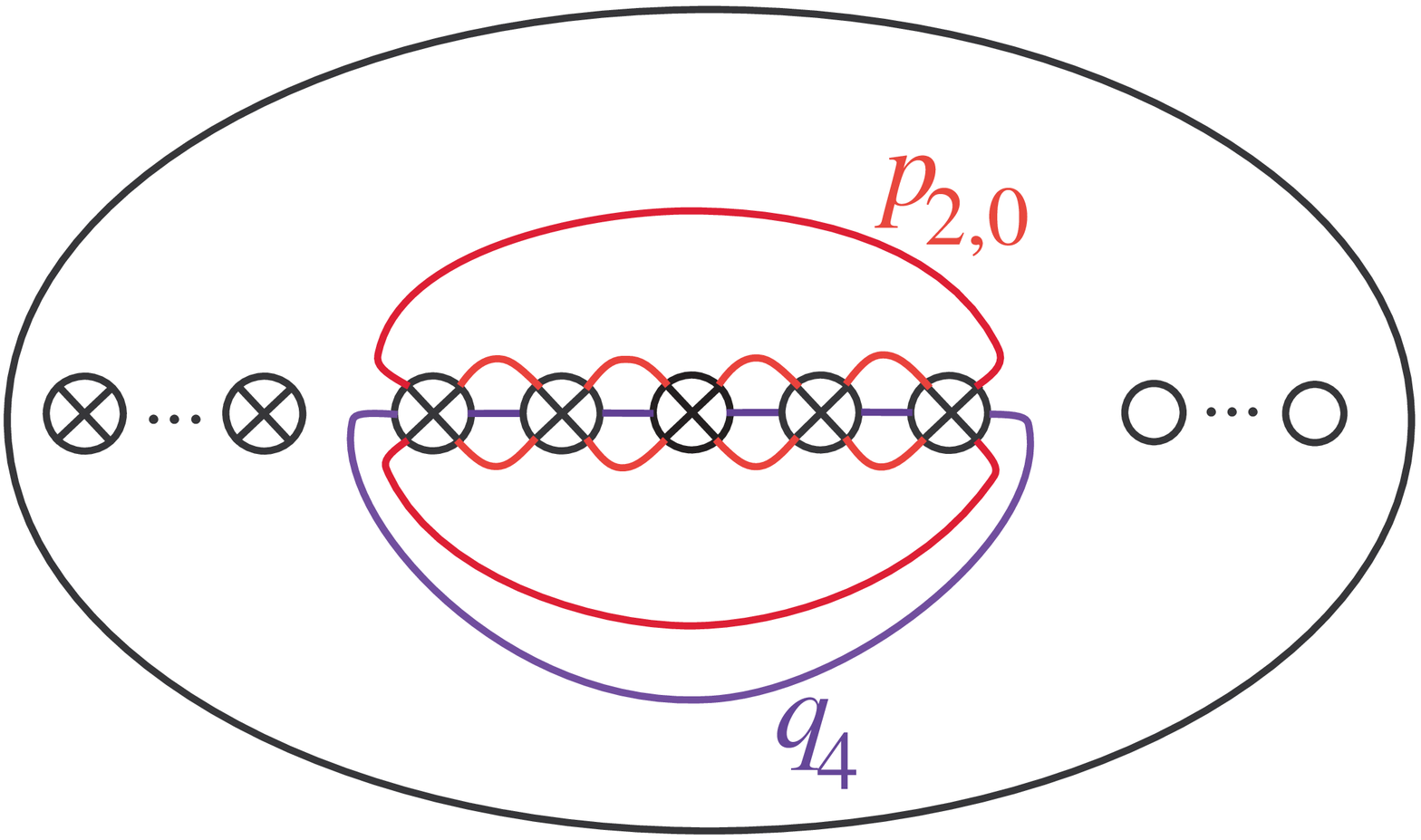} 
		
		(v)  \hspace{6cm}   (vi)
		
			\epsfxsize=2.5in \epsfbox{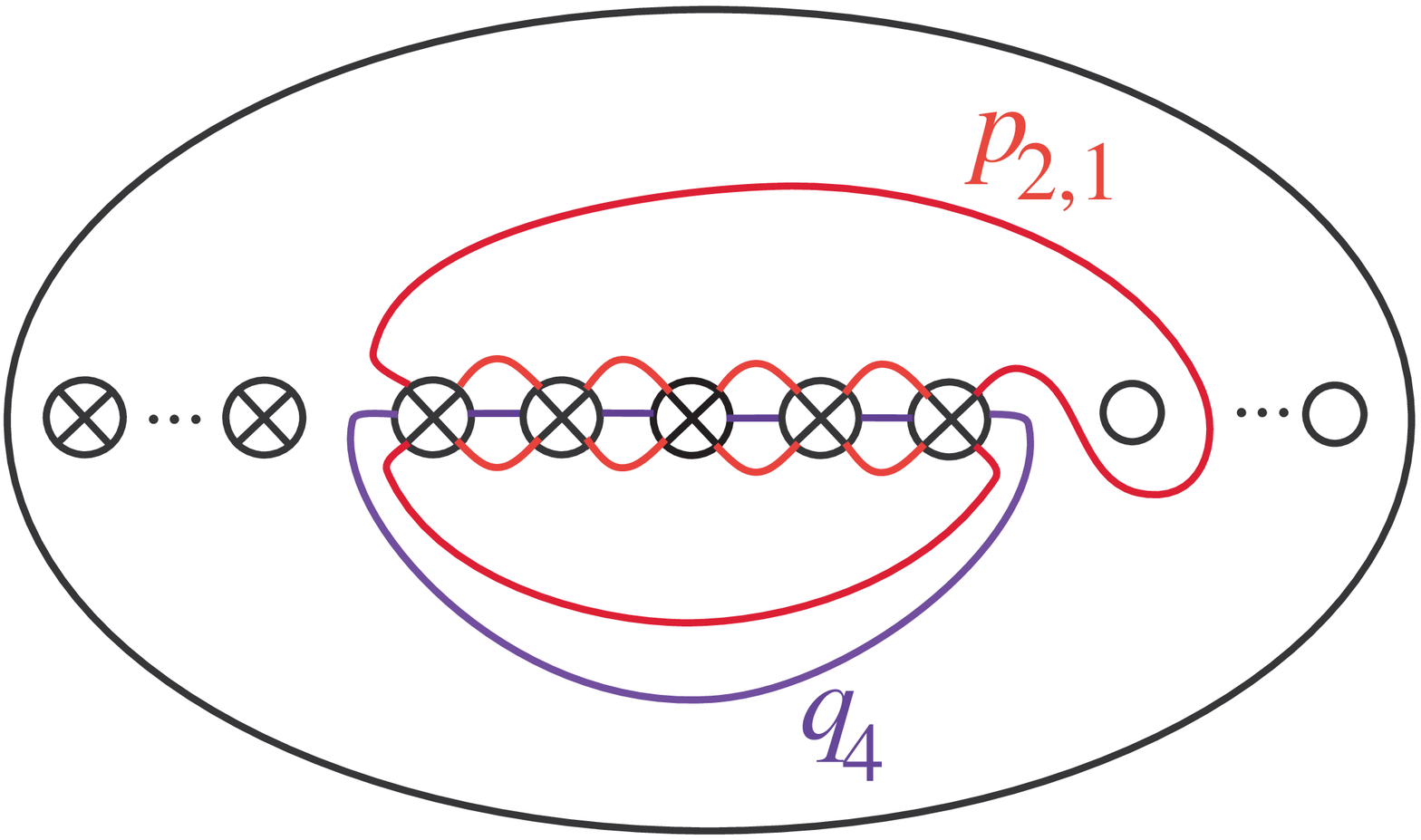} \hspace{0.1cm} 	\epsfxsize=2.5in \epsfbox{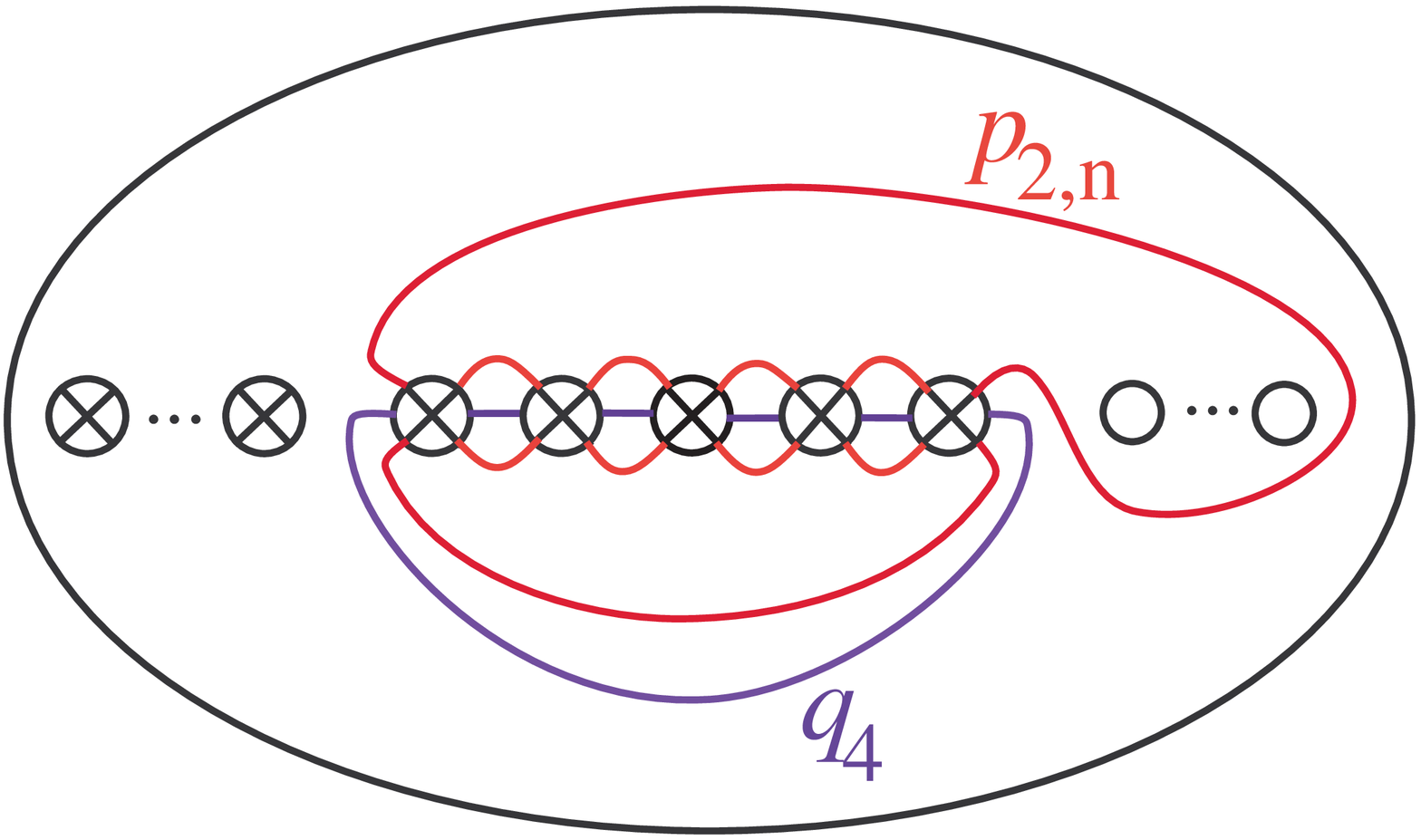} 
			
	(vii)  \hspace{6cm}   (viii)
			
		\caption{Curves in  $\mathcal{C}_5$ and $\mathcal{C}_6$}
		\label{Fig-7}
	\end{center}
\end{figure}

\begin{figure}
	\begin{center}
		\epsfxsize=2.5in \epsfbox{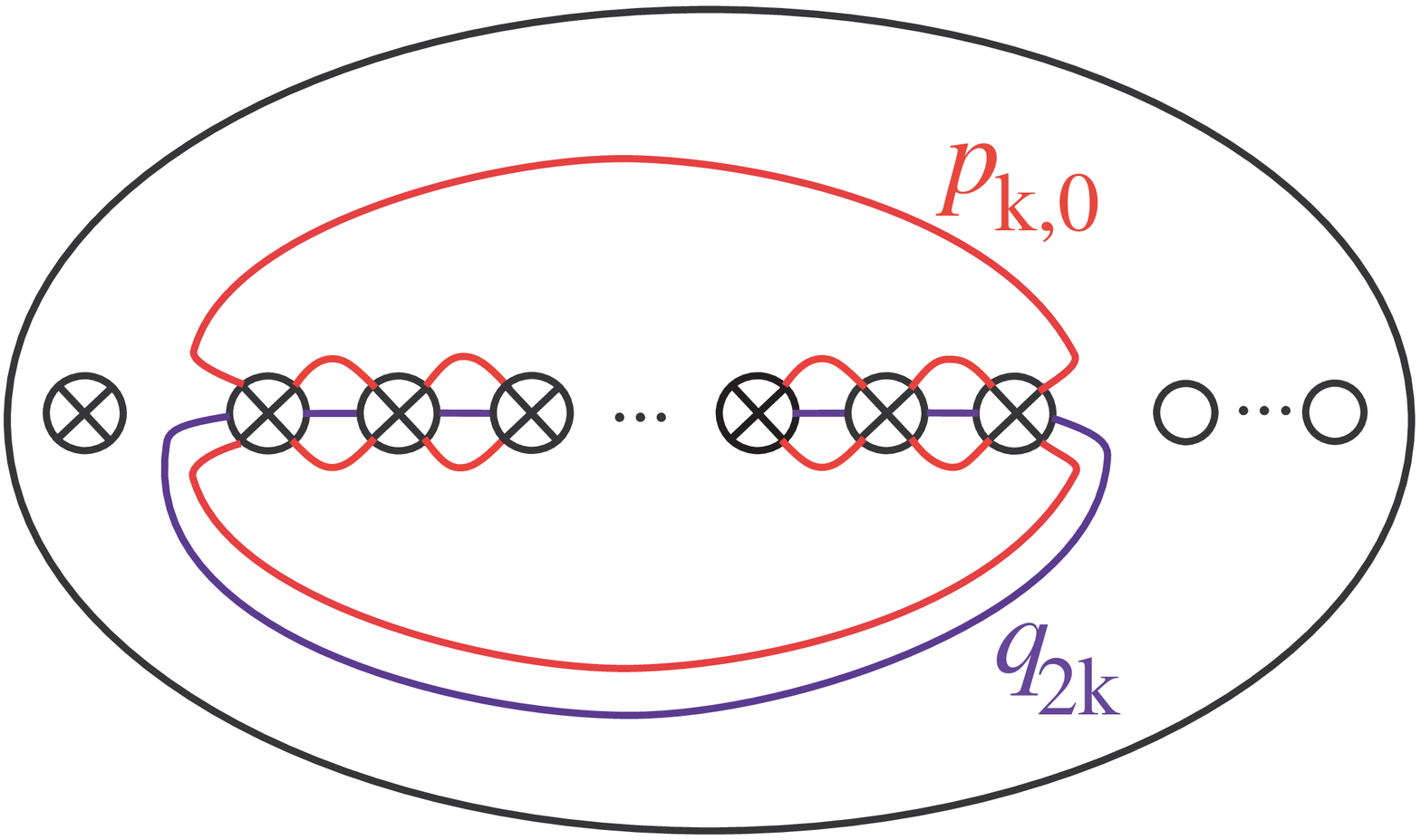} \hspace{0.1cm} 	\epsfxsize=2.5in \epsfbox{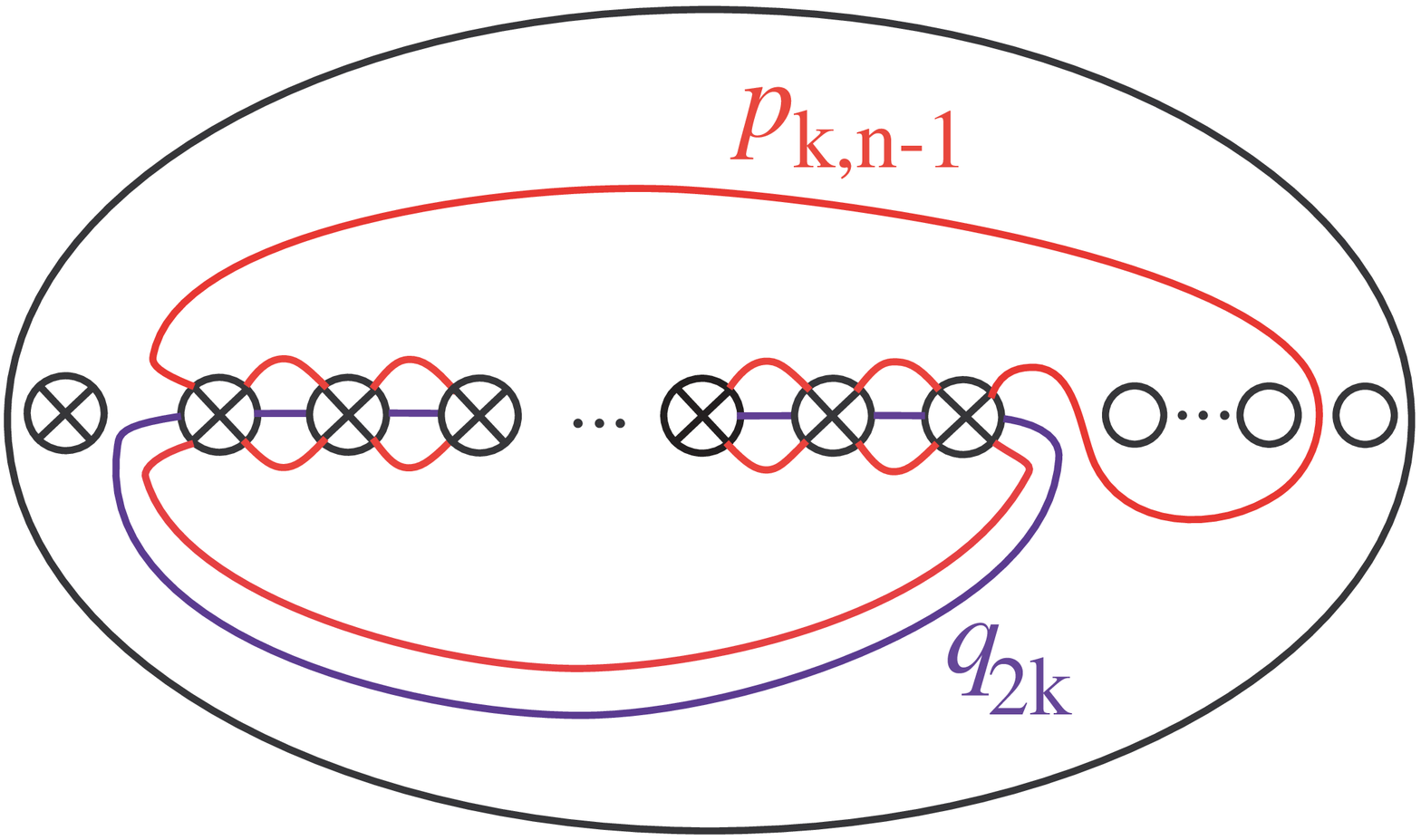} 
		
		(i)  \hspace{6cm}   (ii) 
		
		\epsfxsize=2.5in \epsfbox{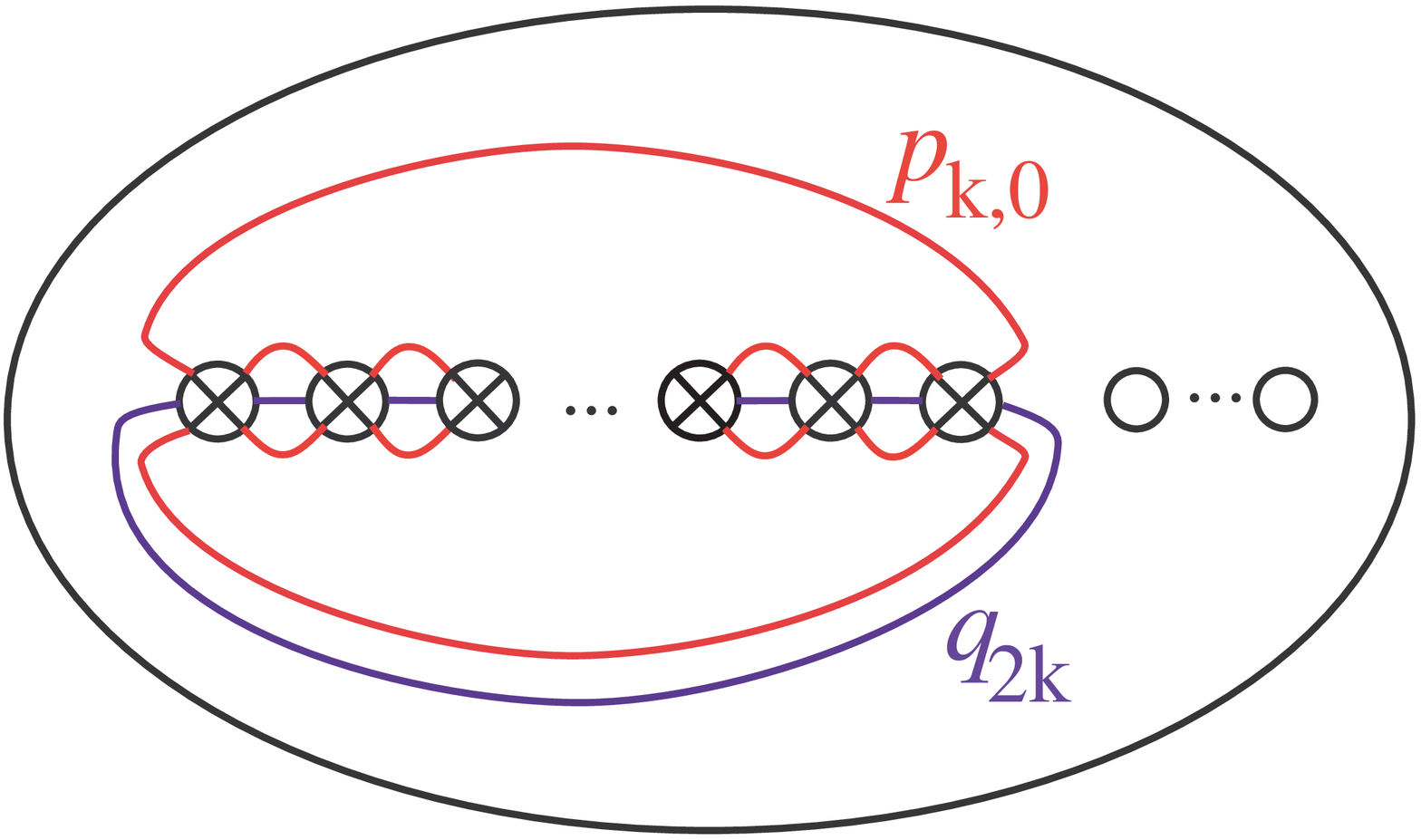} \hspace{0.1cm} 	\epsfxsize=2.5in \epsfbox{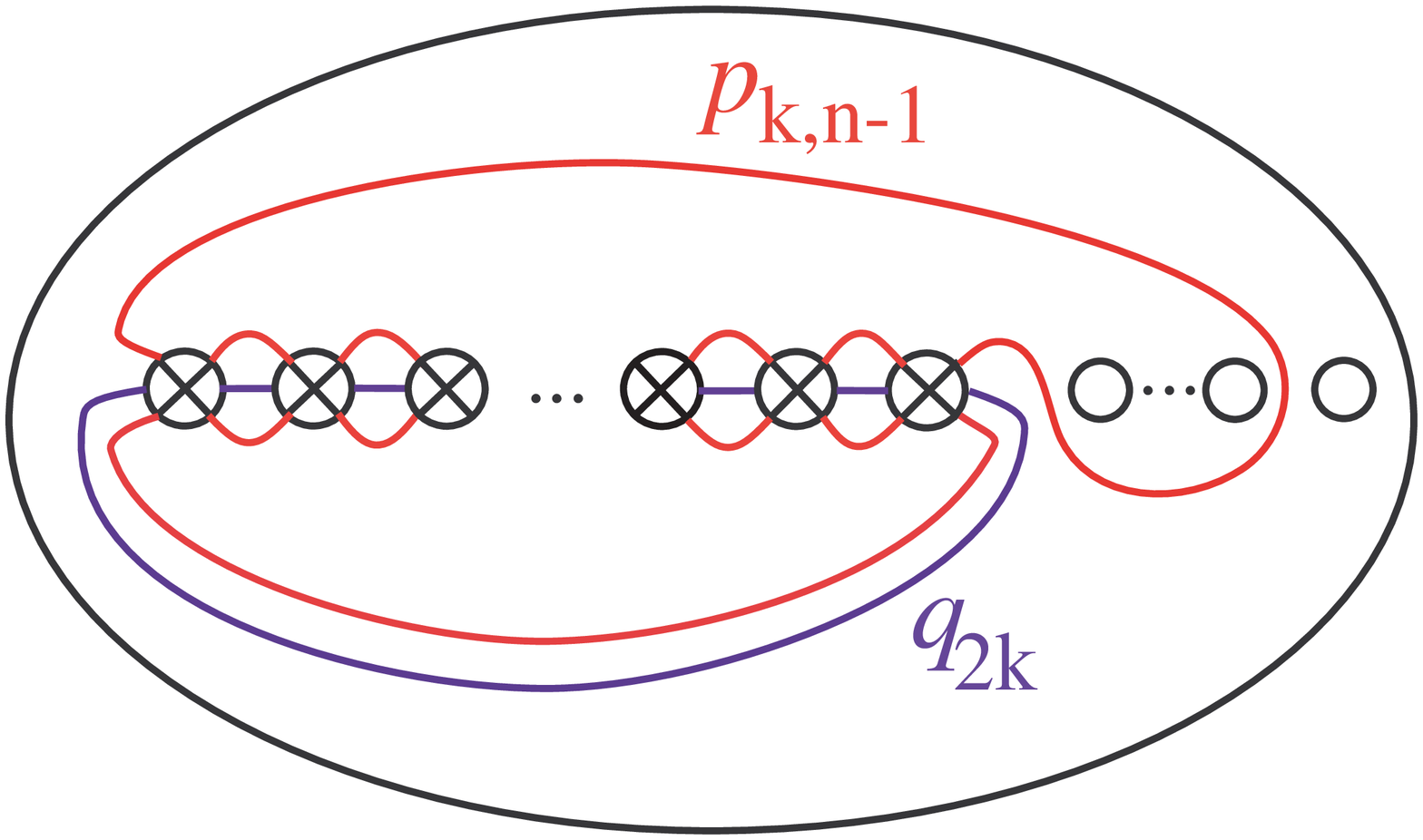} 
		
		(iii)  \hspace{6cm}   (iv) 
 
 	\epsfxsize=2.5in \epsfbox{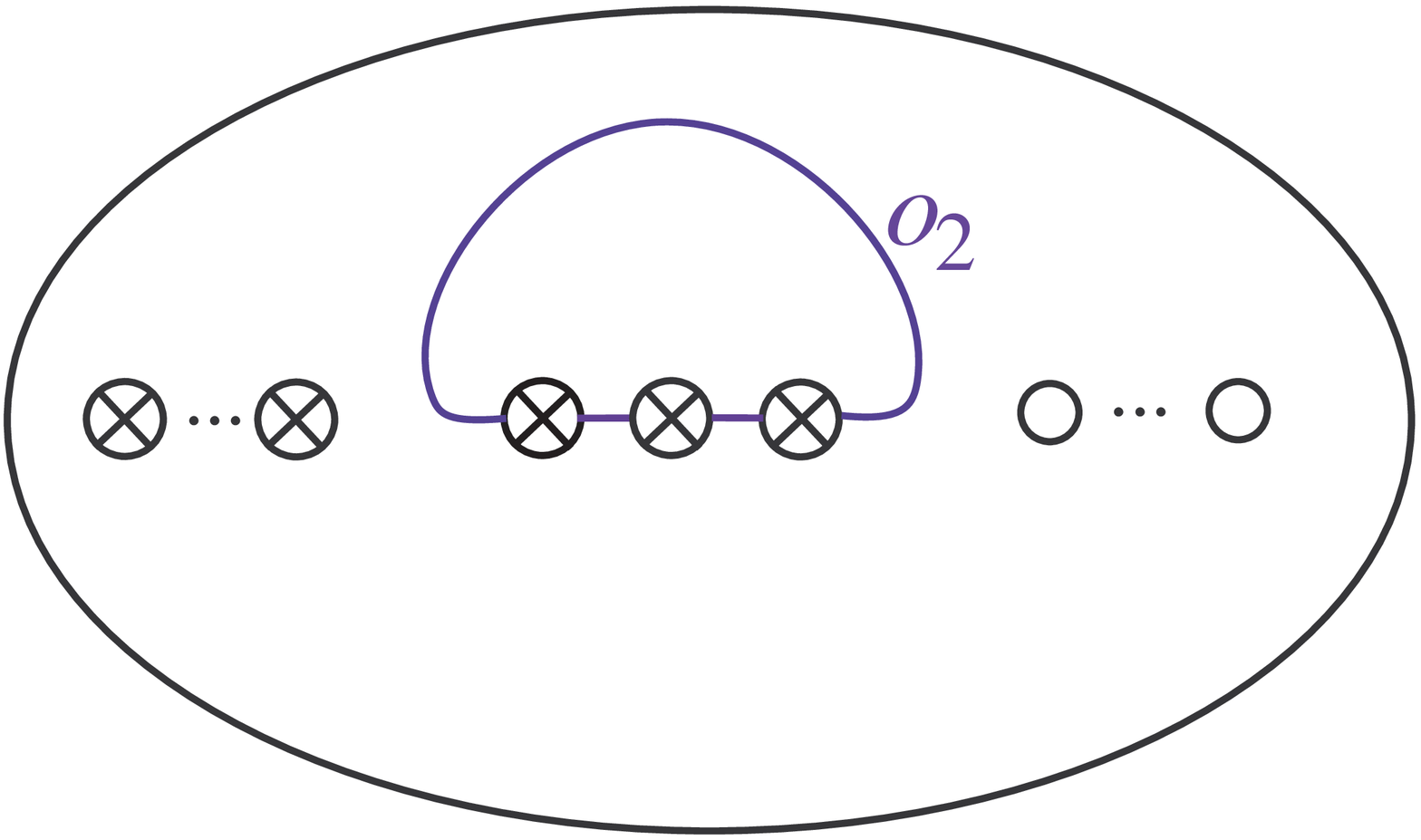} \hspace{0.1cm} 	\epsfxsize=2.5in \epsfbox{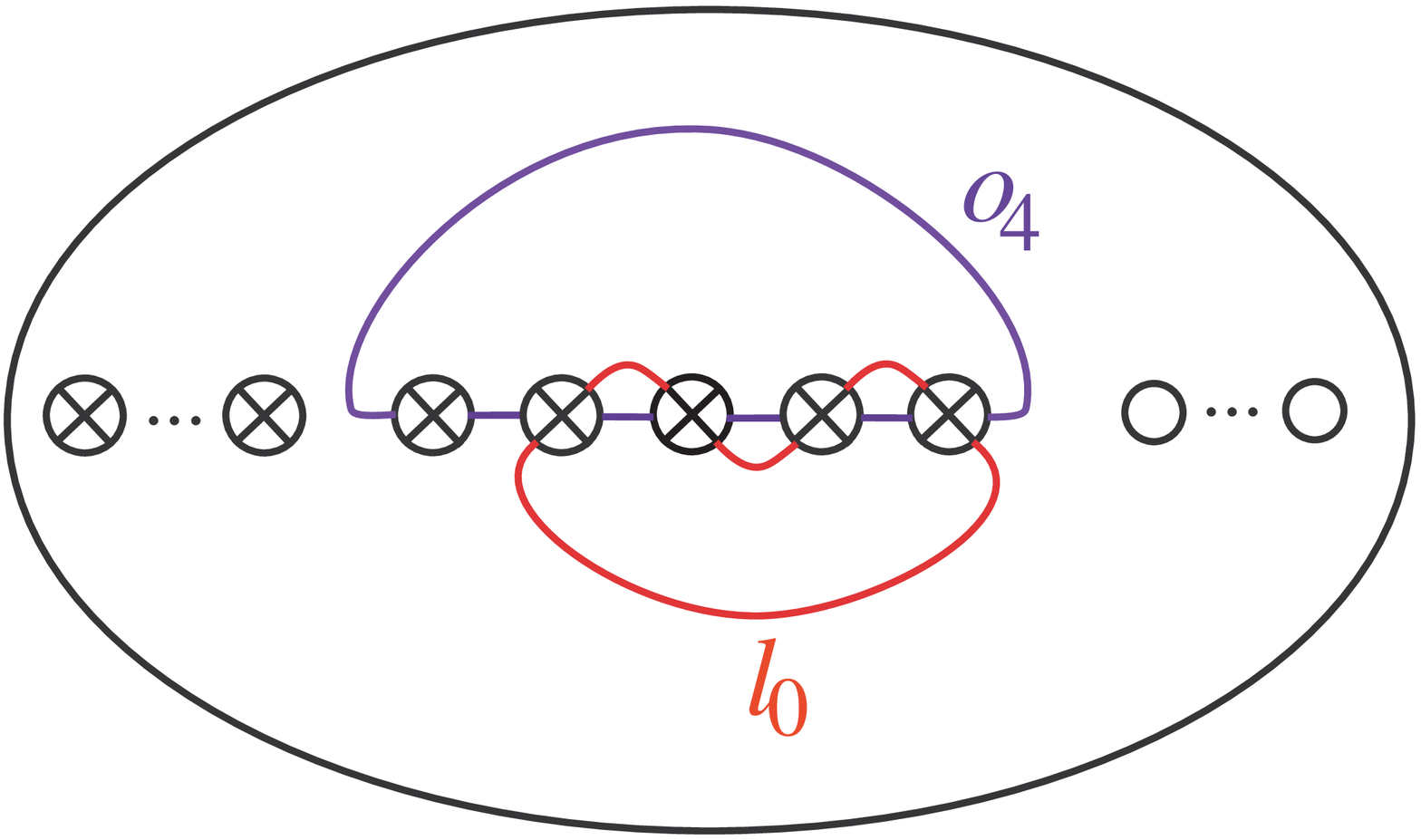} 
 
 (v)  \hspace{6cm}   (vi) 
 
 \epsfxsize=2.5in \epsfbox{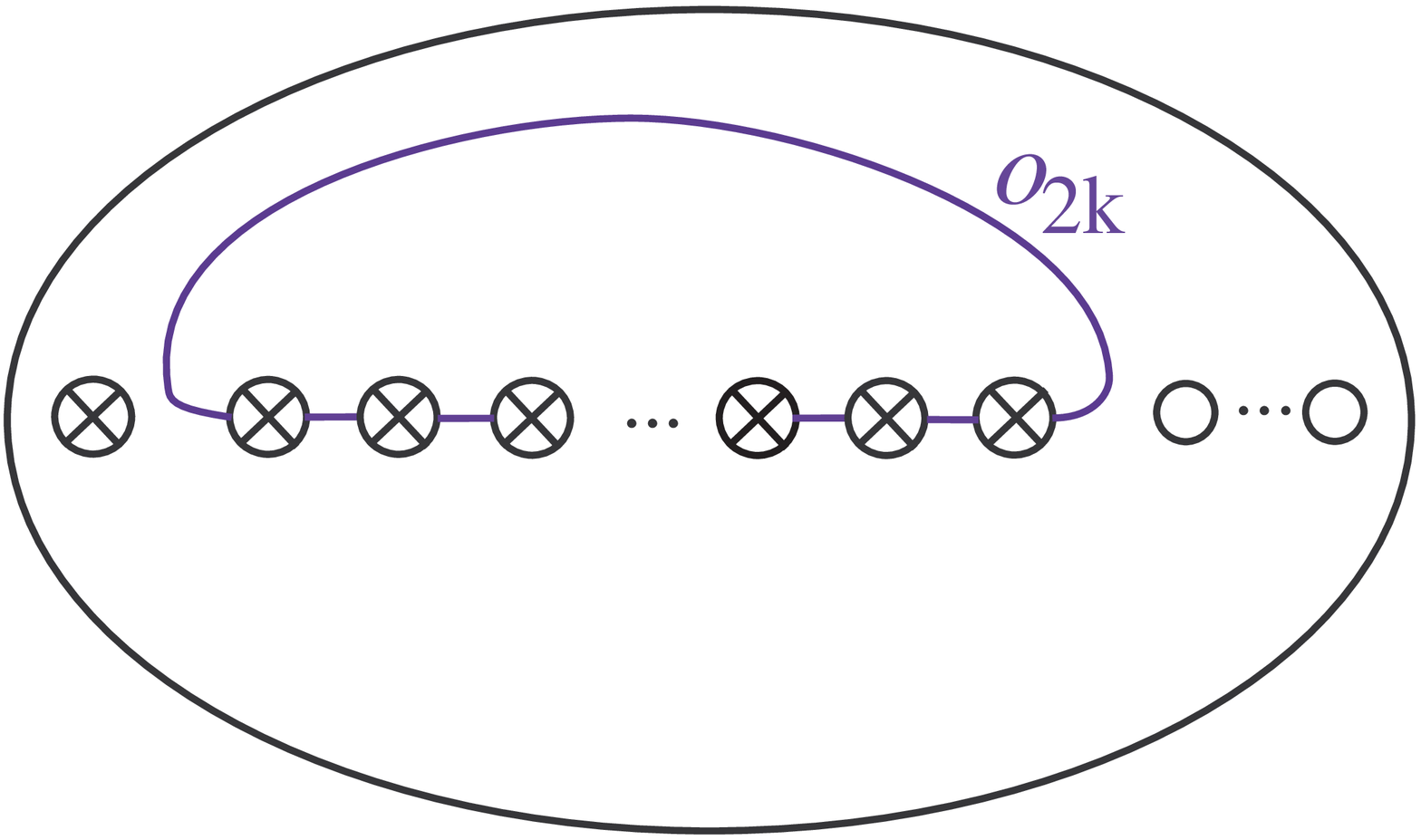} \hspace{0.1cm} 	\epsfxsize=2.5in \epsfbox{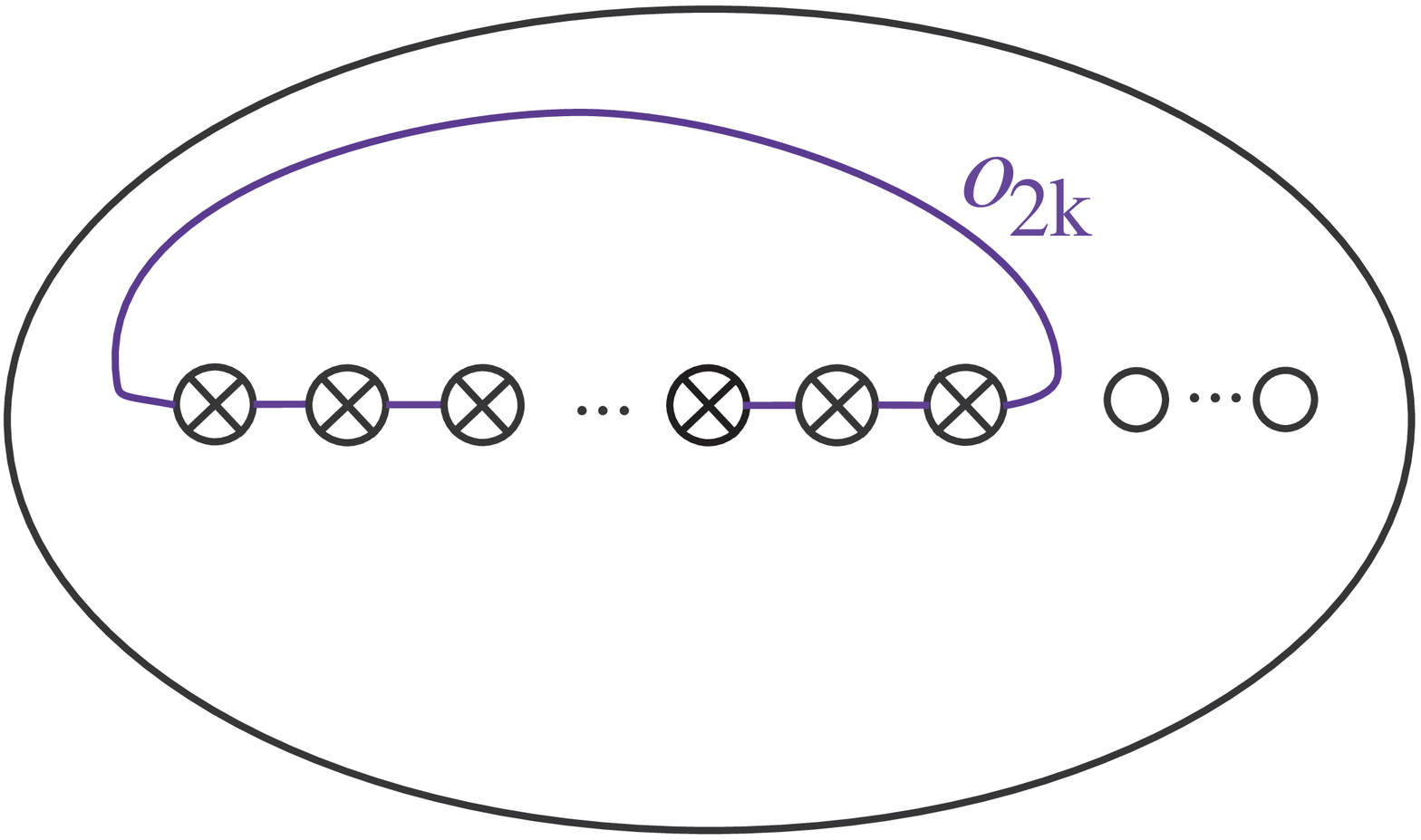} 
 
 (vii)  \hspace{6cm}   (viii) 
 	\caption{Curves in $\mathcal{C}_6$; (i), (ii) , (vii) for even genus surface; (iii), (iv), (viii) for odd genus surface }
 \label{Fig-8}
\end{center}
\end{figure}
 
Consider the curves $l_1$ and $d_{n-1}$ given in Figure \ref{Fig-7} (i) and $l_0$ given in Figure \ref{Fig-8} (vi). Let $\mathcal{C}_5 = \{l_0, l_1\}$ when $n \leq 1$ and let $\mathcal{C}_5 = \{l_1, d_{n-1}\}$ when $n \geq 2$. Note that $l_1$ has orientable complement on $N$. 
 
\begin{lemma} \label{C_5} If $g+n \geq 5$, then $\bigcup_{i=1} ^5\mathcal{C}_i$ is a finite superrigid set.\end{lemma}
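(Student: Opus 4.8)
The plan is to follow the template of Lemmas \ref{C_2}, \ref{C_3} and \ref{C_4}. Let $\lambda : \bigcup_{i=1}^5 \mathcal{C}_i \to \mathcal{C}(N)$ be a superinjective simplicial map. As in the proof of Lemma \ref{inj}, $\lambda$ is injective, and being superinjective it sends $2$-sided curves to $2$-sided curves and $1$-sided curves to $1$-sided curves. By Lemma \ref{C_4} there is a homeomorphism $h : N \to N$ with $h([x]) = \lambda([x])$ for every $x$ in $\bigcup_{i=1}^4 \mathcal{C}_i$, and it remains to show $h([x]) = \lambda([x])$ for the two vertices of $\mathcal{C}_5$.

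First I would pin down $[l_1]$. Using that $l_1$ has orientable complement in $N$, I would complete a subfamily of curves of $\bigcup_{i=1}^4\mathcal{C}_i$ disjoint from $l_1$ to a pants decomposition $P$ of $N$ consisting of curves of $\bigcup_{i=1}^4\mathcal{C}_i$ so that $l_1$ meets only a small, explicit subset of $P$. Cutting $N$ along the curves of $P$ avoided by $l_1$ leaves a subsurface $M$ in which $l_1$ is, up to isotopy, the unique essential curve of its topological type disjoint from and nonisotopic to the finitely many designated curves of $\bigcup_{i=1}^4\mathcal{C}_i$ lying in $M$ and to all curves of $\bigcup_{i=1}^4\mathcal{C}_i$ contained in the complement of $M$. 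Since $\lambda$ is superinjective it preserves disjointness and nondisjointness, and since $h([x]) = \lambda([x])$ on all of these witness curves while $\lambda$ is injective, $\lambda([l_1])$ must be the unique curve in $h(M)$ with the corresponding properties, namely $h([l_1])$; hence $h([l_1]) = \lambda([l_1])$.

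Next I would treat the remaining vertex, splitting on $n$. If $n \geq 2$ it is $d_{n-1}$ of Figure \ref{Fig-7} (i): I would characterize $[d_{n-1}]$ by the same cut-subsurface device, now free to use $l_1$ itself among the witness curves together with suitable curves of $\bigcup_{i=1}^4\mathcal{C}_i$ bounding the relevant piece, so that $d_{n-1}$ is the unique curve of its type disjoint from and nonisotopic to all of them; superinjectivity and injectivity of $\lambda$ then yield $h([d_{n-1}]) = \lambda([d_{n-1}])$. If $n \leq 1$ then $\mathcal{C}_5 = \{l_0, l_1\}$ and $l_0$ of Figure \ref{Fig-8} (vi) is handled in the same way, now using that on a closed or one-holed nonorientable surface the relevant cut pieces (M\"{o}bius bands, Klein bottles with one hole, pairs of pants) each support a unique curve of prescribed $1$-sided or $2$-sided type with prescribed disjointness. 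Once $h([x]) = \lambda([x])$ for every $x \in \bigcup_{i=1}^5\mathcal{C}_i$, the set is a finite superrigid set.

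The hard part will not be conceptual but the bookkeeping: for each parity of $g$ and for $n = 0$, $n = 1$, and $n \geq 2$ one must exhibit from the figures an explicit list of witness curves inside $\bigcup_{i=1}^4\mathcal{C}_i$ (together with $l_1$ for the second vertex) and verify that the induced cut subsurface genuinely determines $l_1$, $d_{n-1}$ or $l_0$ up to isotopy. The orientability of the complement of $l_1$ is exactly what makes the first uniqueness claim work, since it is encoded by $l_1$ being the unique curve of its type disjoint from an appropriate already-controlled configuration spanning an orientable-complement region; any competing curve of the same type is ruled out precisely because it would fail the disjointness pattern with some curve of $\bigcup_{i=1}^4\mathcal{C}_i$.
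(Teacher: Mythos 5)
Your proposal follows the paper's proof essentially verbatim in structure: apply Lemma \ref{C_4} to get $h$ agreeing with $\lambda$ on $\bigcup_{i=1}^4\mathcal{C}_i$, then identify $[l_1]$ and afterwards $[d_{n-1}]$ (if $n\geq 2$) or $[l_0]$ (if $n\leq 1$) as the unique curve with a prescribed disjointness pattern relative to already-determined curves, a pattern which superinjectivity forces $\lambda$ to respect. The only difference is that the paper actually supplies the witness lists you deferred as ``bookkeeping'' (for instance, $l_1$ is the unique nontrivial curve disjoint from $c_1,\dots,c_{g-1}$ when $N$ is closed, with $d_{1,2}, b_{g,g+n}, b_{g,g+2}, b_{g+1,g+3},\dots,b_{g+n-2,g+n}$ added when $N$ has boundary, and explicit lists likewise for $l_0$ and $d_{n-1}$), so that deferred verification is exactly the content of the paper's proof.
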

 
\begin{proof} Let $\lambda : \bigcup_{i=1} ^5 \mathcal{C}_i \rightarrow \mathcal{C}(N)$ be a superinjective simplicial map. By Lemma \ref{C_4}, there exists a homeomorphism $h: N \rightarrow N$ such that $h([x]) = \lambda([x])$ for every $x$ in $\bigcup_{i=1} ^4 \mathcal{C}_i$. We will first show that $h([l_1]) = \lambda([l_1])$. (i) Suppose $N$ is a closed surface. Then $l_1$ is the unique nontrivial curve up to isotopy disjoint from each of $c_1, c_2, \cdots, c_{g-1}$.  Since $\lambda$ is superinjective and preserves this property, we have  $h([l_1]) = \lambda([l_1])$. (ii) Suppose $N$ has boundary. The curve $l_1$ is the unique nontrivial curve up to isotopy disjoint from each of $c_1, c_2, \cdots, c_{g-1}, d_{1,2},  b_{g,g+n}, b_{g,g+2}, b_{g+1,g+3}, \cdots, b_{g+n-2,g+n}$. Since $\lambda$ is superinjective and preserves this property, we have $h([l_1]) = \lambda([l_1])$. 
	
Suppose $n \leq 1$. Then we have $g \geq 4$ since $g+n \geq 5$. The curve $l_0$ is the unique nontrivial curve up to isotopy disjoint from and nonisotopic to each of $c_1, c_2, c_3, d_{1,2}, b_{4,g+n}$ and all the curves of $\bigcup_{i=1} ^4 \mathcal{C}_i$ on the other side of $b_{4,g+n}$. Since $\lambda$ is superinjective and preserves these properties, we have $h([l_0]) = \lambda([l_0])$.	

Suppose $n \geq 2$. The curve $d_{n-1}$ is the unique curve up to isotopy disjoint from and nonisotopic to $b_{2,4}, b_{3,5}, \cdots, b_{g-2,g}, r_{g+1}, b_{g+1,g+3}, b_{g+2,g+4}, \cdots,  b_{g+n-2,g+n}, a_3, a_4, \cdots, a_g,$ 
$c_1,d_{1,2}$. Since $h([x]) = \lambda([x])$ for all these curves and $\lambda$ preserves these properties we have $h([d_{n-1}]) = \lambda([d_{n-1}])$.
So, $h([x]) = \lambda([x])$ for every $x$ in $\bigcup_{i=1} ^5 \mathcal{C}_i$. Hence, $\bigcup_{i=1} ^5 \mathcal{C}_i$ is a finite superrigid set.\end{proof}\\

For genus $g \geq 4$, $g=2k$ for some $k \in \mathbb{Z}^+$, and $g+n \geq 5$, we define $\mathcal{C}_6 = \{p_{i,j}, q_t, o_t: i= 1, 2, \cdots k, \ j= 0, 1, 2, \cdots, n, \ t=2, 4, \cdots, 2k\}$ where the curves $p_{i,j}, q_t, o_t$ are as shown in Figure \ref{Fig-7} and Figure \ref{Fig-8}. Note that $p_{i,j}$ is a separating curve that separates the surface into two pieces and one of them is an orientable surface of genus $i$ and $j+1$ boundary components. The boundary components of this piece are $p_{i,j}$ and $j$ boundary components of $N$. If genus $g = 2$ and $g+n \geq 5$, we let $\mathcal{C}_6 = \emptyset$.

For genus $g \geq 3$, $g=2k+1$ for some $k \in \mathbb{Z}^+$, and $g+n \geq 5$, we define $\mathcal{C}_6 = \{p_{i,j}, q_t, o_t: i= 1, 2, \cdots k, \ j= 0, 1, 2, \cdots, n-1, \ t=2, 4, \cdots, 2k\}$ where the curves $p_{i,j}, q_t, o_t$ are as shown in Figure \ref{Fig-7} and Figure \ref{Fig-8}. The curve $p_{i,j}$ is a separating curve that separates the surface into two pieces and one of them is an orientable surface of genus $i$ and $j+1$ boundary components. The boundary components of this piece are $p_{i,j}$ and $j$ boundary components of $N$. When $g=2k+1$ the curve $p_{k,n}$ corresponds to a trivial curve bounding a  M\"{o}bius band, that is why we don't include it in our set. If genus $g = 1$ and $g+n \geq 5$, we let $\mathcal{C}_6 = \emptyset$.
 
\begin{lemma} \label{C_6} If $g+n \geq 5$, then $\bigcup_{i=1} ^6 \mathcal{C}_i$ is a finite superrigid set with trivial pointwise stabilizer.\end{lemma}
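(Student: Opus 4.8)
When $g=1$ or $g=2$ we have $\mathcal{C}_6=\emptyset$, so $\bigcup_{i=1}^6\mathcal{C}_i=\bigcup_{i=1}^5\mathcal{C}_i$ and the statement is contained in Lemma \ref{C_5} together with the triviality claim explained below; so assume $g\geq 3$. The plan is to argue exactly as in the proofs of Lemmas \ref{C_3}, \ref{C_4} and \ref{C_5}: start from a homeomorphism realizing $\lambda$ on $\bigcup_{i=1}^5\mathcal{C}_i$ and extend it over $\mathcal{C}_6$ one curve at a time, each time using that the new curve is the unique curve of its topological type which is disjoint from (and non-isotopic to) a suitable finite list of curves that have already been pinned down.

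First I would set up the reduction. Let $\lambda:\bigcup_{i=1}^6\mathcal{C}_i\to\mathcal{C}(N)$ be a superinjective simplicial map. As in Lemma \ref{inj}, $\lambda$ is injective, so by Lemma \ref{C_5} there is a homeomorphism $h:N\to N$ with $h([x])=\lambda([x])$ for every $x\in\bigcup_{i=1}^5\mathcal{C}_i$; it remains to prove $h([x])=\lambda([x])$ for the curves $p_{i,j}$, $q_t$, $o_t$ of $\mathcal{C}_6$. Since $\lambda$ sends $2$-sided curves to $2$-sided curves, preserves $i(\cdot,\cdot)=0$ and $i(\cdot,\cdot)\neq0$, and is injective, it is enough, for each $c\in\mathcal{C}_6$, to produce a finite family $\mathcal{F}_c$ of curves drawn from $\bigcup_{i=1}^5\mathcal{C}_i$ together with members of $\mathcal{C}_6$ already treated, such that $c$ is, up to isotopy, the unique nontrivial $2$-sided simple closed curve which separates $N$ in the prescribed way (for the $p_{i,j}$) and is disjoint from and non-isotopic to every member of $\mathcal{F}_c$. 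Then $\lambda([c])$ satisfies the same list with $h(\mathcal{F}_c)=\lambda(\mathcal{F}_c)$ in place of $\mathcal{F}_c$, forcing $\lambda([c])=h([c])$.

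I would carry this out in the order: first the separating curves $p_{i,j}$ (by increasing $i$, then increasing $j$, so the orientable piece cut off by $p_{i,j}$ nests inside the orientable complement of the already-fixed curve $l_1$), then the curves $q_t$, then the curves $o_t$. For a fixed $p_{i,j}$ one cuts $N$ along an appropriate $b_{k,\ell}\in\mathcal{C}_1$ so that the relevant handles lie in a small subsurface, and then uses the curves $c_s$, $d_{s,s+1}$, $v_s$ of $\mathcal{C}_3$ inside that subsurface together with $l_1$ and the relevant $b_{k,\ell}$'s of $\mathcal{C}_1$ (which fill the complement) to see that $p_{i,j}$ is the unique $2$-sided separating curve bounding an orientable genus-$i$ piece with $j+1$ boundary components disjoint from all of them; when $g=2k+1$ one skips the degenerate index $(k,n)$, as in the definition of $\mathcal{C}_6$. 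The curves $q_t$ and $o_t$ are then pinned down by analogous uniqueness statements that now also use the $p_{i,j}$'s; since the configuration of $q_t,o_t$ depends on the parity of $g$ and on whether $n=0$ or $n\geq1$ (cf.\ Figures \ref{Fig-7} and \ref{Fig-8}), these last steps are run separately in those cases.

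The main obstacle is the uniqueness characterization of the \emph{separating} curves $p_{i,j}$: unlike one-sided curves or curves detected through chains, a separating curve is not forced by a single pentagon or corner configuration, so one must check that the chosen $\mathcal{F}_{p_{i,j}}$ genuinely fills both sides of $p_{i,j}$ and excludes every competing curve (in particular the images of $p_{i,j}$ under the handle-permuting involutions), and this is what makes the parity/boundary bookkeeping unavoidable. Once these uniqueness lemmas are verified, we obtain $h([x])=\lambda([x])$ on all of $\bigcup_{i=1}^6\mathcal{C}_i$, so the set is finite superrigid. Finally, the pointwise stabilizer is trivial for free: it is a subgroup of the pointwise stabilizer of $\mathcal{C}_1\cup\mathcal{C}_2$, which is trivial by Lemma \ref{C_2}.
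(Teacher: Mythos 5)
Your overall strategy is the same as the paper's: start from the homeomorphism $h$ given by Lemma \ref{C_5} on $\bigcup_{i=1}^5\mathcal{C}_i$ and pin down each curve of $\mathcal{C}_6$ as the unique curve disjoint from (and non-isotopic to) a finite family of already-identified curves; your handling of the trivial stabilizer (it sits inside the pointwise stabilizer of $\mathcal{C}_1\cup\mathcal{C}_2$, which is trivial by Lemma \ref{C_2}) is correct and clean. But there is a genuine gap in your reduction step for the separating curves $p_{i,j}$. You propose to characterize $p_{i,j}$ as the unique $2$-sided curve \emph{which separates $N$ in the prescribed way} (cutting off an orientable genus-$i$ piece with $j+1$ boundary components) and is disjoint from the family $\mathcal{F}_{p_{i,j}}$, and then assert that $\lambda([p_{i,j}])$ "satisfies the same list". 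It does not: superinjectivity gives you that $\lambda([p_{i,j}])$ is $2$-sided, disjoint from $\lambda(\mathcal{F}_{p_{i,j}})=h(\mathcal{F}_{p_{i,j}})$, and distinct from its members, but nothing tells you that $\lambda([p_{i,j}])$ is separating, let alone that it bounds an orientable genus-$i$ subsurface with the right number of boundary components. So uniqueness within a prescribed topological type is not enough; the characterizing family must be large enough that $p_{i,j}$ is the unique nontrivial curve (up to isotopy, among \emph{all} curves, or at least all $2$-sided ones) disjoint from and non-isotopic to its members. This is exactly why the paper treats the curves $q_t$ and $o_t$ \emph{first} and then includes $q_{2i}$ in the list used to identify $p_{i,j}$: the extra curve is needed to fill the orientable piece so that no competing curve of any type survives. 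Your proposed order (the $p_{i,j}$ first, using only curves of $\bigcup_{i=1}^5\mathcal{C}_i$) is precisely the place where this issue bites, and you give no concrete family or uniqueness verification for it.

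Relatedly, the heart of this lemma in the paper is the explicit lists: for each $q_t$, $o_t$, $p_{i,j}$ a specific collection of curves from $\mathcal{C}_1\cup\cdots\cup\mathcal{C}_5$ (plus previously treated curves of $\mathcal{C}_6$) for which the uniqueness statement actually holds, with the bookkeeping for $g=2k$ versus $g=2k+1$ and for the range of $j$. You correctly identify this as "the main obstacle," but your write-up defers it entirely ("once these uniqueness lemmas are verified"), so as it stands the proposal is a plan rather than a proof: fix the reduction so that uniqueness is among all nontrivial curves disjoint from the family (most easily by fixing $q_2,q_4,\dots$ first and using them, as the paper does), and then exhibit and check the families.
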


\begin{proof} Let $\lambda : \bigcup_{i=1} ^6 \mathcal{C}_i \rightarrow \mathcal{C}(N)$ be a superinjective simplicial map. By Lemma \ref{C_5}, there exists a homeomorphism $h: N \rightarrow N$ such that $h([x]) = \lambda([x])$ for every $x$ in $\bigcup_{i=1} ^5\mathcal{C}_i$. We will show that $h([x]) = \lambda([x])$ for every $x$ in $\mathcal{C}_6$ as well. We will give the proof when $n \geq 1$. The proofs for the other cases are  similar. 
	
The curve $q_2$ is the unique curve up to isotopy disjoint from and nonisotopic to each of $c_1, c_2, d_{1,2}, b_{3,g+n}, b_{3,5}, b_{4,6}, \cdots, $ $b_{g+n-2,g+n}, a_4, a_5, \cdots, a_g$. Since $h([x]) = \lambda([x])$ for all these curves and $\lambda$ preserves these properties we 
have $h([q_2]) = \lambda([q_2])$. The curve $q_4$ is the unique curve up to isotopy disjoint from and nonisotopic to each of $c_1, c_2, c_3, c_4, d_{1,2}, b_{5,g+n}, b_{5,7},$ $b_{6,8}, \cdots, b_{g+n-2,g+n}, a_6, a_7, \cdots, a_g$. Since $h([x]) = \lambda([x])$ for all these curves and $\lambda$ preserves these properties we have $h([q_4]) = \lambda([q_4])$. Similarly, we get $h([q_{2k}]) = \lambda([q_{2k}])$.

The curve $o_2$ is the unique curve up to isotopy disjoint from and nonisotopic to each of $c_1, c_2, e_{1,2}, b_{3,g+n}, b_{3,5}, b_{4,6}, \cdots, $ $b_{g+n-2,g+n}, a_4, a_5, \cdots, a_g$. Since $h([x]) = \lambda([x])$ for all these curves and $\lambda$ preserves these properties we 
have $h([o_2]) = \lambda([o_2])$. The curve $o_4$ is the unique curve up to isotopy disjoint from and nonisotopic to each of $c_1, c_2, c_3, c_4, e_{1,2}, b_{5,g+n}, b_{5,7},$ $b_{6,8}, \cdots, b_{g+n-2,g+n}, a_6, a_7, \cdots, a_g$. Since $h([x]) = \lambda([x])$ for all these curves and $\lambda$ preserves these properties we have $h([o_4]) = \lambda([o_4])$. Similarly, we get $h([o_{2k}]) = \lambda([o_{2k}])$.

The curve $p_{1,0}$ is the unique curve up to isotopy disjoint from and nonisotopic to $c_1, c_2, q_2, b_{3,g+n}, b_{3,5}, b_{4,6}, \cdots, b_{g+n-2,g+n}, a_4, a_5, \cdots, a_g$. Since $h([x]) = \lambda([x])$ for all these curves and $\lambda$ preserves these properties we have $h([p_{1,0}]) = \lambda([p_{1,0}])$. The curve $p_{1,1}$ is the unique curve up to isotopy disjoint from and nonisotopic to $c_1, c_2, d_{1,2}, q_2, b_{3,g+n-1},$ 
$ b_{3,5}, b_{4,6}, \cdots,$ $ b_{g+n-3,g+n-1}, a_4, a_5, \cdots, a_g$. Since $h([x]) = \lambda([x])$ for all these curves and $\lambda$ preserves these properties we have $h([p_{1,1}]) = \lambda([p_{1,1}])$. The curve $p_{1,2}$ is the unique curve up to isotopy disjoint from and nonisotopic to each of $c_1, c_2, d_{1,2}, b_{g+n-2,g+n}, q_2, $ $b_{3,g+n-2}, b_{3,5}, b_{4,6}, \cdots, $ $b_{g+n-4,g+n-2}, a_4, a_5, \cdots, a_g$. Since $h([x]) = \lambda([x])$ for all these curves and $\lambda$ preserves these properties we have $h([p_{1,2}]) = \lambda([p_{1,2}])$. 
Similarly, when $g=2k$, we get the curve $p_{1,n}$ (note that when $g=2k+1$, we check all the way to $p_{1,n-1}$): $p_{1,n}$ is the unique curve up to isotopy disjoint from and nonisotopic to $c_1, c_2, d_{1,2}, b_{3,g}, q_2, b_{g+2,g}, b_{g+1,g+3}, \cdots, b_{g+n-2,g+n}, a_4, a_5, \cdots, a_g$. Since $h([x]) = \lambda([x])$ for all these curves and $\lambda$ preserves these properties we have $h([p_{1,n}]) = \lambda([p_{1,n}])$. Hence, $h([p_{1,j}]) = \lambda([p_{1,j}])$ for all the curves we consider in $\mathcal{C}_5$.  

The curve $p_{2,0}$ is the unique curve up to isotopy disjoint from and nonisotopic to $c_1, c_2, c_3, c_4, q_4, b_{5,g+n}, b_{5,7}, b_{6,8}, \cdots, b_{g+n-2,g+n}, a_6, a_7, \cdots, a_g$. Since $h([x]) = \lambda([x])$ for all these curves and $\lambda$ preserves these properties we have $h([p_{2,0}]) = \lambda([p_{2,0}])$. The curve $p_{2,1}$ is the unique curve up to isotopy disjoint from and nonisotopic to $c_1, c_2, c_3, c_4, d_{1,2}, q_4, b_{5,g+n-1},$ $b_{5,7}, b_{6,8}, \cdots,$ $ b_{g+n-3,g+n-1}, a_6, a_7, \cdots, a_g$. Since $h([x]) = \lambda([x])$ for all these curves and $\lambda$ preserves these properties we have $h([p_{2,1}]) = \lambda([p_{2,1}])$. Similarly, when $g=2k$, we get the curve $p_{2,n}$ (note that when $g=2k+1$, we check all the way to $p_{2,n-1}$): $p_{2,n}$ is the unique curve up to isotopy disjoint from and nonisotopic to $c_1, c_2, c_3, c_4, d_{1,2}, b_{5,g}, q_4, b_{g+2,g}, b_{g+1,g+3}, \cdots,$ 
$b_{g+n-2,g+n}, a_6, a_7, \cdots, a_g$. Since $h([x]) = \lambda([x])$ for all these curves and $\lambda$ preserves these properties we have $h([p_{2,n}]) = \lambda([p_{2,n}])$. Hence, $h([p_{2,j}]) = \lambda([p_{2,j}])$ for all the curves we consider in $\mathcal{C}_5$. 

Continuing this way, we get  $h([p_{i,j}]) = \lambda([p_{i,j}])$ for all the curves we consider in $\mathcal{C}_6$. So, $h([x]) = \lambda([x])$ for every $x$ in $\bigcup_{i=1} ^6 \mathcal{C}_i$. Hence, $\bigcup_{i=1} ^6 \mathcal{C}_i$ is a finite superrigid set. It is easy to see that the pointwise stabilizer of $\bigcup_{i=1} ^6 \mathcal{C}_i $ is trivial.\end{proof}\\
 
\begin{figure}[t]
	\begin{center}
		\epsfxsize=2.9in \epsfbox{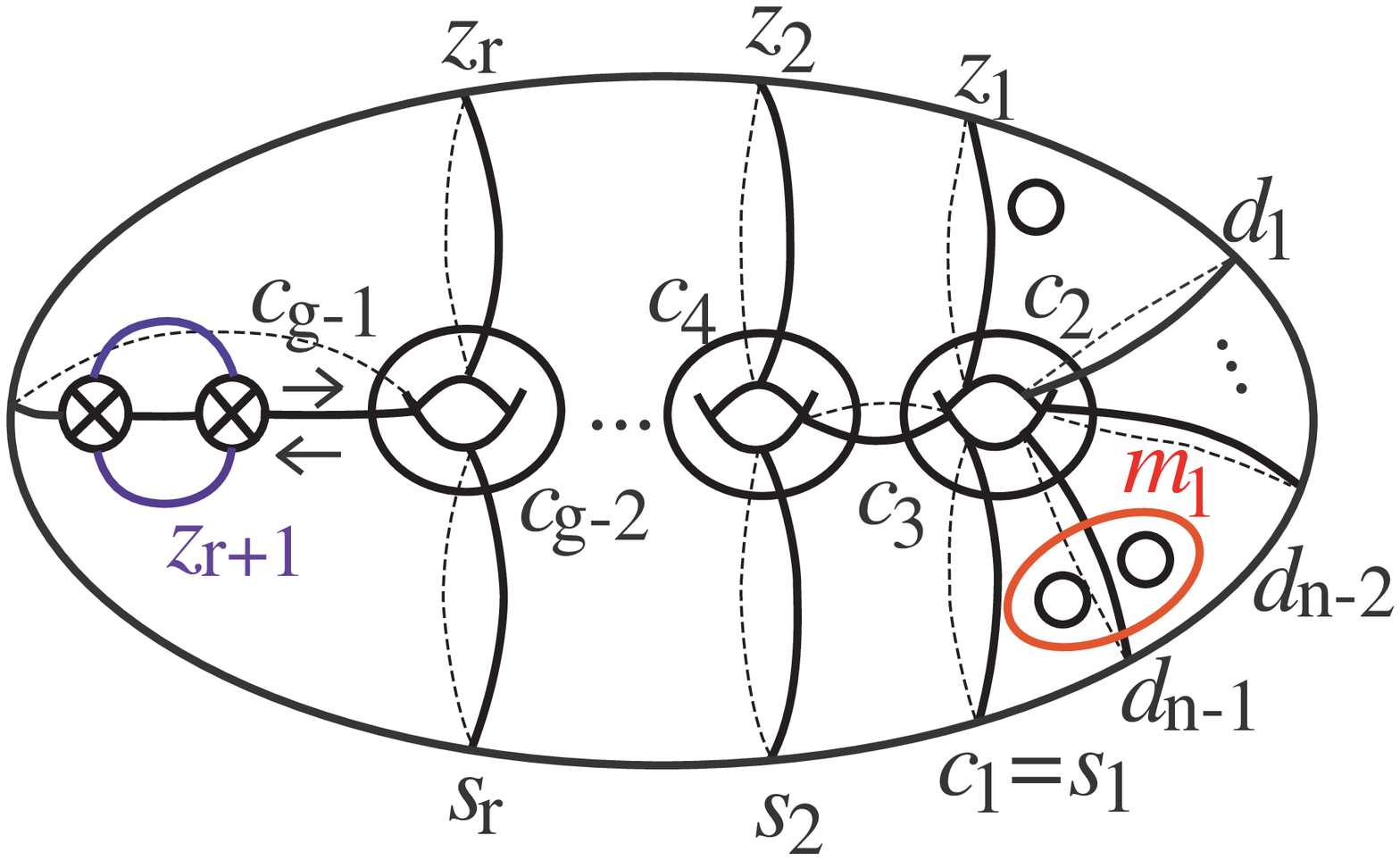} \hspace{-0.1cm} \epsfxsize=2.9in \epsfbox{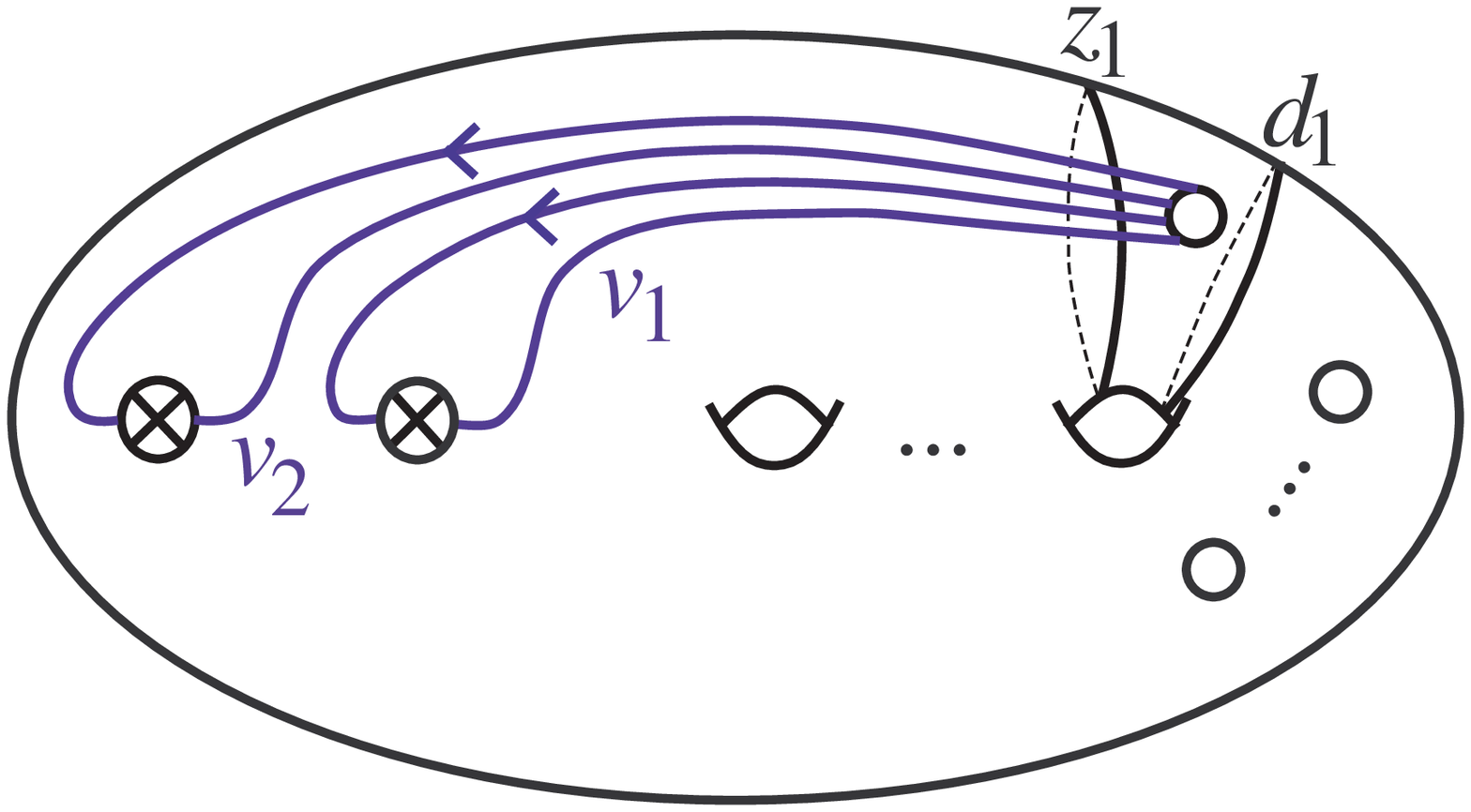}
		
		\hspace{-1cm} (i) \hspace{6.5cm} (ii)
		
		\hspace{0cm} \epsfxsize=3.1in \epsfbox{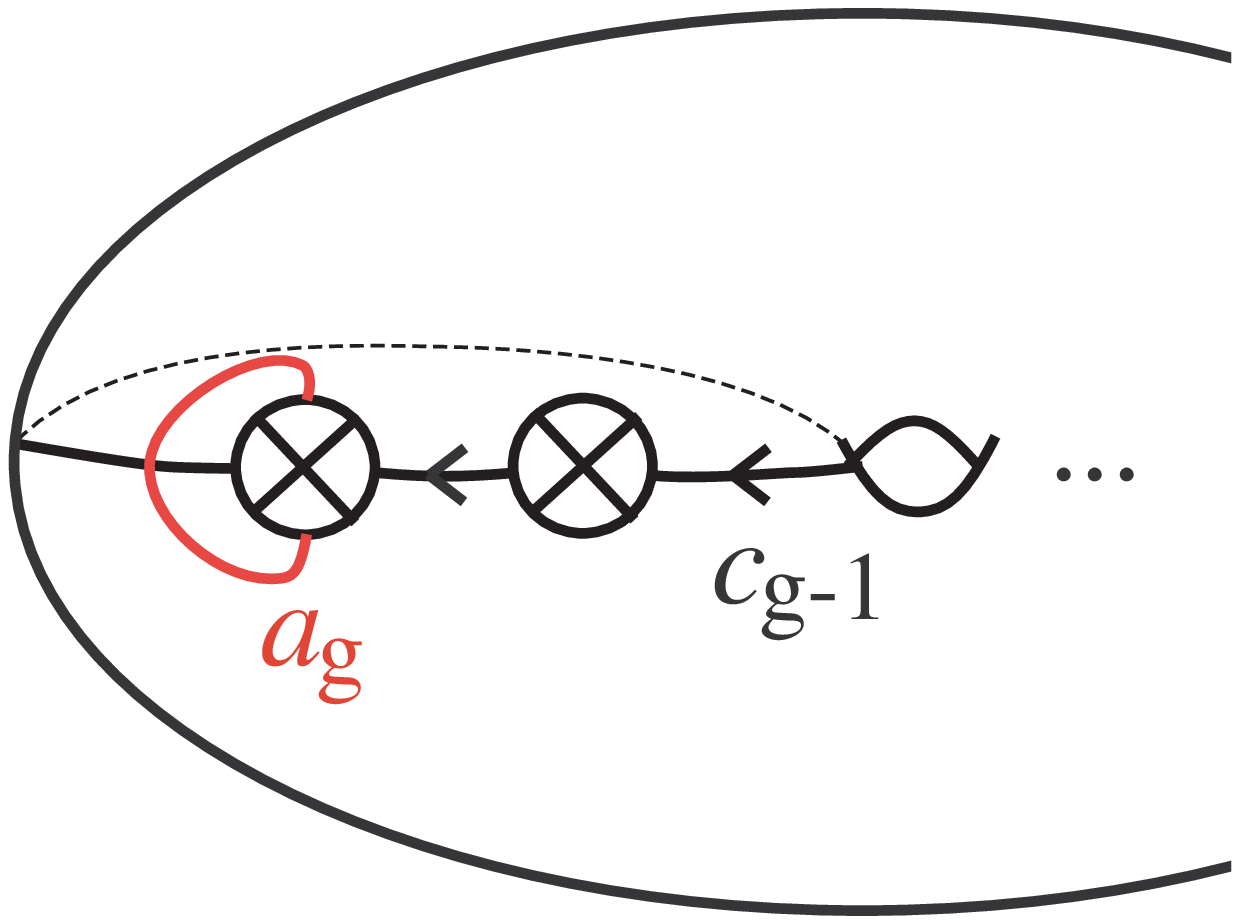} \hspace{-1.4cm} \epsfxsize=3.1in \epsfbox{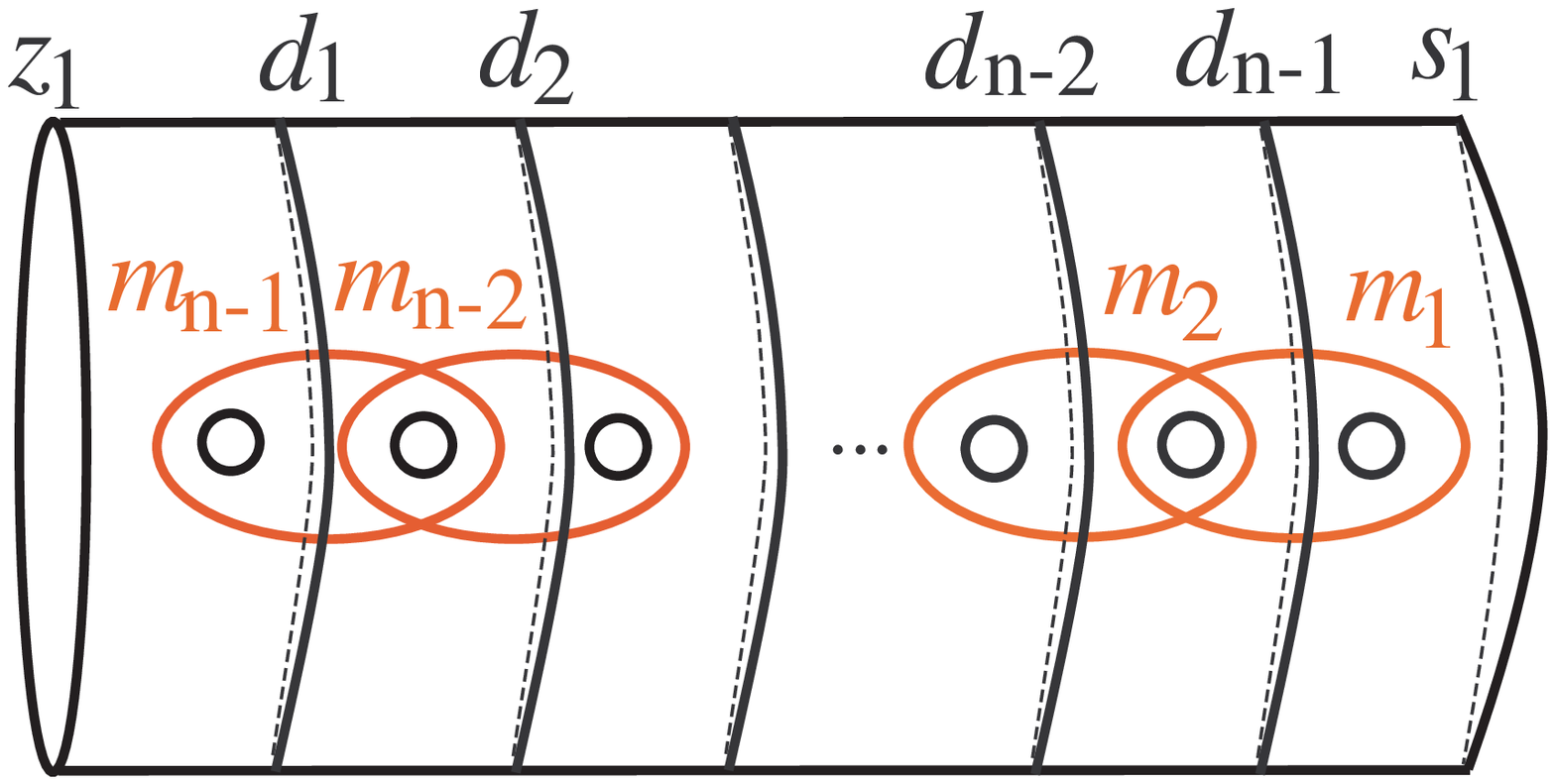}
		
		\hspace{-1cm} (iii) \hspace{6.5cm} (iv)
		
		\caption{Generators for even genus $g=2r+2$, $g \geq 4$} \label{Fig-10}
	\end{center}
\end{figure}
 
\begin{figure}
	\begin{center}
		\epsfxsize=2.7in \epsfbox{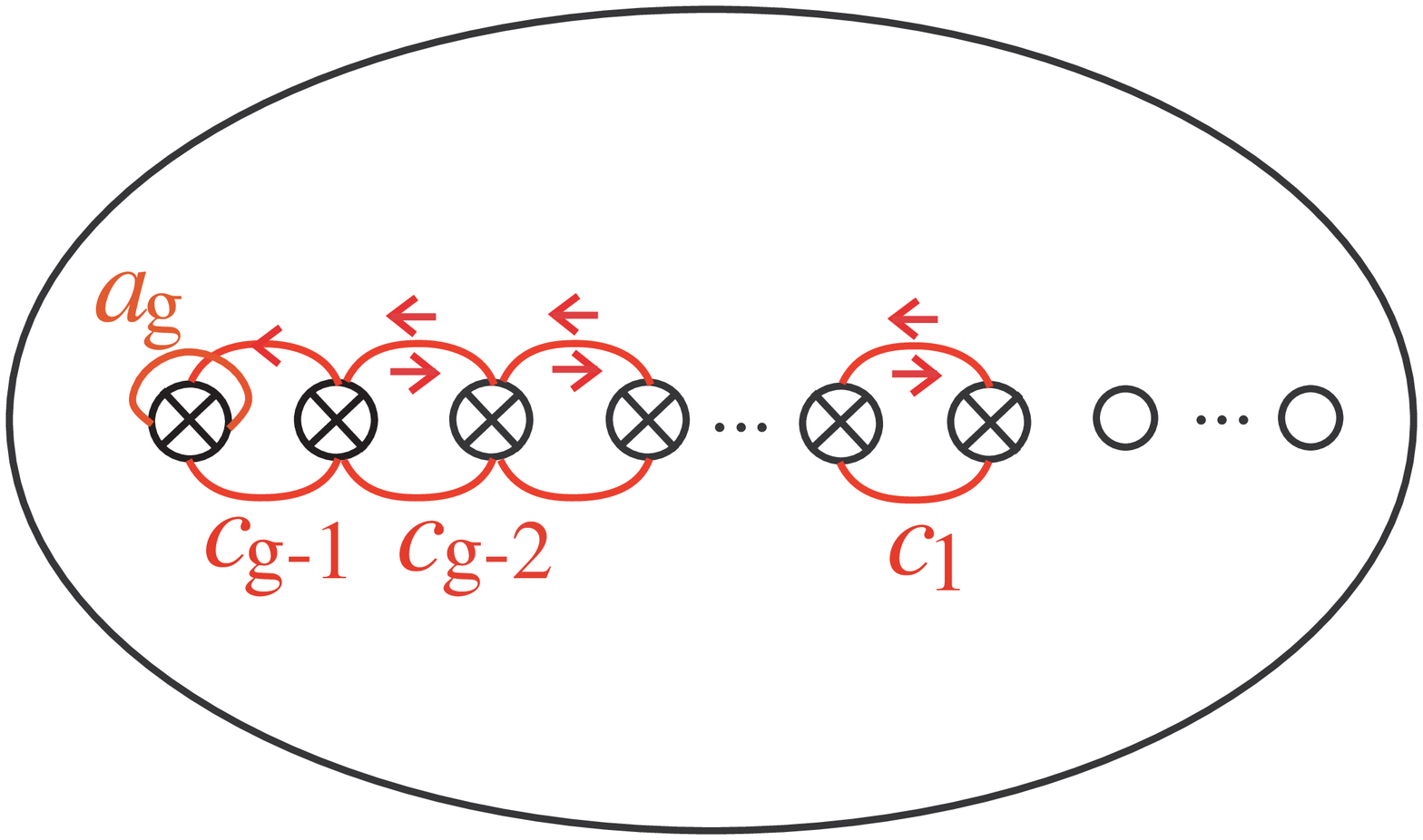}   \hspace{0.1cm} 
			\epsfxsize=2.7in \epsfbox{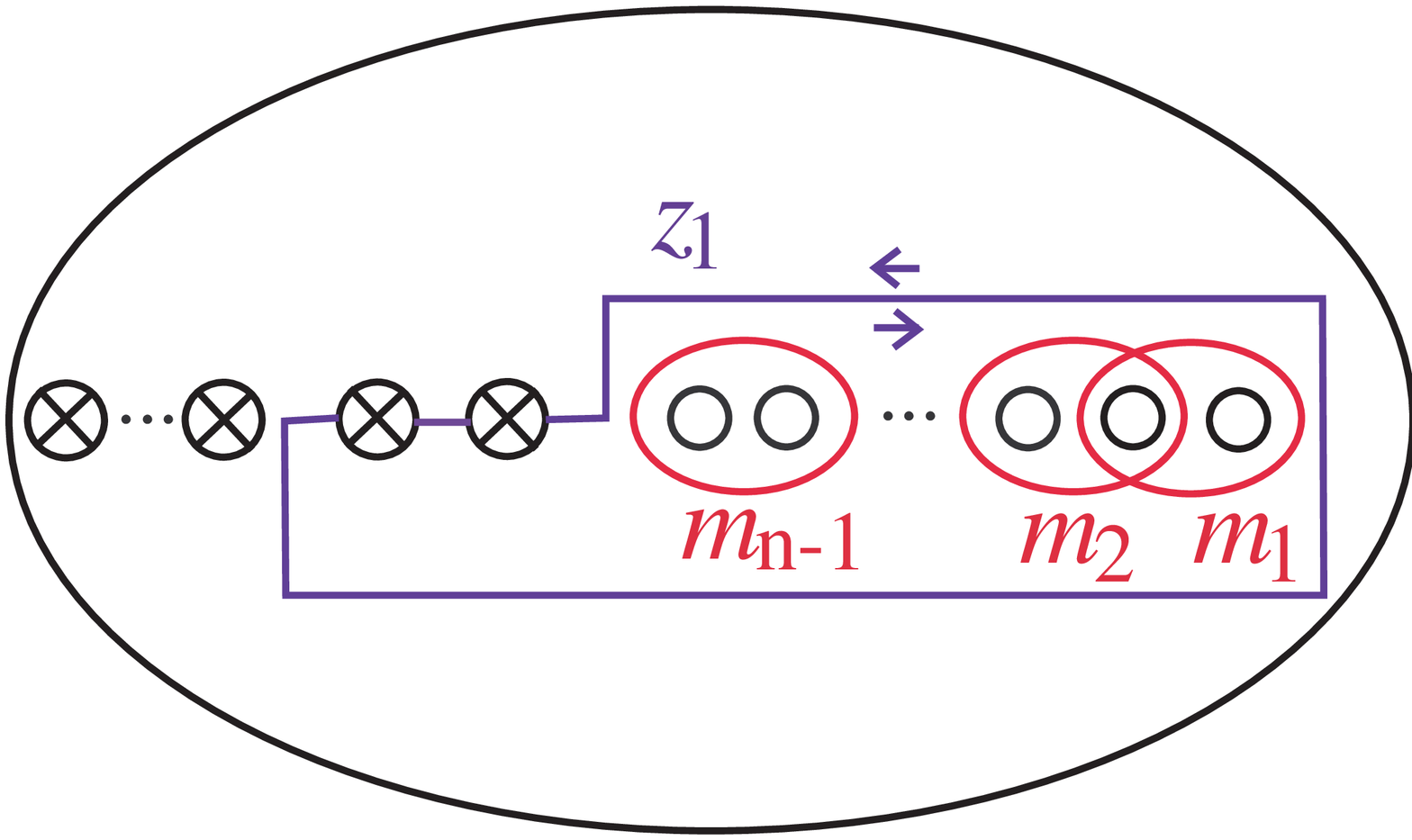}  
			
				(i)  \hspace{6cm}   (ii) 
				
				\epsfxsize=2.7in \epsfbox{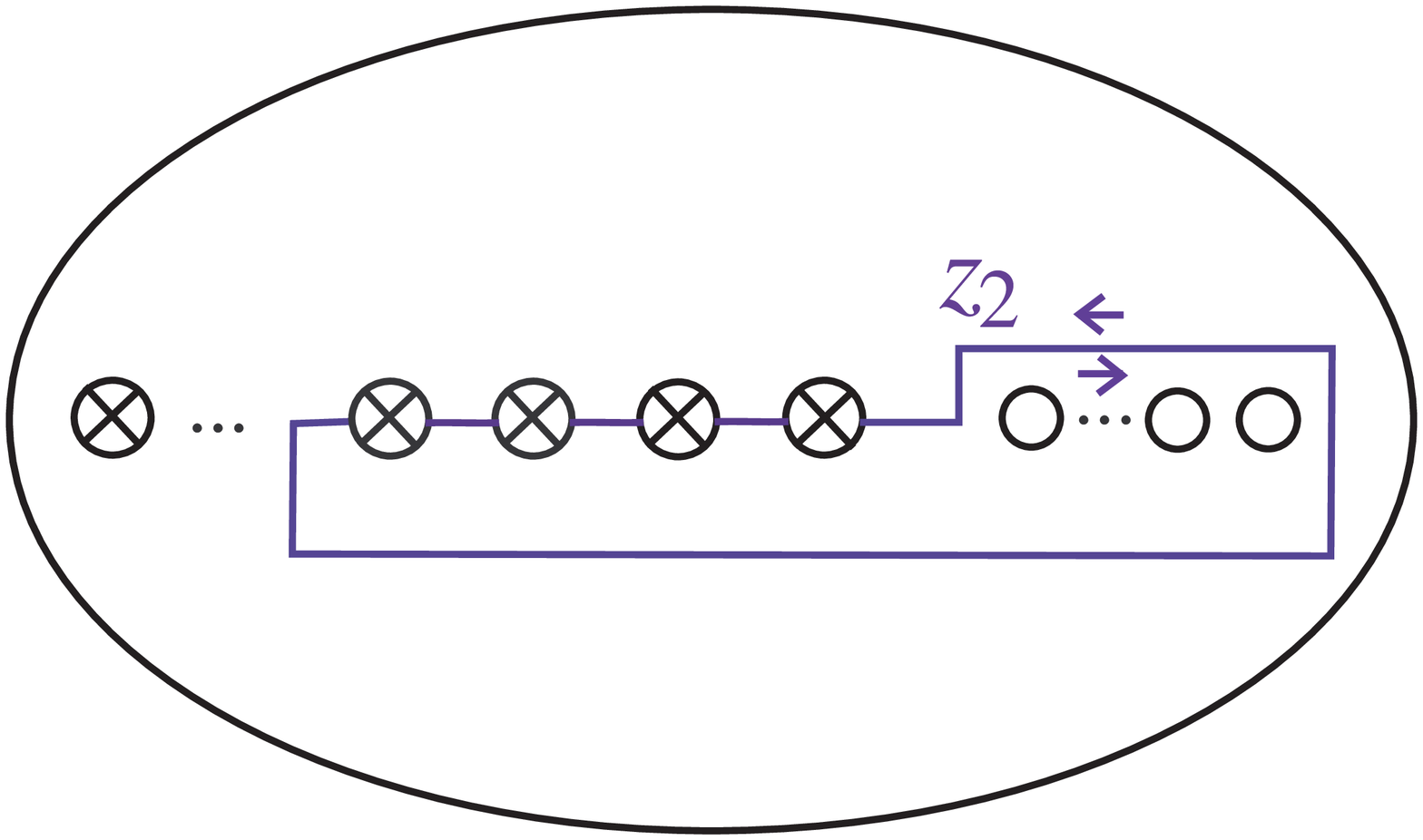}  \hspace{0.1cm}  	\epsfxsize=2.7in \epsfbox{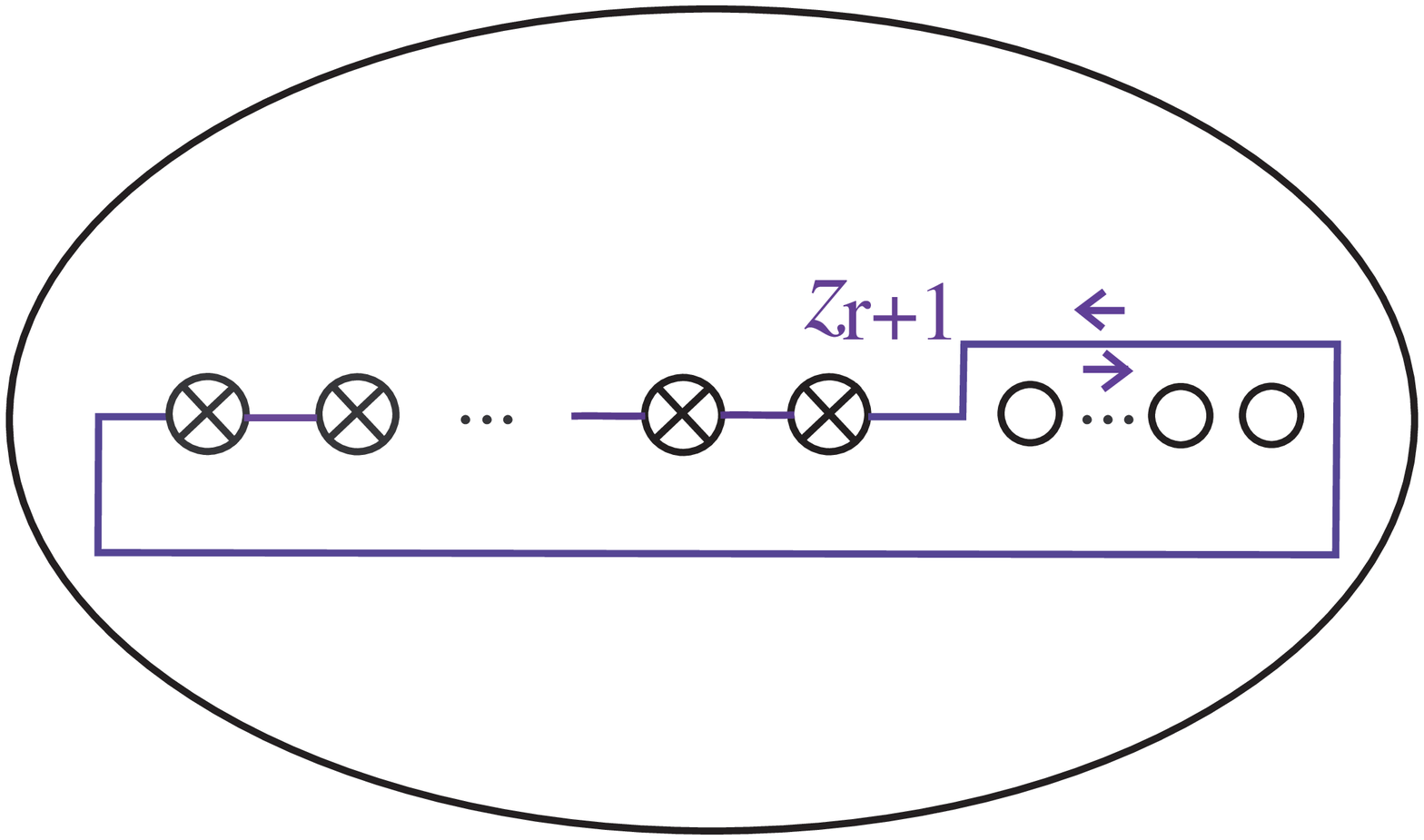}  
				
					(iii)  \hspace{6cm}   (iv) 
					
					\epsfxsize=2.7in \epsfbox{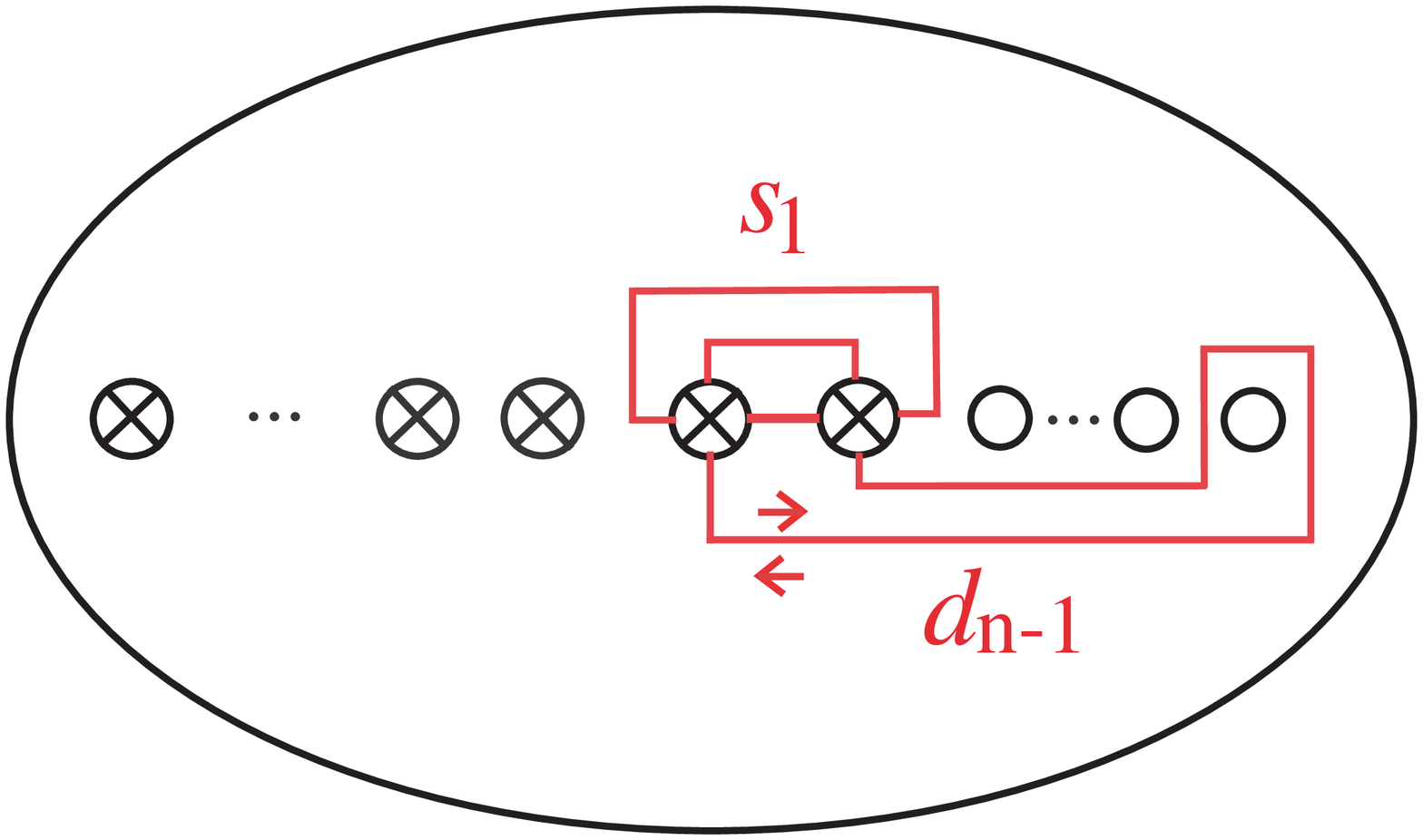}  \hspace{0.1cm}  	\epsfxsize=2.7in \epsfbox{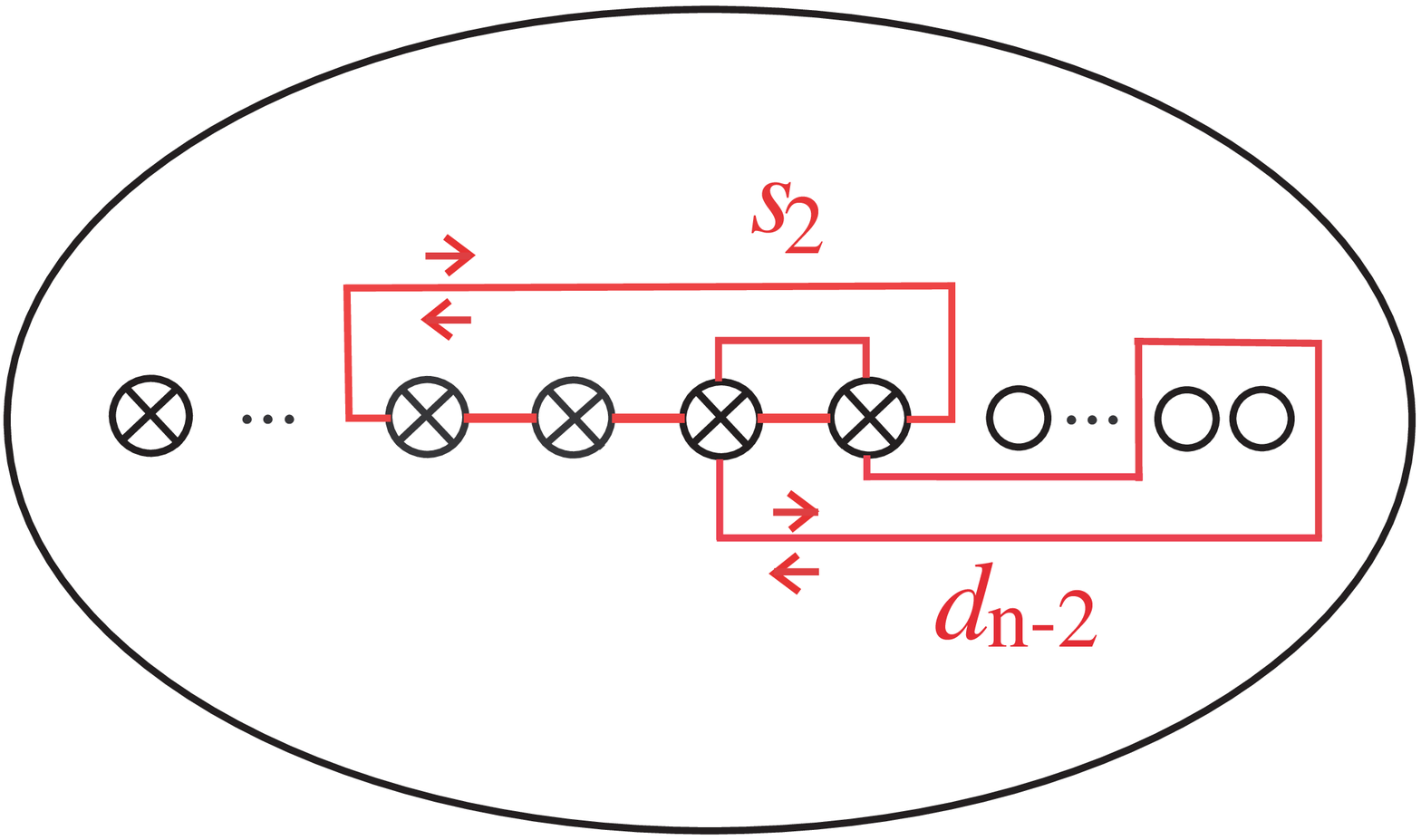}  
					
						(v)  \hspace{6cm}   (vi) 
					
						\epsfxsize=2.7in \epsfbox{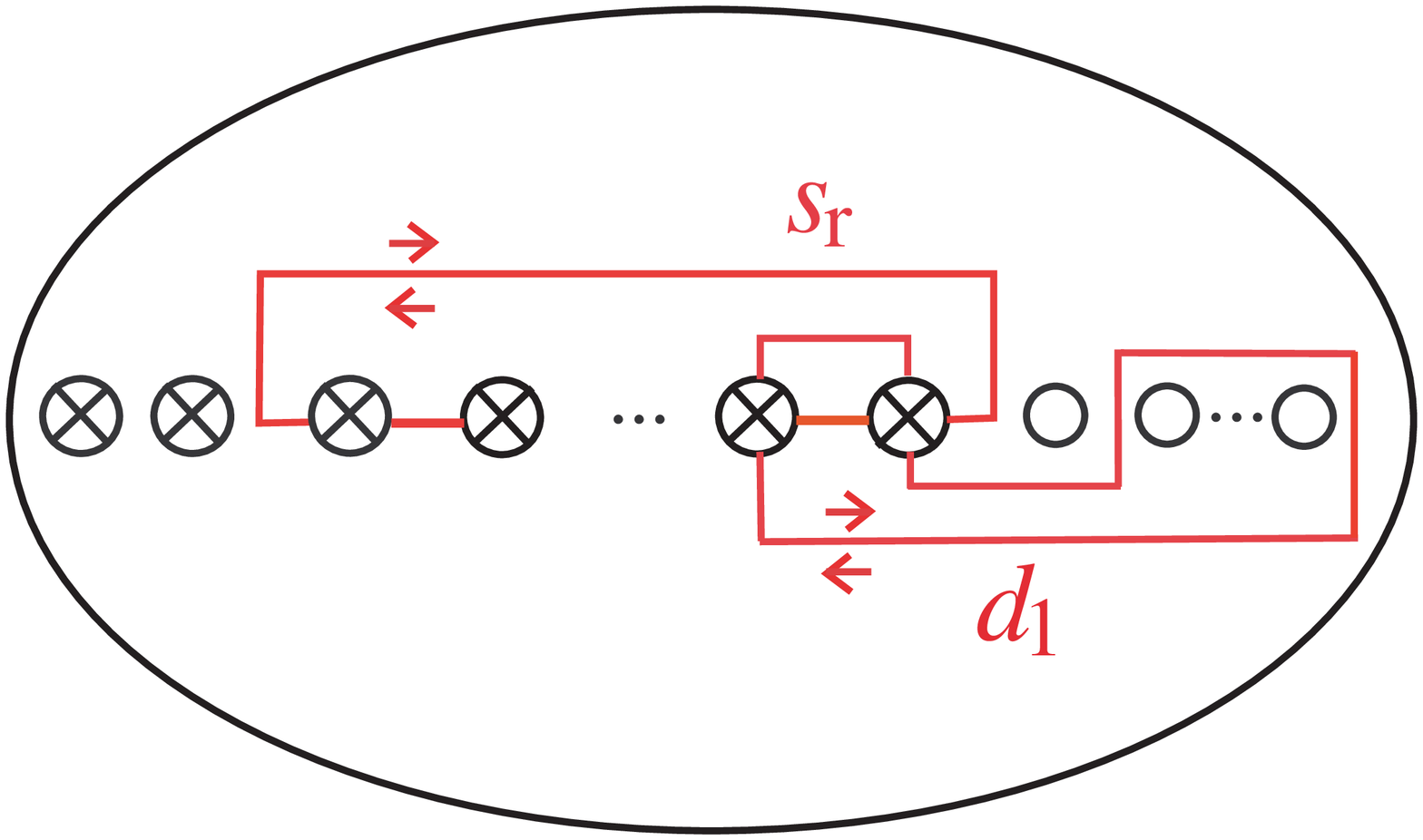}  \hspace{0.1cm} \epsfxsize=2.7in \epsfbox{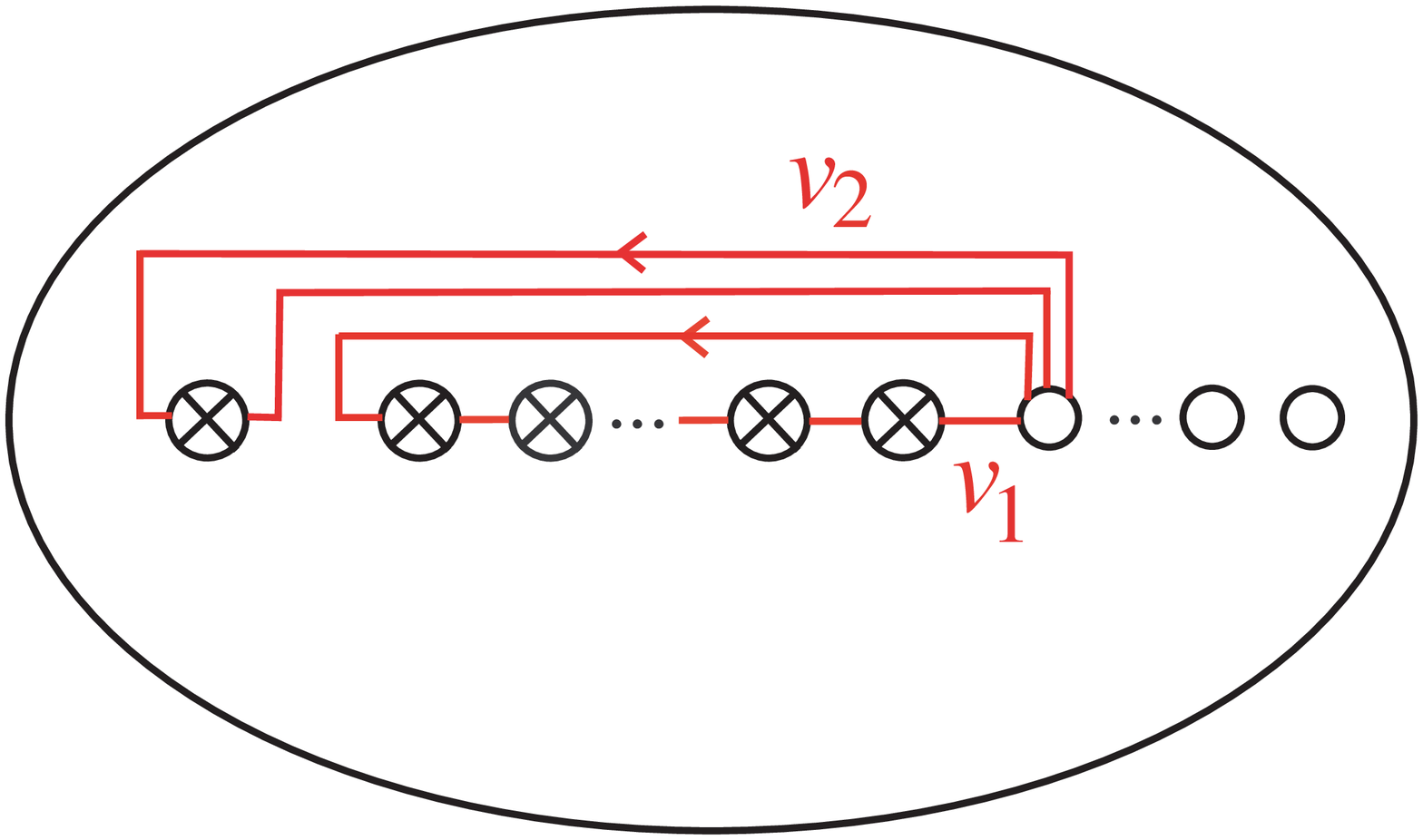}
						
							(vii)  \hspace{6cm}   (viii)   
		\caption{Generators for even genus $g=2r+2$, $g \geq 4$}
		\label{Fig-9}
	\end{center}
\end{figure}

Let $\mathcal{C}= \bigcup_{i=1} ^6\mathcal{C}_i$ when $g+n \geq 5$. 
We note that $\mathcal{C}$ has curve of every topological type on $N$. 
In the following theorem we consider the curves given in Figure \ref{Fig-9}. Let $t_x$ be the Dehn twist about $x$. For $b_{i,i+2}$ that separates a sphere with three holes, let $\sigma_{i-g-1}$ be the half twist along $b_{i,i+2}$ (i.e. the elementary braid supported in a regular neighborhood of an arc connecting the two boundary components of $N$ that $b_{i, i+2}$ separates so that $\sigma_i ^2$ is a right Dehn twist along $b_{i,i+2}$.) We will work with half twists $\{\sigma_1, \sigma_2, \cdots, \sigma_{n-1}\}$. To make the notation simpler we will rename some of the curves in $\mathcal{C}$ and call them $m_i$ as shown in Figure \ref{Fig-9} (ii) so that $\sigma_i$ becomes a half twists along the curve $m_i$. Let $y= y_{a_g, c_{g-1}}$ be the crosscap slide of $a_g$ along $c_{g-1}$. We recall that the crosscap slide is 
defined as follows: Consider a M\"{o}bius band $A$ with one hole. By attaching another M\"{o}bius band $B$ to $A$ 
along one of the boundary components of $A$, we get a Klein bottle $K$ with one hole. By sliding $B$ once along the 
core of $A$, we get a homeomorphism of $K$ which is identity on the boundary of $K$. This homeomorphism can be 
extended by identity to a homeomorphism of the whole surface if $K$ is embedded in a surface. The isotopy class of this homeomorphism is called a crosscap slide. Let $\xi_1$ be the boundary slide of the boundary component $t_{g+n}$ along the arc $v_1$, and $\xi_2$ be the boundary slide of the boundary component $t_{g+n}$ along the arc $v_2$ where the arcs $v_1, v_2$ are given in Figure \ref{Fig-9} (viii). We recall that boundary slide $\xi_1$ is defined as follows: Let $A$ be a regular neighborhood of $t_{g+n} \cup v_1$ on $N$. Then $A$ is a M\"{o}bius band with one hole $t_{g+n}$ (a nonorientable surface of genus one with two boundary components). Let $k$ be the other boundary component of $A$. By sliding $t_{g+n}$ along the core of $A$ (in the direction of $v_1$) we get a homeomorphism which is identity on $k$. This homeomorphism can be extended by identity to a homeomorphism of $N$. The isotopy class of this homeomorphism is called boundary slide of $t_{g+n}$ along $v_1$.  
  
\begin{theorem}\label{prop1} If $g \geq 4$, $g=2r+2$ for some $r \in \mathbb{Z}^+$, then $Mod_N$ is generated by $\{t_x: x \in \{c_1, c_2, \cdots, c_{g-1}, s_1, s_2, \cdots, s_r, z_1, z_2, \cdots, z_{r+1}, d_1, d_2, \cdots, d_{n-1}\} \cup \{\sigma_1, \sigma_2, \cdots, \sigma_{n-1}, $ $y, \xi_1, \xi_2\}\}$.\end{theorem}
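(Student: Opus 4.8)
The plan is to start from a known finite generating set $S$ of $Mod_N$ and to rewrite each of its elements as a word in the proposed generators. For a compact nonorientable surface of genus $g$ with $n$ boundary components one may take $S$ to consist of Dehn twists about the curves of a fixed chain filling $N$ (together with a bounded number of auxiliary $2$-sided curves), one crosscap slide, the $n-1$ half-twists interchanging consecutive boundary components, the Dehn twists about the boundary components, and the boundary slides of the $t_{g+j}$; for nonorientable surfaces with boundary such a set is provided by the work of Stukow, extending the closed-surface generating sets of Korkmaz. Write $G \le Mod_N$ for the subgroup generated by $\{t_x : x \in \{c_1,\dots,c_{g-1},s_1,\dots,s_r,z_1,\dots,z_{r+1},d_1,\dots,d_{n-1}\}\} \cup \{\sigma_1,\dots,\sigma_{n-1},y,\xi_1,\xi_2\}$. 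It suffices to show that every element of $S$ lies in $G$, since then $G = Mod_N$.

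First I would show that $G$ contains the Dehn twist about every $2$-sided curve appearing in $S$. The curves $c_1,\dots,c_{g-1}$, together with $s_1,\dots,s_r$ and $z_1,\dots,z_{r+1}$ as drawn in Figures \ref{Fig-9} and \ref{Fig-10}, are chosen so that consecutive curves meet once and non-consecutive curves are disjoint, i.e. they form a chain filling $N$, while $d_1,\dots,d_{n-1}$ record the boundary data. Applying the braid relation $t_at_bt_a = t_bt_at_b$ whenever $i(a,b)=1$, the chain relation, and the lantern relation inside $G$ produces Dehn twists about a large further family of $2$-sided curves; combined with the \emph{change of coordinates principle} --- two $2$-sided curves with homeomorphic complements differ by an element of $Mod_N$, and for the curves at hand that element can itself be chosen to be a product of twists already available in $G$ --- one concludes that $G$ contains the Dehn twist about each $2$-sided curve of $S$, in particular about each boundary component.

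Next I would account for the non-orientability. Since $G$ already contains $t_{c_1},\dots,t_{c_{g-1}}$ and, by the previous step, twists about curves meeting the crosscap cores $a_i$, the crosscap slide $y = y_{a_g,c_{g-1}}$ can be conjugated inside $G$ to the crosscap slide occurring in $S$: any two crosscap slides along configurations of the same topological type are conjugate in $Mod_N$, and here the conjugating mapping class is a product of twists lying in $G$. One also uses the relation $y^2 = t_{\partial K}$, where $K$ is the Klein-bottle-with-one-hole supporting $y$, to pass between crosscap slides and the corresponding boundary twist as needed.

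Finally I would treat the boundary. The half-twists $\sigma_1,\dots,\sigma_{n-1}$ generate a copy of the symmetric group $S_n$ acting on the boundary components $t_{g+1},\dots,t_{g+n}$, so after multiplying by an element of $G$ any boundary slide may be assumed to move $t_{g+n}$; two boundary slides of $t_{g+n}$ along arcs of the same type differ by a Dehn twist in $G$, and the arcs $v_1,v_2$ of Figure \ref{Fig-9} (viii) are chosen precisely so that $\xi_1,\xi_2$ together with the twists and half-twists in $G$ realize all boundary slides of $t_{g+n}$, hence (after conjugating by $S_n$) all boundary slides of $N$. Thus $G$ contains every element of $S$ and $G = Mod_N$. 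I expect the main obstacle to be the combinatorial bookkeeping in the first and last steps: one must verify that the explicitly drawn curves of Figures \ref{Fig-9} and \ref{Fig-10} really do reach, via braid, chain and lantern relations together with the change of coordinates principle, every curve needed, and that the two boundary slides $\xi_1,\xi_2$ alone (plus twists and half-twists) suffice; the even-genus hypothesis $g = 2r+2$ enters exactly in arranging the chain so that the last two crosscaps are packaged into a single Klein bottle handled by $y$.
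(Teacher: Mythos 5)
Your plan is not the paper's argument, and as written it has a real gap. You propose to take a known generating set $S$ (Korkmaz--Stukow) and rewrite each of its elements inside the subgroup $G$ generated by the listed twists, half-twists, $y$, $\xi_1$, $\xi_2$, using braid/chain/lantern relations together with ``change of coordinates.'' The load-bearing steps there are exactly the ones you assert without proof: change of coordinates only tells you that two curves (or two crosscap slides, or two boundary slides) of the same topological type are related by \emph{some} element of $Mod_N$; to conclude that a twist or slide from $S$ lies in $G$ you must exhibit the conjugating mapping class as a word in the proposed generators, and you never do this for the twists reached from the chain, for the crosscap slide in $S$, or for the boundary slides (where you simply assert that $\xi_1,\xi_2$ together with twists and half-twists realize all of them). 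Without those verifications the argument is circular, since conjugacy in $Mod_N$ by itself gives no information about membership in $G$. Your closing remark about where the parity hypothesis $g=2r+2$ enters is also not quite right: in the paper the parity governs which of Korkmaz's generating sets (and hence how many curves $z_i$ and how many boundary slides) appear, compare Theorem \ref{prop1} with Theorem \ref{gen-odd}.

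The paper's proof is much shorter and avoids all of this: the listed set is not shown to \emph{generate} a known generating set, it \emph{is} Korkmaz's generating set from Theorem 4.14 of \cite{K2}. One takes the curves and arcs of Korkmaz's generators, drawn in his model of $N$ (Figure \ref{Fig-10}), and transports them by an explicit homeomorphism to the crosscap model used throughout this paper, where they become the curves $c_i$, $s_i$, $z_i$, $d_i$, $m_i$, and the arcs $v_1,v_2$ of Figure \ref{Fig-9}; Section 3 of Stukow's paper \cite{St} supplies part of this dictionary. Since a homeomorphism of $N$ carries a generating set to a generating set, no braid, chain, or lantern relations and no change-of-coordinates bookkeeping are needed. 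If you want to salvage your approach you would have to carry out, curve by curve, the rewriting you only sketch; the identification with Korkmaz's set makes that work unnecessary.
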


\begin{proof} The proof follows from the proof of Theorem 4.14 given by Korkmaz in \cite{K2} by representing the curves used in his generating set given in Figure \ref{Fig-10}, in our crosscap model of $N$ as shown in Figure \ref{Fig-9} (see section 3 in Stukow's paper \cite{St} to see some part of this transfer).\end{proof}\\
    
\begin{figure}
	\begin{center}
		\epsfxsize=2.7in \epsfbox{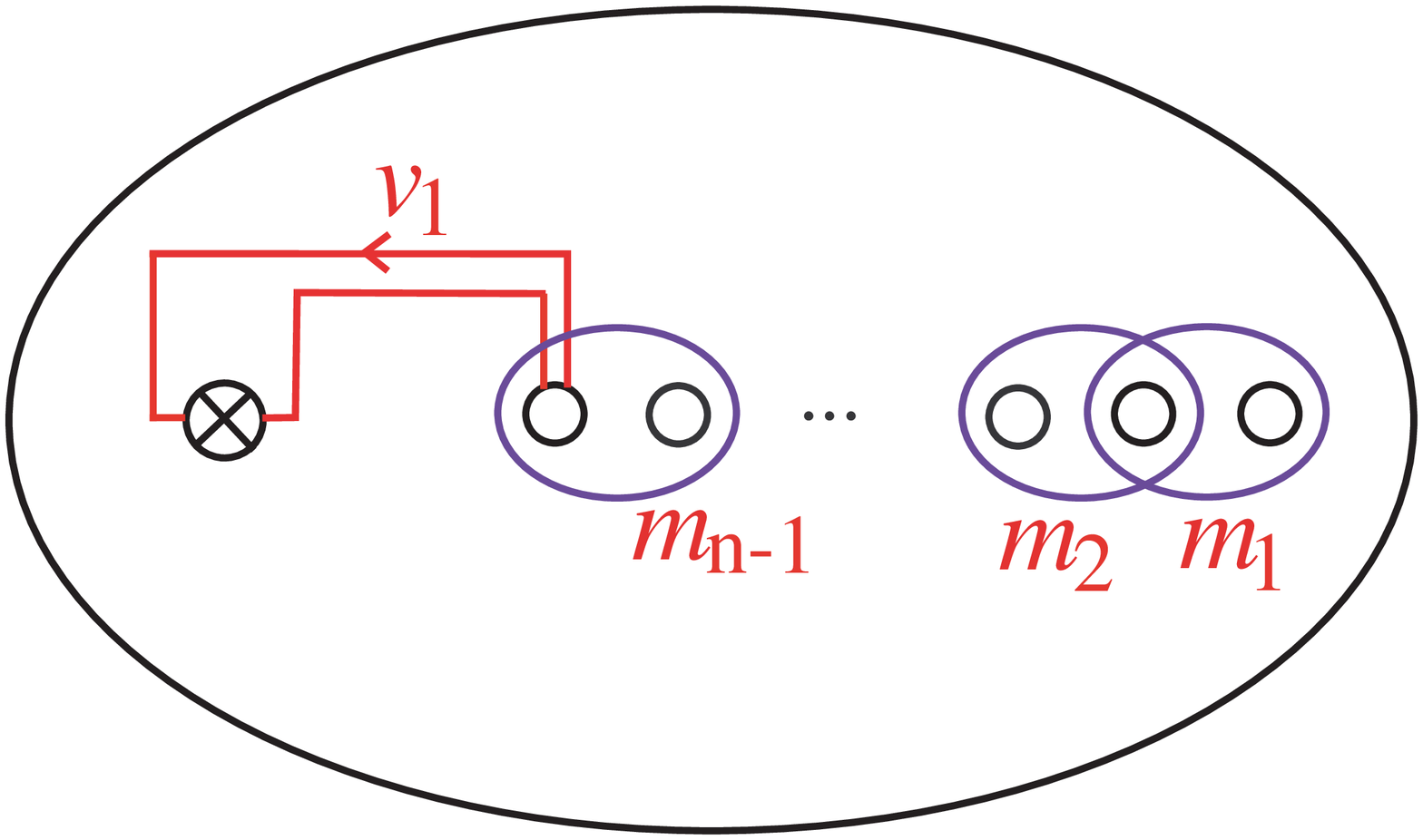}   \hspace{0.1cm} 
		\epsfxsize=2.7in \epsfbox{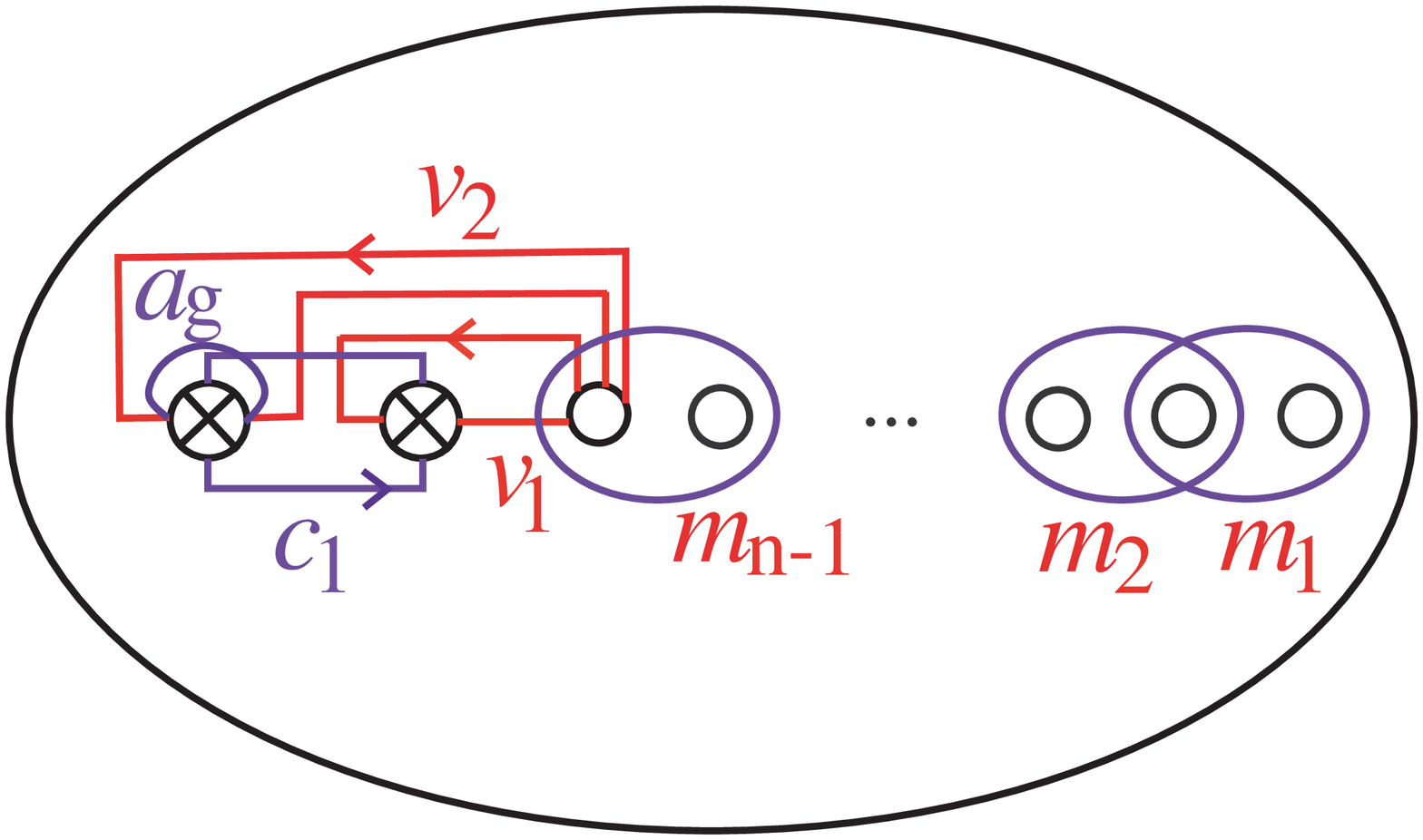}  
		
		(i)  \hspace{6cm}   (ii) 
		
		\caption{Generators (i) when $g=1, n \geq 1$, (ii) when $g=2, n \geq 1$}
		\label{Fig-09}
	\end{center}
\end{figure}

Consider Figure \ref{Fig-09} (ii). The proof of the following theorem was given by Korkmaz (see Theorem 4.10 in  \cite{K2}).
  
\begin{theorem}\label{theorem-2} If $g = 2, n \geq 1$, then $Mod_N$ is generated by $\{t_{c_1}, \sigma_1, \sigma_2, \cdots, \sigma_{n-1}, y, \xi_1,$ 
$\xi_2\}$.\end{theorem}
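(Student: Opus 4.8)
The plan is to deduce the statement from an already-known finite generating set for $Mod_N$ and then to rewrite each of its members as a product of the proposed generators, which is the scheme used by Korkmaz in the proof of Theorem 4.10 of \cite{K2}. Here $N$ is the Klein bottle with $n$ holes, so its mapping class group is comparatively small: by the results of Chillingworth, Korkmaz and Stukow (see \cite{K2}, \cite{St}), $Mod_N$ is generated by finitely many Dehn twists about two-sided simple closed curves, one crosscap slide, the half-twists $\sigma_1,\dots,\sigma_{n-1}$ interchanging adjacent boundary components, and a collection of boundary slides pushing boundary components across the two crosscaps. Write $S=\{t_{c_1},\sigma_1,\dots,\sigma_{n-1},y,\xi_1,\xi_2\}$; the half-twists already belong to $S$, so it remains to show that every Dehn twist, crosscap slide and boundary slide in the chosen generating set lies in $\langle S\rangle$.

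For the Dehn twists I would argue by conjugation: for a two-sided nonseparating curve $d$ occurring in the generating set I would produce an element $f\in\langle S\rangle$ — built from the half-twists $\sigma_i$ (which move boundary components around), the crosscap slide $y$ (which moves a crosscap along $c_1$), and the boundary slides $\xi_1,\xi_2$ (which carry $t_{g+n}$ across a crosscap along two different arcs) — with $f(c_1)=d$, and then write $t_d=f\,t_{c_1}\,f^{-1}$; for a two-sided curve bounding an orientable subsurface one instead expresses $t_d$ through a chain or lantern relation from twists about nonseparating curves already handled. The point to verify is that the orbit of $c_1$ under $\langle S\rangle$ meets every topological type of two-sided curve needed (in particular both the orientable-complement and the nonorientable-complement types), and similarly that every crosscap slide and boundary slide in the generating set is conjugate in $\langle S\rangle$ to $y$, $\xi_1$ or $\xi_2$; this last point is where the two boundary slides $\xi_1,\xi_2$ (rather than a single one) are used, since one boundary slide does not realize all the changes of position of a boundary component relative to the two crosscaps. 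An equivalent route is induction on $n$: the base case $N_{2,1}$ (the Klein bottle with one hole) is classical, and the Birman exact sequence obtained by filling in the last boundary component reduces the inductive step to expressing the point-pushing generators — each a product of two Dehn twists about the boundary of an annular neighbourhood of a loop, or a single crosscap slide along a one-sided loop — as conjugates of $t_{c_1}$ and $y$ by the $\sigma_i$ and $\xi_j$.

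The main obstacle is exactly this bookkeeping: checking that $\langle S\rangle$ acts transitively enough on the configurations of the two crosscaps and the $n$ boundary components that all the required curves and sliding homeomorphisms become conjugates of $t_{c_1}$, $y$, $\xi_1$ or $\xi_2$. This combinatorial verification is the heart of the argument, and it is carried out in detail by Korkmaz; for the present paper it suffices to invoke Theorem 4.10 of \cite{K2}.
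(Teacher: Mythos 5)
Your proposal is correct and matches the paper's treatment: the paper does not reprove this statement but simply attributes it to Korkmaz (Theorem 4.10 of \cite{K2}), which is exactly the citation you fall back on after sketching the standard conjugation/Birman-exact-sequence strategy behind Korkmaz's argument.
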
 

If $g = 2, n \geq 1$, let $G_1= \{t_{c_1}, \sigma_1, \sigma_2, \cdots, \sigma_{n-1}, y, \xi_1,$ $\xi_2\}$. If $g \geq 4$, $g=2r+2$ for some $r \in \mathbb{Z}$, let $G_1= \{t_x: x \in \{c_1, c_2, \cdots, c_{g-1}, s_1, s_2, \cdots, s_r, $ $ z_1, z_2, \cdots, z_{r+1}, d_1, d_2, \cdots, $ $d_{n-1}\}\}$ $ \cup \{\sigma_1, \sigma_2, \cdots, \sigma_{n-1}, y, \xi_1, \xi_2\}$. We remind that $\mathcal{C}= \bigcup_1 ^6\mathcal{C}_i$. 
 
\begin{lemma} \label{L_f_even} If $g + n \geq 5$ and $g$ is even, then $\forall \ f \in G_1$, $\exists$ a set $L_f \subset \mathcal{C}$ such that $L_f$ has trivial pointwise stabilizer and $f(L_f) \subset \mathcal{C}$.\end{lemma}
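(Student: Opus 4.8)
The crucial reduction is that the choice $L_f := \mathcal{C} \cap f^{-1}(\mathcal{C})$ makes two of the three required properties automatic: $L_f \subseteq \mathcal{C}$ by definition, and $f(L_f) = f(\mathcal{C}) \cap \mathcal{C} \subseteq \mathcal{C}$. So the whole lemma reduces to showing that this set has trivial pointwise stabilizer in $Mod_N$ or, equivalently after conjugating by $f$, that $\mathcal{C} \cap f(\mathcal{C})$ does. I would handle the generators of $G_1$ family by family, exploiting the fact that each of them is supported on an explicit small subsurface $\Sigma_f \subset N$: an annular neighborhood of $x$ when $f = t_x$ for $x$ among $c_1,\dots,c_{g-1},s_1,\dots,s_r,z_1,\dots,z_{r+1},d_1,\dots,d_{n-1}$; the three-holed sphere bounded by $m_i$ (one of the curves $b_{j,j+2}$) and two boundary components of $N$ when $f = \sigma_i$; the one-holed Klein bottle with core $a_g$ that contains $c_{g-1}$ when $f = y$; and the regular neighborhood of $t_{g+n}\cup v_j$, a Möbius band with one extra hole, when $f = \xi_j$. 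Since $f$ is the identity off $\Sigma_f$ and preserves $\Sigma_f$, every curve of $\mathcal{C}$ with a representative disjoint from $\Sigma_f$ is fixed by $f$ and hence lies in $L_f$; write $L_f^0$ for this collection, so that $L_f \supseteq L_f^0$ and $f(L_f^0)=L_f^0$.

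Next I would show that $L_f^0$ already rigidifies the complement: any mapping class fixing every curve of $L_f^0$ is supported on $\Sigma_f$ up to isotopy. This uses the remark recorded above that $\mathcal{C}=\bigcup_{i=1}^6\mathcal{C}_i$ contains a curve of every topological type — the curves of $\mathcal{C}$ lying in $N\setminus\Sigma_f$ contain, after cutting, a rigidifying sub-configuration of the same shape as the one used to establish Lemma \ref{C_6} on $N\setminus\Sigma_f$, whose pointwise stabilizer in the mapping class group of $N\setminus\Sigma_f$ (rel boundary) is trivial. Hence a mapping class fixing $L_f^0$ restricts to the identity on $N\setminus\Sigma_f$, so its only freedom is in $Mod(\Sigma_f,\partial\Sigma_f)$, which is small in every case: infinite cyclic generated by $t_x$ for the annulus; generated by $\sigma_i$, with $\sigma_i^2=t_{m_i}$ and the boundary twists trivial in $Mod_N$, for the three-holed sphere; and the corresponding virtually cyclic groups for the one-holed Klein bottle and the one-holed Möbius band, generated by $t_{c_{g-1}}$ and $y$ (resp.\ by the relevant twist and $\xi_j$). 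To kill this residual group I would set $L_f = L_f^0\cup L_f^1$, where $L_f^1\subset\mathcal{C}$ is a short list of curves that meet $\Sigma_f$, have $f$-images in $\mathcal{C}$ (so $f(L_f)=L_f^0\cup f(L_f^1)\subseteq\mathcal{C}$), and are chosen so that no nontrivial element of $Mod(\Sigma_f,\partial\Sigma_f)$ fixes all of them: for $f=t_x$ one curve $z\in\mathcal{C}$ with $i(x,z)\neq 0$ and $t_x(z)\in\mathcal{C}$ suffices — for $x=c_i$ one of the $u$-, $v$- or $d$-curves of $\mathcal{C}_3$, or one of the $w$-, $r$-curves of $\mathcal{C}_2$, works in view of Lemmas \ref{1c} and \ref{1d}; for $f=\sigma_i$ one picks a curve of $\mathcal{C}$ crossing $m_i$ twice that is not symmetric under the interchange of the two boundary components swapped by $\sigma_i$; and for $y$, $\xi_1$, $\xi_2$ one picks in $\mathcal{C}$ a one-sided curve through the relevant crosscap (for instance $a_g$), resp.\ a curve through the one-holed Möbius band, together with a curve detecting the slide, whose images under the slide are again among the listed curves of $\mathcal{C}$ — all available because $\mathcal{C}$ realizes every topological type and the slides and half-twists are supported in these small pieces.

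The \emph{main obstacle} I expect is the case of the slides $y$, $\xi_1$, $\xi_2$: their supporting subsurfaces are nonorientable, so one must argue with the less routine structure of $Mod(N_{1,2})$ and $Mod(N_{2,1})$, checking both that the chosen companion curves really map into $\mathcal{C}$ under the slide and that, together with $L_f^0$, they detect every nontrivial element of the local group — in particular the crosscap slide itself, which fixes many curves up to isotopy. A secondary point needing care is the complement-rigidity step: one must actually exhibit, among the curves of $\mathcal{C}$ disjoint from $\Sigma_f$, the sub-configuration that pins down $N\setminus\Sigma_f$ pointwise, in the spirit of the arguments proving Lemmas \ref{C_1-super}--\ref{C_6}. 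Once these two points are settled, the Dehn-twist and half-twist cases are comparatively mechanical, and assembling the finitely many $L_f$ completes the proof.
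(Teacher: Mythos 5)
Your general scheme (take the curves of $\mathcal{C}$ fixed by $f$, argue they pin down the complement of the support $\Sigma_f$, then add a few curves meeting $\Sigma_f$ whose $f$-images stay in $\mathcal{C}$ to kill the local group) is a reasonable way to organize the problem, and it is in the same spirit as what the paper does: the paper's $L_f$'s consist of explicitly listed $f$-invariant curves of $\mathcal{C}$ together with one or two curves moved by $f$ to other curves of $\mathcal{C}$ (e.g.\ $t_{c_1}(a_{1,g+n-1})=a_{1,g+n-2}$, $t_{s_2}(a_{4,g+n})=q_2$, $\sigma_{n-1}(b_{g+n-1,1})=w_{g+n}$, $y(a_{g-1})=a_{g-1,g}$, $\xi_1(a_1)=a_{1,g+n-1}$), and the triviality of the pointwise stabilizer is read off from each explicit configuration. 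But as written your argument has genuine gaps exactly at the points that constitute the content of the lemma. First, the claim that every mapping class fixing all curves of $\mathcal{C}$ disjoint from $\Sigma_f$ is supported on $\Sigma_f$ is asserted, not proved: $\mathcal{C}$ was built to rigidify $N$, not the cut surface $N\setminus\Sigma_f$, and fixing a large collection of curves does not by itself force support on $\Sigma_f$ (recall that the reflection through the plane of the paper fixes every curve of $\mathcal{C}_1$; one must check that enough symmetry-breaking curves such as the $w_i,r_i$ survive in the complement of each $\Sigma_f$). This verification has to be done generator by generator, and you have only flagged it as a ``secondary point needing care.''

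Second, and more seriously, for the slides $y,\xi_1,\xi_2$ you have not exhibited companion curves at all: you explicitly defer the check that curves detecting the slide exist in $\mathcal{C}$, are mapped by the slide back into $\mathcal{C}$, and together with the fixed curves kill the local group. Your one concrete suggestion there does not work: for $y=y_{a_g,c_{g-1}}$ the class $[a_g]$ is fixed by $y$ (as are $[c_{g-1}]$ and the other curves of the supporting Klein bottle that lie in $\mathcal{C}$), so it detects nothing; the paper instead uses $a_{g-1}$, with $y(a_{g-1})=a_{g-1,g}\in\mathcal{C}$, and analogous ad hoc choices for $\xi_1,\xi_2$ depending on $g$ and $n$ (e.g.\ $\xi_2(a_{g,g+n})=a_{g,g+n-1}$, and for $n=1$ also $\xi_2(c_{g-1})=s_{g-1,g}$). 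Since the lemma is precisely the assertion that such explicit finite sets exist for every generator in $G_1$, an outline that leaves the existence and verification of these sets open in the hardest cases is not yet a proof; to complete it you would need to produce the lists (or carry out your complement-rigidity argument) case by case, as the paper does.
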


\begin{proof} Suppose $g + n \geq 5$ and $g$ is even. Let $f \in G_1$. For $f=t_{c_1}$, let $L_f = \{b_{2,4}, b_{3,5} \cdots, b_{g+n-2,g+n}, c_1, b_{g+n,2}, a_{1,g+n-1}, e_{1,2}\}$. The set $L_f$ has trivial pointwise stabilizer. Since $t_{c_1}(a_{1,g+n-1}) = a_{1,g+n-2}$ and the other curves in $L_f$ are fixed by $t_{c_1}$, we have $f(L_f) \subset \mathcal{C}$. 
Similarly we get the result for every $t_{c_i}$ when $i=2, 3, \cdots, n-1$.
	
For $f=t_{d_1}$, let $L_f = \{b_{2,4}, b_{3,5} \cdots, b_{g-2,g}, b_{g,g+2}, b_{g+1,g+3}, \cdots, b_{g+n-3,g+n-1}, d_{n-1}, d_{1,2},$ $ s_{1,2}, c_1,  a_{1,g+n-1}\}$. The set $L_f$ has trivial pointwise stabilizer. Since $t_{d_1}(a_{1,g+n-1}) = a_{2,g}$ and the other curves in $L_f$ are fixed by $t_{d_1}$, we have $f(L_f) \subset \mathcal{C}$. For $f=t_{d_2}$, let $L_f = \{b_{2,4}, b_{3,5} \cdots, b_{g-2,g}, b_{g,g+2}, b_{g+1,g+3}, \cdots, b_{g+n-4,g+n-2}, d_{n-1}, d_{1,2}, s_{1,2}, c_1, a_{1,g+n-2}\}$. The set $L_f$ has trivial pointwise stabilizer. Since $t_{d_2}(a_{1,g+n-2}) = a_{2,g}$ and the other curves in $L_f$ are fixed by $t_{d_2}$, we have $f(L_f) \subset \mathcal{C}$. Similarly we get the result for every $t_{d_i}$ when $i=3, 4, \cdots, n-1$.

We know the result for $t_{s_1}$ since $s_1=c_1$. For $f=t_{s_2}$, let $L_f = \{b_{4,6}, b_{5,7}, \cdots, $ $ b_{g+n-2,g+n}, $ $e_{1,2}, r_{3,4}, c_1, c_2, c_3, a_{4,g+n}\}$. The set $L_f$ has trivial pointwise stabilizer. Since $t_{s_2}(a_{4,g+n}) = q_2$ and the other curves in $L_f$ are fixed by $t_{s_2}$, we have $f(L_f) \subset \mathcal{C}$. 
Similarly we get the result for every $t_{s_i}$ when $i=3, 4, \cdots, r$.
	
For $f=t_{z_1}$, if $n =0$ then let $L_f = \{b_{2,4}, b_{3,5} \cdots, b_{g-2,g}, s_{1,2}, c_1,$ $ a_1\}$. The set $L_f$ has trivial pointwise stabilizer. Since $t_{z_1}(a_1) = a_{2,g}$ and the other curves in $L_f$ are fixed by $t_{z_1}$, we have $f(L_f) \subset \mathcal{C}$. 	
For $f=t_{z_1}$, if $n \geq 1$ then let $L_f = \{b_{2,4}, b_{3,5} \cdots, b_{g-2,g}, b_{g,g+2},b_{g+1,g+3}, \cdots, b_{g+n-2,g+n},$ $s_{1,2}, e_{1,2}, c_1,$ $ a_1\}$. The set $L_f$ has trivial pointwise stabilizer. Since $t_{z_1}(a_1) = a_{2,g}$ and the other curves in $L_f$ are fixed by $t_{z_1}$, we have $f(L_f) \subset \mathcal{C}$. 
For $f=t_{z_2}$ when $g=4, n \geq 1$, let $L_f = \{b_{4,6},b_{5,7}, \cdots, b_{2+n,4+n},$ $e_{1,2}, c_1, c_2, c_3, a_{4,g}\}$. The set $L_f$ has trivial pointwise stabilizer. Since $t_{z_2}(a_{4,g}) = o_2$ and the other curves in $L_f$ are fixed by $t_{z_2}$, we have $f(L_f) \subset \mathcal{C}$. 
For $f=t_{z_2}$, when $g \geq 6, n =0$, let $L_f = \{b_{4,6}, b_{5,7} \cdots, b_{g-2,g},$ $s_{3,4}, d_{1,2}, c_1, c_2, c_3, a_{4,g}\}$. The set $L_f$ has trivial pointwise stabilizer. Since $t_{z_2}(a_{4,g}) = o_2$ and the other curves in $L_f$ are fixed by $t_{z_2}$, we have $f(L_f) \subset \mathcal{C}$. 
For $f=t_{z_2}$, when $g \geq 6, n \geq 1$, let $L_f = \{b_{4,6}, b_{5,7}, \cdots,  b_{g-2,g}, b_{g,g+2}, b_{g+1,g+3}, \cdots, b_{g+n-2,g+n},$ $s_{3,4}, e_{1,2}, c_1, c_2, c_3, a_{4,g}\}$. The set $L_f$ has trivial pointwise stabilizer. Since $t_{z_2}(a_{4,g}) = o_2$ and the other curves in $L_f$ are fixed by $t_{z_2}$, we have $f(L_f) \subset \mathcal{C}$. Similarly we get the result for every $t_{z_i}$ when $i=3, 4, \cdots, r+1$.
	
For $f=\sigma_{n-1}$, let $L_f = \{b_{1,3}, b_{2,4}, \cdots, b_{g+n-2,g+n}, b_{g+n-1,1}, b_{g+n,2}, w_{i_o}\}$, where $w_{i_o}$ is an element in $\{w_1, w_2, \cdots, w_n\}$ disjoint from  $b_{g+n-2,g+n}$. The set $L_f$ has trivial pointwise stabilizer. Since $\sigma_{n-1}(b_{g+n-3,g+n-1})= r_{g+n-1}$,  $\sigma_{n-1}(b_{g+n-1,1})= w_{g+n}$ and the other curves in $L_f$ are fixed by $\sigma_{n-1}$, we have $f(L_f) \subset \mathcal{C}$. 
With similar construction, we get the result for every $\sigma_i$ when $i=1, 2, \cdots, n-2$.
  
For $f=y$, when $g=2$ let $L_f = \{a_1, a_2, b_{2,2+n}, c_{1}, d_{1,2}, b_{2,4}, b_{3,5}, \cdots, b_{n,2+n}\} \subset \mathcal{C}$. The set $L_f$ has trivial pointwise stabilizer. Since $y(a_{1})=a_{1,2}$ and the other curves in $L_f$ are fixed by $y$, we have $f(L_f) \subset \mathcal{C}$.   
For $f=y$, when $g \geq 4$ let $L_f = \{a_{g-1}, a_g, b_{g-2,g}, c_{g-1}, d_{g-1,g}, b_{1,3}, b_{2,4}, \cdots, b_{g-4,g-2}, b_{g,g+2}, b_{g+1,g+3},$ $ \cdots, b_{g+n-2,g+n},$ $ b_{g+n-1,1}, b_{g+n,2} \} \subset \mathcal{C}$. The set $L_f$ has trivial pointwise stabilizer. Since $y(a_{g-1})=a_{g-1,g}$ and the other curves in $L_f$ are fixed by $y$, we have $f(L_f) \subset \mathcal{C}$. 

For $f=\xi_1$, when $g = 2$ let $L_f = \{a_1, b_{1,3}, b_{2,4}, \cdots, b_{n-1,n+1}, b_{1,n+1}, w_3\} \subset \mathcal{C}$. The set $L_f$ has trivial pointwise stabilizer. Since $\xi_1(a_1)= a_{1,1+n}$ and the other curves in $L_f$ are fixed by $\xi_1$, we have $f(L_f) \subset \mathcal{C}$. For $f=\xi_1$, when $g \geq 4, n \geq 2$ let $L_f = \{a_1, c_1, c_2, \cdots, c_{g-2}, a_g, r_{g-2,g-1}, b_{g-1,g+1}, b_{g,g+2}, \cdots,$ $ b_{g+n-3,g+n-1}\} \subset \mathcal{C}$. The set $L_f$ has trivial pointwise stabilizer. Since  
$\xi_1(a_1)= a_{1,g+n-1}$ and the other curves in $L_f$ are fixed by $\xi_1$, we have $f(L_f) \subset \mathcal{C}$. For $f=\xi_1$, when $g \geq 4, n = 1$ let $L_f = \{a_1, c_1, c_2, \cdots, c_{g-2}, a_g, r_{g-2,g-1}\} \subset \mathcal{C}$. The set $L_f$ has trivial pointwise stabilizer. Since  
$\xi_1(a_1)= a_{1,g}$ and the other curves in $L_f$ are fixed by $\xi_1$, we have $f(L_f) \subset \mathcal{C}$.

For $f=\xi_2$, when $g = 2$ let $L_f = \{a_{2,2+n}, w_{2+n}, b_{2,4}, b_{3,5}, \cdots, b_{n-1, n+1}\} \subset \mathcal{C}$. The set $L_f$ has trivial pointwise stabilizer. Since $\xi_2(a_{2,2+n})= a_{2,1+n}$ and the other curves in $L_f$ are fixed by $\xi_2$, we have $f(L_f) \subset \mathcal{C}$. For $f=\xi_2$, when $g \geq 4, n \geq 3$ let $L_f = \{a_{g,g+n}, b_{g+n,2}, b_{1,3}, b_{2,4}, \cdots, b_{g-3, g-1}, b_{g+n-1,g+n-3}, $ 
$b_{g+n-2,g+n-4}, \cdots, b_{g+2,g}, w_{g+n}\} \subset \mathcal{C}$. The set $L_f$ has trivial pointwise stabilizer. Since $\xi_2(a_{g,g+n})= a_{g,g+n-1}$ and the other curves in $L_f$ are fixed by $\xi_2$, we have $f(L_f) \subset \mathcal{C}$. For $f=\xi_2$, when $g \geq 4, n=2$ let $L_f = \{a_{g,g+2}, b_{g+2,2}, b_{1,3}, b_{2,4}, \cdots,$ $ b_{g-3, g-1}, w_{g+2}\} \subset \mathcal{C}$. The set $L_f$ has trivial pointwise stabilizer. Since $\xi_2(a_{g,g+2})= a_{g,g+1}$ and the other curves in $L_f$ are fixed by $\xi_2$, we have $f(L_f) \subset \mathcal{C}$.
For $f=\xi_2$, when $g \geq 4, n=1$ let $L_f = \{a_{g,g+1}, b_{g+1,2}, b_{1,3}, b_{2,4}, \cdots,$ $ b_{g-3,g-1}, w_2, c_{g-1}\} \subset \mathcal{C}$. The set $L_f$ has trivial pointwise stabilizer. Since $\xi_2(a_{g,g+1})= a_g$ and  $\xi_2(c_{g-1})= s_{g-1,g}$ the other curves in $L_f$ are fixed by $\xi_2$, we have $f(L_f) \subset \mathcal{C}$.\end{proof}  
 
\begin{theorem} \label{thm-4} If $g + n \geq 5$ and $g$ is even, then there exists a sequence $\mathcal{Z}_1 \subset \mathcal{Z}_2 \subset \cdots \subset \mathcal{Z}_n \subset \cdots$  such that $\mathcal{Z}_i$ is a finite superrigid set in $\mathcal{C}(N)$, $\mathcal{Z}_i$ has trivial pointwise stabilizer in $Mod_N$ for all $i$ and $\bigcup_{i \in \mathbb{N}} \mathcal{Z}_i = \mathcal{C}(N)$. \end{theorem}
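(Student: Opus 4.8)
The plan is to bootstrap from the superrigid set $\mathcal{C} = \bigcup_{i=1}^6 \mathcal{C}_i$ of Lemma \ref{C_6}, which already has trivial pointwise stabilizer, and to build the exhausting sequence by repeatedly applying the generators of $Mod_N$ provided by Lemma \ref{L_f_even}. Concretely, set $\mathcal{Z}_0 = \mathcal{C}$ and define inductively
\[
\mathcal{Z}_{i+1} \;=\; \mathcal{Z}_i \;\cup\; \bigcup_{f \in G_1} \big( f(\mathcal{Z}_i) \cup f^{-1}(\mathcal{Z}_i) \big).
\]
Each $\mathcal{Z}_i$ is finite because $G_1$ is finite and each $f$ acts on the (countable) vertex set, so we add only finitely many new vertices at each stage; the inclusions $\mathcal{Z}_i \subset \mathcal{Z}_{i+1}$ are immediate. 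Since $G_1$ generates $Mod_N$ (Theorem \ref{prop1} for $g \ge 4$, Theorem \ref{theorem-2} for $g=2$) and $Mod_N$ acts transitively on isotopy classes of curves of each fixed topological type, and $\mathcal{C}$ already contains a curve of every topological type (as remarked after Lemma \ref{C_6}), every vertex of $\mathcal{C}(N)$ is of the form $g_k \cdots g_1(z)$ for some $z \in \mathcal{C}$ and $g_j \in G_1 \cup G_1^{-1}$; such a vertex lies in $\mathcal{Z}_k$. Hence $\bigcup_{i} \mathcal{Z}_i = \mathcal{C}(N)$.

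The substantive point is that each $\mathcal{Z}_i$ is superrigid with trivial pointwise stabilizer. Trivial pointwise stabilizer is inherited automatically: $\mathcal{Z}_i \supset \mathcal{C}$ and $\mathcal{C}$ already has trivial pointwise stabilizer, so any mapping class fixing $\mathcal{Z}_i$ pointwise is the identity. For superrigidity I would argue by induction on $i$, the base case $i=0$ being Lemma \ref{C_6}. Assume $\mathcal{Z}_i$ is superrigid; let $\lambda : \mathcal{Z}_{i+1} \to \mathcal{C}(N)$ be superinjective. Its restriction to $\mathcal{Z}_i$ is superinjective, so by the inductive hypothesis there is $h \in Mod_N$ with $h|_{\mathcal{Z}_i} = \lambda|_{\mathcal{Z}_i}$. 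Replacing $\lambda$ by $h^{-1}\lambda$ we may assume $\lambda$ fixes $\mathcal{Z}_i$ pointwise, and it remains to show $\lambda$ fixes every vertex of $\mathcal{Z}_{i+1}$. A typical new vertex is $x = f(z)$ with $f \in G_1^{\pm 1}$ and $z \in \mathcal{Z}_i$; here is where Lemma \ref{L_f_even} enters. That lemma supplies a set $L_f \subset \mathcal{C} \subset \mathcal{Z}_i$ with trivial pointwise stabilizer and $f(L_f) \subset \mathcal{C}$.

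The key step, and the one requiring care, is the following local rigidity statement: if $\lambda$ fixes $f(L_f) \cup (\text{the relevant curves})$ pointwise and preserves all geometric-intersection patterns, then $\lambda(f(z)) = f(z)$. In the proofs of Lemmas \ref{C_3}--\ref{C_6} the recurring mechanism is that each new curve is \emph{the unique curve up to isotopy} satisfying a prescribed list of disjointness/nondisjointness relations with already-pinned curves; the same mechanism applies here, because $f(L_f) \subset \mathcal{C}$ is already pinned (being in $\mathcal{Z}_i$ and fixed by $\lambda$) and the topological configuration of $f(z)$ relative to $f(L_f)$ is the $f$-image of the configuration of $z$ relative to $L_f$, which by $L_f$ having trivial pointwise stabilizer pins $f(z)$ uniquely. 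One must check that $f(L_f)$ (together, if needed, with finitely many further curves of $\mathcal{C}$ lying in the complement of the supporting subsurface) genuinely determines $f(z)$ among all curves of $\mathcal{C}(N)$ — this is the analog of the ``unique curve'' arguments already used repeatedly — and that when $f \in G_1$ is not a Dehn twist (a half twist $\sigma_i$, the crosscap slide $y$, or a boundary slide $\xi_1, \xi_2$) the same uniqueness still holds on the nonorientable piece it is supported on. Having pinned every $f(z)$, we conclude $\lambda = \mathrm{id}$ on $\mathcal{Z}_{i+1}$, so the original $\lambda$ is induced by $h$; this completes the induction and hence the theorem. I expect the main obstacle to be verifying cleanly, uniformly in the (finitely many) topological types of $f$, that the pinned data $f(L_f)$ does determine $f(z)$ — i.e.\ checking that the $L_f$ chosen in Lemma \ref{L_f_even} are not merely fixed-pointwise-rigidly but are ``filling enough'' for the uniqueness argument; this is handled type-by-type exactly as in Lemma \ref{C_3}.
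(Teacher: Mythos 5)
Your construction of the sequence $\mathcal{Z}_k$, the finiteness, the trivial pointwise stabilizer, and the exhaustion argument all match the paper, and the reduction (restrict $\lambda$ to $\mathcal{Z}_{k-1}$, get $h$, normalize by $h^{-1}$) is fine. But the step you yourself flag as ``requiring care'' is a genuine gap, and it is not the mechanism the paper uses. You try to pin a new vertex $f(z)$, $z\in\mathcal{Z}_{k-1}$, by arguing that its zero/nonzero intersection pattern with the already-fixed curves $f(L_f)$ (plus finitely many curves of $\mathcal{C}$) determines it uniquely, ``as in Lemma~\ref{C_3}.'' That does not follow: superinjectivity only controls disjointness versus nondisjointness, and the uniqueness statements in Lemmas~\ref{C_3}--\ref{C_6} are hand-crafted for specific curves in specific complementary subsurfaces. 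Here $z$ ranges over the growing sets $\mathcal{Z}_{k-1}$, i.e.\ over essentially arbitrary curves, and an arbitrary curve is certainly not the unique one with a given disjointness pattern relative to the fixed finite set $f(L_f)$; moreover, the fact that $L_f$ has trivial pointwise stabilizer is a statement about mapping classes fixing $L_f$, and gives no uniqueness of a curve in terms of intersection data with $L_f$. So the induction does not close as written, and making it close ``type-by-type'' is not feasible uniformly in $k$.

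The missing idea, which is the heart of the paper's proof (following Aramayona--Leininger and Irmak--Paris \cite{IrP1}), is that superrigidity is inherited by homeomorphic images: since $\mathcal{Z}_{k-1}$ is superrigid, so are $f(\mathcal{Z}_{k-1})$ and $f^{-1}(\mathcal{Z}_{k-1})$ (precompose a superinjective map with $f_{*}^{\pm1}$). Hence $\lambda$ restricted to $f(\mathcal{Z}_{k-1})$ is induced by some homeomorphism $h_f$, and $\lambda$ restricted to $f^{-1}(\mathcal{Z}_{k-1})$ by some $h'_f$. The sets $L_f$ of Lemma~\ref{L_f_even} are then used only for the patching: $f(L_f)\subset\mathcal{C}\subset\mathcal{Z}_{k-1}\cap f(\mathcal{Z}_{k-1})$, so $h$ and $h_f$ agree on $f(L_f)$, and since $f(L_f)$ has trivial pointwise stabilizer (being the $f$-image of such a set), $h_f=h$; similarly $L_f\subset\mathcal{Z}_{k-1}\cap f^{-1}(\mathcal{Z}_{k-1})$ forces $h'_f=h$. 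This pins $\lambda=h$ on all of $\mathcal{Z}_k$ without any per-curve uniqueness analysis. Your proposal needs to be repaired by replacing your ``local rigidity/unique curve'' step with this overlap argument.
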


\begin{proof} For each vertex $x$ in the curve complex, there 
exists $r \in Mod_N$ and a vertex $y$ in the set $\mathcal{C}$ such that $r(y)=x$. By following the construction given in \cite{IrP1}, we let
$\mathcal{Z}_1 = \mathcal{C}$ and
$\mathcal{Z}_k = \mathcal{Z}_{k-1} \cup (\bigcup _{f \in G} (f(\mathcal{Z}_{k-1}) \cup f^{-1}(\mathcal{Z}_{k-1})))$ when $k \geq 2$. We observe that 
$\bigcup _{k=1} ^{\infty} \mathcal{Z}_k$ is the set of all vertices in  $\mathcal{C}(N)$. The set $\mathcal{Z}_k$ is a finite set with trivial pointwise stabilizer for all $k \geq 1$. We will prove that $\mathcal{Z}_k$ is superrigid for every $k$. We will give the proof by induction on $k$. 
We know that $\mathcal{Z}_1$ is superrigid. Assume that $\mathcal{Z}_{k-1}$ is superrigid for some $k \geq 2$. Let $\lambda: \mathcal{Z}_k \rightarrow \mathcal{C}(N)$ be a superinjective simplicial map. The map $\lambda$ restricts to a superinjective simplicial map on $\mathcal{Z}_{k-1}$. Since $\mathcal{Z}_{k-1}$ is superrigid, there exists a homeomorphism $h$ of $N$ such that $h([x]) = \lambda([x])$ for all $x \in \mathcal{Z}_{k-1}$. Let $f \in G$. Since $\mathcal{Z}_{k-1}$ is superrigid, $f(\mathcal{Z}_{k-1})$ is also superrigid. So, there exists a homeomorphism $h_f$ of $N$ such that $h_f([x]) = \lambda([x])$ for all $x \in f(\mathcal{Z}_{k-1})$. There exists $L_f \subset \mathcal{C}$ such that $L_f$ has trivial pointwise stabilizer and $f(L_f) \subset \mathcal{C}$ by Lemma \ref{L_f_even}. So,  $L_f \subset \mathcal{C} \subset \mathcal{Z}_{k-1}$ and $f(L_f) \subset \mathcal{C} \subset \mathcal{Z}_{k-1}$.
We have $f(L_f) \subset \mathcal{Z}_{k-1} \cap f(\mathcal{Z}_{k-1})$. This implies that $h_f = h$ since $f(L_f)$ has trivial pointwise stabilizer. Similarly, there exists a homeomorphism 
$h'_f$ of $N$ such that $h'_f([x]) = \lambda([x])$ for all $x \in f^{-1}(\mathcal{Z}_{k-1})$. We have $L_f \subset \mathcal{Z}_{k-1} \cap f^{-1}(\mathcal{Z}_{k-1})$. This implies that $h'_f = h$ since $L_f$ has trivial pointwise stabilizer. So, $h([x]) = \lambda([x])$ for each $x \in \mathcal{Z}_k$. This shows that $\mathcal{Z}_k$ is a superrigid set. Hence, by induction  $\mathcal{Z}_k$ is a finite superrigid set for all $k \geq 1$.\end{proof}\\
  
  \begin{figure}[t]
  	\begin{center}
  		\epsfxsize=2.9in \epsfbox{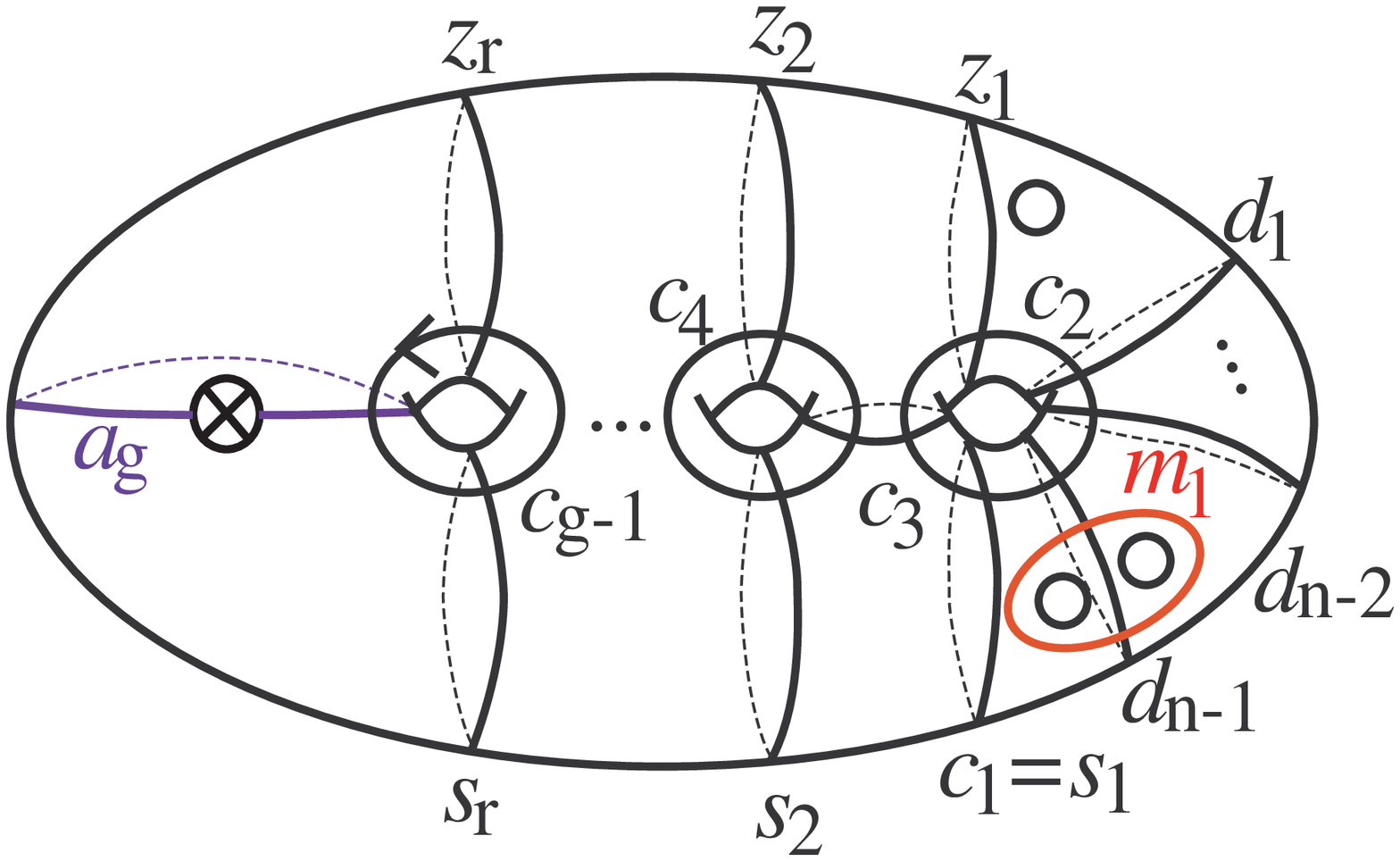} \hspace{-0.1cm} \epsfxsize=2.9in \epsfbox{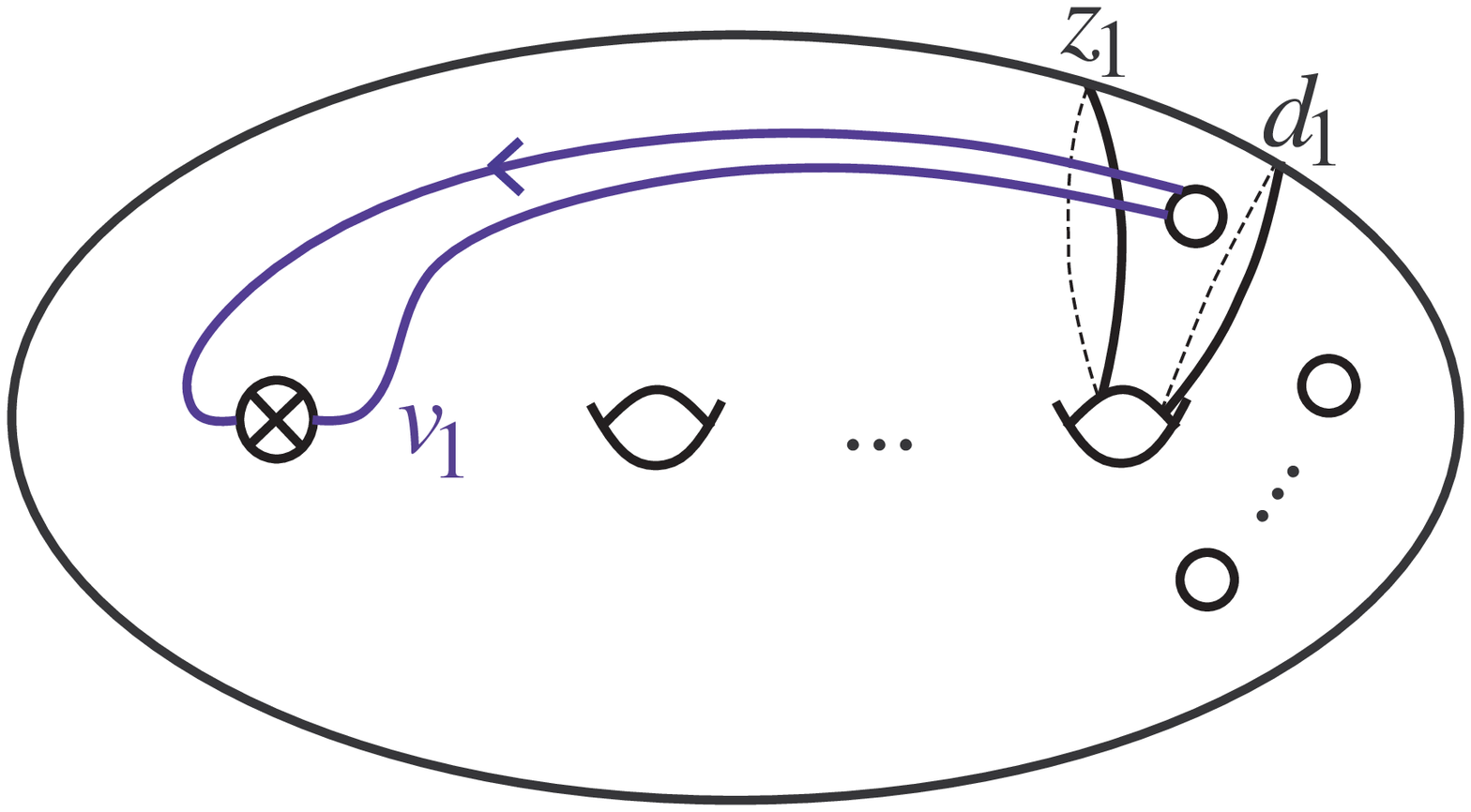}
  		
  		\hspace{-1cm} (i) \hspace{6.5cm} (ii)
  		
  		\hspace{0.71cm} \epsfxsize=3.2in \epsfbox{ext-fig114.eps}
  		
  		(iii) 
  		
  		\caption{Generators for odd genus $g=2r+1$, $g \geq 3$} \label{Fig-13}
  	\end{center}
  \end{figure}

\begin{figure}
	\begin{center}
		\epsfxsize=2.55in \epsfbox{ext-fig109.eps}   \hspace{0.1cm} 
		\epsfxsize=2.55in \epsfbox{ext-fig66.eps}  
		
		(i)  \hspace{6cm}   (ii) 
		
		\epsfxsize=2.55in \epsfbox{ext-fig67.eps}  \hspace{0.1cm}  	\epsfxsize=2.55in \epsfbox{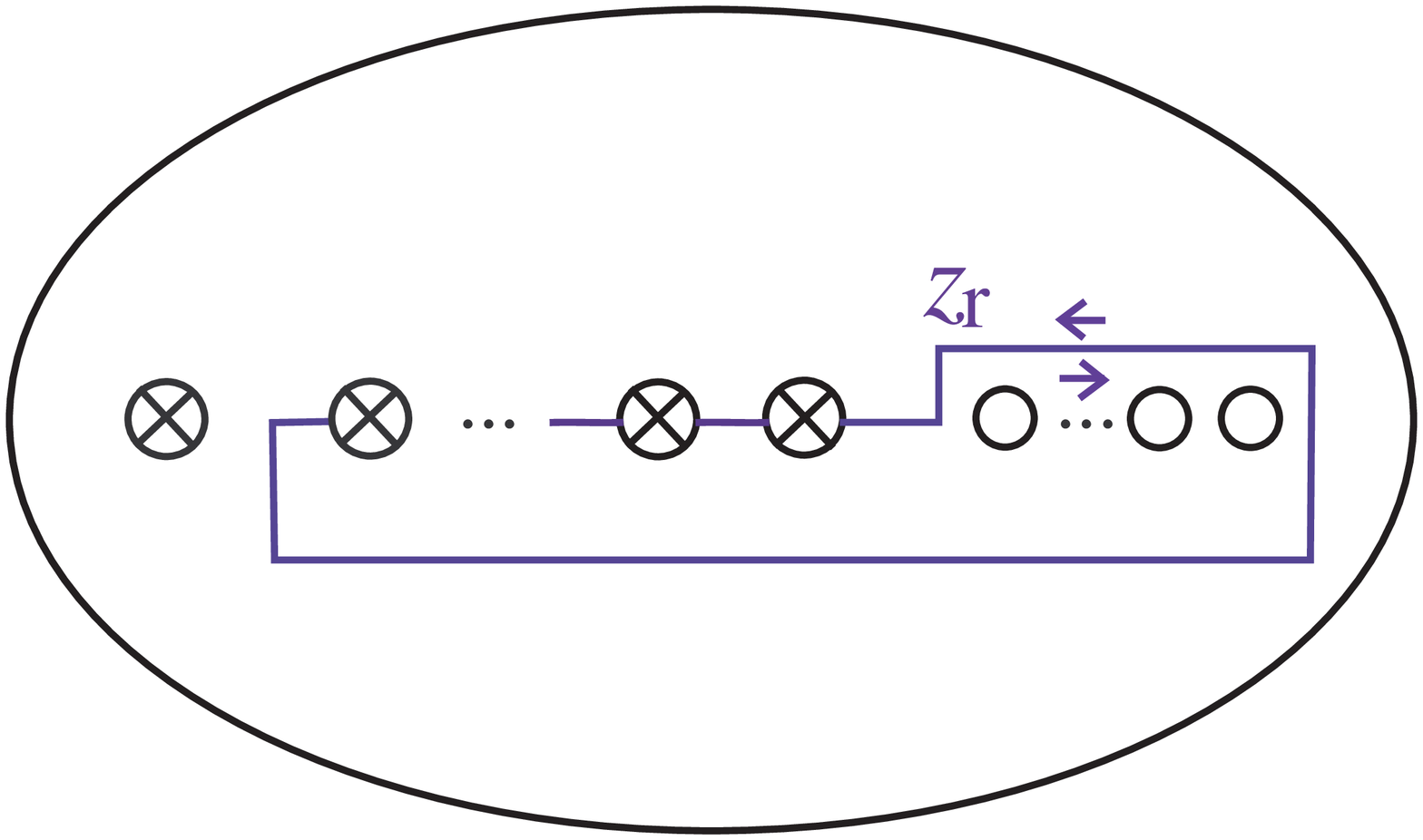}  
		
		(iii)  \hspace{6cm}   (iv) 
		
		\epsfxsize=2.55in \epsfbox{ext-fig70.eps}  \hspace{0.1cm}  	\epsfxsize=2.55in \epsfbox{ext-fig69.eps}  
		
		(v)  \hspace{6cm}   (vi) 
		
		\epsfxsize=2.55in \epsfbox{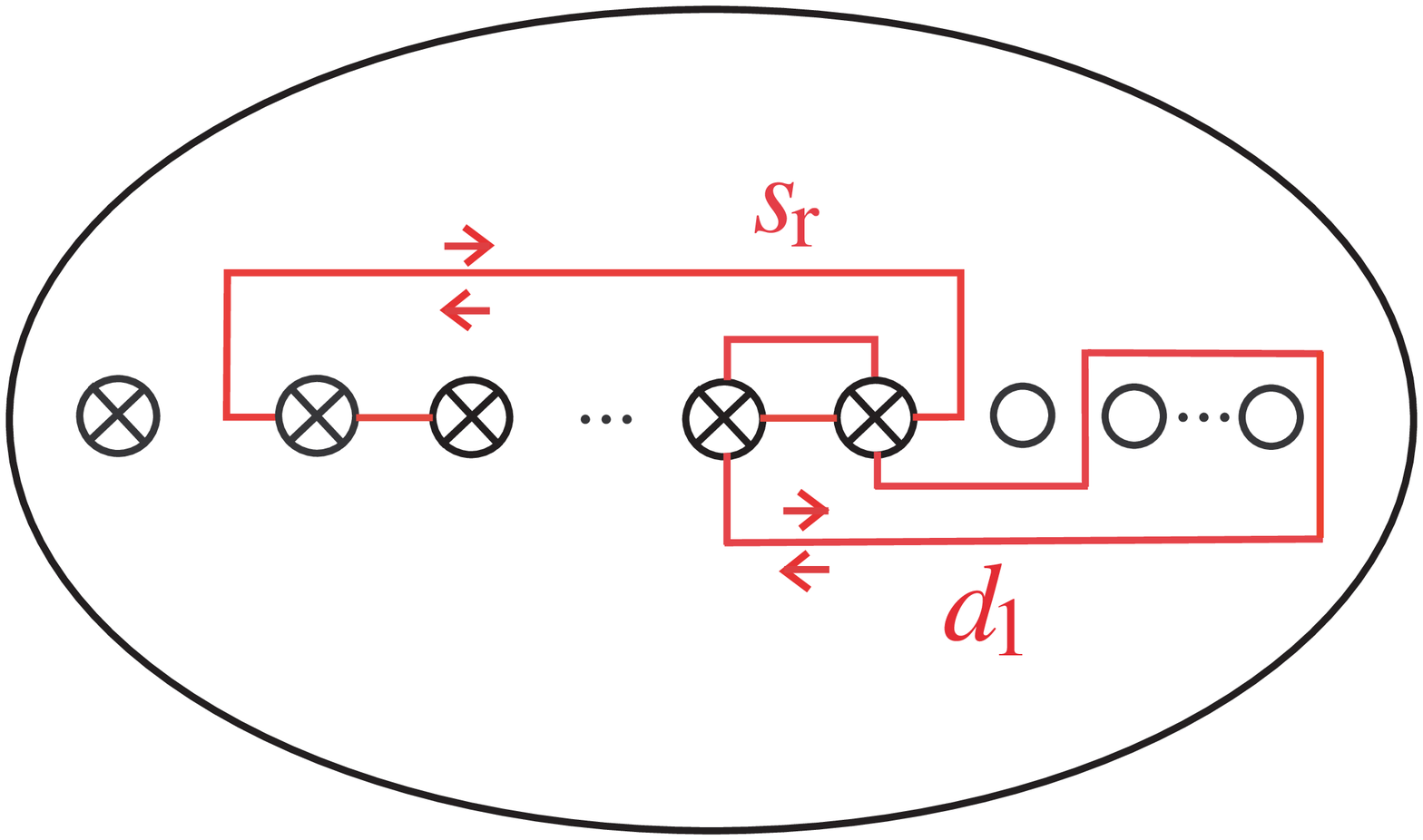}  \hspace{0.1cm} \epsfxsize=2.55in \epsfbox{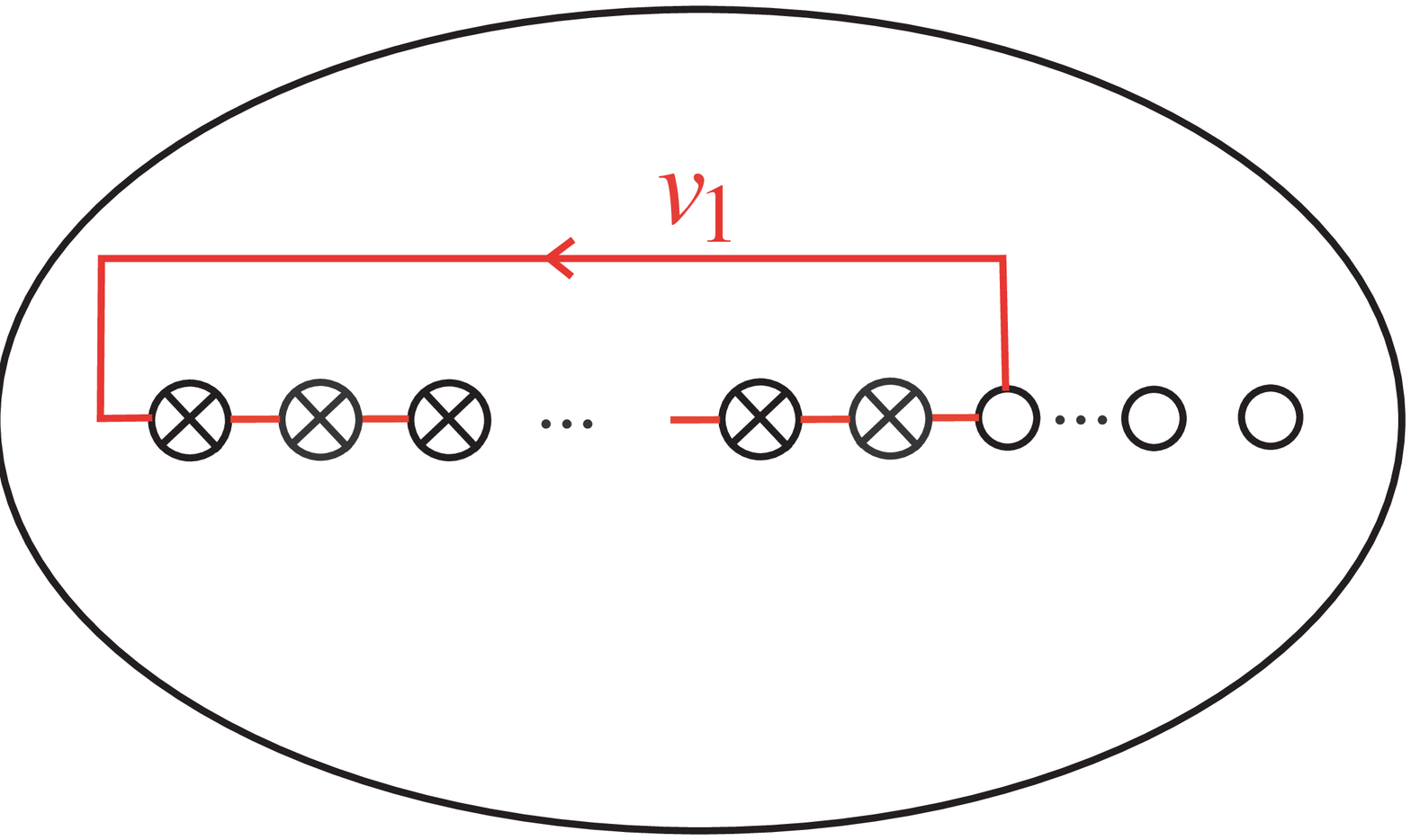}
		
		(vii)  \hspace{6cm}   (viii)   
		\caption{Generators for odd genus, $g \geq 3$}
		\label{Fig-12}
	\end{center}
\end{figure}  

In the following theorem we consider the curves given in Figure \ref{Fig-12}. 
Let $\sigma_i$ denote the half twist along the curve $m_i$. Let $y= y_{a_g, c_{g-1}}$ be the crosscap slide of $a_g$ along $c_{g-1}$.  Let $\xi_1$ be the boundary slide of the boundary component $t_{g+n}$ along the arc $v_1$. 

\begin{theorem}\label{gen-odd} If $g \geq 3$ and $g=2r+1$ for some $r \in \mathbb{Z}^+$, then $Mod_N$ is generated by $\{t_x: x \in \{c_1, c_2, \cdots, c_{g-1}, s_1, s_2, \cdots, s_r, z_1, z_2, \cdots, z_r, d_1, d_2, \cdots, d_{n-1}\} \} \cup \{\sigma_1, \sigma_2, \cdots,$ $ \sigma_{n-1}, y,  \xi_1\}$.\end{theorem}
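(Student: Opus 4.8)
The plan is to mirror the argument used for Theorem~\ref{prop1}: invoke the known generating set of $Mod_N$ for a nonorientable surface of odd genus due to Korkmaz, and then translate each of his generators into the crosscap model of $N$ fixed in Figure~\ref{Fig-12}. Korkmaz's Theorem~4.14 in \cite{K2} treats the even genus case; the companion statement for odd genus $g=2r+1$ in \cite{K2} produces a generating set consisting of Dehn twists about a chain-type collection of two-sided curves together with a crosscap slide, a boundary slide, and the braid (half-twist) generators permuting the boundary components. The entire content of the proof is therefore to exhibit, curve by curve, the image of Korkmaz's generating set under a homeomorphism carrying his model of $N$ to ours.

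First I would set up the dictionary between the two models. In our picture (Figures~\ref{Fig-12} and \ref{Fig-13}) the $g$ crosscaps $a_1,\dots,a_g$ and the boundary components $t_{g+1},\dots,t_{g+n}$ sit in the cyclic order inherited from the polygon $D$ of Section~3, whereas Korkmaz works with an orientable piece carrying $r$ handles to which crosscaps are attached. One slides the crosscaps past one another and past the boundary components --- exactly the type of move recorded in Section~3 of Stukow's paper \cite{St} --- to identify: $c_1,\dots,c_{g-1}$ as a chain of two-sided curves playing the role of Korkmaz's chain; $s_1,\dots,s_r$ and $z_1,\dots,z_r$ as the remaining handle-type twists; $d_1,\dots,d_{n-1}$ as the twists carrying boundary components around; $m_1,\dots,m_{n-1}$ as the curves supporting the half twists $\sigma_1,\dots,\sigma_{n-1}$; $y=y_{a_g,c_{g-1}}$ as Korkmaz's crosscap slide; and $\xi_1$ (the boundary slide of $t_{g+n}$ along $v_1$) as his boundary slide. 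Since for odd genus there are $r$ handle-pairs rather than $r+1$ and the last crosscap $a_g$ is unpaired, only the single boundary slide $\xi_1$ is needed rather than the pair $\xi_1,\xi_2$ of the even case; this is the one structural difference from Theorem~\ref{prop1}.

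Once the dictionary is in place the proof is finished: the subgroup generated by $\{t_x : x\in\{c_1,\dots,c_{g-1},s_1,\dots,s_r,z_1,\dots,z_r,d_1,\dots,d_{n-1}\}\}\cup\{\sigma_1,\dots,\sigma_{n-1},y,\xi_1\}$ is, via the model-identifying homeomorphism, conjugate to the subgroup generated by Korkmaz's set, which is all of $Mod_N$; hence this subgroup equals $Mod_N$ as well.

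The main obstacle will be the bookkeeping in the second step. One has to verify that the specific curves drawn in Figure~\ref{Fig-12} really do match Korkmaz's generators after the crosscap-sliding homeomorphism, and this has to be done uniformly in $n$, including the degenerate case $n=0$ (closed surface, no half twists, no boundary slide) and the base case $g=3$. In each situation one must confirm that no generator of Korkmaz's list has been dropped or duplicated --- in particular that the count of $z$-twists and the single $\xi_1$ genuinely suffice --- and that every curve in Figure~\ref{Fig-12} is nontrivial and of the asserted topological type in our model. None of this is conceptually deep, but it requires care with the crosscap conventions (the crossed disks of Figure~\ref{Fig0}) and with the cyclic ordering of the $a_i$ and $t_j$.
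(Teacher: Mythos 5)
Your proposal matches the paper's proof: the paper likewise deduces the statement from Korkmaz's Theorem~4.14 in \cite{K2} by transferring his generating curves (Figure~\ref{Fig-13}) into the crosscap model of Figure~\ref{Fig-12}, citing Section~3 of Stukow \cite{St} for part of that dictionary. The only cosmetic difference is that the paper attributes the odd-genus case directly to the proof of Theorem~4.14 rather than to a separate companion statement, so your argument is essentially the same as the one given.
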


\begin{proof} The proof follows from the proof of Theorem 4.14 given by Korkmaz in \cite{K2} by representing the curves used in his generating set given in Figure \ref{Fig-13}, in our crosscap model of $N$ as shown in Figure \ref{Fig-12} (see section 3 in Stukow's paper \cite{St} to see some part of this transfer).\end{proof}

The proof of the following theorem was given by Korkmaz (see Theorem 4.4 in  \cite{K2}). Consider Figure \ref{Fig-09} (i),  $\sigma_i$ denotes the half twist along the curve $m_i$ and $\xi_1$ is the boundary slide along the arc $v_1$.

\begin{theorem}\label{theorem-1} If $g = 1, n \geq 1$, then $Mod_N$ is generated by $\{\sigma_1, \sigma_2, \cdots, \sigma_{n-1},  \xi_1\}$.\end{theorem}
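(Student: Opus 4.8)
The plan is to obtain this generating set directly from Korkmaz's Theorem 4.4 in \cite{K2}: since $N$ here is a genus one nonorientable surface with $n \geq 1$ boundary components --- a M\"{o}bius band with $n-1$ extra holes --- Korkmaz's classification already supplies a generating set expressed in essentially the crosscap picture of Figure \ref{Fig-09} (i). Thus, in contrast with the cases $g \geq 3$ handled in Theorems \ref{prop1} and \ref{gen-odd}, which required transferring Korkmaz's curves into our crosscap model (cf.\ \cite{St}), no change of model is needed here. One need only match notation: check that the half twists $\sigma_1, \dots, \sigma_{n-1}$ and the boundary slide $\xi_1$ defined above coincide with the corresponding elements in \cite{K2}, after which the statement is immediate.

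If one wanted to prove the statement from scratch, the natural route is induction on $n$. Cap the boundary component $t_{g+n}$ with a disk carrying a marked point to obtain a surface $N'$, which is again a genus one nonorientable surface, now with $n-1$ boundary components. A Birman-type exact sequence then presents $Mod_N$ as an extension involving $Mod_{N'}$ and the point-pushing subgroup, the image of $\pi_1$ of $N'$ punctured at the marked point. Since $N'$ is a M\"{o}bius band with holes, this fundamental group is free, generated by loops encircling the remaining holes together with one loop running through the crosscap. The key computations are: the push of the marked point around a loop enclosing two adjacent holes is a product of the $\sigma_i^{\pm 1}$; the push around the crosscap loop is realized by $\xi_1$; and any boundary-parallel twist ambiguity is absorbed because Dehn twists about pair-of-pants boundaries (for instance $\sigma_{n-1}^{2}$) reduce, by an inner induction, to the listed generators. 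Feeding in the inductive generating set for $Mod_{N'}$, lifted into $Mod_N$, then yields the claim, with the base case $n=1$ (the M\"{o}bius band) checked directly.

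The step I would expect to be the main obstacle is verifying that the point-push along the \emph{one-sided} loop through the crosscap is captured exactly by $\xi_1$, and does not spawn an extra crosscap slide: in the nonorientable setting a regular neighborhood of such a loop together with the puncture is a punctured M\"{o}bius band rather than a punctured annulus, so pushing the puncture around it behaves quite differently from the orientable case and has to be analyzed by hand. Closely related is the usual care needed with the Birman exact sequence for surfaces with boundary, namely keeping track of boundary twists under capping. In the present paper, however, none of this is reproved: we simply invoke Korkmaz's Theorem 4.4 in \cite{K2}.
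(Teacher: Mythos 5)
Your proposal matches the paper exactly: this theorem is not reproved there but is simply quoted from Korkmaz's Theorem 4.4 in \cite{K2}, with the generators $\sigma_1,\dots,\sigma_{n-1},\xi_1$ identified via Figure \ref{Fig-09} (i), which is precisely your primary route. The additional Birman-exact-sequence sketch is optional background and does not affect the correctness of the citation-based argument.
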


If $g = 1, n \geq 1$, let $G_2=\{\sigma_1, \sigma_2, \cdots, \sigma_{n-1},  \xi_1\}$. If $g \geq 3$ and $g=2r+1$ for some $r \in \mathbb{Z}^+$, let $G_2= \{t_x: x \in \{c_1, c_2, \cdots, c_{g-1}, s_1, 
s_2, \cdots, $ $s_r, z_1, z_2, \cdots, z_r, d_1, d_2, \cdots, d_{n-1}\} \} \cup \{\sigma_1, \sigma_2, \cdots, \sigma_{n-1}, y, \xi_1\}$.

\begin{lemma} \label{L_f_odd} If $g + n \geq 5$ and $g$ is odd, then $\forall \ f \in G_2$, $\exists$ a set $L_f \subset \mathcal{C}$ such that $L_f$ has trivial pointwise stabilizer and $f(L_f) \subset \mathcal{C}$.\end{lemma}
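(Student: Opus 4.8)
The plan is to prove the lemma exactly as Lemma \ref{L_f_even} was proved, now working in the crosscap model of the odd-genus surface of Figure \ref{Fig-12} and exhausting the generating set $G_2$ generator by generator. For each $f \in G_2$ the goal is to exhibit an explicit finite subset $L_f \subset \mathcal{C}$ with (i) trivial pointwise stabilizer in $Mod_N$ and (ii) $f(L_f) \subset \mathcal{C}$. Since $\mathcal{C} = \bigcup_{i=1}^6 \mathcal{C}_i$ has already been observed to contain a curve of every topological type on $N$, condition (ii) reduces to checking, for each curve in $L_f$, that its image under $f$ is again one of the named curves in the lists defining $\mathcal{C}_1,\dots,\mathcal{C}_6$.

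The construction of each $L_f$ follows the same principle as in the even case: choose $L_f$ so that all of its members except one lie in the complement of the support of $f$, hence are fixed by $f$, while the remaining member is sent by $f$ to a curve already placed in $\mathcal{C}$. For the Dehn twists $t_{c_i}$, $t_{s_i}$, $t_{d_i}$ one reuses the families of $b_{p,q}$'s, $c_i$'s, $d_{p,q}$'s, $s_{p,q}$'s, $e_{p,q}$'s together with a single curve $a_i$ or $a_{p,q}$ that appear in the proof of Lemma \ref{L_f_even}, verifying relations such as $t_{c_1}(a_{1,g+n-1}) = a_{1,g+n-2}$. For the twists $t_{z_i}$ the index $i$ now ranges over $1,\dots,r$ in accordance with Theorem \ref{gen-odd}, and one checks, for example, $t_{z_1}(a_1) = a_{2,g}$ and that the relevant $t_{z_i}$ carry some $a_{p,q}$ to a curve of the form $o_t$ or $q_t$ in $\mathcal{C}_6$. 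For the half twists $\sigma_i$ one uses the full cyclic family of $b_{p,q}$'s together with a suitable $w_{i_0}$, checking that $\sigma_i$ sends the two curves it moves to curves $r_t$, $w_t$ in $\mathcal{C}_2$. For the crosscap slide $y = y_{a_g,c_{g-1}}$ (present only when $g \ge 3$) one takes $L_f$ built from $a_{g-1}, a_g, b_{g-2,g}, c_{g-1}, d_{g-1,g}$ and enough $b_{p,q}$'s, using $y(a_{g-1}) = a_{g-1,g}$. For the boundary slide $\xi_1$ one separates the subcases $g = 1$, $g \ge 3$ with $n \ge 2$, and $g \ge 3$ with $n = 1$, verifying $\xi_1(a_1) = a_{1,g+n-1}$ or $a_{1,g}$ as appropriate and, in the $n = 1$ subcase, that any extra curve such as $c_{g-1}$ retained in $L_f$ also has image in $\mathcal{C}$. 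When $g = 1$ the set $G_2$ is just $\{\sigma_1,\dots,\sigma_{n-1},\xi_1\}$, so only the half-twist and boundary-slide cases arise.

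In each case condition (i) is verified as before: the $b_{p,q}$'s (or the $c_i$'s) together with the included $a_i$ or $a_{p,q}$ cut $N$ into disks and elementary pieces that admit no nontrivial mapping class trivial on their boundary, so no nontrivial element of $Mod_N$ can fix $L_f$ pointwise. I expect the main difficulty to be organizational rather than conceptual: there is no new ingredient beyond Lemma \ref{L_f_even}, but one must re-derive the action of every generator on the chosen test curves directly from the odd-genus crosscap picture and keep the many subcases straight ($n = 0$ versus $n \ge 1$, $g = 1$ versus $g \ge 3$, and the small exceptional genera). The most delicate individual cases are likely $\xi_1$ when $n = 1$, where there is little room on the surface, and the twists $t_{s_i}$, $t_{z_i}$ for indices near $r$, where the symmetry of the model is least transparent and one must make sure the single moved curve genuinely lands on a curve appearing in the explicit list defining $\mathcal{C}$.
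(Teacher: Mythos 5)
Your proposal follows essentially the same route as the paper's proof: for each generator in $G_2$ one exhibits an explicit finite $L_f$ consisting mostly of curves fixed by $f$ plus one or two curves whose images ($t_{c_1}(a_{1,g+n-1})=a_{1,g+n-2}$, $t_{z_1}(a_1)=a_{2,g}$, $t_{s_2}(a_{4,g+n})=q_2$, $y(a_{g-1})=a_{g-1,g}$, $\xi_1(a_1)=a_{1,g+n-1}$, etc.) are again named curves of $\mathcal{C}$, with the same case split by generator type and by $n$ and the same verification of trivial pointwise stabilizer. The only difference is that the paper writes out each list explicitly (and splits the $\xi_1$, $g\geq 3$ case into $n\geq 3$, $n=2$, $n=1$), which is exactly the organizational work you anticipate.
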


\begin{proof} Suppose $g + n \geq 5$ and $g$ is odd. Let $f \in G_2$. For $f=t_{c_1}$, let $L_f = \{b_{2,4}, b_{3,5} \cdots, b_{g+n-2,g+n}, c_1, b_{g+n,2}, a_{1,g+n-1}, e_{1,2}\}$. The set $L_f$ has trivial pointwise stabilizer. Since $t_{c_1}(a_{1,g+n-1}) = a_{1,g+n-2}$ and the other curves in $L_f$ are fixed by $t_{c_1}$, we have $f(L_f) \subset \mathcal{C}$. Similarly we get the result for every $t_{c_i}$ when $i=2, 3, \cdots, n-1$.
	
For $f=t_{d_1}$, let $L_f = \{b_{2,4}, b_{3,5} \cdots, b_{g-2,g}, b_{g,g+2}, b_{g+1,g+3}, \cdots, b_{g+n-3,g+n-1}, d_{n-1}, d_{1,2},$ $ s_{1,2}, c_1,  a_{1,g+n-1}\}$. The set $L_f$ has trivial pointwise stabilizer. Since $t_{d_1}(a_{1,g+n-1}) = a_{2,g}$ and the other curves in $L_f$ are fixed by $t_{d_1}$, we have $f(L_f) \subset \mathcal{C}$. For $f=t_{d_2}$, let $L_f = \{b_{2,4}, b_{3,5} \cdots, b_{g-2,g}, b_{g,g+2}, b_{g+1,g+3}, \cdots, b_{g+n-4,g+n-2}, d_{n-1}, d_{1,2}, s_{1,2}, c_1, a_{1,g+n-2}\}$. The set $L_f$ has trivial pointwise stabilizer. Since $t_{d_2}(a_{1,g+n-2}) = a_{2,g}$ and the other curves in $L_f$ are fixed by $t_{d_2}$, we have $f(L_f) \subset \mathcal{C}$. Similarly we get the result for every $t_{d_i}$ when $i=3, 4, \cdots, n-1$.
	
We know the result for $t_{s_1}$ since $s_1=c_1$. For $f=t_{s_2}$, let $L_f = \{b_{4,6}, b_{5,7}, \cdots, $ $b_{g+n-2,g+n}, $ $e_{1,2}, r_{3,4}, c_1, c_2, c_3, a_{4,g+n}\}$. The set $L_f$ has trivial pointwise stabilizer. Since $t_{s_2}(a_{4,g+n}) = q_2$ and the other curves in $L_f$ are fixed by $t_{s_2}$, we have $f(L_f) \subset \mathcal{C}$. 
Similarly we get the result for every $t_{s_i}$ when $i=3, 4, \cdots, r$.
	 
For $f=t_{z_1}$, if $n =0$ then let $L_f = \{b_{2,4}, b_{3,5} \cdots, b_{g-2,g}, s_{1,2}, c_1, a_1\}$. The set $L_f$ has trivial pointwise stabilizer. Since $t_{z_1}(a_1) = a_{2,g}$ and the other curves in $L_f$ are fixed by $t_{z_1}$, we have $f(L_f) \subset \mathcal{C}$. 	
For $f=t_{z_1}$, if $n \geq 1$ then let $L_f = \{b_{2,4}, b_{3,5} \cdots, b_{g-2,g}, b_{g,g+2}, b_{g+1,g+3}, \cdots, b_{g+n-2,g+n},$ $s_{1,2}, e_{1,2}, c_1, a_1\}$. The set $L_f$ has trivial pointwise stabilizer. Since $t_{z_1}(a_1) = a_{2,g}$ and the other curves in $L_f$ are fixed by $t_{z_1}$, we have $f(L_f) \subset \mathcal{C}$. 
For $f=t_{z_2}$, if $n=0$ then let $L_f = \{b_{4,6}, b_{5,7} \cdots, b_{g-2,g}, s_{3,4}, c_1, c_2,$ $ c_3, a_{4,g}\}$. The set $L_f$ has trivial pointwise stabilizer. Since $t_{z_2}(a_{4,g}) = o_2$ and the other curves in $L_f$ are fixed by $t_{z_2}$, we have $f(L_f) \subset \mathcal{C}$.
For $f=t_{z_2}$, if $n \geq 1$ then let $L_f = \{b_{4,6}, b_{5,7} \cdots, b_{g-2,g},$ $ b_{g,g+2},b_{g+1,g+3}, \cdots, b_{g+n-2,g+n},$ $e_{1,2}, s_{3,4}, c_1, c_2, c_3, a_{4,g}\}$. The set $L_f$ has trivial pointwise stabilizer. Since $t_{z_2}(a_{4,g}) = o_2$ and the other curves in $L_f$ are fixed by $t_{z_2}$, we have $f(L_f) \subset \mathcal{C}$. Similarly we get the result for every $t_{z_i}$ when $i=3, 4, \cdots, r$.
	
For $f=\sigma_{n-1}$, let $L_f = \{b_{1,3}, b_{2,4}, \cdots, b_{g+n-2,g+n}, b_{g+n-1,1}, b_{g+n,2}, w_{i_o}\}$, where $w_{i_o}$ is an element in $\{w_1, w_2, \cdots, w_n\}$ disjoint from  $b_{g+n-2,g+n}$. The set $L_f$ has trivial pointwise stabilizer. Since $\sigma_{n-1}(b_{g+n-3,g+n-1})= r_{g+n-1}$,  $\sigma_{n-1}(b_{g+n-1,1})= w_{g+n}$ and the other curves in $L_f$ are fixed by $\sigma_{n-1}$, we have $f(L_f) \subset \mathcal{C}$. 
With similar construction, we get the result for every $\sigma_i$ when $i=1, 2, \cdots, n-2$.
	
For $f=y$, let $L_f = \{a_{g-1}, a_g, b_{g-2,g}, c_{g-1}, d_{g-1,g}, b_{1,3}, b_{2,4}, \cdots, b_{g-4,g-2}, b_{g,g+2}, b_{g+1,g+3},$ $ \cdots, b_{g+n-2,g+n}, b_{g+n-1,1}, b_{g+n,2} \} \subset \mathcal{C}$. The set $L_f$ has trivial pointwise stabilizer. Since $y(a_{g-1})=a_{g-1,g}$ and the other curves in $L_f$ are fixed by $y$, we have $f(L_f) \subset \mathcal{C}$. 
	
For $f=\xi_1$, when $g = 1$, let $L_f = \{a_1, b_{1,3}, b_{2,4}, \cdots,  b_{n-2,n}, u_1, w_{n-1}\} \subset \mathcal{C}$. The set $L_f$ has trivial pointwise stabilizer. Since  
$\xi_1(a_1)= a_{1,g+n-1}$ and $\xi_1(u_1)= a_{1,2}$ the other curves in $L_f$ are fixed by $\xi_1$, we have $f(L_f) \subset \mathcal{C}$. For $f=\xi_1$, when $g \geq 3, n \geq 3$, let $L_f = \{a_1, c_1, c_2, \cdots, c_{g-1},  r_{g-1,g}, b_{g,g+2}, b_{g+1,g+3}, \cdots,$ $ b_{g+n-3,g+n-1}\} \subset \mathcal{C}$. The set $L_f$ has trivial pointwise stabilizer. Since  
$\xi_1(a_1)= a_{1,g+n-1}$ and the other curves in $L_f$ are fixed by $\xi_1$, we have $f(L_f) \subset \mathcal{C}$. 
For $f=\xi_1$, when $g \geq 3, n = 2$, let $L_f = \{a_1, c_1, c_2, \cdots, c_{g-1},  r_{g-1,g }\} \subset \mathcal{C}$. The set $L_f$ has trivial pointwise stabilizer. Since $\xi_1(a_1)= a_{1,g+1}$ and the other curves in $L_f$ are fixed by $\xi_1$, we have $f(L_f) \subset \mathcal{C}$. 
For $f=\xi_1$, when $g \geq 3, n = 1$ (in this case actually we have $g \geq 5$ since $g$ is odd and $g+n \geq 5$) let $L_f = \{a_1, c_1, c_2, \cdots, c_{g-1}, l_0\} \subset \mathcal{C}$. The set $L_f$ has trivial pointwise stabilizer. Since $\xi_1(a_1)= a_{1,g}$ and the other curves in $L_f$ are fixed by $\xi_1$, we have $f(L_f) \subset \mathcal{C}$.\end{proof}

\begin{theorem} \label{B-odd} If $g + n \geq 5$ and $g$ is odd, then there exists a sequence $\mathcal{Z}_1 \subset \mathcal{Z}_2 \subset \cdots \subset \mathcal{Z}_n \subset \cdots$  such that $\mathcal{Z}_i$ is a finite superrigid set in $\mathcal{C}(N)$, $\mathcal{Z}_i$ has trivial pointwise stabilizer in $Mod_N$ for all $i$ and $\bigcup_{i \in \mathbb{N}} \mathcal{Z}_i = \mathcal{C}(N)$. \end{theorem}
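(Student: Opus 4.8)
The plan is to run the argument of Theorem \ref{thm-4} verbatim, with the even-genus generating set replaced by the odd-genus one. Let $G_2$ be the finite generating set of $Mod_N$ given by Theorem \ref{theorem-1} when $g=1$ and by Theorem \ref{gen-odd} when $g\geq 3$ is odd. Following the construction in \cite{IrP1}, I would set $\mathcal{Z}_1=\mathcal{C}=\bigcup_{i=1}^6\mathcal{C}_i$ and, for $k\geq 2$, $\mathcal{Z}_k=\mathcal{Z}_{k-1}\cup\bigl(\bigcup_{f\in G_2}(f(\mathcal{Z}_{k-1})\cup f^{-1}(\mathcal{Z}_{k-1}))\bigr)$. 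Each $\mathcal{Z}_k$ is finite because $\mathcal{Z}_1$ and $G_2$ are finite, and each $\mathcal{Z}_k$ has trivial pointwise stabilizer because it contains $\mathcal{C}$, which has trivial pointwise stabilizer by Lemma \ref{C_6}. Since $\mathcal{C}$ contains a curve of every topological type on $N$, every vertex $x$ of $\mathcal{C}(N)$ is of the form $r(y)$ for some $r\in Mod_N$ and some $y\in\mathcal{C}$; writing $r$ as a word of length $m$ in $G_2$ and its inverses and applying the letters one at a time shows $x\in\mathcal{Z}_{m+1}$, so $\bigcup_k\mathcal{Z}_k$ is the whole vertex set of $\mathcal{C}(N)$.

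It then remains to show each $\mathcal{Z}_k$ is superrigid, which I would prove by induction on $k$. The base case $k=1$ is Lemma \ref{C_6}. For the inductive step, assume $\mathcal{Z}_{k-1}$ is superrigid and let $\lambda\colon\mathcal{Z}_k\to\mathcal{C}(N)$ be a superinjective simplicial map. Its restriction to $\mathcal{Z}_{k-1}$ is superinjective, so by hypothesis there is a homeomorphism $h$ of $N$ with $h|_{\mathcal{Z}_{k-1}}=\lambda|_{\mathcal{Z}_{k-1}}$. For $f\in G_2$, the set $f(\mathcal{Z}_{k-1})$ is superrigid (a homeomorphic image of a superrigid set) and $\lambda$ restricted to it is superinjective, so there is a homeomorphism $h_f$ agreeing with $\lambda$ on $f(\mathcal{Z}_{k-1})$, and similarly $h'_f$ agreeing with $\lambda$ on $f^{-1}(\mathcal{Z}_{k-1})$. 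By Lemma \ref{L_f_odd} there is a set $L_f\subset\mathcal{C}\subset\mathcal{Z}_{k-1}$ with trivial pointwise stabilizer such that $f(L_f)\subset\mathcal{C}\subset\mathcal{Z}_{k-1}$. Then $f(L_f)\subset\mathcal{Z}_{k-1}\cap f(\mathcal{Z}_{k-1})$, on which $h$ and $h_f$ both coincide with $\lambda$; since $f(L_f)$ has trivial pointwise stabilizer, $h_f=h$. Likewise $L_f\subset\mathcal{Z}_{k-1}\cap f^{-1}(\mathcal{Z}_{k-1})$ forces $h'_f=h$. Hence $h$ agrees with $\lambda$ on $\mathcal{Z}_{k-1}$ and on every $f(\mathcal{Z}_{k-1})$ and $f^{-1}(\mathcal{Z}_{k-1})$, i.e.\ on all of $\mathcal{Z}_k$, so $\mathcal{Z}_k$ is superrigid. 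By induction every $\mathcal{Z}_k$ is a finite superrigid set with trivial pointwise stabilizer, which together with the exhaustion statement proves the theorem.

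Given the lemmas already established, this theorem is essentially a bookkeeping argument identical in form to Theorem \ref{thm-4}; the only place the odd-genus hypothesis enters is through the input Lemma \ref{L_f_odd}, whose proof is the genuine content. There one must exhibit, for each generator $f\in G_2$, a subset $L_f$ of the fixed finite set $\mathcal{C}$ that has trivial pointwise stabilizer and satisfies $f(L_f)\subset\mathcal{C}$; this is done generator by generator and split into subcases according to $g$ and $n$, choosing $L_f$ so that all but one or two of its curves are literally fixed by $f$ while the moved curves are carried onto named curves already in $\mathcal{C}$, and checking that no nontrivial mapping class fixes every curve of $L_f$. I expect that step, rather than the formal induction above, to be the main obstacle.
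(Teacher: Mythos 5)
Your proposal is correct and follows essentially the same route as the paper: the paper proves Theorem \ref{B-odd} by repeating the induction of Theorem \ref{thm-4} with the odd-genus generating set $G_2$ and Lemma \ref{L_f_odd} in place of $G_1$ and Lemma \ref{L_f_even}, exactly as you describe. Your observation that the genuine content lies in Lemma \ref{L_f_odd} (already established) rather than in the formal induction matches the paper's treatment, which dispatches this theorem by reference to the even-genus argument.
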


\begin{proof} The proof is similar to the even genus case (use Lemma \ref{L_f_odd}).\end{proof}
   
\section{Exhaustion of $\mathcal{C}(N)$ by finite superrigid sets when $(g,n)=(3,0)$}

For $(g,n)=(3,0)$, let $\mathcal{C} = \{a_1, a_2, a_3, a_{1,2}, a_{2,3}, a_{3,1}, c_1, c_2, l, w, j\}$ where the curves are as shown in Figure \ref{fig-new}. Note that $l$ has orientable complement on $N$ and $\mathcal{C}$ has nontrivial curve of every topological type on $N$.

\begin{figure}
	\begin{center}
		\epsfxsize=1.8in \epsfbox{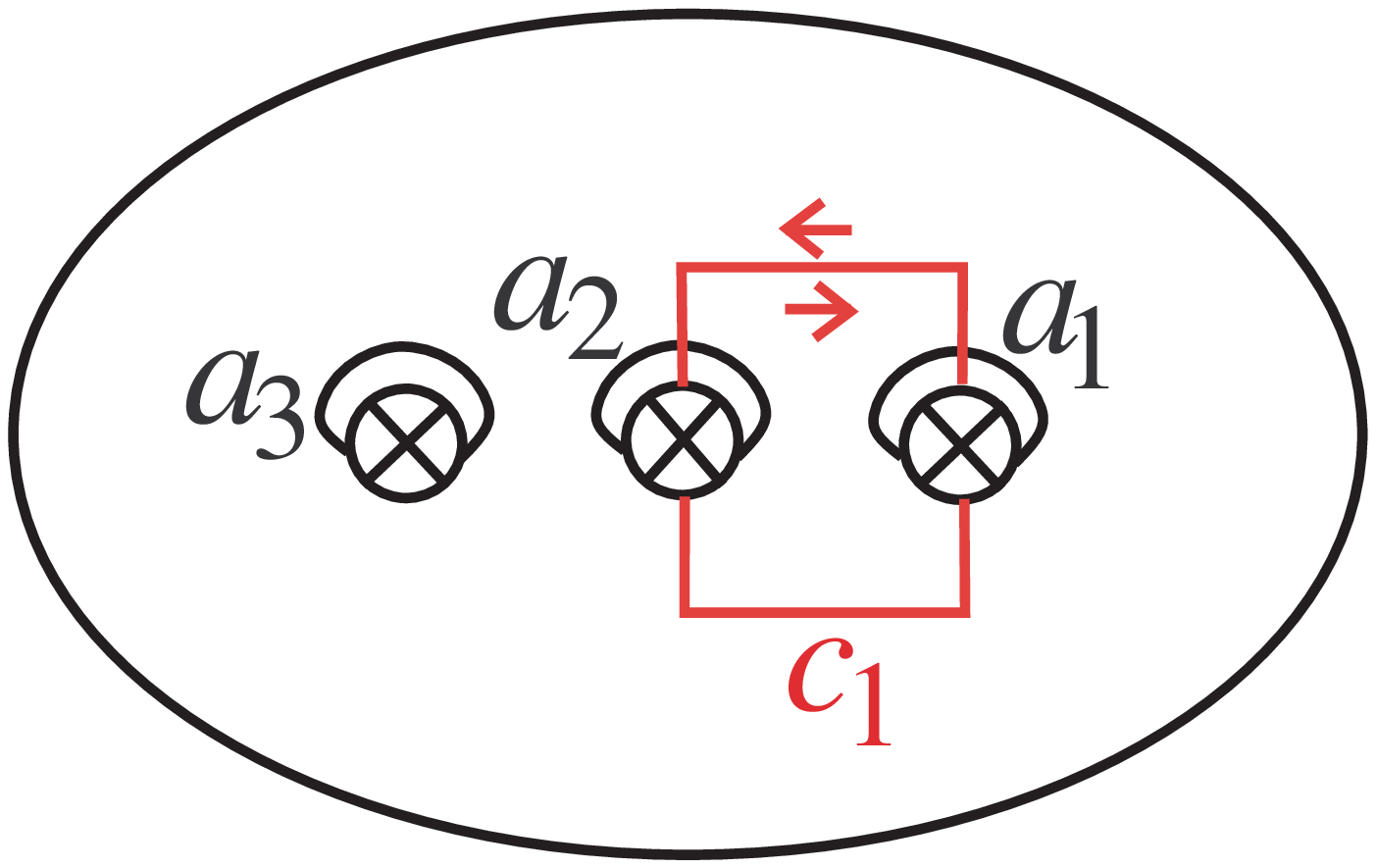}   \hspace{0.2cm}   	\epsfxsize=1.8in \epsfbox{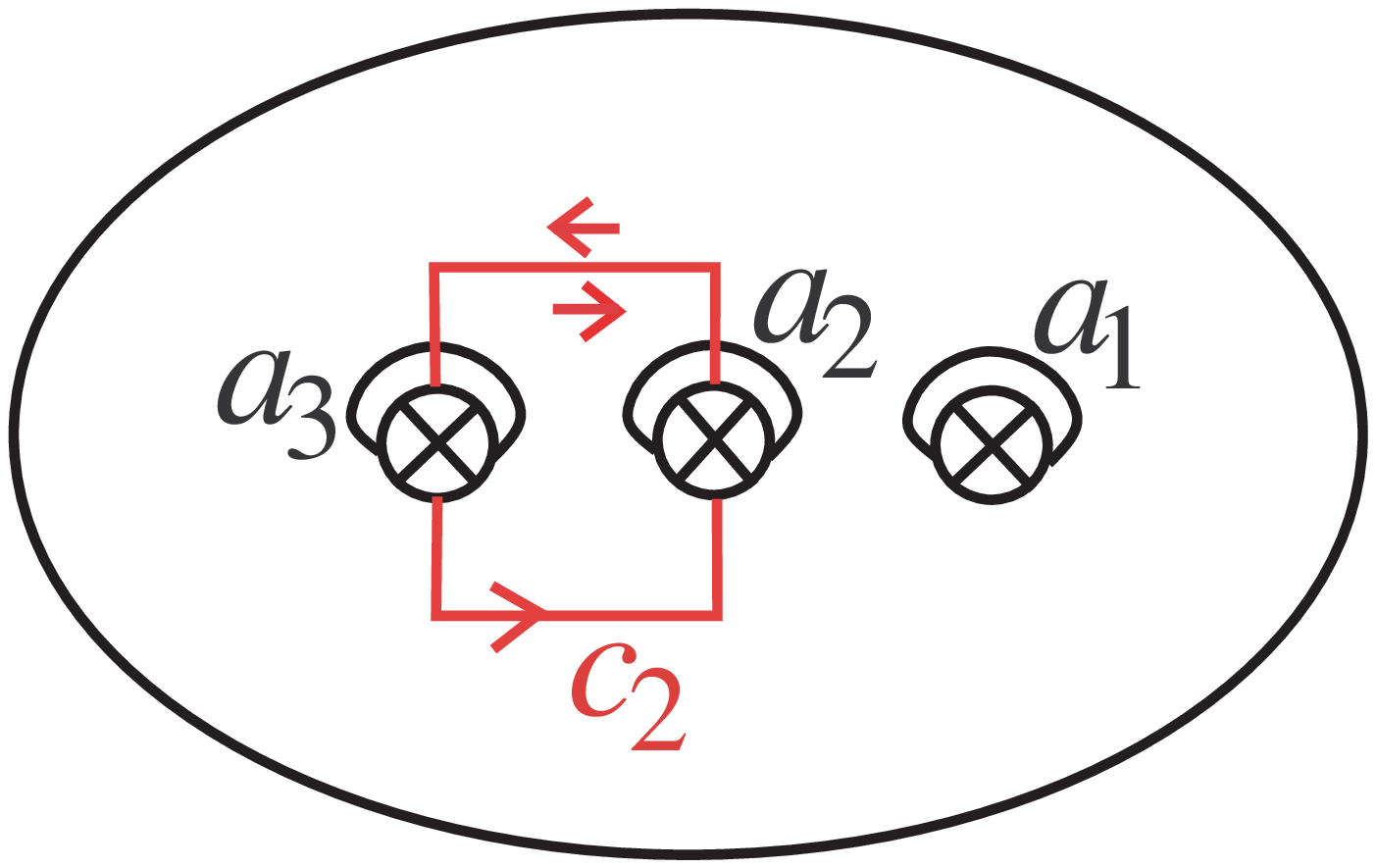}   \hspace{0.2cm}  	\epsfxsize=1.8in \epsfbox{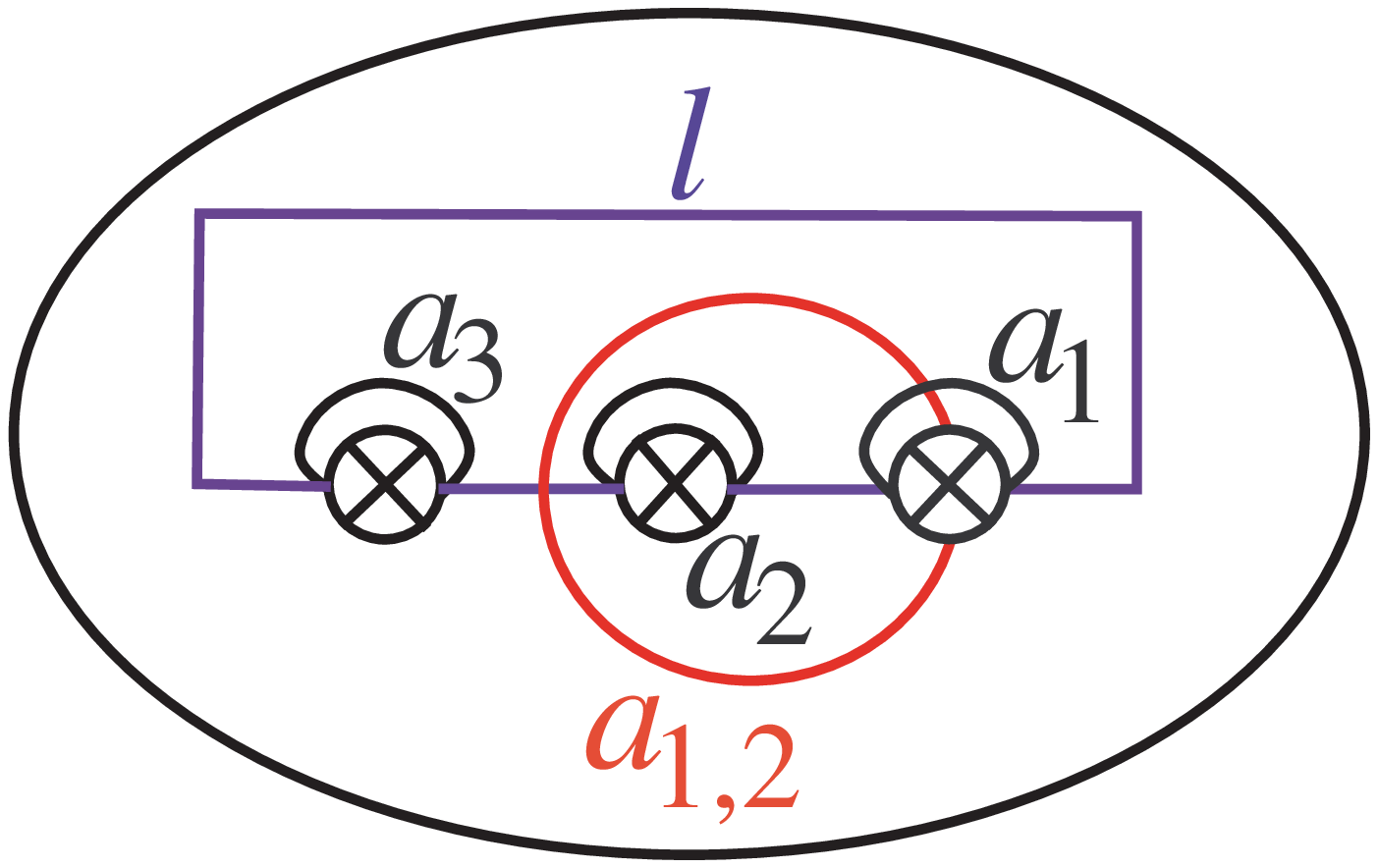}   
		
			(i)  \hspace{4cm}   (ii)  \hspace{4cm}   (iii) 
			
		\epsfxsize=1.8in \epsfbox{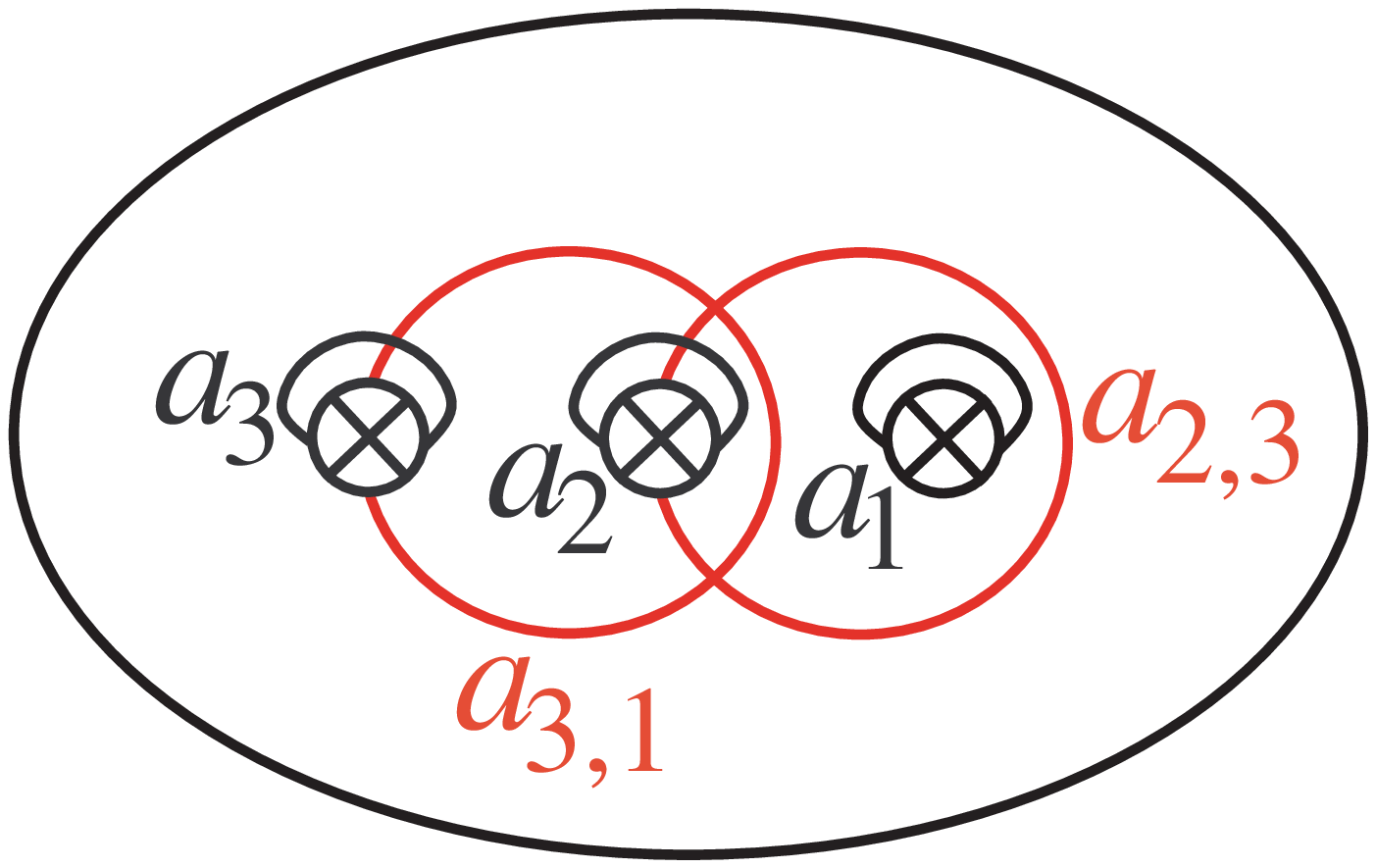}   \hspace{0.2cm}   			\epsfxsize=1.8in \epsfbox{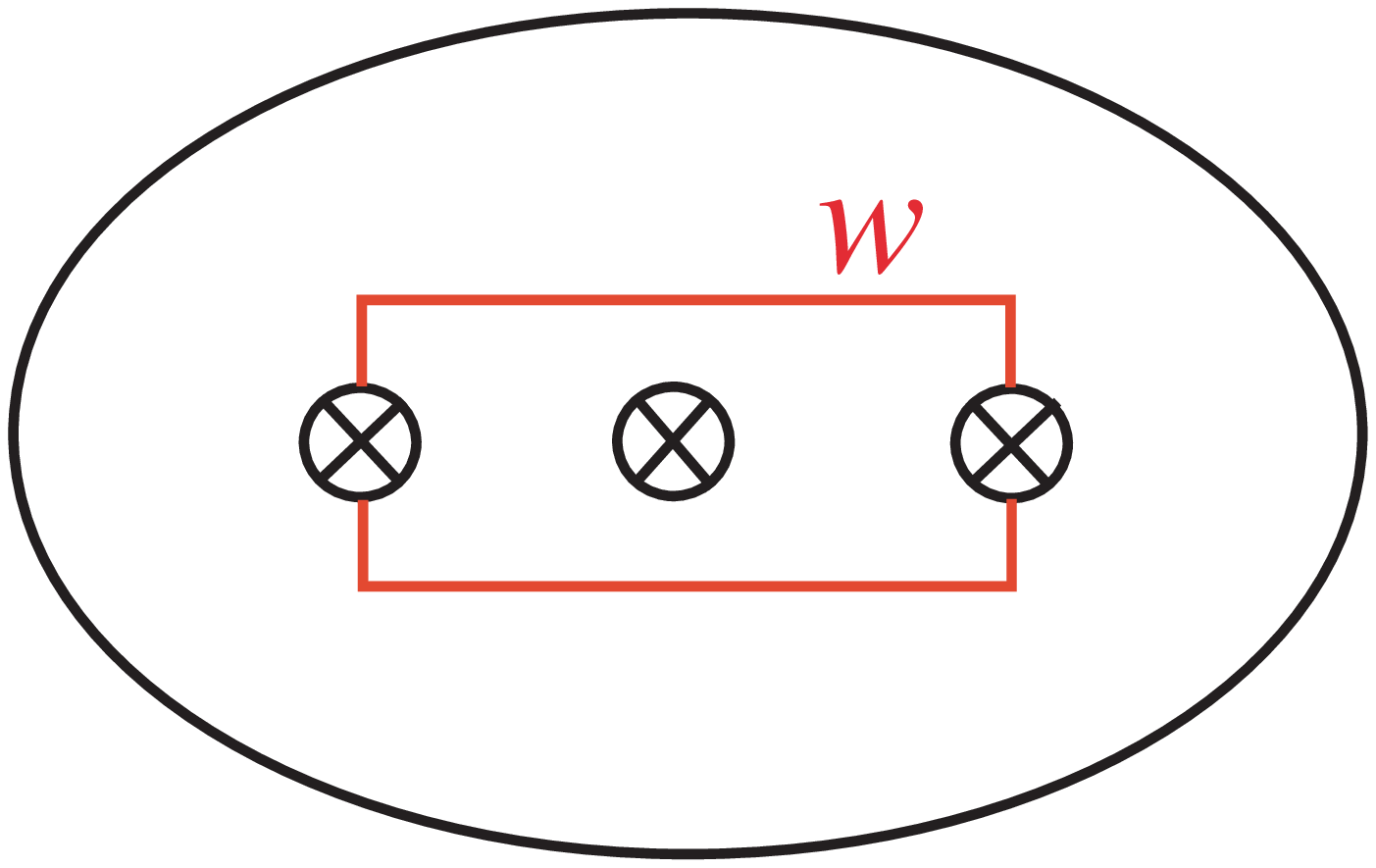}   \hspace{0.2cm}   	\epsfxsize=1.8in \epsfbox{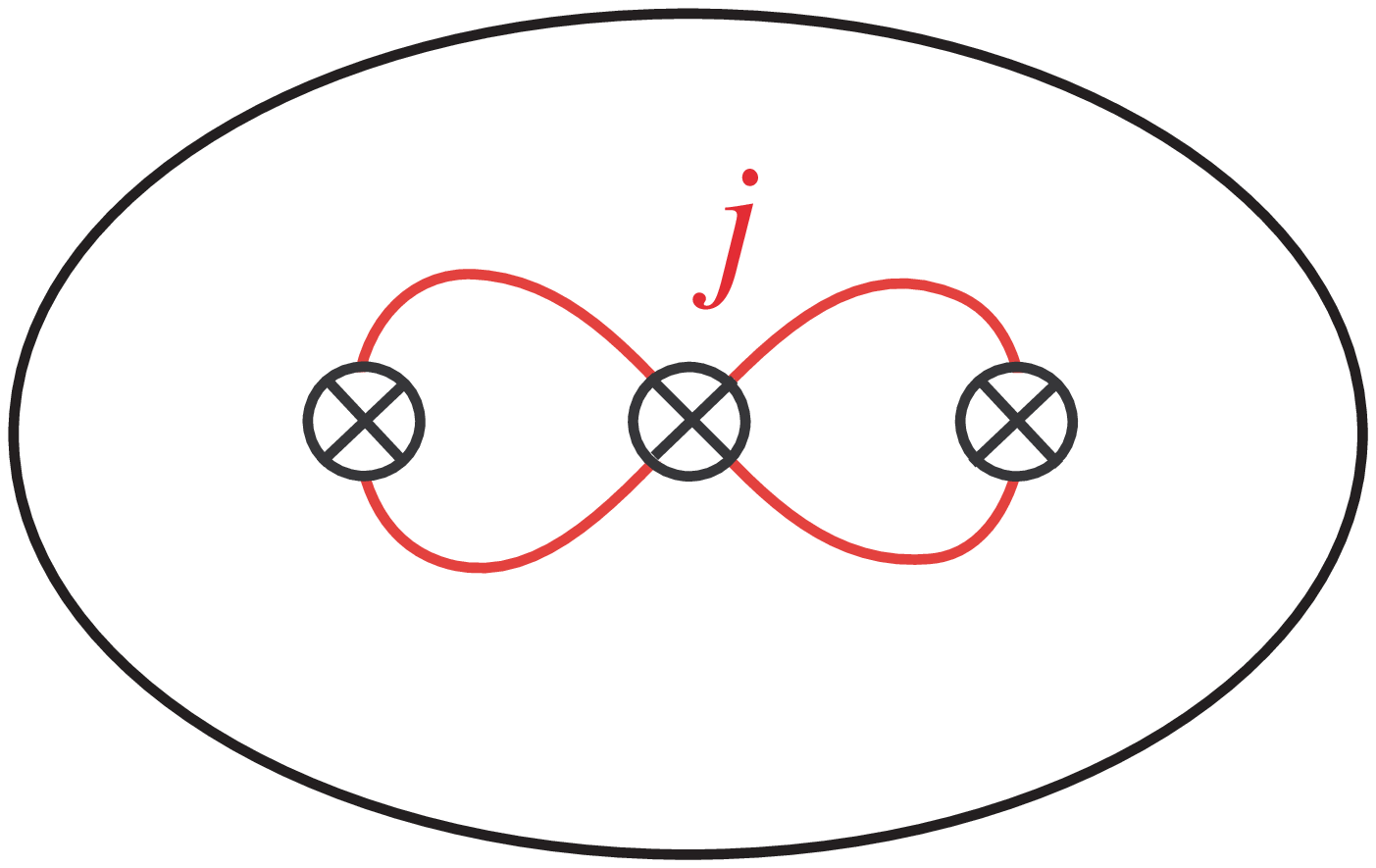}  
			
			(iv)  \hspace{4cm}   (v)  \hspace{4cm}   (vi) 
	\caption{$(g,n)=(3,0)$}
\label{fig-new}
\end{center}
\end{figure} 

\begin{lemma} \label{C_5_g3} If $(g,n)=(3,0)$, then $\mathcal{C}$ is a finite superrigid set.\end{lemma}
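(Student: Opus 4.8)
The plan is to mimic the structure used in the $g+n\ge 5$ case (Lemmas \ref{C_1-super}--\ref{C_6}) but carried out by hand for the small surface $N=N_{3,0}$, the closed nonorientable surface of genus $3$. First I would invoke the Ilbira--Korkmaz rigidity theorem (Theorem \ref{rigid}), which applies since $g+n=3\ne 4$: the set $\mathcal{X}=\mathcal{X}_1\cup\mathcal{X}_2$ is a finite rigid set in $\mathcal{C}(N)$. Here $\mathcal{X}_1=\{a_1,a_2,a_3,a_{1,2},a_{2,3},a_{3,1}\}$ (all one-sided) and $\mathcal{X}_2$ is empty or small since for $g+n=3$ the condition $2\le|i-j|\le g+n-2=1$ is vacuous; so effectively $\mathcal{X}=\{a_1,a_2,a_3,a_{1,2},a_{2,3},a_{3,1}\}$. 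Thus $\mathcal{C}$ as defined contains the rigid set $\mathcal{X}$ together with the extra curves $c_1,c_2,l,w,j$. I would first check that any superinjective simplicial map $\lambda:\mathcal{C}\to\mathcal{C}(N)$ is injective (exactly as in Lemma \ref{inj}: for any two distinct vertices of $\mathcal{C}$ one finds a third vertex of $\mathcal{C}$ disjoint from one but not the other — this requires inspecting Figure \ref{fig-new} to confirm enough curves are present, which is routine). Injectivity plus Theorem \ref{rigid} gives a homeomorphism $h:N\to N$ with $h([x])=\lambda([x])$ for every $x\in\mathcal{X}$.

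The core of the argument is then to upgrade this agreement from $\mathcal{X}$ to all of $\mathcal{C}$, i.e. to show $h([x])=\lambda([x])$ for $x\in\{c_1,c_2,l,w,j\}$ as well, using only that $\lambda$ is superinjective and already agrees with $h$ on the $a_i$'s and $a_{i,j}$'s. For each new curve I would produce a topological ``uniqueness characterization'' in terms of disjointness/intersection with curves already controlled — this is exactly the device used repeatedly in the proofs of Lemmas \ref{C_3}--\ref{C_6}. Concretely: $c_1,c_2$ are two-sided curves, each the unique two-sided curve (up to isotopy) in a Klein-bottle-with-one-hole (or similar) subsurface cut off by curves among the $a_{i,j}$; since $\lambda$ preserves two-sidedness (because $i([x],[x])=0$ iff two-sided, preserved by superinjectivity) and preserves the relevant disjointness pattern, $\lambda([c_k])$ must be that same unique two-sided curve, hence equals $h([c_k])$. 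For $l$ (which has orientable complement) and for $w,j$, I would similarly complete appropriate curves among $\mathcal{X}\cup\{c_1,c_2\}$ to a pants decomposition, observe that $l$ (resp.\ $w$, $j$) intersects only a prescribed small set of these nontrivially, deduce the same about its $\lambda$-image in $\lambda$ of the pants decomposition, and invoke the corresponding local uniqueness statement on the cut-open subsurface. One must also rule out the finitely many ``wrong'' candidates satisfying the same intersection pattern; in the small cases this is a short case analysis (e.g.\ using that $l$ is the unique curve with orientable complement disjoint from $c_1,c_2$, or that $w$ and $j$ are distinguished by their intersection numbers with $a_1,a_2,a_3$).

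Having shown $h([x])=\lambda([x])$ for every vertex $x\in\mathcal{C}$, the set $\mathcal{C}$ is by definition a superrigid set, and it is finite ($11$ vertices), which proves the lemma. The main obstacle I anticipate is the second step: for each of the five extra curves one must find the right subsurface and the right ``characterizing'' collection of already-controlled curves, and then verify genuine uniqueness — in particular that no other isotopy class realizes the same disjointness/intersection data. On $N_{3,0}$ the curve complex is small enough (Scharlemann's analysis, and the fact that there are very few topological types) that this is tractable, but it requires careful reading of Figure \ref{fig-new} and some low-dimensional case-checking; the possibility that $\lambda$ could send a one-sided/two-sided curve to the ``other'' curve of the same type in a twice-punctured-projective-plane or Klein-bottle piece is the kind of subtlety that must be explicitly excluded, just as the $r_i\leftrightarrow w_i$ ambiguity was handled via the reflection $\phi$ in the proof of Lemma \ref{C_2}. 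I expect a similar order-two symmetry of $N_{3,0}$ will be needed to absorb one residual ambiguity before concluding $h$ can be chosen to match $\lambda$ on all of $\mathcal{C}$.
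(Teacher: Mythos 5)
Your proposal is correct in outline and shares the paper's basic device (anchor a homeomorphism on a core collection, then capture each remaining curve by a uniqueness/disjointness characterization), but it takes a genuinely different route to the anchor. You invoke the Ilbira--Korkmaz rigid set (Theorem \ref{rigid}) for $\mathcal{X}=\{a_1,a_2,a_3,a_{1,2},a_{2,3},a_{3,1}\}$, which is legitimate since $g+n=3\neq 4$, provided you first verify that $\lambda$ restricted to $\mathcal{X}$ is (locally) injective; note that Lemma \ref{inj} is stated only for $g+n\geq 5$, so this separation-by-a-third-curve check must be redone by hand for the small set, though it does go through (e.g.\ $a_{1,2}$ separates $a_1$ from $a_2,a_3$, etc.). The paper instead avoids Theorem \ref{rigid} entirely in this case: it observes that $\{a_1,a_2,a_3\}$ is a top-dimensional pants decomposition, that superinjectivity forces the three images to be distinct, pairwise disjoint one-sided curves (the only kind of top-dimensional simplex on $N_{3,0}$), and then a change of coordinates gives $h$ agreeing with $\lambda$ on $a_1,a_2,a_3$; the curves $a_{1,2},a_{2,3},a_{3,1}$ are then recovered exactly like your five extra curves, each being the unique one-sided curve disjoint from two of the $a_i$ and nonisotopic to the third. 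Your approach buys a shorter anchoring step at the price of citing the rigidity theorem in a borderline low-complexity case plus the injectivity check; the paper's is more elementary and self-contained. Two small corrections to your expectations: the characterizations used are complements of the one-sided curves $a_1,a_2,a_3$ and of $a_{2,3}$ (each a Klein bottle with one hole, containing a unique two-sided curve, giving $c_1,c_2,w,j$) and disjointness from $c_1,c_2$ for $l$, rather than subsurfaces cut off by the $a_{i,j}$; and no residual order-two symmetry correction (as with $\phi$ in Lemma \ref{C_2}) is needed, since each of $c_1,c_2,w,j,l$ is pinned down by a strict uniqueness statement once $h$ agrees with $\lambda$ on the curves appearing in that statement.
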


\begin{proof} Let $\lambda: \mathcal{C} \rightarrow \mathcal{C}(N)$ be a superinjective simplicial map. Since $a_1, a_2, a_3$ give a top dimensional pants decomposition on $N$, $\lambda([a_1]), \lambda([a_2]), \lambda([a_3])$ corresponds to a top dimensional pants decomposition on $N$. Since $(g,n)= (3,0)$, this implies that $\lambda([a_i])$ is the isotopy class of a 1-sided curve for each $i$. So, there exists a homeomorphism $h$ such that $h([x])=\lambda([x])$ for each $x \in \{a_1, a_2, a_3\}$. Since superinjective simplicial maps send 2-sided curves to 2-sided curves and 
$c_1$ is the unique nontrivial 2-sided curve up to isotopy disjoint from $a_3$, we see that $h([c_1])=\lambda([c_1])$. Since $c_2$ is the unique nontrivial 2-sided curve up to isotopy disjoint from $a_1$, we have $h([c_2])=\lambda([c_2])$. Since $a_{1,2}$ is the unique 1-sided curve up to isotopy disjoint from and nonisotopic to $a_2, a_3$ and nonisotopic to $a_1$, and $\lambda$ preserves these properties we have  $h([a_{1,2}])=\lambda([a_{1,2}])$. Similarly, we can see that $h([a_{2,3}])=\lambda([a_{2,3}])$ and $h([a_{3,1}])=\lambda([a_{3,1}])$. Since $w$ is the unique nontrivial 2-sided curve up to isotopy disjoint from $a_2$, we have $h([w])=\lambda([w])$. Since $l$ is the unique 1-sided curve up to isotopy disjoint from $c_1, c_2$, we see that $h([l])=\lambda([l])$. Since $j$ is the unique nontrivial 2-sided curve up to isotopy disjoint from $a_{2,3}$, we have $h([j])=\lambda([j])$. So, $\mathcal{C}$ is a finite superrigid set.\end{proof}\\
  
When $(g,n)=(3,0)$, the generating set for the mapping class group is $G_3= \{t_{c_1}, t_{c_2}, y\}$ where the maps are as defined in section 3.
 
\begin{lemma} \label{L_f_odd_g3} If $(g,n)=(3,0)$, then $\forall \ f \in G_3$, $\exists$ a set $L_f \subset \mathcal{C}$ such that the pointwise stabilizer of $L_f$ is the center of $Mod_N$ and $f(L_f) \subset \mathcal{C}$.\end{lemma}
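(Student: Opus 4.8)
The plan is to treat the three generators $f\in G_3=\{t_{c_1},t_{c_2},y\}$ one at a time, in the same spirit as the proofs of Lemma~\ref{L_f_even} and Lemma~\ref{L_f_odd}: for each $f$ we take $L_f$ to consist of the curves of $\mathcal C$ that $f$ fixes up to isotopy, together with a single curve of $\mathcal C$ that meets the support of $f$ and is carried by $f$ onto another curve of $\mathcal C$. Two things then have to be checked: that $f(L_f)\subset\mathcal C$, and that the pointwise stabilizer of $L_f$ in $Mod_N$ equals the center $Z(Mod_N)$. The inclusion $Z(Mod_N)\subseteq \mathrm{Stab}(L_f)$ is automatic, since for the closed nonorientable surface of genus three the center acts trivially on all of $\mathcal C(N)$ and hence fixes every vertex of $\mathcal C$; so the real content is the reverse inclusion $\mathrm{Stab}(L_f)\subseteq Z(Mod_N)$ together with the image condition $f(L_f)\subset\mathcal C$.

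For the explicit choices I would read everything off the crosscap model in Figure~\ref{fig-new}. Each generator is supported in a small subsurface: $t_{c_1}$ in an annular neighborhood of $c_1$, $t_{c_2}$ in an annular neighborhood of $c_2$, and $y=y_{a_3,c_2}$ in a one-holed Klein bottle containing $a_3$ and $c_2$. For $f=t_{c_1}$ I would put into $L_f$ the curve $c_1$ itself, every curve of $\mathcal C$ disjoint from $c_1$ (among them $a_3$, $c_2$, $l$, and the further curves visible in the figure), and one of the one-sided curves $a_{i,j}$ (or $w$, or $j$) that crosses $c_1$ once and whose $t_{c_1}$-image is again one of the listed curves of $\mathcal C$; the case $f=t_{c_2}$ is symmetric, exchanging the roles of $c_1$ and $c_2$. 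For $f=y$ I would put into $L_y$ the curves $a_3$ and $c_2$ together with all curves of $\mathcal C$ lying in the complement of the one-holed Klein bottle supporting $y$, and one one-sided curve $a_i$ that $y$ slides onto some $a_{i,j}\in\mathcal C$, using the description of the crosscap slide recalled in Section~3. In every case the verification $f(L_f)\subset\mathcal C$ is a direct picture computation: all curves of $L_f$ except the one distinguished crossing curve are untouched by $f$, and that one is sent to a named curve of $\mathcal C$ by inspection of the figure.

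The step I expect to be the main obstacle is showing that each $L_f$ has pointwise stabilizer \emph{exactly} $Z(Mod_N)$. Two points must be combined. First, one must be sure $L_f$ contains at least one curve crossing the support of each of $t_{c_1}$, $t_{c_2}$, $y$ (not merely of $f$ itself) — the distinguished crossing curve handles the support of $f$, and one has to check that the curves of $\mathcal C$ fixed by $f$ already cross the supports of the other two generators — so that none of these noncentral generators, and hence no nontrivial word in them moving a curve of $L_f$, lies in $\mathrm{Stab}(L_f)$. Second, one needs that the curves of $L_f$ pin a mapping class down to the center: this is an Alexander-method argument adapted to the closed genus-three nonorientable case, using that the relevant curves fill $N$ (cutting along them leaves only disks and once-crossed M\"obius bands), so that a homeomorphism fixing each of them up to isotopy is isotopic, up to the generator of $Z(Mod_N)$, to the identity. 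One may also shortcut this by bootstrapping from Lemma~\ref{C_5_g3}, whose proof already shows that fixing all of $\mathcal C$ determines a homeomorphism up to exactly this ambiguity; applying the same reasoning to the filling part of $L_f$ gives the claim. The delicate features are the behavior of one-sided curves and of crosscap slides under the homeomorphisms in play, which is precisely where this low-genus, closed, nonorientable setting departs from the usual orientable Alexander method.
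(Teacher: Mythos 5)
Your plan has the right shape (one set per generator, image condition by picture computation), but there are two genuine problems. The central one is the stabilizer step, which you yourself flag as the main obstacle but never actually supply. The paper does not run an Alexander-method/filling argument here at all: it chooses tiny explicit sets, $L_{t_{c_1}}=L_y=\{c_1,c_2,w\}$ and $L_{t_{c_2}}=\{c_1,c_2,j\}$, and quotes Stukow's Theorem 6.1 in \cite{St2} for the fact that the pointwise stabilizer of such a configuration is the center of $Mod_N$ (generated by an involution). Your proposed shortcut of ``bootstrapping from Lemma~\ref{C_5_g3}'' does not close this gap: that lemma shows that a superinjective map of $\mathcal{C}$ is induced by \emph{some} homeomorphism; it does not compute the pointwise stabilizer of $\mathcal{C}$, let alone of a proper subset $L_f$, so the statement you need (stabilizer exactly the center) is still unproved in your write-up. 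Without either the citation or a carried-out filling argument for the specific curves you choose, the lemma is not established.

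The second problem is the structural constraint you impose, that $L_f$ consist of $f$-fixed curves plus a single moved curve; the lemma only requires $f(L_f)\subset\mathcal{C}$, and the paper exploits this freedom essentially: $t_{c_1}$ moves \emph{two} of the three curves of its $L_f$ ($t_{c_1}(c_2)=j$, $t_{c_1}(w)=c_2$), and similarly for $t_{c_2}$. Your constraint also leads to a concrete error: you list $c_2$ among the curves of $\mathcal{C}$ disjoint from $c_1$, but $i(c_1,c_2)\neq 0$ (this is exactly why $t_{c_1}(c_2)=j\neq c_2$), so the set you describe for $f=t_{c_1}$ is not what you think it is, and the ``all but one curve is untouched'' verification fails for it. Restricting to the curves of $\mathcal{C}$ genuinely disjoint from $c_1$ leaves very few curves, which makes your stabilizer claim harder, not easier; the paper's choice of letting $f$ permute curves inside $\mathcal{C}$ is what allows the sets to be both small and rigid enough.
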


\begin{proof} Let $f \in G_3$. For $f=c_1$, let $L_f = \{c_1, c_2, w\}$. The pointwise stabilizer of $L_f$ is the center of $Mod_N$ and it is generated by an involution, see Stukow's Theorem 6.1 in \cite{St2}. We have $t_{c_1}(c_1)= c_1$, $t_{c_1}(c_2)= j$, $t_{c_1}(w)= c_2$. So, $f(L_f) \subset \mathcal{C}$. For $f=c_2$, let $L_f = \{c_1, c_2, j\}$. The pointwise stabilizer of $L_f$ is the center of $Mod_N$. We have $t_{c_2}(c_1)= w$, $t_{c_2}(c_2)= c_2$, $t_{c_2}(j)= c_1$. So, $f(L_f) \subset \mathcal{C}$. For $f=y$, let $L_f = \{c_1, c_2, w\}$. We have $y(c_1)= c_1$, $y(c_2)= c_2$, $y(w)=j$. So, $f(L_f) \subset \mathcal{C}$.\end{proof}

\begin{theorem} \label{B-odd-g3} If $(g,n)=(3,0)$, then there exists a sequence $\mathcal{Z}_1 \subset \mathcal{Z}_2 \subset \cdots \subset \mathcal{Z}_n \subset \cdots$  such that $\mathcal{Z}_i$ is a finite superrigid set in $\mathcal{C}(N)$ for all $i$ and $\bigcup_{i \in \mathbb{N}} \mathcal{Z}_i = \mathcal{C}(N)$.\end{theorem}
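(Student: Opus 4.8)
The plan is to run the same exhaustion scheme as in the proof of Theorem~\ref{thm-4}, using the generating set $G_3=\{t_{c_1},t_{c_2},y\}$ of $Mod_N$ and the superrigid set $\mathcal{C}$ of Lemma~\ref{C_5_g3} as the seed. First I would set $\mathcal{Z}_1=\mathcal{C}$ and, for $k\geq 2$,
\[
\mathcal{Z}_k=\mathcal{Z}_{k-1}\cup\bigcup_{f\in G_3}\bigl(f(\mathcal{Z}_{k-1})\cup f^{-1}(\mathcal{Z}_{k-1})\bigr).
\]
Each $\mathcal{Z}_k$ is finite since $G_3$ and $\mathcal{Z}_1$ are finite, and $\mathcal{Z}_1\subset\mathcal{Z}_2\subset\cdots$ by construction. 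Because $G_3$ generates $Mod_N$ and $\mathcal{C}$ contains a curve of every topological type on $N$ (so it meets every $Mod_N$-orbit of vertices of $\mathcal{C}(N)$), the same observation as in the proof of Theorem~\ref{thm-4} shows that $\bigcup_{k\in\mathbb{N}}\mathcal{Z}_k$ is the full vertex set of $\mathcal{C}(N)$. It then remains to prove, by induction on $k$, that each $\mathcal{Z}_k$ is superrigid.

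The base case is Lemma~\ref{C_5_g3}. For the inductive step, assume $\mathcal{Z}_{k-1}$ is superrigid and let $\lambda\colon\mathcal{Z}_k\to\mathcal{C}(N)$ be a superinjective simplicial map. Restricting $\lambda$ to $\mathcal{Z}_{k-1}$ gives a homeomorphism $h$ with $h([x])=\lambda([x])$ for all $x\in\mathcal{Z}_{k-1}$; for each $f\in G_3$ the set $f(\mathcal{Z}_{k-1})$ is superrigid, so there is a homeomorphism $h_f$ with $h_f([x])=\lambda([x])$ for all $x\in f(\mathcal{Z}_{k-1})$. By Lemma~\ref{L_f_odd_g3} there is $L_f\subset\mathcal{C}$ with $f(L_f)\subset\mathcal{C}$ whose pointwise stabilizer is the center $Z=\{1,\iota\}$ of $Mod_N$, where $\iota$ is the central involution. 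Then $f(L_f)\subset\mathcal{C}\subset\mathcal{Z}_{k-1}$ and $f(L_f)\subset f(\mathcal{C})\subset f(\mathcal{Z}_{k-1})$, so $h^{-1}h_f$ fixes $f(L_f)$ pointwise; since $Z$ is normal, the pointwise stabilizer of $f(L_f)$ equals $fZf^{-1}=Z$, hence $h^{-1}h_f\in\{1,\iota\}$. Similarly $L_f\subset\mathcal{Z}_{k-1}\cap f^{-1}(\mathcal{Z}_{k-1})$, so the homeomorphism $h'_f$ realizing $\lambda$ on $f^{-1}(\mathcal{Z}_{k-1})$ satisfies $h^{-1}h'_f\in Z$.

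This is the one place the $(3,0)$ case departs from the cases with trivial pointwise stabilizer: we cannot conclude $h=h_f$, only that $h$ and $h_f$ differ by an element of $Z$. The point to invoke is that $\iota$ acts trivially on $\mathcal{C}(N)$, i.e.\ $Z$ lies in the kernel of $Mod_N\to\mathrm{Aut}(\mathcal{C}(N))$ (this is implicit in Stukow's analysis in \cite{St2}; concretely, $\iota$ fixes every vertex of $\mathcal{C}(N)$). Granting this, $h$ and $h_f$ induce the same simplicial automorphism of $\mathcal{C}(N)$, so $h([x])=h_f([x])=\lambda([x])$ for every $x\in f(\mathcal{Z}_{k-1})$, and likewise $h([x])=\lambda([x])$ for every $x\in f^{-1}(\mathcal{Z}_{k-1})$. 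Ranging over all $f\in G_3$ gives $h([x])=\lambda([x])$ for all $x\in\mathcal{Z}_k$, so $\mathcal{Z}_k$ is superrigid, completing the induction. I expect the \emph{only} nonroutine point to be this last one: since the bookkeeping in Lemma~\ref{L_f_odd_g3} pins down the gluing homeomorphism only up to the nontrivial center, the argument genuinely relies on the fact that the central involution of $N_{3,0}$ acts trivially on its curve complex; the rest is a transcription of the proofs of Theorems~\ref{thm-4} and~\ref{B-odd}.
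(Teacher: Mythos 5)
Your proposal is correct and follows essentially the same route as the paper: the same recursive construction $\mathcal{Z}_k=\mathcal{Z}_{k-1}\cup\bigcup_{f\in G_3}(f(\mathcal{Z}_{k-1})\cup f^{-1}(\mathcal{Z}_{k-1}))$ seeded with the superrigid set $\mathcal{C}$ of Lemma~\ref{C_5_g3}, the same use of Lemma~\ref{L_f_odd_g3} to pin the gluing homeomorphisms down up to the center, and the same final appeal to the fact that the central involution acts trivially on every vertex of $\mathcal{C}(N)$. Your remark that this last point is the only genuine departure from the trivial-stabilizer cases matches the paper exactly.
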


\begin{proof} For each vertex $x$ in the curve complex, there 
exists $r \in Mod_N$ and a vertex $y$ in the set $\mathcal{C}$ such that $r(y)=x$. By following the construction given in \cite{IrP1}, we let
$\mathcal{Z}_1 = \mathcal{C}$ and
$\mathcal{Z}_k = \mathcal{Z}_{k-1} \cup (\bigcup _{f \in G} (f(\mathcal{Z}_{k-1}) \cup f^{-1}(\mathcal{Z}_{k-1})))$ when $k \geq 2$. We observe that 
$\bigcup _{k=1} ^{\infty} \mathcal{Z}_k$ is the set of all vertices in  $\mathcal{C}(N)$. We will prove that $\mathcal{Z}_k$ is superrigid for every $k$. We will give the proof by induction on $k$. 
We know that $\mathcal{Z}_1$ is superrigid by Lemma \ref{C_5_g3}. Assume that $\mathcal{Z}_{k-1}$ is a superrigid set for some $k \geq 2$. Let $\lambda: \mathcal{Z}_k \rightarrow \mathcal{C}(N)$ be a superinjective simplicial map. The map $\lambda$ restricts to a superinjective simplicial map on $\mathcal{Z}_{k-1}$. Since $\mathcal{Z}_{k-1}$ is a superrigid set, there exists a homeomorphism $h$ of $N$ such that $h([x]) = \lambda([x])$ for all $x \in \mathcal{Z}_{k-1}$. Let $f \in G$. Since $\mathcal{Z}_{k-1}$ is superrigid, $f(\mathcal{Z}_{k-1})$ is also superrigid. So, there exists a homeomorphism $h_f$ of $N$ such that $h_f([x]) = \lambda([x])$ for all $x \in f(\mathcal{Z}_{k-1})$. There exists $L_f \subset \mathcal{C}$ such that the pointwise stabilizer of $L_f$ is the center of $Mod_N$ and $f(L_f) \subset \mathcal{C}$ by Lemma \ref{L_f_odd_g3}. So, $L_f \subset \mathcal{C} \subset \mathcal{Z}_{k-1}$ and $f(L_f) \subset \mathcal{C} \subset \mathcal{Z}_{k-1}$.
We have $f(L_f) \subset \mathcal{Z}_{k-1} \cap f(\mathcal{Z}_{k-1})$. This implies that $h_f = h$ or $h_f = h \circ i$ where $i$ is the generator for the center of $Mod_N$ since the stabilizer of $f(L_f)$ is the center of $Mod_N$. Similarly, there exists a homeomorphism 
$h'_f$ of $N$ such that $h'_f([x]) = \lambda([x])$ for all $x \in f^{-1}(\mathcal{Z}_{k-1})$. We have $L_f \subset \mathcal{Z}_{k-1} \cap f^{-1}(\mathcal{Z}_{k-1})$. So, $h'_f = h$ or $h'_f = h \circ i$ since the pointwise stabilizer of $L_f$ is the center of $Mod_N$. We know that $i([x])=[x]$ for every vertex $[x] \in \mathcal{C}(N)$. So, $h([x]) = \lambda([x])$ for each $x \in \mathcal{Z}_k$. Hence, by induction 
$h([x]) = \lambda([x])$ for each $x \in \mathcal{Z}_k$ for all $k \geq 1$. This shows that $\mathcal{Z}_k$ is a superrigid set. Hence, by induction  $\mathcal{Z}_k$ is a finite superrigid set for all $k \geq 1$.\end{proof}\\
   
{\bf Acknowledgements}\\

The author thanks Peter Scott and Blazej Szepietowski for some comments.
  
\vspace{0.3cm}

{\bf Elmas Irmak} \vspace{0.2cm}
 
University of Michigan, Department of Mathematics, Ann Arbor, 48105, MI, USA, 

e-mail: eirmak@umich.edu\\

\end{document}